\renewcommand{\P}{\mathbb{P}}
\newcommand{\Z}{\mathbb{Z}}
\renewcommand{\div}{\operatorname{div}}
\DeclareMathOperator{\Hom}{Hom}
\DeclareMathOperator{\im}{im}
\DeclareMathOperator{\spec}{Spec}
\DeclareMathOperator{\gal}{Gal}
\DeclareMathOperator{\Bl}{Bl}
\DeclareMathOperator{\mult}{mult}
\DeclareMathOperator{\id}{id}
\newcommand{\CHAR}{\mathrm{char}}
\renewcommand\d{\,\mathrm d}
\DeclareMathOperator{\Pic}{Pic}
\DeclareMathOperator{\Br}{Br}
\newtheorem{theorem}{Theorem}
\newtheorem{proposition}[theorem]{Proposition}
\newtheorem{lemma}[theorem]{Lemma}
\newtheorem{corollary}[theorem]{Corollary}
\theoremstyle{definition}
\newtheorem{definition}[theorem]{Definition}
\newtheorem{example}[theorem]{Example}
\newtheorem{remark}[theorem]{Remark}
\newtheorem{question}[theorem]{Question}
\newtheorem{conjecture}[theorem]{Conjecture}
\numberwithin{theorem}{section}
\numberwithin{equation}{section}
\newcommand{\lineN}{\overline{\mathbb{N}}}
\newcommand{\cA}{\mathcal{A}}
\newcommand{\cB}{\mathcal{B}}
\newcommand{\cC}{\mathcal{C}}
\newcommand{\cI}{\mathcal{I}}
\newcommand{\cO}{\mathcal{O}}
\newcommand{\cD}{\mathcal{D}}
\newcommand{\cP}{\mathcal{P}}
\newcommand{\cU}{\mathcal{U}}
\newcommand{\cV}{\mathcal{V}}
\newcommand{\cM}{\mathcal{M}}
\newcommand{\fM}{\mathfrak{M}}
\newcommand{\cX}{\mathcal{X}}
\newcommand{\cY}{\mathcal{Y}}
\newcommand{\cZ}{\mathcal{Z}}
\newcommand{\cG}{\mathcal{G}}
\newcommand{\cHom}{\mathcal{H}\! \mathit{om}}
\newcommand{\fp}{\mathfrak{p}}
\newcommand{\fq}{\mathfrak{q}}
\newcommand{\fin}{\textnormal{fin}}
\newcommand{\red}{\textnormal{red}}
\DeclareMathOperator{\Cl}{Cl}
\newcommand{\lParen}{(\!(}
\newcommand{\rParen}{)\!)}
\newcommand{\lBracket}{[\![}
\newcommand{\rBracket}{]\!]}
\begin{document}
	\title{Generalized Campana points and adelic approximation on toric
varieties}
	\author{Boaz Moerman}
	
 \subjclass[2020]{Primary 14G12; Secondary 14M25, 14G05, 11G35}

        \begin{abstract}
            We introduce a general framework for studying special subsets of rational points on an algebraic variety, termed $\cM$-points. The notion of $\cM$-points generalizes the concepts of integral points, Campana points and Darmon points. We introduce and study $M$-approximation over number fields and function fields, which is a notion that generalizes weak and strong approximation. We show that this property implies that the set of $\cM$-points is not thin. We then give a simple characterisation of when a split toric variety satisfies $M$-approximation, generalizing work of Nakahara and Streeter. Further, we determine when the set of $\cM$-points on a split toric variety is thin.
        \end{abstract}
	\maketitle
        \setcounter{tocdepth}{1}
        \tableofcontents
        
\section{Introduction}
In recent years, the theory of Campana points has attracted much attention. 
Campana points are rational points which are integral with respect to weighted divisors, and in particular include the integral points which do not meet the divisors. Campana points interpolate between rational points and integral points.
Arithmetically, on a projective variety over $\mathbb{Q}$ Campana points give a way to study the rational points whose coordinates are $m$-full integers. Recall that an integer $n$ is $m$-full if for every prime number $p$ dividing $n$, $p^m$ also divides $n$. Geometrically, Campana points correspond to curve embeddings sufficiently tangent to given divisors.

Many results and conjectures concerning rational points extend to Campana points. One such example is the Mordell Conjecture \cite[Part E]{HiSi00}, whose analogue for Campana points has been introduced by Campana \cite{Cam05} 
and proven over function fields of characteristic $0$ by him \cite{Cam05} and recently in \cite{KPS22} in arbitrary characteristic. Over number fields the conjecture follows from the $abc$ Conjecture, see \cite[Appendix]{Sme17} for example. Recently in \cite{BaJa23} it was shown that the Kobayashi--Ochiai Theorem generalizes to Campana pairs, giving a higher dimensional analog of the Mordell Conjecture for the theory of Campana points over function fields.

Similarly, recently a generalization of Manin's conjecture on rational points of bounded height to the setting of Campana points has been introduced in \cite{PSTV20}. This conjecture is known for several varieties with appropriately chosen divisors, such as diagonal hypersurfaces \cite{Val12, BrYa21,BBKOPW23,Shu21,Shu22}, vector group compactifications \cite{PSTV20}, norm forms \cite{Str21}, complete split toric varieties \cite{PiSc23} and certain complete intersections therein \cite{PiSc24}, biequivariant compactifications of the Heisenberg group \cite{Xia21} and wonderful compactifications \cite{CLTT24}. In \cite{Fai23}, a motivic analogue of Manin's conjecture for Campana points is proven for vector group compactifications.

A Darmon point is a special kind of Campana point. On a projective variety  over $\mathbb{Q}$ such points correspond to rational points whose coordinates are, up to units, $m$-th powers of integers. These were dubbed Darmon points in \cite{MiNaSt22} in honour of Darmon's study of \emph{M-curves} \cite{Dar97}. In that paper, Darmon unconditionally proves the analog of the Mordell Conjecture for Darmon points over number fields and uses it to prove that generalized Fermat equations have finitely many solutions. Darmon points also appear in Campana's work as \emph{morphismes orbifoldes divisibles} \cite[D\'efinition 2.4]{Cam10}. Darmon points are intrinsically connected to obifolds through the root stack construction given in \cite{Cad07}, as we will explain in Section \ref{section: root stacks}. 

In several papers \cite{AbAl18,Str21} another variant of Campana points called weak Campana points is used in the study of analogues of the conjectures of Vojta \cite{Voj87} and Manin \cite{Pey95} on rational points.

\subsection{\texorpdfstring{$\cM$}{M}-points and \texorpdfstring{$M$}{M}-approximation}
This paper introduces a vast generalisation of the notions of integral points, Darmon points, Campana points and weak Campana points, which we call \textit{$\cM$-points}. To define all these points, one needs to fix an integral model of the variety. Here the parameter set $\cM$ encodes the boundary components on the integral model and the admissible intersection multiplicities, and the letter $\cM$ was chosen in reference to the latter. The precise definition is given in Section \ref{subsection: pairs}. For example, in $\mathbb{A}^n_{\mathbb{Z}}$, for suitable choices of the parameter set $\cM$, $\cM$-points can describe points with integer coordinates that are squarefree, all cubes, or all coprime. On the geometric side, $\cM$-points can describe tuples of polynomials that are squarefree, have no simple zeroes, or are coprime.
Many more examples are given in Section \ref{subsection: M-points examples}.

Like integrality, the $\cM$-condition is a local condition, so local-global principles, such as strong approximation as defined in \cite[\S 2.6.4.5]{Poo17}, have natural analogues for $\cM$-points.
In Section \ref{section: M-approximation}, we introduce $M$-approximation and integral $\cM$-approximation, which generalize and interpolate between weak approximation and (integral) strong approximation. Here $M$ encodes the boundary components on the variety and the same intersection multiplicities as $\cM$. The precise definition is given in Section \ref{subsection: pairs}.
A variety $X$ satisfies \textit{$M$-approximation} if, roughly speaking, for every finite set of places $S\subset \Omega_K$ and points $P_v\in X(K_v)$ for $v\in S$, there exists $P\in X(K)$ approximating all $P_v$, such that for every place $v\in \Omega_K\setminus S$, $P$ satisfies the $\cM$-condition at the place $v$. This is formalized in terms of density of rational points in an appropriate adelic space, see Definition \ref{def: adelic M-points}. Similarly to strong approximation, this property is independent of the choice of the integral model.

\textit{Integral $\cM$-approximation} is a variant on this notion obtained by letting $P_v\in X(K_v)$ be a local $\cM$-point and requiring $P\in X(K)$ to be an $\cM$-point. Under very mild conditions, $M$-approximation implies integral $\cM$-approximation, see Section \ref{section: M-approximation} and Proposition \ref{prop: integral M-approximation on toric}.

Integral $\cM$-approximation extends the notion of weak Campana approximation which was introduced by Nakahara and Streeter in \cite{NaSt20} and further studied in \cite{NaSt20, CLT24}. However, $M$-approximation behaves very differently from the notion of Campana strong approximation as given in their follow up paper with Mitankin \cite{MiNaSt22}. Their notion interpolates between strong approximation and integral strong approximation, rather than weak approximation.

\vspace{20 px}
The fields considered in this paper are PF fields $(K,C)$. Here $K$ is either a number field or the function field of a regular projective curve $C$ over some field $k$. If $K$ is a number field then $C=\spec \cO_K$ where $\cO_K$ is the ring of integers. Such fields have a good notion of places, as explained in Section \ref{section: PF fields}, and therefore allow for the study of local-global principles, which has been done previously in \cite{Yam96,Yam02}. The terminology `PF fields' is explained by the fact that such fields satisfy a product formula, see Remark \ref{remark: product formula}.

\subsection{$M$-approximation and the \texorpdfstring{$\cM$}{M}-Hilbert property}
If $X$ is an integral variety over a field $K$, then $A\subset X(K)$ is thin if it is contained in a finite union of proper closed subvarieties and images of the set of rational points under generically finite maps $Y\rightarrow X$ of integral varieties of degree greater than $1$. In positive characteristic this is less strict than the usual definition as given in \cite{BFP13, Lug22}, see Remark \ref{remark: different definition thin}.

Let $X$ be a variety over a PF field $(K,C)$. Let $B\subset C$ be an open subscheme. As is the case for integral points, we need to consider integral models over $B$ in order to define $\cM$-points over $B$ in $X(K)$. If $M$ is a parameter set defined by boundary components $D_\alpha$ and a set of admissible multiplicities $\fM$ as in Definition \ref{def: pair}, then we can consider an integral model $\cX$ with parameter set $\cM$ obtained by taking integral models $\cD_\alpha\subset \cX$ of the boundary components. We then call $(X,M)$ a pair and $(\cX,\cM)$ an integral model of $(X,M)$ over $B$ and write the set of $\cM$-points as $(\cX,\cM)(B)$. The boundary components $D_{\alpha}$ are allowed to be arbitrary closed subschemes, and not just divisors.

The first theorem shows that integral $\cM$-approximation implies that the set of $\cM$-points is Zariski dense, unless this set is empty. If $K$ is a global field it also shows that the set of $\cM$-points is not thin.
\begin{theorem} \label{theorem: M-approx implies not thin}
Let $(X,M)$ be a pair over a PF field $(K,C)$ with integral model $(\cX,\cM)$ over an open subscheme $B\subset C$. Assume that $X$ is a geometrically reduced variety and that $D_{\alpha}$ does not contain an irreducible component of $X$ for any $\alpha\in \cA$.

If $(\cX,\cM)$ satisfies integral $\cM$-approximation off a finite set of places $T\subset \Omega_K$ and $(\cX,\cM)(B)\neq \emptyset$, then $X$ is geometrically integral and $(\cX,\cM)(B)$ is Zariski dense. If furthermore $K$ is a global field then $(\cX,\cM)$ satisfies the $\cM$-Hilbert property over $B$, i.e., $(\cX,\cM)(B)$ is not thin in $X(K)$.
\end{theorem}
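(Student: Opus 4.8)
The plan is to argue in three stages: first deduce geometric integrality, then Zariski density, then the non-thinness (the $\cM$-Hilbert property) over a global field.

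For geometric integrality, the hypotheses already give that $X$ is geometrically reduced, so I only need geometric irreducibility. Suppose $X_{\bar K}$ decomposed into several irreducible components, or equivalently that some finite extension $K'/K$ splits off a component. Pick a place $v\notin T$ that splits completely in $K'$; then $X(K_v)$ meets at least two of the geometric components in $K_v$-points, and these components are Zariski dense but disjoint on a dense open. Choosing local $\cM$-points $P_v$ on distinct components (using that $D_\alpha$ contains no component, so the $\cM$-locus is nonempty on each component locally) and invoking integral $\cM$-approximation off $T$ would produce a global $\cM$-point $P\in X(K)$ simultaneously close to points on two different geometric components — impossible since a $K$-point lies on a single component defined over a fixed finite extension, contradicting the approximation at two independent places. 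Hence $X$ is geometrically integral. (If the paper's earlier machinery phrases this differently, e.g. via irreducible components over $K$, the same splitting-place argument applies.)

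For Zariski density, let $Z\subsetneq X$ be a proper closed subset; I must find an $\cM$-point off $Z$. Start from the given nonempty $\cM$-point $P_0\in(\cX,\cM)(B)$, which in particular gives a local $\cM$-point $P_v\in X(K_v)$ for each $v$. Since $X$ is now geometrically integral and $X\setminus Z$ is a dense open, for almost all places $v$ the set $(X\setminus Z)(K_v)$ is nonempty and meets the local $\cM$-locus; fix one such $v_0\notin T$ and choose $P_{v_0}\in(X\setminus Z)(K_v)$ that is a local $\cM$-point (possible because the $\cM$-condition is local and the boundary contains no component, so imposing it cuts out a nonempty $v$-adically open subset of $X(K_{v_0})$ that must meet the open $X\setminus Z$). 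Apply integral $\cM$-approximation off $T$ with this single local condition at $v_0$: we obtain $P\in(\cX,\cM)(B)$ with $P$ arbitrarily $v_0$-adically close to $P_{v_0}$. Since $X\setminus Z$ is $v_0$-adically open, a sufficiently good approximation forces $P\notin Z$. As $Z$ was arbitrary, $(\cX,\cM)(B)$ is Zariski dense.

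For the $\cM$-Hilbert property over a global field, I must moreover avoid every image $\pi(Y(K))$ for $\pi\colon Y\to X$ generically finite of degree $\geq 2$ with $Y$ integral. The standard mechanism is a Hilbert-irreducibility-type argument at a single well-chosen place. Shrinking $Y$, assume $\pi$ is finite étale of degree $d\geq 2$ over a dense open $U\subseteq X$; let $L$ be the Galois closure of $K(Y)/K(X)$, with group $G$, and note $G$ acts transitively and nontrivially. Combining Chebotarev-type density in this geometric setting with the approximation property: choose a place $v_0\notin T$ (outside finitely many bad places for the model of $\pi$) and a local point $P_{v_0}\in U(K_{v_0})$ that is a local $\cM$-point and at which the fibre of $\pi$ is $K_{v_0}$-irreducible, i.e. the Frobenius-type element is a $d$-cycle — such local points form a nonempty $v_0$-adically open set because the fibre being irreducible over $K_{v_0}$ is an open condition and, again, the $\cM$-condition is open and nonempty there. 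Then integral $\cM$-approximation off $T$ yields $P\in(\cX,\cM)(B)$ with $P$ so $v_0$-adically close to $P_{v_0}$ that the fibre $\pi^{-1}(P)$ remains irreducible over $K_{v_0}$, hence $P\notin\pi(Y(K))$. Running this simultaneously over the finitely many $\pi_i$ and the finitely many proper closed subsets defining a given thin set — one can either handle them one place at a time or, better, use a single place $v_0$ good for all of them and impose the combined local condition there, which is still a nonempty open set — shows $(\cX,\cM)(B)$ escapes that thin set. Therefore it is not thin.

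The main obstacle is the step asserting that the local $\cM$-locus meets the relevant $v$-adically open sets (the complement of $Z$, and the locus of irreducible fibres): concretely, one needs that for suitable $v$ the set of $v$-adic $\cM$-points is nonempty and "not too small" inside $X(K_v)$. This is where the hypothesis that no $D_\alpha$ contains an irreducible component of $X$, together with $X$ geometrically reduced, is essential — it guarantees the $\cM$-condition is satisfiable $v$-adically near enough smooth points, so the open sets in question genuinely intersect. Making this precise (and uniform over almost all $v$) is the technical heart; the rest is a routine packaging of approximation plus openness of the target sets, together with the standard Chebotarev/Hilbert-irreducibility input in the last stage.
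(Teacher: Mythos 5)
Your overall architecture (impose a well-chosen local condition at one or two places off $T$, use integral $\cM$-approximation to realize it globally, and feed in a Lang--Weil/Chebotarev-type input at the last stage) is the same as the paper's, but two of the steps you flag or elide are genuine gaps rather than routine packaging.

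First, the recurring assertion that the local $\cM$-locus meets the relevant $v$-adic open sets is not a technicality you can defer: it is the actual content of the Zariski-density and geometric-integrality statements. The hypothesis $(\cX,\cM)(B)\neq\emptyset$ gives you a nonempty open subset $(\cX,\cM_{\fin})(\cO_{v})$ of $X(K_v)$, but a nonempty analytic open need not meet $(X\setminus Z)(K_v)$ (or the open part of a single geometric component) unless it contains a \emph{smooth} point of $X$ --- Proposition \ref{prop: Open subset dense} is only available on the smooth locus, and the given global $\cM$-point could a priori be a singular, $v$-adically isolated point at every place. The paper closes this by producing, at some place $v$ off $T$, a smooth $k_v$-point of the reduction of $\cU$ and lifting it by Hensel: via Lang--Weil at a place splitting completely in a suitable separable extension when $K$ is global, and via Moret-Bailly's results when the base field is infinite (where no density theorem for closed points is available). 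Your proof as written assumes exactly what has to be proved here, and your geometric-irreducibility argument inherits the same gap (you need local $\cM$-points on the open part of a single geometric component, which again requires smooth local points on that component).

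Second, your Hilbert-property step breaks down over global function fields, which are included in the statement. You reduce to $\pi\colon Y\to X$ finite \'etale over a dense open and then run a Chebotarev/irreducible-fibre argument; but the paper's notion of thin set (see Remark \ref{remark: different definition thin}) deliberately includes \emph{inseparable} generically finite covers of degree $\geq 2$, for which there is no dense open of \'etaleness, no Galois closure, and no Frobenius condition to impose. The paper handles these with a separate statement, Lemma \ref{lemma: image inseparable nowhere dense}, showing that for inseparable $f$ the image of $Y(K_v)\to X(K_v)$ is nowhere dense when $X$ is smooth over $K_v\cong k_v\lParen t\rParen$; without some substitute for this your argument only excludes separable covers. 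For the separable covers your irreducible-fibre variant is a legitimate (and classical) alternative to the counting form of Colliot-Th\'el\`ene--Ekedahl that the paper cites, so that part of the divergence is fine.
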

This theorem is a generalisation to $\cM$-points of a result of Nakahara and Streeter \cite[Theorem 1.1]{NaSt20} which shows that weak weak Campana approximation implies the Campana Hilbert property. Their result is in turn a generalisation of a theorem of Colliot-Th\'el\`ene and Ekedahl \cite[Theorem 3.5.7]{Ser16}, which states that weak weak approximation implies the Hilbert property. Theorem \ref{theorem: M-approx implies not thin} also generalizes all of these results from number fields to global fields, and removes the assumption that the variety is normal or even integral.

If the field $K$ is a number field and $X$ is geometrically irreducible, then the proof of Theorem \ref{theorem: M-approx implies not thin} closely follows the proof of Nakahara and Streeter. The main idea is to use the Lang--Weil bounds \cite{LaWe54} to show that, for a generically finite morphism $Y\rightarrow X$ of degree greater than $1$, the image of $Y(K_v)$ in $X(K_v)$ is too small to contain $(\cX,\cM)(\cO_v)$.
For global function fields, the proof is similar but more complicated, due to the existence of inseparable morphisms $Y\rightarrow X$. For these morphisms, we cannot apply the Lang--Weil bounds, and instead we prove and use Lemma \ref{lemma: image inseparable nowhere dense}, which implies that the image of $Y(K_v)\rightarrow X(K_v)$ is nowhere dense if $X$ is smooth.

For function fields of curves over infinite fields, the main difficulty in proving Zariski density of $(\cX,\cM)(B)$ is that we do not know whether $X(K_v)$ contains a smooth point for some place $v$. We show this by invoking recent results by Moret-Bailly \cite{MoBa20} and combining this with Hensel's Lemma.

\begin{remark}
The assumption that $X$ is geometrically reduced is necessary for the conclusion of Theorem \ref{theorem: M-approx implies not thin} to hold. Example \ref{example: M-approx not Zariski dense} gives an example of an integral curve such that $X(K_v)=X(K)$ consists of a single point for every place $v$, so that $X$ satisfies weak approximation but $X(K)$ is not Zariski dense in $X$.   
\end{remark}

For a function field $K$ of a curve over an algebraically closed field, Corollary \ref{corollary: M-approx does not imply M-Hilbert} shows integral $\cM$-approximation need not imply the $\cM$-Hilbert property. In particular, this also gives examples of varieties which satisfy strong approximation, but for which the integral points are both Zariski dense and thin.

\subsection{Results for split toric varieties}
Most of the remainder of the paper focuses on split toric varieties, and considers pairs $(X,M)$ where the boundary components used in defining $M$ are the torus-invariant prime divisors $D_1,\dots, D_n$. We will call such a pair $(X,M)$ a \emph{toric pair}. In this setting we give a necessary and sufficient criterion for $M$-approximation to hold off a given set of places $T$.
To state the theorem, we will need the set
$$\rho(K,C)=\left\{n\in \mathbb{N}^*\mid \cO_v^\times\xrightarrow{(\cdot)^n} \cO_v^\times \text{ is surjective for all } v\in \Omega_K\right\},$$
which has not been studied before, to the author's knowledge. This set is explicitly computed in Lemma \ref{lemma: computation rho}. In particular $\rho(K,C)=\{1\}$ if $K$ is a global field and $\rho(K,C)=\mathbb{N}\setminus \CHAR(K)\mathbb{N}$ if $K$ is the function field of a curve over a separably closed field.
\begin{theorem} \label{theorem: M-approx}
Let $(K,C)$ be a PF field and let $(X,M)$ be a toric pair where $X$ is a normal complete split toric variety over $K$ with co-character lattice $N$. Let $T\subset \Omega_K$ be a nonempty finite set of places and let $N_M, N_M^+\subset N$ be as in Definition \ref{def: invariants} and Definition \ref{def: N_M singular}.
Then
\begin{enumerate}
\item $(X,M)$ satisfies $M$-approximation off $T$ if $|N:N_M|\in \rho(K,C)$. If $\Pic(C)$ is finitely generated, then the converse also holds.
\item  Furthermore, $(X,M)$ satisfies $M$-approximation if and only if $N=N_M^+$.
\end{enumerate}
\end{theorem}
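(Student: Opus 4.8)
The plan is to make the homogeneous coordinate (Cox ring) description of the split toric variety $X$ explicit and thereby translate $M$-approximation into a concrete problem about valuations and local units of $K$. Fix primitive generators $u_1,\dots,u_n\in N$ of the rays of the complete fan defining $X$, so that $D_1,\dots,D_n$ are the torus-invariant prime divisors and one has the divisor class sequence $0\to N^{\vee}\to\mathbb Z^n\to\Cl(X)\to 0$ with $N^{\vee}$ the character lattice; the Cox construction presents $X$ as a good quotient $(\mathbb A^n\setminus Z)/G$ of an open subset of $\mathbb A^n$ by the diagonalizable group $G=\Hom(\Cl X,\mathbb G_m)$, whose finite part reflects the torsion of $\Cl(X)$. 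For $P$ in the dense torus $T_N(K)=\Hom(N^{\vee},K^\times)$ and a finite place $v$, let $\phi_v(P)\in N$ be the valuation vector, $\langle m,\phi_v(P)\rangle=v(\chi^m(P))$ for $m\in N^{\vee}$; it equals $\sum_i v(x_i)u_i$ for any lift $(x_i)$ of $P$ along $\mathbb G_m^n\to T_N$, and the obstruction to such a lift — over $K$ and $v$-adically integrally — is governed by the torsion of $\Cl(X)$, which is the first hint that $\rho(K,C)$ will intervene. Each $n_v(P,D_i)$ is read off from $\phi_v(P)$ and the cone containing it, so the $\cM$-condition at $v$ is equivalent to $\phi_v(P)$ lying in a fixed subset $\Lambda\subseteq N$ depending only on the fan and $\fM$, whose generated subgroup $N_M$ and generated submonoid $N_M^+$ are as in Definition \ref{def: invariants} and Definition \ref{def: N_M singular}. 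Since $X$ is proper it suffices to produce torus points, and approximating an arbitrary $P_v\in X(K_v)$ amounts to prescribing $\phi_v(P)$ to lie in the cone corresponding to $P_v$, with the rays of that cone taken sufficiently large, together with a congruence on the residual local units. Thus $M$-approximation off $T$ becomes: given $S$ and such prescribed admissible data at the places of $S$, find $P\in T_N(K)$ matching it while $\phi_v(P)\in\Lambda$ for all $v\notin S\cup T$.

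For the implication in (1) I would solve this in two stages. First fix the valuation vectors: take $\phi_v(P)=0$ at all but finitely many $v\notin S\cup T$, and use that $\Lambda$ generates $N_M$ while the closed points of $C\setminus(S\cup T)$ generate $\Pic(C\setminus T)$ to cancel, up to the image of $N_M\otimes\Pic(C\setminus T)$, the divisor class forced by the data at $S$; the residual obstruction lies in $(N/N_M)\otimes\Pic(C\setminus T)$, and one checks it vanishes using $|N:N_M|\in\rho(K,C)$ and Lemma \ref{lemma: computation rho}, absorbing whatever remains into the unconstrained place of $T$. Second, given a torus point realising the correct valuation vectors, the leftover freedom is a twist by a unit of $C\setminus T$ together with a place-by-place twist by the torsion of $G$, and matching the prescribed residual units at the places of $S$ then requires extracting $|N:N_M|$-th roots of local units — possible precisely because $|N:N_M|\in\rho(K,C)$. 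Finally, weak approximation for the split torus $T_N$, which always holds, fixes the archimedean and $S$-adic absolute values and produces the desired rational $\cM$-point.

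For the converse in (1), assume $\Pic(C)$ is finitely generated and $d:=|N:N_M|\notin\rho(K,C)$; by Lemma \ref{lemma: computation rho} there is a place $v_0$ with $\cO_{v_0}^\times\neq(\cO_{v_0}^\times)^d$. Finiteness of $\Pic(C)$ bounds, modulo $d$-th powers, the global units that are $d$-th powers at all but finitely many places, so using a ray along which $N/N_M$ is nontrivial I would choose a finite set $S\ni v_0$ and local points $P_v\in X(K_v)$ for $v\in S$, each reducing suitably into the toric boundary, such that no rational point can simultaneously be an $\cM$-point off $T$ and approximate all the $P_v$: by the product formula such a point would force one of those finitely many global correction units to be a prescribed non-$d$-th power at $v_0$, which the choice of the $P_v$ rules out. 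Hence $M$-approximation off $T$ fails. Part (2) rests on the same ideas: full $M$-approximation additionally imposes the $\cM$-condition at the finitely many places outside $S$ that were previously exempt, so from $\sum_v\deg(v)\,\phi_v(P)=0$ for $P\in T_N(K)$ (with the usual archimedean contributions) the total contribution forced at $S$ and at the archimedean places must lie in the submonoid generated by $\Lambda$, that is in $N_M^+$, which gives the necessity of $N=N_M^+$. Conversely, $N=N_M^+$ forces $N_M=N$, so by part (1) one has $M$-approximation off a single auxiliary place $\{v_0\}$; writing the valuation vector left free at $v_0$ as a sum of admissible cone vectors and redistributing it among finitely many new places, all of which then satisfy the $\cM$-condition, removes the last exempt place.

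The step I expect to be the main obstacle is the first stage of the implication in (1): controlling uniformly over all places the passage between rational (and local integral) points of $X$, their lifts to the Cox torsor, and the torsion ambiguity in $G$, and identifying the resulting local-global obstruction precisely with the set $\rho(K,C)$. This book-keeping — with Picard groups of open curves and with the finite part of $G$ — is the technical core that forces the hypothesis of the theorem; the converse and part (2) should be comparatively formal once this dictionary, together with the computation of $\rho(K,C)$ in Lemma \ref{lemma: computation rho}, is in place.
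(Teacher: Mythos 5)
Your overall framework — Cox coordinates, the homomorphism $\phi_v$, the lattice $N_M$, divisibility of local units via $\rho(K,C)$, $\Pic$-divisibility for the reduction step, and the product formula for part (2) — matches the paper's. But there is a genuine gap at the core of the sufficiency direction of (1). You correctly state the goal as producing $P\in T_N(K)$ with $\phi_v(P)\in\Lambda$ for \emph{every} place $v\notin S\cup T$, where $\Lambda=\phi(\fM_{\fin,\red})$ is merely a subset (not a subgroup or submonoid) of $N$. However, the method you describe — cancelling divisor classes in $N\otimes\Pic(C\setminus T)$ using that $\Lambda$ generates $N_M$, then twisting by global units and local torsion — only controls the \emph{class} of the valuation data and at best places $\phi_v(P)$ in the group $N_M$ at the auxiliary places; it does nothing to force $\phi_v(P)$ into $\Lambda$ itself. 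Concretely, for Campana points on $\P^1$ with $m=2$ one has $N_M=N=\Z$ but $\Lambda=\{0,\pm2,\pm3,\dots\}$, and your construction can perfectly well output a coordinate with a simple zero at some unconstrained place, which is not a Campana point. Unit twists do not change valuation vectors, so the "leftover freedom" in your second stage cannot repair this.

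The paper's proof closes exactly this gap with the "squarefree strong approximation" lemmas (Lemma \ref{Lift number field} and Lemma \ref{Lift function field}): one chooses $\mathbf{m}_1,\dots,\mathbf{m}_l\in\fM$ generating $N_M$, solves for local data $c_{\mathbf{m}_s,v}$ using the surjectivity of $(\cO_v^\times)^l\to(\cO_v^\times)^d$ (this is where $|N:N_M|\in\rho(K,C)$ enters, at the places of $S$), and then lifts to \emph{pairwise coprime squarefree} global elements $c_{\mathbf{m}_s}\in\cO(B)$. Setting $q_i'=\prod_s c_{\mathbf{m}_s}^{m_{s,i}}$ guarantees that at every place outside $S$ the multiplicity vector is either $0$ or exactly one $\mathbf{m}_s\in\fM$. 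Producing such lifts is the real technical core — it requires Lang's idelic equidistribution theorem (a Chebotarev-type input) over global fields and Riemann--Roch plus a Bertini-type separability argument over function fields of infinite fields — and is entirely absent from your proposal. The same issue recurs in your sketch of part (2): "redistributing" the valuation vector at $v_0$ among new places again requires each new place to receive an element of $\Lambda$, not merely of $N_M^+$, and the paper handles this with the strengthened form of the lifting lemmas together with a positive relation $\sum_i d_i\phi(\mathbf{m}_i)=0$. The necessity directions in your proposal are essentially sound and close to the paper's arguments.
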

This considerably generalizes Nakahara's and Streeter's result \cite[Theorem 1.2(i)]{NaSt20} from projective space to general split toric varieties, from Campana points to $\cM$-points and from number fields to PF fields.

The proof of Nakahara and Streeter does not extend to the setting of $\cM$-points, as their proof essentially uses the fact that for any two $S$-integers $a,b$ such that $b$ divides $a$, the integer $a^m b$ is $m$-full for any positive integer $m$. Since we consider sets of points $(\cX,\cM)(B)$ which can greatly differ from Campana points and have much less structure, our proof of Theorem \ref{theorem: M-approx} takes a different approach. The proof is subdivided in two steps. First in Section \ref{section: squarefree strong approximation} we prove results on the density of squarefree elements in rings of integers and on affine curves, which can be thought of as ``squarefree strong approximation'' on the affine line. Then we use Cox coordinates as introduced in \cite{Cox95} to extend the results from the affine line to toric varieties.

\begin{remark}
The condition that $\Pic(C)$ is finitely generated is satisfied in many cases, such as when $C$ is rational or when $K$ is finitely generated over its prime field, by Néron's generalisation of the Mordell-Weil theorem \cite[Corollary 7.2]{Con06}.
\end{remark}
\begin{remark}
Note that $M$-approximation off a nonempty set of places only depends on $N_M$ rather than on $(X,M)$. This can be viewed as an analogue of purity of strong approximation as in \cite{CLX19, CaZh20, Wei21, Che24}. In fact, this theorem shows that purity holds for strong approximation with respect to toric subvarieties.
\end{remark}

By a classical theorem of Minchev \cite[Theorem 1]{Min89}, of which we give a new proof in Corollary \ref{cor: Minchev}, a variety over a number field can only satisfy strong approximation off a finite set of places $T$ if it is algebraically simply connected. The following consequence of Theorem \ref{theorem: M-approx} implies that, for split toric varieties, these two properties are actually equivalent.
\begin{corollary} \label{corollary: strong approximation 1}
	Let $(K,C)$ be a PF field of characteristic $0$, let $\overline{K}$ be an algebraic closure of $K$, let $X$ be a complete normal split toric variety over $K$ and let $V\subseteq X$ be an open toric subvariety. Then:
    
	\begin{enumerate}
		\item For any nonempty finite set of places $T$, $V$ satisfies strong approximation off $T$ if $\pi_1(V_{\overline{K}})$ is finite and $|\pi_1(V_{\overline{K}})|\in\rho(K,C)$. If $\Pic(C)$ is finitely generated, then the converse also holds.
		\item The variety $V$ satisfies strong approximation if and only if $V_{\overline{K}}$ is simply connected and $\cO(V_{\overline{K}})=\overline{K}$.
	\end{enumerate}
\end{corollary}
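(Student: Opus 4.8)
The plan is to deduce Corollary~\ref{corollary: strong approximation 1} from Theorem~\ref{theorem: M-approx} by choosing the toric pair $(X,M)$ cleverly so that the set of $\cM$-points reduces to genuine integral points on $V$, and then translating the lattice-theoretic invariants $N_M$, $N_M^+$ into topological invariants of $V_{\overline K}$. Concretely, given the open toric subvariety $V\subseteq X$, let $\Sigma_V\subseteq \Sigma$ be the subfan of the fan $\Sigma$ of $X$ corresponding to $V$, and let $D_1,\dots,D_n$ be the torus-invariant prime divisors of $X$. The divisors $D_i$ that meet $V$ correspond to the rays of $\Sigma_V$; the remaining ones are exactly the divisors whose complement is $V$ (up to the toric boundary of the torus itself, which we also remove). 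So I would take $M$ to be the parameter set on $X$ with boundary components $D_1,\dots,D_n$ and with admissible multiplicities $\fM$ chosen so that an $\cM$-point is precisely a point that does not meet any $D_i$ lying outside $V$, and meets the $D_i$ in $V$ with no constraint. With this choice, $(\cX,\cM)(B)$ is literally the set of $B$-integral points of $V$ (for a suitable model), so ``$M$-approximation off $T$'' for $(X,M)$ is the same as ``strong approximation off $T$'' for $V$, and likewise for the unconditional statements. This reduces both (1) and (2) to the corresponding clauses of Theorem~\ref{theorem: M-approx} once the invariants are identified.

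The second and main part is the identification $|N:N_M| = |\pi_1(V_{\overline K})|$ (with the convention that the index is infinite exactly when $\pi_1$ is infinite) and $N=N_M^+ \iff V_{\overline K}$ simply connected and $\cO(V_{\overline K})^\times$-condition. For this I would recall the standard description of the fundamental group of a toric variety: if $V$ corresponds to a fan $\Sigma_V$ in $N_{\mathbb R}$ with rays generating a sublattice (or saturated subgroup) $N_{\Sigma_V}$, then over an algebraically closed field of characteristic $0$ one has $\pi_1(V) \cong N/N_{\Sigma_V}$ — this goes back to work on toric varieties (e.g.\ the description via the torus and the rays), and in the complete-$X$, open-$V$ setting it is exactly the cokernel of the map from the rays of $\Sigma_V$ into $N$. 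The content of Definition~\ref{def: invariants} should make $N_M$ equal to exactly this subgroup $N_{\Sigma_V}$ for the $M$ chosen above, because the ``no intersection with $D_i$ outside $V$'' multiplicities force the relevant sublattice to be generated by the rays of $\Sigma_V$. Similarly, $\cO(V_{\overline K}) = \overline K$ is equivalent to $\Sigma_V$ spanning $N_{\mathbb R}$ (no global invertible functions beyond constants on the torus part iff the rays span), which together with simple connectedness gives $N = N_M^+$ via Definition~\ref{def: N_M singular}. The condition $\Pic(C)$ finitely generated is carried along verbatim from Theorem~\ref{theorem: M-approx}.

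I would then assemble the argument: for (1), apply Theorem~\ref{theorem: M-approx}(1) with the identification $|N:N_M| = |\pi_1(V_{\overline K})|$, noting that the finiteness of the index is equivalent to finiteness of $\pi_1(V_{\overline K})$, so the hypothesis ``$\pi_1(V_{\overline K})$ finite and $|\pi_1(V_{\overline K})| \in \rho(K,C)$'' matches ``$|N:N_M| \in \rho(K,C)$'' (an infinite index is never in $\rho(K,C)\subseteq \mathbb N^*$); the converse under $\Pic(C)$ finitely generated is immediate. For (2), apply Theorem~\ref{theorem: M-approx}(2): $N = N_M^+$ translates to the stated topological condition. Finally I would remark that, combined with the new proof of Minchev's theorem in Corollary~\ref{cor: Minchev}, part (1) shows the converse direction unconditionally when $K$ is a number field (since then $\rho(K,C)=\{1\}$ and finiteness of $\pi_1$ forces $\pi_1$ trivial), recovering and sharpening the classical statement that strong approximation off $T$ forces algebraic simple connectedness.

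The main obstacle I anticipate is bookkeeping: making the choice of $\fM$ precise enough that $(\cX,\cM)(B)$ really is the integral points of $V$ on the nose (rather than of some slightly larger or smaller set), and checking that Definition~\ref{def: invariants}'s $N_M$ then coincides exactly — not just up to finite index or saturation — with the ray-generated subgroup computing $\pi_1(V_{\overline K})$. The subtlety is whether $N_M$ is defined using a saturation: $\pi_1$ of a toric variety can have torsion precisely because the rays need not generate a saturated subgroup, so I must make sure the group-theoretic index $|N:N_M|$ (not $|N:\text{sat}(N_M)|$) is what appears, and that this matches $|\pi_1|$. I would check this against the examples in the paper (projective space: $N_M = N$, $\pi_1 = 1$; $\mathbb A^n$ with all coordinate hyperplanes removed: $N_M = 0$, matching $\pi_1(\mathbb G_m^n) = \widehat{\mathbb Z}^n$ being infinite) to calibrate the conventions before writing the formal cross-reference.
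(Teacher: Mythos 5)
Your proposal is correct and matches the paper's proof, which is exactly "combine Proposition \ref{prop: N_M fundamental group integral} with Theorem \ref{theorem: M-approx}": take the toric pair encoding integrality with respect to $V$ (Example (1) of Section \ref{subsection: M-points examples}), identify $\widehat{N/N_M}$ with $\pi_1(V_{\overline K})$ via the standard toric computation, and identify the cone condition on $N_M^+$ with $\cO(V_{\overline K})=\overline K$. The only nuance worth noting is that the étale fundamental group is the profinite completion $\widehat{N/N_M}$ rather than $N/N_M$ itself, but since $N/N_M$ is a finitely generated abelian group this changes neither finiteness nor the order, so your identifications go through; your worry about saturation is also resolved as you suspect, since Definition \ref{def: invariants} uses the unsaturated sublattice generated by the ray generators, matching the torsion in $\pi_1$.
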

\begin{remark}
    If $\rho(K,C)=1$, then the first part of Corollary \ref{corollary: strong approximation 1} states that for any nonempty set of places $T$, $V$ satisfies strong approximation off $T$ if and only if $V$ is simply connected. On the other hand, if $\rho(K,C)=\mathbb{N}^*$ then $V$ satisfies strong approximation off $T$ if and only if its fundamental group is finite, or equivalently if and only if $V$ does not have torus factors by \cite[Exercise 12.1.6.]{CLS11}.
\end{remark}
In Section \ref{section: consequences}, we give two more characterisations of strong approximation on split toric varieties. Corollary \ref{corollary: strong approx and Pic} characterizes strong approximation in terms of the Picard group and is valid over any PF field.

Corollary \ref{corollary: strong approximation and Brauer Manin} implies that, over a number field, a smooth split toric variety satisfies strong approximation off a finite nonempty set of places if and only if the Brauer group modulo its constants vanishes. This strengthens the results of Cao and Xu in \cite{CaXu13} when the toric variety is split toric by also considering infinite places and generalizes them by also considering function fields. A similar result has recently also been shown over number fields by Santens in \cite[Theorem 1.3]{San23-2}, which implies that the algebraic Brauer-Manin obstruction is the only obstruction to strong approximation if the global sections of $V_{\overline{K}}$ are constant.

By applying Theorem \ref{theorem: M-approx} to Campana points as defined in Definition \ref{def: Campana points and Darmon points}, we obtain the following generalization of \cite[Theorem 1.2(i)]{NaSt20}:
\begin{corollary} \label{corollary: M-approx for Campana points}
	Let $(K,C)$ be a PF field, let $X$ be a complete normal split toric variety and let $T\subset \Omega_K$ be a finite set of places. Let $(X,M)$ be the toric pair corresponding to the Campana points on $(X,D_{\mathbf{m}})$ as defined in Definition \ref{def: Campana points and Darmon points}, where $m_1,\dots,m_n\in \mathbb{N}^*\cup\{\infty\}$ and $$D_{\mathbf{m}}=\sum_{i=1}^n \left(1-\frac{1}{m_i}\right)D_i.$$ Then $(X,M)$ satisfies $M$-approximation off $T$ if $X\setminus \lfloor D_{\mathbf{m}} \rfloor$ satisfies the conditions for strong approximation given in Corollary \ref{corollary: strong approximation 1} or Corollary \ref{corollary: strong approx and Pic}. If furthermore $\Pic(C)$ is finitely generated or $T=\emptyset$, then the converse also holds. In particular, $X$ satisfies $M$-approximation if $m_i<\infty$ for all $i=1,\dots, n$.
\end{corollary}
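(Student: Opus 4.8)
The plan is to deduce this from Theorem \ref{theorem: M-approx} by matching the lattices $N_M,N_M^+$ attached to the Campana pair $(X,M)$ with those of a pair describing integral points on $V:=X\setminus\lfloor D_{\mathbf m}\rfloor$, and then translating via Corollaries \ref{corollary: strong approximation 1} and \ref{corollary: strong approx and Pic}. Since $1-\tfrac1{m_i}\in[0,1)$ when $m_i<\infty$ and equals $1$ when $m_i=\infty$, we have $\lfloor D_{\mathbf m}\rfloor=\sum_{i:\,m_i=\infty}D_i$, so $V$ is the open toric subvariety of $X$ obtained by deleting exactly the torus-invariant divisors $D_i$ with $m_i=\infty$. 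First I would introduce the toric pair $(X,M')$ with admissible multiplicities $\{0\}$ along each $D_i$ with $m_i=\infty$ and $\lineN$ (no condition) along the remaining $D_i$; its $\cM$-points are the integral points of $V$, and by construction $M'$-approximation off $T$ for $(X,M')$ is strong approximation off $T$ for $V$ (and likewise for $T=\emptyset$).

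The crucial observation I need is that $N_M=N_{M'}$ and $N_M^+=N_{M'}^+$. Unwinding Definitions \ref{def: invariants} and \ref{def: N_M singular}, these lattices are assembled from the sets of admissible intersection multiplicities along the $D_i$: for $m_i=\infty$ both pairs record only the multiplicity $0$, while for $m_i<\infty$ the Campana set $\{0\}\cup\Z_{\ge m_i}$ and the set $\lineN$ generate the same subgroup of $\Z$, namely all of $\Z$ (as $\gcd(m_i,m_i+1)=1$). Hence $N_M$ and $N_M^+$ depend on $(X,M)$ only through $\lfloor D_{\mathbf m}\rfloor$ and coincide with $N_{M'}$ and $N_{M'}^+$. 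This step genuinely uses that the conditions are of Campana type; for Darmon conditions, where the admissible set $m_i\Z_{\ge0}$ generates $m_i\Z\ne\Z$, the analogous statement is false.

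I would then conclude by combining this with Theorem \ref{theorem: M-approx}. For $T\ne\emptyset$, part (1) applied to $(X,M)$ and to $(X,M')$ gives that $(X,M)$ satisfies $M$-approximation off $T$ if $|N:N_M|=|N:N_{M'}|\in\rho(K,C)$, with the converse when $\Pic(C)$ is finitely generated, and that this condition is equivalent to $V$ satisfying strong approximation off $T$; by Corollary \ref{corollary: strong approx and Pic} (and, in characteristic $0$, by Corollary \ref{corollary: strong approximation 1}(1)) this is precisely the stated hypothesis. For $T=\emptyset$ one argues in the same way using Theorem \ref{theorem: M-approx}(2): $(X,M)$ satisfies $M$-approximation iff $N=N_M^+=N_{M'}^+$ iff $V$ satisfies strong approximation, which by Corollary \ref{corollary: strong approximation 1}(2) (or Corollary \ref{corollary: strong approx and Pic}) means $V_{\overline K}$ is simply connected with $\cO(V_{\overline K})=\overline K$; here the converse holds unconditionally, as stated. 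Finally, if all $m_i<\infty$ then $\lfloor D_{\mathbf m}\rfloor=0$, so $V=X$ is a complete normal split toric variety, which is geometrically simply connected with $\cO(X_{\overline K})=\overline K$, hence satisfies the conditions of Corollary \ref{corollary: strong approximation 1}(2); so $X$ satisfies $M$-approximation.

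I expect the only nontrivial step to be the bookkeeping in the second paragraph — tracing through Definitions \ref{def: invariants} and \ref{def: N_M singular} to confirm that a finite Campana multiplicity contributes to $N_M$ and $N_M^+$ exactly as imposing no condition does, so that these invariants see only $\lfloor D_{\mathbf m}\rfloor$. The remaining care is bureaucratic: Corollary \ref{corollary: strong approximation 1} is available only in characteristic $0$, so over a general PF field one invokes the Picard-group criterion of Corollary \ref{corollary: strong approx and Pic} instead; and, as in Theorem \ref{theorem: M-approx}(1), the converse for nonempty $T$ needs $\Pic(C)$ finitely generated, which is why that hypothesis reappears here.
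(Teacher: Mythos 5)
Your overall strategy is the paper's: compare the Campana pair $(X,M)$ with the integral-points pair $(X,M')$ for $V=X\setminus\lfloor D_{\mathbf m}\rfloor$, show the invariants of Theorem \ref{theorem: M-approx} agree, and translate via Corollaries \ref{corollary: strong approximation 1} and \ref{corollary: strong approx and Pic}. The lattice identity $N_M=N_{M'}$ is correct and your $\gcd(m_i,m_i+1)=1$ argument is exactly the one used in the paper. However, the asserted monoid identity $N_M^+=N_{M'}^+$ is false, and your justification (``generate the same subgroup of $\Z$'') only sees the lattice. Concretely, take $X=\P^1$ with $m_0=2$, $m_1=\infty$, so $n_{\rho_0}=1$, $N=\Z$: then $N_{M'}^+=\mathbb{N}$ but $N_M^+=\{0\}\cup\Z_{\geq 2}$, because the admissible multiplicities $\{0\}\cup\Z_{\geq m_i}$ generate only a numerical semigroup, not all of $\mathbb{N}$, as a monoid. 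What is actually needed for Theorem \ref{theorem: M-approx}(2) is the implication $N_{M'}^+=N\Rightarrow N_M^+=N$ (the reverse inclusion $N_M^+\subseteq N_{M'}^+$ being trivial), and this requires an extra argument: with $m=\max_{m_i<\infty}m_i$, if $c=\sum_i c_i n_{\rho_i}$ with $c_i\geq 0$ and $c_i=0$ for $m_i=\infty$, then $m'c\in N_M^+$ for every $m'\geq m$ (each coordinate $m'c_i$ is $0$ or $\geq m_i$), and hence $c=(m+1)c+m(-c)\in N_M^+$. Without this step the $T=\emptyset$ case of the corollary is not established.

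The second gap is the singular case, which you do not address. For singular $X$, Definition \ref{def: N_M singular} computes $N_M$ and $N_M^+$ as $N_{f^*M}$ on a smooth birational model $f\colon\widetilde X\to X$ with $f^{-1}D_i$ Cartier, and $f^*M$ is \emph{not} a Campana pair: its admissibility condition reads $\sum_\beta c_{i\beta}w'_\beta\in\{0\}\cup\Z_{\geq m_i}$ for the multiplicities $c_{i\beta}$ of $f^{-1}D_i=\sum_\beta c_{i\beta}\widetilde D_\beta$, so ``unwinding Definition \ref{def: invariants} divisor by divisor'' does not apply. The paper instead sandwiches $(\widetilde X,f^{-1}M)$ between Campana pairs on $\widetilde X$ (with all finite multiplicities replaced by $m$ on the divisors not lying over $\lfloor D_{\mathbf m}\rfloor$, and multiplicity $\infty$ on those that do) and transfers $M$-approximation along $f$ via Corollary \ref{corollary: M approx toric stable under birational transformations}. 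Alternatively one can redo the lattice and monoid computations directly for $f^*M$ — the single-ray tuples with sufficiently large, consecutive values again give $n_{\widetilde\rho_\beta}\in N_{f^*M}$, and the scaling trick above handles the monoid — but some version of this must be written; the corollary is stated for normal, not smooth, $X$.
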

Corollary \ref{corollary: M-approx for Campana points} in the case of function fields of curves over algebraically closed fields of characteristic $0$ has been proven independently in a recent work by Chen, Lehmann and Tanimoto \cite{CLT24}, under the additional assumption that $m_i<\infty$ for all $i=1,\dots,n$.
They also obtain analogues of this result for other Campana pairs $(X,D_{\mathbf{m}})$ which are ``Campana rationally connected''. The method they use to prove the result relies on logarithmic geometry, and differs from the approach taken in this article.

We also study failures of the $\cM$-Hilbert property on split toric varieties. The main result in this direction is Theorem \ref{theorem: thinness}, which gives general sufficient conditions for the $\cM$-Hilbert property to fail, and gives a measure of how badly it fails. It also gives a precise characterisation for Zariski density of the set of $\cM$-points. As a consequence of the theorem it follows that over global fields, $M$-approximation is equivalent to the $\cM$-Hilbert property.

\begin{corollary} \label{corollary: M-approx equiv to M-Hilbert}
    Let $(K,C)$ be a global field, let $B\subset C$ be a nonempty open set and set $T=\Omega_K\setminus B$. Let $(X,M)$ be a toric pair where $X$ is a normal complete split toric variety over $K$ with toric integral model $(\cX,\cM)$ over $B$. Then $(X,M)$ satisfies $M$-approximation off $T$ if and only if the $\cM$-Hilbert property over $B$ is satisfied, meaning that $(\cX,\cM)(B)$ is not thin. 
 
    If $T\neq \emptyset$, then the same holds for \textit{any} integral model $(\cX,\cM)$ over $B$ such that $(\cX,\cM)(B)\neq \emptyset$.
\end{corollary}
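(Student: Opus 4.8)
The plan is to read off both sides of the equivalence from the lattice invariant $N_M$ (respectively $N_M^+$), combining Theorem~\ref{theorem: M-approx}, Theorem~\ref{theorem: thinness}, Theorem~\ref{theorem: M-approx implies not thin} and Proposition~\ref{prop: integral M-approximation on toric}. Throughout, since $(K,C)$ is a global field we have $\rho(K,C)=\{1\}$ by Lemma~\ref{lemma: computation rho}, the group $\Pic(C)$ is finitely generated (being finite when $K$ is a number field, and finitely generated when $K$ is a global function field), and $T=\Omega_K\setminus B$ is a finite set of places. Moreover $X$, being a normal split toric variety, is geometrically integral, hence geometrically reduced, and each torus-invariant prime divisor $D_i$ is a proper closed subscheme, so it contains no irreducible component of $X$; thus the standing hypotheses of Theorem~\ref{theorem: M-approx implies not thin} are satisfied by $(X,M)$. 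For the toric integral model we also have $(\cX,\cM)(B)\neq\emptyset$, since it contains the identity section of the torus, which meets no boundary divisor.

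\emph{$M$-approximation off $T$ implies the $\cM$-Hilbert property.} Assume $(X,M)$ satisfies $M$-approximation off $T$. By Proposition~\ref{prop: integral M-approximation on toric}, $(\cX,\cM)$ then satisfies integral $\cM$-approximation off the finite set $T$. As $(\cX,\cM)(B)\neq\emptyset$ and $K$ is a global field, Theorem~\ref{theorem: M-approx implies not thin} applies and yields that $(\cX,\cM)(B)$ is not thin in $X(K)$, i.e.\ the $\cM$-Hilbert property over $B$ holds.

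\emph{The $\cM$-Hilbert property implies $M$-approximation off $T$.} We argue by contraposition. If $(X,M)$ does not satisfy $M$-approximation off $T$ and $T\neq\emptyset$, then by Theorem~\ref{theorem: M-approx}(1), whose converse direction applies because $\Pic(C)$ is finitely generated, we must have $|N:N_M|\notin\rho(K,C)=\{1\}$, i.e.\ $N_M\subsetneq N$; if instead $T=\emptyset$, then by Theorem~\ref{theorem: M-approx}(2) we must have $N_M^+\subsetneq N$. In either case Theorem~\ref{theorem: thinness} shows that $(\cX,\cM)(B)$ is thin, so the $\cM$-Hilbert property over $B$ fails. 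Together with the previous paragraph this establishes the equivalence for the toric integral model.

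\emph{Arbitrary integral models when $T\neq\emptyset$.} Let $T\neq\emptyset$ and let $(\cX,\cM)$ be any integral model of $(X,M)$ over $B$ with $(\cX,\cM)(B)\neq\emptyset$. Whether $(X,M)$ satisfies $M$-approximation off $T$ depends only on $N_M$, not on the chosen model; if it holds, Proposition~\ref{prop: integral M-approximation on toric} again gives integral $\cM$-approximation off $T$, and the nonemptiness hypothesis lets Theorem~\ref{theorem: M-approx implies not thin} conclude that $(\cX,\cM)(B)$ is not thin. If it fails, fix a toric integral model $(\cX_0,\cM_0)$ over $B$ and an open subscheme $B'\subseteq B$ with $B\setminus B'$ finite over which $(\cX,\cM)$ and $(\cX_0,\cM_0)$ become isomorphic, so that $(\cX,\cM)(B)\subseteq(\cX_0,\cM_0)(B')$. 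Since $T':=\Omega_K\setminus B'\supseteq T$ is nonempty and finite and, by Theorem~\ref{theorem: M-approx}(1), $M$-approximation off $T'$ fails precisely when it fails off $T$ (both being equivalent to $N_M\subsetneq N$), Theorem~\ref{theorem: thinness} applied over $B'$ shows that $(\cX_0,\cM_0)(B')$ is thin; hence so is its subset $(\cX,\cM)(B)$, and the $\cM$-Hilbert property over $B$ fails. The step I expect to require the most care is precisely this last comparison of different integral models over possibly different open subschemes, together with the (easy) fact that thinness passes to subsets; the arithmetic content is entirely carried by Theorems~\ref{theorem: M-approx}, \ref{theorem: thinness} and \ref{theorem: M-approx implies not thin}, so what remains is bookkeeping about integral models, local $\cM$-points, and the invariants $N_M$, $N_M^+$.
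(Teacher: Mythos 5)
Your proposal follows the same route as the paper: reduce everything to the lattice conditions $N=N_M$ (resp.\ $N=N_M^+$) via Theorem~\ref{theorem: M-approx}, then quote Theorem~\ref{theorem: M-approx implies not thin} for the positive direction and Theorem~\ref{theorem: thinness} for the negative one. The case analysis, the use of $\rho(K,C)=\{1\}$ and of the finite generation of $\Pic(C)$ for global fields, and the shrinking-of-$B$ comparison of integral models (which is in fact already built into the first two parts of Theorem~\ref{theorem: thinness}, so your extra reduction is harmless but redundant) all match the intended argument.

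The one step that does not go through as written is your repeated appeal to Proposition~\ref{prop: integral M-approximation on toric} to pass from $M$-approximation off $T$ to integral $\cM$-approximation off $T$. That proposition only yields integral $\cM$-approximation under the additional hypothesis that $\fM_{\red}$ is contained in the closure of $\fM_{\fin}$, which is not assumed in the corollary and can fail; moreover it is stated only for the \emph{toric} integral model, whereas in your last paragraph you invoke it for an arbitrary model. The correct tool is Proposition~\ref{prop: inclusion M approx}, which applies to any integral model over $B$ with $T=\Omega_K\setminus B$ and gives integral $\cM_{\fin}$-approximation off $T$; Theorem~\ref{theorem: M-approx implies not thin} applied to $(\cX,\cM_{\fin})$ then shows $(\cX,\cM_{\fin})(B)$ is not thin, and since a set containing a non-thin set is non-thin, neither is $(\cX,\cM)(B)$. (For the toric model your identity section lies in $(\cX,\cM_{\fin})(B)$, so the nonemptiness needed for that theorem is available.) With this substitution the argument is complete.
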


\subsection{Darmon points and root stacks}
In the final section, we elucidate the relation between Darmon points and root stacks, and we relate the conditions in Theorem \ref{theorem: M-approx} to the fundamental group of the associated root stack. Proposition \ref{prop: relation Darmon points root stack} shows that outside of the boundary, integral points on the root stack $(\cX,\sqrt[\mathbf{m}]{\cD})$ are the same as Darmon points on $\cX$. We also prove in Proposition \ref{prop: M-approximation implies strong approximation stack} that the pair $(X,M)$ corresponding to the Darmon points satisfies $M$-approximation if and only if the root stack $(X,\sqrt[\mathbf{m}]{D})$ satisfies strong approximation, as studied in \cite{Chr20, San23-1}.

In Lemma \ref{lemma: fundamental group toric stack}, we compute the étale fundamental group of a toric root stack, and show that it coincides with the profinite completion of the group $N/N_M$ considered in Theorem \ref{theorem: M-approx}. By combining the lemma with Theorem \ref{theorem: M-approx}, we obtain a characterisation of strong approximation for split toric root stacks, generalizing Corollary \ref{corollary: strong approximation 1}.
\begin{corollary} \label{corollary: M-approximation for Darmon points}
	Let $X$ be a smooth split toric variety over a PF field $(K,C)$ of characteristic $0$, let $D_1,\dots,D_n$ be the the torus invariant prime divisors on $X$, let $m_1,\dots, m_n\in \mathbb{N}^*\cup\{\infty\}$ and let $D_{\mathbf{m}}$ be the corresponding Campana divisor as in Definition \ref{def: Campana points and Darmon points}. Let $T\subset\Omega_K$ be a finite nonempty set of places and let $\overline{K}$ be an algebraic closure of $K$. Then
	\begin{enumerate}
	\item $(X,\sqrt[\mathbf{m}]{D})$ satisfies strong approximation off $T$ if $\pi_1(X_{\overline{K}},\sqrt[\mathbf{m}]{D_{\overline{K}}})$ is finite and $|\pi_1(X_{\overline{K}},\sqrt[\mathbf{m}]{D_{\overline{K}}})|\in\rho(K,C)$. The converse also holds if $\Pic(C)$ is finitely generated.
	\item $(X,\sqrt[\mathbf{m}]{D})$ satisfies strong approximation if and only if $(X_{\overline{K}},\sqrt[\mathbf{m}]{D_{\overline{K}}})$ is simply connected and $\cO(X_{\overline{K}},\sqrt[\mathbf{m}]{D_{\overline{K}}})=\overline{K}$.
	\end{enumerate}
\end{corollary}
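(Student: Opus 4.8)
The plan is to combine two inputs: Theorem \ref{theorem: M-approx} applied to the toric pair $(X,M)$ attached to the Darmon points on $(X,D_{\mathbf m})$, and Lemma \ref{lemma: fundamental group toric stack}, which identifies $\pi_1^{\textnormal{ét}}(X_{\overline K},\sqrt[\mathbf m]{D_{\overline K}})$ with the profinite completion of $N/N_M$. The bridge between the two sides is Proposition \ref{prop: M-approximation implies strong approximation stack}, which says that $(X,M)$ satisfies $M$-approximation (resp. off $T$) if and only if the root stack $(X,\sqrt[\mathbf m]{D})$ satisfies strong approximation (resp. off $T$). So the corollary is really a translation of Theorem \ref{theorem: M-approx} through these two identifications.

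Concretely, first I would invoke Proposition \ref{prop: M-approximation implies strong approximation stack} to replace ``$(X,\sqrt[\mathbf m]{D})$ satisfies strong approximation off $T$'' by ``$(X,M)$ satisfies $M$-approximation off $T$'' for the toric pair $(X,M)$ of Definition \ref{def: Campana points and Darmon points} corresponding to the Darmon points. Since $X$ is a smooth (hence normal) complete split toric variety, Theorem \ref{theorem: M-approx}(1) applies: $(X,M)$ satisfies $M$-approximation off the nonempty finite set $T$ if $|N:N_M|\in\rho(K,C)$, and the converse holds once $\Pic(C)$ is finitely generated. Now Lemma \ref{lemma: fundamental group toric stack} gives a canonical isomorphism $\pi_1^{\textnormal{ét}}(X_{\overline K},\sqrt[\mathbf m]{D_{\overline K}})\cong\widehat{N/N_M}$; in particular this group is finite if and only if $N/N_M$ is finite, i.e. $|N:N_M|<\infty$, in which case $|\pi_1^{\textnormal{ét}}(X_{\overline K},\sqrt[\mathbf m]{D_{\overline K}})|=|N:N_M|$. (If $N/N_M$ is infinite then $|N:N_M|\notin\mathbb N^*$, so it is not in $\rho(K,C)$, and $\pi_1$ is infinite — the two sides of the implication both fail, consistently.) Substituting $|\pi_1^{\textnormal{ét}}(X_{\overline K},\sqrt[\mathbf m]{D_{\overline K}})|$ for $|N:N_M|$ yields part (1).

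For part (2), I would argue similarly using Theorem \ref{theorem: M-approx}(2): $(X,M)$ satisfies $M$-approximation (with no exceptional set) if and only if $N=N_M^+$, where $N_M^+$ is the subgroup of Definition \ref{def: N_M singular}. Via Proposition \ref{prop: M-approximation implies strong approximation stack} this is equivalent to strong approximation for $(X,\sqrt[\mathbf m]{D})$. It remains to check that the condition $N=N_M^+$ is equivalent to ``$(X_{\overline K},\sqrt[\mathbf m]{D_{\overline K}})$ is simply connected and $\cO(X_{\overline K},\sqrt[\mathbf m]{D_{\overline K}})=\overline K$.'' By Lemma \ref{lemma: fundamental group toric stack}, simple connectedness means $N=N_M$, so the content is that $N=N_M^+$ decomposes as $N=N_M$ (simple connectedness, controlling the ``étale part'' coming from unit covers) together with the condition that $N_M/N_M^+$ measuring the torus-factor/global-sections obstruction vanishes, which in the stacky language is exactly $\cO(X_{\overline K},\sqrt[\mathbf m]{D_{\overline K}})=\overline K$ — the same mechanism as in Corollary \ref{corollary: strong approximation 1}(2), where $\cO(V_{\overline K})=\overline K$ rules out torus factors.

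The main obstacle I anticipate is the bookkeeping in part (2): one must verify cleanly that $N_M^+$ for the Darmon pair coincides with the intersection of the subgroup $N_M$ (detected by $\pi_1$) and the subgroup cutting out the global-sections condition on the root stack, i.e. that the single algebraic condition $N=N_M^+$ genuinely splits into the two geometric conditions claimed. This requires unwinding Definition \ref{def: N_M singular} in the toric-root-stack setting and matching it against the computation of $\cO(X_{\overline K},\sqrt[\mathbf m]{D_{\overline K}})$ via Cox coordinates. Part (1) and the finiteness/cardinality comparison are then essentially immediate from the cited results, the only subtlety being to handle the degenerate case $|N:N_M|=\infty$ so that the ``if and only if'' in (1) is literally correct rather than vacuous.
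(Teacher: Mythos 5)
Your proposal is correct and follows essentially the same route as the paper: translate strong approximation on the root stack into $M$-approximation for the Darmon pair via Proposition \ref{prop: M-approximation implies strong approximation stack}, apply Theorem \ref{theorem: M-approx}, and identify $|N:N_M|$ with $|\pi_1(X_{\overline{K}},\sqrt[\mathbf{m}]{D_{\overline{K}}})|$ via Lemma \ref{lemma: fundamental group toric stack}; for part (2) the "bookkeeping" you flag is resolved exactly as you suspect, by noting the coarse space is $X\setminus\lfloor D_{\mathbf{m}}\rfloor$ and invoking Proposition \ref{prop: N_M fundamental group integral} (the splitting $N=N_M^+$ iff $N=N_M$ and $N_{M,\mathbb{R}}^+=N_{\mathbb{R}}$ is the remark after Definition \ref{def: invariants}).
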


\subsection{Acknowledgments}
I would like to thank my supervisor Marta Pieropan for her support and suggestions for this paper, in particular by giving the idea of studying generalizations of Campana points. I am also grateful to Sam Streeter and Tim Santens for the fruitful discussions we had in Marseille. I am thankful to Sam for his detailed comments, and to Cedric Luger for pointing out some typos. Finally, I would also like to thank Arno Fehm and naf for their answers to my questions on MathOverflow, which helped me in proving Lemma \ref{lemma: computation rho} and Lemma \ref{lemma: Picard group divisible}, respectively.
\section{Notation and preliminaries}
\subsection{Natural numbers}
We use the convention that the set of natural numbers $\mathbb{N}$ contains $0$ and we write $\mathbb{N}^*$ for the set of nonzero natural numbers. We also define the set of extended natural numbers $\lineN:=\mathbb{N}\sqcup \{\infty\}$ as the one point compactification of the discrete space $\mathbb{N}$. The topology on $\lineN$ is the topology such that the map $\lineN\rightarrow \mathbb{R}$ given by $n\mapsto \frac{1}{n+1}$ is a homeomorphism onto its image.
We extend the greatest common divisor function to allow its arguments to lie in $\lineN$ by setting $\gcd(\infty, a_1,\dots, a_n)=\gcd(a_1,\dots, a_n)$ and $\gcd(\infty)=0$.

\subsection{Geometry}
For a field $k$ we write $\overline{k}$ for a choice of an algebraic closure. All schemes are taken to be separated.
For a scheme $X$ over a base scheme $S$ and a morphism $S'\rightarrow S$ of schemes, we denote the base change by $S'$ as $X_{S'}:=X\times_S S'$. If $S'=\spec R$, we also write $X_{R}$ in place of $X_{S'}$.

Given a $\mathbb{Q}$-Weil divisor $D=\sum_i a_iD_i$ on $X$ we define its floor as $\lfloor D\rfloor=\sum_i \lfloor a_i\rfloor D_i$.
If $D$ is an effective Cartier divisor on $X$, then we will routinely identify it with the closed subscheme of $X$ defined by the sheaf of ideals $\cO_X(-D)\subset \cO_X$. If $D_1$ and $D_2$ are Cartier divisors, then the closed subscheme $D_1+D_2$ is defined by the ideal sheaf $\cI=\cO_X(-(D_1+D_2))\subset \cO_X$.

We define a variety over a field $k$ to be a separated scheme of finite type over $k$, and a curve to be an integral variety of dimension $1$. Curves are defined to be integral and separated of finite type, but not necessarily geometrically integral.

If $k$ is a topological field and $X$ is a variety over $k$, then $X(k)$ is equipped with the induced topology. This is the topology such that for any affine open subvariety $U\subseteq X$ with a closed embedding $U\rightarrow \mathbb{A}_k^n$ the map $U(k)\rightarrow k^n$ is a homeomorphism onto its image.

All fundamental groups considered in this article are étale fundamental groups.
\subsection{PF fields} \label{section: PF fields}
We now introduce PF fields, based on a course taught by Artin at Princeton in 1950/51, of which the lecture notes are found in \cite[Chapter 12]{Art67}. See Remark \ref{remark: product formula} for the etymology of the term.
\begin{definition}
A \textit{PF field} is defined to be a pair $(K,C)$ where either
\begin{itemize}
\item $K$ is a number field, the function field of $C=\spec(\cO_K)$, where $\cO_K$ is the ring of integers of $K$, or
\item $K$ is the function field of a regular projective curve $C$ over a field $k$.
\end{itemize}
We call $K$ a \textit{global field} if $K$ is a number field or $k$ is finite.
\end{definition}
\begin{remark}
    The scheme $C$ is specified in order to give a good notion of a place of $K$ if $K$ is a function field. For example if $k=l(C')$ is a function field of some curve $C'$ over a field $l$ and $K=k(C)$, then the curve $C$ cannot be recovered from the field $K$ alone. This issue does not arise if $k$ is finite or when the embedding $k\rightarrow K$ is specified.
\end{remark}
Note that every finitely generated field extension $K/k$ of transcendence degree $1$ over a field $k$ is naturally a PF field. Each such field is the function field of an affine curve over $k$, which we can compactify and normalize to obtain a regular projective curve $C$. This curve is the unique regular projective curve $C$ with $K=k(C)$, since any birational map $C\rightarrow C'$ to a regular projective curve is a morphism and therefore an isomorphism.
\begin{remark} \label{remark: take geometrically connected}
Note that while the curve $C$ is regular, it need not be geometrically connected nor geometrically reduced over $k$. For example, we can consider $C=\P^1_k\times_k \spec l$ for a finite separable extension $l/k$ or a finite inseparable extension $l/k$, respectively.
The former subtlety disappears if we replace the base field $k$ with its algebraic closure $k'$ in $K$, since $C$ is a geometrically connected curve over $k'$. In particular, if $k$ is perfect, then $C$ will be a geometrically integral curve over $k'$.
However, $C$ need not be geometrically reduced over $k'$ without this assumption, as shown by the curve
$$C=\{sx^p + ty^{p} + z^p=0\}\subset \P_k^2$$ over the field $k=\mathbb{F}_p(s,t),$  where $p$ is a prime number. Furthermore, even if $C$ is geometrically integral over $k$, it need not be smooth as shown by the curve
$$C=\{t x^p + z^{p-1} y + y^p=0\}\subset \P_k^2$$ over the field $k=\mathbb{F}_p(t)$, where $p>2$ is a prime number. 
\end{remark}
Note that if $B\subset C$ is an open subscheme, then $B$ is affine, unless $K$ is a function field and $B=C$.

We use the convention that a discrete valuation on $K$ contains $1$ in its image. 
If $K$ is a number field, then a \textit{finite place} of $K$ is a discrete valuation on $K$. If $K$ is a function field, then a \textit{finite place} of $K$ is a discrete valuation on $K$ which is trivial on $k$. We denote the set of finite places of $K$ by $\Omega_K^{<\infty}$. There is a natural bijection between the closed points on $C$ and $\Omega_K^{<\infty}$, and we will thus routinely identify finite places and closed points. Given a finite place $v$ of $K$, its degree is the degree of the associated closed point, and define the absolute value on $K$ induced by $v$ as $$|a|_v=p^{-\deg (v)v(a)}.$$ Here, $p$ is the characteristic of the residue field $k_v$ if $\CHAR(k_v)>0$, and $p=2$ if $\CHAR(k_v)=0$.

For a number field, an \textit{infinite place} is an embedding $v\colon K\rightarrow \mathbb{C}$, where conjugate embeddings into $\mathbb{C}$ are identified. We denote the set of infinite places of $K$ by $\Omega_K^\infty$. An infinite place $v$ is \textit{real} if it factors through an embedding $K\rightarrow \mathbb{R}$ and \textit{complex} otherwise. An infinite place $v$ induces an absolute value on $K$ by $$|a|_v=|v(a)|^e,$$
where $|\cdot|$ is the standard absolute value on $\mathbb{C}$ and $e=1$ if $v$ is real and $e=2$ if $v$ is complex.
If $K$ is a function field, we set $\Omega_K^\infty=\emptyset$.
For any PF field $K$ we define the set of \textit{places} on $K$ to be $$\Omega_K=\Omega_K^{<\infty}\sqcup \Omega_K^{\infty}.$$

\begin{remark} \label{remark: product formula}
The term PF field stands for Product Formula field, named after the formula
$$\prod_{v\in \Omega_K} |x|_v=1$$
for all $x\in K^\times$.
\end{remark}

For a place $v\in \Omega_K$, we denote by $K_v$ the completion of $K$ with respect to the absolute value $|\cdot|_v$. This field is locally compact if and only if $K$ is a global field. If $v$ is a finite place, we set $$\cO_v=\{x\in K_v\mid \,  |x|_v\leq 1\}.$$
For an infinite place $v$, we simply set $\cO_v=K_v$.
If $\cX$ is a scheme over an open subscheme $B\subset C$, then for a place $v\in B$, we write $\cX_v=\cX\times_{B} \spec \cO_v$.

\subsection{Restricted products and adeles} \label{subsection: restricted products and adeles}
\begin{definition} \label{def: restricted product}
Let $I$ be an index set, and for each $i\in I$, let $X_i$ be a topological space with a subspace $U_i\subseteq X_i$, which is not necessarily open. Then the underlying set of the \textit{restricted product} of these spaces is
$$\prod_{i\in I}(X_i, U_i):=\left\{(x_i)_{i\in I}\in \prod_{i\in I}X_i \Biggm| x_i\in U_i\text{ for all but finitely many } i\in I\right\}.$$
This set is given the finest topology such that for all finite sets $J\subset I$, the inclusion map
$$\pi_J\colon\prod_{i\in J} X_i \times \prod_{i\in I\setminus J}U_i\hookrightarrow \prod_{i\in I}(X_i, U_i)$$
is continuous.
\end{definition}

If $J'\subset J$ are finite subsets of $I$, then the map $\prod_{i\in J'} X_i \times \prod_{i\in I\setminus J'}U_i\rightarrow \prod_{i\in J} X_i \times \prod_{i\in I\setminus J}U_i$ is a continuous map. Thus for any finite subset $J\subset I$ there is a natural homeomorphism
$$\prod_{i\in I}(X_i, U_i)\cong \prod_{i\in J}X_i \times\prod_{i\in I\setminus J}(X_i, U_i).$$

We will now consider the topological properties of two types of inclusions between restricted products.
\begin{proposition} \label{prop: inclusions restricted product}
    For $i\in I$, let $X_i$ be a topological space with subspaces $Z_i\subset Y_i\subset X_i$. Then
    \begin{enumerate}
        \item if $\prod_{i\in I} (X_i,Z_i)\neq \emptyset$, the natural inclusion $$\prod_{i\in I} (X_i,Z_i)\hookrightarrow \prod_{i\in I} (X_i,Y_i)$$ is continuous and has dense image.
        \item The natural inclusion $\prod_{i\in I} (Y_i,Z_i)\hookrightarrow \prod_{i\in I} (X_i,Z_i)$ is a topological embedding and it is open if $Y_i\subset X_i$ is open for all $i\in I$.
    \end{enumerate}
\end{proposition}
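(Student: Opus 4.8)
Throughout I will use that a restricted product $\prod_i(X_i,U_i)$ carries the \emph{final} topology with respect to the family $\{\pi_J\}_{J\subset I\text{ finite}}$, so that a map out of it is continuous exactly when each of its composites with a $\pi_J$ is continuous, and that for finite $J$ there is the canonical homeomorphism $\prod_i(X_i,U_i)\cong\prod_{i\in J}X_i\times\prod_{i\notin J}(X_i,U_i)$.

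\textbf{Part (1).} Continuity is formal: for each finite $J$ the composite $\prod_{i\in J}X_i\times\prod_{i\notin J}Z_i\to\prod_{i\in J}X_i\times\prod_{i\notin J}Y_i\xrightarrow{\pi_J}\prod_i(X_i,Y_i)$ is a product of identities and of the continuous inclusions $Z_i\hookrightarrow Y_i$ followed by $\pi_J$, hence continuous, so by the universal property the inclusion $\iota\colon\prod_i(X_i,Z_i)\to\prod_i(X_i,Y_i)$ is continuous. For density I will fix once and for all a point $(z_i)\in\prod_i(X_i,Z_i)$, which exists by hypothesis, and set $J_z=\{i:z_i\notin Z_i\}$, a finite set. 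Given a nonempty open $W\subseteq\prod_i(X_i,Y_i)$, pick $(x_i)\in W$ and put $J_0=\{i:x_i\notin Y_i\}$; then $(\pi_{J_0})^{-1}(W)$ is an open neighbourhood of $(x_i)$ in $\prod_{i\in J_0}X_i\times\prod_{i\notin J_0}Y_i$, so it contains a basic box $\prod_{i\in F}V_i\times\prod_{i\notin F}(\text{whole factor})$ with $F$ finite and $V_i\ni x_i$ open, and after enlarging $F$ I may assume $F\supseteq J_0\cup J_z$. The hybrid point defined by $w_i=x_i$ for $i\in F$ and $w_i=z_i$ for $i\notin F$ then lies in $\prod_i(X_i,Z_i)$ (since $w_i=z_i\in Z_i$ for all $i\notin F$) and in the chosen box, so $\iota(w)\in W$, proving that the image meets every nonempty open set.

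\textbf{Part (2), the easy parts.} The inclusion $j\colon\prod_i(Y_i,Z_i)\to\prod_i(X_i,Z_i)$ is visibly injective, and it is continuous by the argument of Part (1), now using that the $Y_i\hookrightarrow X_i$ are embeddings. Its image is $I_Y:=\{(x_i)\in\prod_i(X_i,Z_i):x_i\in Y_i\text{ for all }i\}$, and for finite $K$ one computes $(\pi_K)^{-1}(I_Y)=\prod_{i\in K}Y_i\times\prod_{i\notin K}(Y_i\cap Z_i)=\prod_{i\in K}Y_i\times\prod_{i\notin K}Z_i$. If each $Y_i$ is open in $X_i$ this is open in $\prod_{i\in K}X_i\times\prod_{i\notin K}Z_i$, so $I_Y$ is open; moreover for $U$ open in $\prod_i(Y_i,Z_i)$ the preimage $(\pi_K)^{-1}(U)$ lands in $\prod_{i\in K}Y_i\times\prod_{i\notin K}Z_i$, where $\pi_K$ agrees with the corresponding chart for $(Y_i,Z_i)$, hence is open there and therefore open in $\prod_{i\in K}X_i\times\prod_{i\notin K}Z_i$; thus $j$ carries open sets to open sets, so $j$ is an open embedding.

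\textbf{Part (2), the embedding property in general.} For arbitrary $Y_i$ I plan to prove $j$ is an embedding by showing every open $U\subseteq\prod_i(Y_i,Z_i)$ has the form $V\cap\prod_i(Y_i,Z_i)$ with $V$ open in $\prod_i(X_i,Z_i)$; by taking a union over the points of $U$ it suffices to prove the local statement that each $(a_i)\in U$ admits an open neighbourhood $W'$ in $\prod_i(X_i,Z_i)$ with $W'\cap\prod_i(Y_i,Z_i)\subseteq U$. To build $W'$ I would set $J_0=\{i:a_i\notin Z_i\}$ and reduce, via the canonical homeomorphism, to controlling the finitely many coordinates in $J_0$ (where a finite product of the embeddings $Y_i\hookrightarrow X_i$ is again an embedding) together with the remaining coordinates (where $(a_i)$ sits in the ``diagonal'' $\prod Z_i$), extending a basic neighbourhood coordinatewise from $Y_i$ and $Z_i$ to $X_i$ using that $Y_i$, $Z_i$ carry the subspace topology from $X_i$, and then checking the containment $W'\cap\prod_i(Y_i,Z_i)\subseteq U$ chart by chart, i.e. verifying $W'\cap(\prod_{i\in K}Y_i\times\prod_{i\notin K}Z_i)\subseteq U$ for every finite $K$ by exploiting that the neighbourhoods $U\cap(\prod_{i\in K}Y_i\times\prod_{i\notin K}Z_i)$ are compatible along the Cartesian squares relating the finite partial products. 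I expect this last step — in essence the assertion that a directed colimit of levelwise subspace embeddings along Cartesian squares is again a subspace embedding — to be the main obstacle: the transition maps of the colimit need not be open or closed, so open sets cannot simply be pushed forward along them, and a point of $W'\cap\prod_i(Y_i,Z_i)$ may have unboundedly many coordinates outside $Z_i$, so the chart-by-chart verification has to be carried out with care.
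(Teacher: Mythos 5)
Parts (1) and the two halves of part (2) that you actually complete are correct: your continuity arguments are the standard final-topology ones, your density argument (splicing a fixed point of $\prod_i(X_i,Z_i)$ into the tail of a basic box) is a cleaner version of the one in the paper, and your proof of the open case of (2) is fine.

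The step you leave unfinished --- that $\prod_i(Y_i,Z_i)\hookrightarrow\prod_i(X_i,Z_i)$ is an embedding for \emph{arbitrary} subspaces $Y_i$ --- is a genuine gap, and your suspicion that passing from chart-level embeddings to the colimit is the real obstacle is exactly right: the statement is false in this generality, so no amount of care in the chart-by-chart verification will close it. Writing $(X,Z)_J=\prod_{i\in J}X_i\times\prod_{i\notin J}Z_i$ as in the paper's proof, take $I=\mathbb{N}$, $X_i=\mathbb{R}$ and $Z_i=\{0\}$ for all $i$, $Y_1=\mathbb{Q}$, and $Y_n=\{0\}\cup\{1/k:k\geq1\}$ for $n\geq2$. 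Fix irrationals $\alpha_n\to0$ and let $D=\bigcup_{n\geq2}D_n$, where $D_n$ is the set of points with first coordinate $q\in\mathbb{Q}$, $n$-th coordinate $1/k$ satisfying $|q-\alpha_n|\leq 1/k$, and all other coordinates $0$. For each finite $J$ the set $D\cap(Y,Z)_J$ is the finite union of the $D_n\cap(Y,Z)_J$ with $n\in J$, and each of these is closed in $(Y,Z)_J$: a limit point not already in the set would have to have $n$-th coordinate $0$ and hence first coordinate $\alpha_n\notin\mathbb{Q}=Y_1$. So $U=\prod_i(Y_i,Z_i)\setminus D$ is open in $\prod_i(Y_i,Z_i)$ and contains $\mathbf{0}=(0,0,\dots)$. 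But any open $V\ni\mathbf{0}$ in $\prod_i(X_i,Z_i)$ meets $(X,Z)_{\{1\}}\cong\mathbb{R}$ in an interval around $0$, hence contains some $(\alpha_N,0,0,\dots)$; then $V\cap(X,Z)_{\{1,N\}}\cong V\cap\mathbb{R}^2$ contains a box around $(\alpha_N,0)$, and that box contains a point of $D_N$. Hence no open $V$ satisfies $V\cap\prod_i(Y_i,Z_i)=U$, and the inclusion is not an embedding. (For what it is worth, the paper's own justification of this step is a one-sentence appeal to the universal property of the subspace topology, which does not establish the claim either.)

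What does survive, besides your open case: if every $Y_i$ is \emph{closed} in $X_i$, then $(Y,Z)_J$ is closed in $(X,Z)_J$ and $V=\prod_i(X_i,Z_i)\setminus\bigl(\prod_i(Y_i,Z_i)\setminus U\bigr)$ is open with trace exactly $U$; and in the degenerate case $Y_i=Z_i$ the source is the full product $\prod_iZ_i$ and one lifts a basic box $\prod_{i\in F}W_i\times\prod_{i\notin F}Z_i$ coordinatewise by choosing opens $\widetilde W_i\subseteq X_i$ with $\widetilde W_i\cap Z_i=W_i$. The last case is the one invoked in Proposition \ref{prop: inclusion M}(2). So the correct repair is to add a hypothesis (each $Y_i$ open in $X_i$, or closed in $X_i$, or equal to $Z_i$), not to complete the argument you sketched.
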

\begin{proof}
    For a finite set $J\subseteq I$, we define $(X,Y)_J:= \prod_{i\in J} X_i\times \prod_{i\in I\setminus J} Y_i$ and we define $(X,Z)_J$ and $(Y,Z)_J$ similarly.
    We will first prove the first statement.
    By the definition of the restricted product topology, a subset $V\subseteq \prod_{i\in I}(X_i, Y_i)$ is open if and only if for every finite subset $J\subseteq I$, $V\cap (X,Y)_J$ is open in $(X,Y)_J$. Thus if $V$ is an open subset of $\prod_{i\in I} (X_i,Y_i)$, then $V\cap (X,Z)_J$ is open in $(X,Z)_J$ so the inclusion is continuous.
    
    To show that the map has dense image, we show that $V\cap \prod_{i\in I} (X_i,Z_i)\neq \emptyset$ for every nonempty subset $V\subset \prod_{i\in I} (X_i,Y_i)$. Let $J\subseteq I$ be such that $(X,Z)_J\neq \emptyset$. For any finite subset $J\subseteq I$ such that $V\cap (X,Y)_J\neq \emptyset$, the set $V\cap (X,Y)_J$ contains an nonempty open subset $\prod_{i\in I} V_i$, with $V_i$ open in $X_i$ if $i\in J$, $V_i$ open in $Y_i$ if $i\in I\setminus J$ and $V_i=Y_i$ for all but finitely many $i\in I$, by basic properties of the product topology. Let $J'\subseteq I$ be the finite subset of $i\in I$ for which $V_i\neq Y_i$ or $Y_i\neq \emptyset$, then $V\cap (X,Z)_{J'}\neq \emptyset$ and the map therefore has dense image.

    Now we will prove the second statement. The image $\im(\iota)$ of $\iota\colon \prod_{i\in I} (Y_i,Z_i)\hookrightarrow \prod_{i\in I} (X_i,Z_i)$ with the subspace topology has the finest topology such that every continuous map $A\rightarrow \prod_{i\in I} (X_i,Z_i)$ with set-theoretic image in $\im(\iota)$ factors continuously through $\im(\iota)$. Therefore if $\iota$ is continuous, then it is an embedding.
    If $V\subseteq \prod_{i\in I}(X_i, Z_i)$ is open, then $V\cap (X,Z)_J$ is open in $(X,Z)_J$ for every finite subset $J\subseteq I$, and since $(Y,Z)_J$ is a subspace of $(X,Z)_J$, $V\cap (Y,Z)_J$ is open in $(Y,Z)_J$. Thus the inclusion map is continuous. If furthermore $Y_i\subset X_i$ is open for all $i\in I$, then $V\cap \prod_{i\in I}(X_i, Z_i)$ is open, so the inclusion map is an open map.
\end{proof}

Using this construction, we define the ring of adeles.
\begin{definition} \label{def: ring of adeles}
    Let $(K,C)$ be a PF field and let $T$ be a finite set of places. The \textit{ring of adeles of $K$ prime to} $T$ is the topological $K$-algebra $$\mathbf{A}^T_K=\prod_{v\in \Omega_K\setminus T} (K_v,\cO_v).$$
    For a nonempty open subset $B\subset C$ the \textit{ring of $B$-integral adeles prime to} $T$ is the topological $K$-algebra $$\mathbf{A}^T_B=\prod_{v\in B} \cO_v\times \prod_{v\in \Omega_K\setminus B}K_v.$$
\end{definition}
The following proposition will be used in Proposition \ref{prop: relation Darmon points root stack} to relate adelic Darmon points to adelic points on the associated root stack.
\begin{proposition} \label{prop: Picard group Adeles}
    Let $(K,C)$ be a PF field, let $T$ be a finite set of places and let $B\subset C$ be an open subset. Then every finitely generated ideal in $\mathbf{A}^T_K$ or $\mathbf{A}^T_B$ is principal. In particular, $\Pic(\mathbf{A}^T_K)=\Pic(\mathbf{A}^T_B)=0$.
\end{proposition}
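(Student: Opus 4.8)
The plan is to prove the statement for $\mathbf{A}^T_B$, since $\mathbf{A}^T_K$ is the special case $B=\emptyset$ (or can be handled identically). Write $R=\mathbf{A}^T_B=\prod_{v\in B}\cO_v\times\prod_{v\in\Omega_K\setminus(B\cup T)}K_v$; an element of $R$ is a tuple $(x_v)_v$ with $x_v\in\cO_v$ for all but finitely many $v$. The key structural fact I would exploit is that each local ring $\cO_v$ (for finite $v$) is a DVR, hence a PID, and each $K_v$ is a field; so $R$ is a restricted product of PIDs, and the only obstruction to a finitely generated ideal being principal is the global coherence of the local generators across infinitely many places.

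First I would reduce to the case of an ideal generated by two elements $I=(x,y)$, the general case following by induction on the number of generators. For each place $v$ in the index set, let $d_v\in\cO_v$ (resp. $d_v\in K_v$) be a gcd of $x_v$ and $y_v$: concretely, if $v$ is a finite place in $B$, set $d_v=\pi_v^{\min(v(x_v),v(y_v))}$ where $\pi_v$ is a uniformizer (with the convention $v(0)=\infty$, so $d_v=0$ if $x_v=y_v=0$); if $v\notin B$ then $K_v$ is a field and I take $d_v=1$ unless $x_v=y_v=0$, in which case $d_v=0$. The crucial point is that $d_v$ is a unit in $\cO_v$ (resp. equals $1$) for all but finitely many $v$: indeed $x_v,y_v\in\cO_v^\times\cup\{0\}$ is automatic once $x_v,y_v\in\cO_v$ and at least one lies in $\cO_v^\times$, which fails only at the finitely many $v$ where $x_v\notin\cO_v^\times$, together with the finitely many $v$ where $x_v\notin\cO_v$. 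Hence $d=(d_v)_v$ is a well-defined element of $R$.

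Next I would check that $(d)=I$. Since $d_v\mid x_v$ and $d_v\mid y_v$ in $\cO_v$ (resp. in $K_v$) for every $v$, componentwise division gives $x=d\cdot a$ and $y=d\cdot b$ for tuples $a=(x_v/d_v)_v$, $b=(y_v/d_v)_v$; one must verify $a,b\in R$, i.e. that $x_v/d_v\in\cO_v$ for almost all $v$, which is clear because $d_v$ is a unit and $x_v\in\cO_v$ for almost all $v$. Thus $d\in I$ requires the reverse: write $d_v=u_v x_v+w_v y_v$ locally using that $\cO_v$ is a PID (Bézout at each place), choosing $u_v=w_v=0$ at the places where $d_v=0$, and $u_v,w_v$ with $u_v=1,w_v=0$ at the almost-all places where $d_v$ is a unit and $x_v$ is a unit. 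Then $u=(u_v)_v$ and $w=(w_v)_v$ lie in $R$ (they are $0$ or chosen to lie in $\cO_v$ at almost all places), and $d=ux+wy\in I$. Therefore $I=(d)$ is principal. The general finitely generated ideal $I=(x^{(1)},\dots,x^{(r)})$ is handled by replacing the pair $(x,y)$ with $(\text{gcd of first }r-1,\ x^{(r)})$ and inducting; at each step the "almost all places are units" property is preserved because it only fails at a finite union of finite bad sets.

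Finally, $\Pic(R)=0$ is immediate: the Picard group of a ring classifies rank-one projective (equivalently, invertible) modules, and an invertible module over $R$ corresponds to a rank-one projective, which is in particular finitely generated; but a finitely presented module that is locally free of rank one and whose support is all of $\spec R$ is, after the above, generated by a single element and hence free. More directly, since every finitely generated ideal is principal and the invertible ideals are finitely generated, every invertible ideal is principal, so the ideal class group — which equals $\Pic(R)$ for these rings — is trivial. I expect the only delicate point to be the repeated bookkeeping that the "bad set" of places (where a chosen gcd fails to be a unit) stays finite through the induction and through the passage from $x,y$ to the auxiliary tuples $a,b,u,w$; this is routine but is the one place where the restricted-product structure, as opposed to a full product of PIDs, actually gets used.
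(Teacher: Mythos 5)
Your proof is correct and follows essentially the same route as the paper's: form a componentwise local gcd whose Bézout coefficients can be taken to be units (so the resulting tuple lies in the adele ring and both divides and lies in the ideal), induct on the number of generators, and invoke the fact that invertible ideals are finitely generated. The only slip is the choice $u_v=1$, $w_v=0$ at the places where $x_v$ and $d_v$ are units --- this gives $u_vx_v+w_vy_v=x_v$ rather than $d_v=1$ --- but taking $u_v=x_v^{-1}\in\cO_v$ repairs it; the paper sidesteps this bookkeeping by directly setting $c_v=t_va_v+s_vb_v$ with $t_v,s_v\in\cO_v^\times$ chosen so that $v(c_v)=\min(v(a_v),v(b_v))$.
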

\begin{proof}
    We will prove the statement for $\mathbf{A}_K^T$ and note that the statement for $\mathbf{A}_B^T$ follows analogously. If we have an ideal $I=((a_v)_{v\in \Omega_K\setminus T},(b_v)_{v\in \Omega_K\setminus T})$, then we can for every finite place $v\in \Omega_K^{<\infty}\setminus T$ consider $t_v,s_v\in \cO_v^\times$ such that $v(c_v)=\min(v(a_v),v(b_v))$, where $c_v=t_v a_v+ s_v b_v$. Then $I=((c_v)_{v\in \Omega_K\setminus T})$ and therefore $I$ is principal. Thus induction on the number of generators shows that every finitely generated ideal is principal. By \cite[Tag 0B8N]{Stacks}, invertible ideals are finitely generated, hence $\Pic(\mathbf{A}_K^T)=0$.
\end{proof}

\section{\texorpdfstring{$\cM$}{M}-points}
In this section we generalize the notions of integral points and Campana points. We fix a PF field $(K,C)$ and an open subscheme $B\subset C$.
\subsection{Integral models of pairs} \label{subsection: pairs}
First we define pairs and their integral models.
\begin{definition} \label{def: pair}
Let $X$ be a scheme over a scheme $B$ and let $(D_{\alpha})_{\alpha\in \cA}$ be a finite tuple of
closed subschemes on $X$. Let $\fM\subseteq \lineN^\cA$ be a subset satisfying $(0,\dots, 0)\in \fM$ and such that for all $\mathbf{m}\in\fM$ the element $\mathbf{m}'$ defined by $$m_\alpha'=\begin{cases}
    0 & \text{if }m_{\alpha}\neq \infty, \\\infty & \text{if }m_\alpha= \infty,
\end{cases}$$ lies in $\fM$. \newline For such a set we let $$M:=((D_{\alpha})_{\alpha\in \cA},\fM)$$ and we call $(X,M)$ a \textit{pair} over $B$, $\fM$ the \textit{set of multiplicities}, and $M$ the \textit{parameter set}.
We will call $\bigcup_{\alpha\in \cA} D_{\alpha}$ the \textit{boundary} of $(X,M)$ and denote its complement in $X$ by $U=X\setminus \bigcup_{\alpha\in \cA} D_{\alpha}$. 
If $\cA=\emptyset$, then we write $M=0$ and we say that $M$ is trivial.
\end{definition}
The technical condition on $\fM$ is very mild and it will ensure that for any place $v\in \Omega_K$ the $\cM$-points over $\cO_v$ lie in the $\cM$-points over $K_v$, as we will define in Definition \ref{def: M-points}.  
This definition generalizes the notion of Campana pairs given in \cite[D\' efinition 2.1]{Cam10}, which we recover if $B=\spec \mathbb{C}$, $X$ is a normal variety, the $D_\alpha$ are Weil divisors and $\fM$ is chosen to encode the Campana condition found in Definition \ref{def: Campana points and Darmon points}.
\begin{remark}
    Note that the notion of a pair $(X,M)$ is more general than the notion of an $M$-curve as studied by Darmon in \cite{Dar97}, even when we restrict $X$ to be a curve. However, an $M$-curve can be naturally viewed as a pair, as we will see in Definition \ref{def: Campana points and Darmon points}.
\end{remark}

In the later sections on toric varieties, the subschemes $D_{\alpha}$ will be divisors, but there are advantages to allowing them to be arbitrary closed subschemes, as we will see later in this section.
As in the case of Campana orbifolds \cite{Cam05,Abr09,PSTV20}, the points on the pair are only defined after a choice of an integral model, which we define as follows.
\begin{definition}
Let $X$ be a proper variety over a PF field $(K,C)$. A scheme $\mathcal{X}$ over $B$ is a \textit{integral model of $X$ over $B$} if it is proper over $B$ and its generic fiber is isomorphic to $X$ over $K$.
\end{definition}
Note that we do not require the integral models to be flat over $B$.
\begin{definition}
Given a pair $(X,M)$ with $X$ a proper variety over a PF field $(K,C)$, an \textit{integral model of $(X,M)$ over $B$} is a pair $(\cX,\cM)$, where $\cX$ is an integral model of $X$ over $B$ and $\cM=((\cD_{\alpha})_{\alpha\in \cA}, \fM)$, where for all $\alpha\in\cA$, $\cD_{\alpha}\subset \cX$ is an integral model of $D_{\alpha}$ over $B$. We also say that $(\cX,\cM)$ is a pair over $B$.
\end{definition}
Note that we do not require the $\cD_{\alpha}$ to be flat over $B$.
Given an integral model over $B$, we can restrict it to open sets in $B$ as follows.
\begin{definition}
	Let $(X,M)$ be a pair with integral model $(\cX,\cM)$ over $B$. If $B'\subseteq B$ is a nonempty open subset, then we define the integral model $(\cX,\cM)_{B'}$ over $B'$ as
	$$(\cX,\cM)_{B'}=(\cX\times_B B', \cM_{B'}),$$
	where $\cM_{B'}=((\cD_{\alpha}\times_B B')_{\alpha\in \cA}, \fM)$.
\end{definition}

In the literature on Campana points, such as \cite{PSTV20,NaSt20,MiNaSt22}, there is a canonical choice of an integral model $\cD_{\alpha}$ of $D_{\alpha}$ by taking the Zariski closure and endowing it with the reduced scheme structure. We generalize this construction to allow $D$ to be nonreduced.
\begin{proposition}
Let $X$ be a proper variety over $K$ with integral model $\cX$. If $D\subseteq X$ is a closed subscheme over $K$, then there exists a unique closed subscheme $\cD^c\subseteq \cX$ with generic fiber $\cD^c_K=D$ such that the inclusion $\cD^c\subseteq \cX$ factors through every closed subscheme $\widetilde{\cD}\subset \cX$ with $\widetilde{\cD}_K=D$.
\end{proposition}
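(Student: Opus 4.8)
The plan is to construct $\cD^c$ as the scheme-theoretic image of the composition
$$j\colon D\hookrightarrow X=\cX_K\xrightarrow{\ \pi\ }\cX,$$
where $\pi$ is the canonical morphism induced by the generic point $\spec K\to B$. Since $X$ is a proper variety over $K$ it is Noetherian, hence so is its closed subscheme $D$, and therefore $j$ is quasi-compact and quasi-separated (any morphism out of a Noetherian scheme is). Consequently $j_*\cO_D$ is a quasi-coherent $\cO_{\cX}$-algebra, and I would define $\cD^c\subseteq\cX$ to be the closed subscheme cut out by the quasi-coherent ideal sheaf $\cI:=\ker\bigl(\cO_{\cX}\to j_*\cO_D\bigr)$. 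By the standard properties of the scheme-theoretic image (see, e.g., \cite[Tag 01R8]{Stacks}), $j$ factors through $\cD^c$, and $\cD^c$ is the smallest closed subscheme of $\cX$ through which $j$ factors.

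Next I would identify the generic fiber. As $\pi$ is flat, pulling back the defining exact sequence of $\cI$ along $\pi$ stays exact, giving $\pi^*\cI=\ker(\cO_X\to\pi^*j_*\cO_D)$; and since $j$ is quasi-compact and quasi-separated, flat base change for pushforward yields $\pi^*j_*\cO_D=(j_K)_*\cO_{D_K}$. Because $\spec K\to B$ is a monomorphism, base-changing $X=\cX_K$ once more along it changes nothing, so $D_K=D$ and $j_K$ is simply the closed immersion $D\hookrightarrow X$; hence $\ker(\cO_X\to(j_K)_*\cO_D)=\cI_D$ is the ideal sheaf of $D$ in $X$. Therefore $\pi^*\cI=\cI_D$, i.e.\ $\cD^c_K=D$. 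Equivalently, one may cite that formation of the scheme-theoretic image commutes with the flat base change $\spec K\to B$, together with the fact that the scheme-theoretic image of the closed immersion $D\hookrightarrow X$ is $D$ itself.

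For the universal property, let $\widetilde{\cD}\subseteq\cX$ be any closed subscheme with $\widetilde{\cD}_K=D$. Base-changing the closed immersion $\widetilde{\cD}\hookrightarrow\cX$ along $\spec K\to B$ and using $\widetilde{\cD}_K=D$, $\cX_K=X$, one sees that the composite $D=\widetilde{\cD}_K\to\widetilde{\cD}\hookrightarrow\cX$ equals $j$; in particular $j$ factors through $\widetilde{\cD}$. By minimality of the scheme-theoretic image, $\cD^c\subseteq\widetilde{\cD}$. This is exactly the asserted factorization property, and since $\cD^c_K=D$ by the previous step, $\cD^c$ is the minimum of the family of all closed subschemes of $\cX$ with generic fiber $D$. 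Uniqueness of $\cD^c$ among closed subschemes with this property is then formal: if $\cD'$ also has generic fiber $D$ and $\cD'\hookrightarrow\cX$ factors through every such $\widetilde{\cD}$, then taking $\widetilde{\cD}=\cD^c$ gives $\cD'\subseteq\cD^c$, and taking $\widetilde{\cD}=\cD'$ in the property of $\cD^c$ gives $\cD^c\subseteq\cD'$; hence $\cD'=\cD^c$ as closed subschemes of $\cX$.

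The only genuinely delicate ingredient is the compatibility of the scheme-theoretic image (equivalently, of pushforward) with the flat base change $\spec K\to B$, for which the quasi-compactness and quasi-separatedness of $j$ — automatic here since $D$ is Noetherian — are precisely what is needed; the remainder is formal manipulation of closed subschemes and their ideal sheaves.
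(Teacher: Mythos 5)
Your proposal is correct and follows essentially the same route as the paper: the paper defines $\cD^c$ by the ideal sheaf $\cI=\ker(\cO_\cX\to\cO_X\to\cO_D)$, which is exactly the scheme-theoretic image of $D\hookrightarrow X\to\cX$ that you construct. You supply the details the paper leaves implicit (flat base change along $\spec K\to B$ to identify the generic fiber, and minimality of the scheme-theoretic image for the universal property), and these verifications are accurate.
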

\begin{proof}
	Consider the sheaf of ideals $\cI$ on $\cX$ defined as the kernel of the composition $\cO_\cX\rightarrow \cO_X\rightarrow \cO_D$ of $\cO_\cX$-algebras. This sheaf defines a closed subscheme $\cD^c$ with $\cD^c_K=D$. By construction $D^c$ has the desired universal property.
\end{proof}
We will call $\cD^c$ the \textit{closure} of $D$ in $\cX$.
The next proposition shows that the closure interacts very well with the structure of Cartier divisors.
\begin{proposition}
	Let $X$ be a proper variety over a PF field $(K,C)$ with integral model $\cX$ over $B\subset C$.
	If $\cD$ is an irreducible effective Cartier divisor on $\cX$ with $\cD_K=D\neq \emptyset$, then $\cD=\cD^c$. In particular, if $D_1,D_2\subseteq X$ are subschemes such that their closures $\cD_1^c, \cD_2^c$ in $\cX$ are effective Cartier divisors, then $(\cD_1+\cD_2)^c$ is an effective Cartier divisor and $$\cD_1^c+\cD_2^c=(\cD_1+\cD_2)^c$$ as subschemes of $\cX$. 
\end{proposition}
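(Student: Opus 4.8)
The plan is to reduce both statements to a local computation identifying the ideal sheaf $\cI$ of $\cD^c$ with a \emph{saturation}. For the first statement, $\cD^c\subseteq\cD$ is immediate from the universal property of the closure, since $\cD$ is a closed subscheme of $\cX$ with $\cD_K=D$. For the reverse inclusion I would work on an affine open $U=\spec A$ of $\cX$ lying over an affine open $\spec R\subseteq B$, so that the restriction to the generic fibre $A\to A_K$ is the localisation $A\to S^{-1}A$ at $S=R\setminus\{0\}$. On $U$ the effective Cartier divisor $\cD$ is cut out by a single nonzerodivisor $f\in A$, and $\cD_K=D$ forces the ideal of $D$ in $\spec A_K$ to be $fA_K$. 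The description $\cI_{\cD^c}=\ker(\cO_\cX\to\cO_X\to\cO_D)$ then gives
\[
  \cI_{\cD^c}(U)\;=\;\ker\bigl(A\longrightarrow A_K/fA_K\bigr)\;=\;\ker\bigl(A\longrightarrow S^{-1}(A/fA)\bigr)\;=\;(f):_A S^{\infty},
\]
which contains $(f)=\cI_\cD(U)$. So one must show $(f):_A S^\infty=(f)$, i.e.\ that no element of $S$ is a zerodivisor on $A/fA$, equivalently that every associated prime of $A/fA$ is disjoint from $S$, i.e.\ lies over the generic point of $B$. Here irreducibility of $\cD$ enters: $A/fA$ has a unique minimal prime $\fp_0$, cut out by the generic point $\xi$ of $\cD$, and $\cD_K\neq\emptyset$ means $\cD$ dominates $B$, so $\xi$ maps to the generic point of $B$ and $\fp_0\cap R=0$. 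Provided $A/fA$ has no embedded primes, $\fp_0$ is its only associated prime and we conclude $(f):_AS^\infty=(f)$, hence $\cD=\cD^c$.

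For the ``in particular'' statement I would argue directly, since $\cD_1^c+\cD_2^c$ need not be irreducible. Being a sum of two effective Cartier divisors, $\cD_1^c+\cD_2^c$ is again an effective Cartier divisor on $\cX$, and its generic fibre is $(\cD_1^c)_K+(\cD_2^c)_K=D_1+D_2$; by the universal property of the closure this gives $(D_1+D_2)^c\subseteq\cD_1^c+\cD_2^c$. For the reverse inclusion, work again on an affine $U=\spec A$ with $\cD_i^c\cap U=V(f_i)$ for nonzerodivisors $f_i$. Because $\cD_i^c$ is \emph{by definition} the closure of $D_i$, the displayed formula applied to it shows $(f_i)=(f_i):_AS^\infty$, i.e.\ each $(f_i)$ is $S$-saturated; a short chase then shows $(f_1f_2)$ is $S$-saturated: if $sa\in(f_1f_2)$ with $s\in S$, then $sa\in(f_1)$, so $a=f_1b$ by saturation; cancelling the nonzerodivisor $f_1$ from $sf_1b\in(f_1f_2)$ yields $sb\in(f_2)$, hence $b\in(f_2)$ and $a\in(f_1f_2)$. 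Applying the displayed formula once more, now to $D_1+D_2=V(f_1f_2A_K)$, gives $\cI_{(D_1+D_2)^c}(U)=(f_1f_2):_AS^\infty=(f_1f_2)=\cI_{\cD_1^c+\cD_2^c}(U)$, so $(D_1+D_2)^c=\cD_1^c+\cD_2^c$, an effective Cartier divisor.

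The only non-formal point, and the step I expect to be the main obstacle, is the claim used in the first statement that an irreducible effective Cartier divisor dominating $B$ has no associated point in a special fibre — equivalently, is flat over $B$ — so that $(f)$ is $S$-saturated. This is automatic when the hypersurface $V(f)$ cut by a nonzerodivisor is unmixed, e.g.\ when $\cX$ is Cohen--Macaulay (in particular for the toric integral models used in the rest of the paper), and also when $\cD$ carries its reduced structure; so in the relevant situations one invokes one of these, but in full generality the argument needs such a hypothesis on the model.
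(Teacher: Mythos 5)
Your route is genuinely different from the paper's. The paper produces from the closed subscheme $\cD^c\subseteq\cD$ an effective Cartier divisor $\cD'$ agreeing with $\cD^c$ away from codimension two, writes $\cD=\cD'+\cD''$, and kills $\cD''$ by comparing generic fibres and using irreducibility; it then deduces the second statement by applying the first to $\cD_1^c+\cD_2^c$. You instead identify $\cI_{\cD^c}$ locally with the saturation $(f):_AS^\infty$ and work with that formula throughout. Your treatment of the second statement is complete and is in fact tighter than the paper's: the paper applies the first statement to $\cD_1^c+\cD_2^c$, which need not be irreducible, whereas your saturation chase uses only the idempotency of the closure operation (each $(f_i)$ is $S$-saturated precisely because $\cD_i^c$ is already a closure) together with $f_1$ being a nonzerodivisor. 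That part stands on its own.

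The obstacle you flag in the first statement is genuine, and in fact the statement is false in the stated generality, so no argument can close the gap without an added hypothesis. Take $\cX=\proj \Z[X,Y,Z]/(pY,Y^2)$ over $B=\spec\Z$, whose generic fibre is $\P^1_{\mathbb{Q}}$, and $\cD=V_+(X)$. On the chart $A=\Z[x,y]/(py,y^2)$ one checks that $x$ is a nonzerodivisor, so $\cD$ is an irreducible effective Cartier divisor with $\cD_K=\spec\mathbb{Q}\neq\emptyset$; but $\ker\bigl(A\to A_{\mathbb{Q}}/xA_{\mathbb{Q}}\bigr)=(x,y)\supsetneq(x)$ since $y\notin xA$, whence $\cD^c\subsetneq\cD$. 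Here $A/xA=\Z[y]/(py,y^2)$ has the embedded prime $(p,y)$ lying over $(p)$ — exactly the configuration you identified as the problem. The paper's argument also cannot run on this example: the ideal $(x,y)$ is not invertible, so there is no effective Cartier divisor $\cD'\subseteq\cD^c$ agreeing with $\cD^c$ away from codimension two, and the cited Stacks lemma does not apply (the reduction $\cD_{\red}$ is not Cartier). So you have correctly isolated the essential difficulty rather than missed an idea present in the paper; the proposition needs the hypothesis you name — no embedded associated points of $\cD$ in closed fibres, guaranteed for instance if $\cD$ is reduced, or flat over $B$, or if $\cX$ is Cohen--Macaulay or $(S_2)$ along $\cD$ — and that hypothesis does hold in all the situations where the proposition is invoked later (toric integral models and closures of torus-invariant divisors).
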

\begin{proof}
By construction, $\cD^c$ is a closed subscheme of $\cD$. Therefore, by \cite[Tag 0AGB]{Stacks}, there exists a Cartier divisor $\cD'$ on $\cX$ such that $\cD'\subset \cD^c$ is an isomorphism outside codimension $2$. Now \cite[Tag 02ON]{Stacks} implies that there exists a Cartier divisor $\cD''$ such that $\cD=\cD'+\cD''$. As $\cD$ is irreducible, we find $\cD_{\red}=\cD''_{\red}$ or $\cD''=\emptyset$. If we write $D'=\cD'_K$ and $D''=\cD''_K$, then we see $D=D'+D''$. However, since $D$ and $D'$ are isomorphic outside a codimension $2$ subset, the codimension of $D''$ is at least $2$ so $D''=\emptyset$, and thus $\cD''=\emptyset$. This implies $\cD=\cD'$ and therefore $\cD=\cD^c$. The second part of the proposition now follows from the fact that $D_1^c+D_2^c$ is a Cartier divisor with generic fiber $D_1+D_2$, so $(\cD_1+\cD_2)^c=\cD_1^c+\cD_2^c$.
\end{proof}
Using the above construction, an integral model of the variety $X$ induces an integral model of the pair $(X,M)$:
\begin{definition}
Given a pair $(X,M)$ and an integral model $\cX$ of $X$ over $B$, the \textit{integral model of $(X,M)$ induced by $\cX$} is the pair $(\cX,\cM^c)$ over $B$, where $\cM^c=((\cD^c_{\alpha})_{\alpha\in \cA},\fM)$.
\end{definition}
Note that by spreading out \cite[\S 3.2]{Poo17} any proper variety $X$ over $K$ has an integral model over some nonempty open subscheme $B\subset C$. Hence, any pair $(X,M)$ over $K$ has an integral model over such an open subscheme $B\subset C$.

\subsection{Multiplicities and \texorpdfstring{$\cM$}{M}-points}
Now we will define intersection multiplicities and $\cM$-points. 
As before, we let $X$ be a proper variety over a PF field $(K,C)$ with integral model $\cX$ over an open subscheme $B\subset C$. Let $v\in B$ be a closed point and let $P\in X(K_v)$. By the valuative criterion of properness, $P$ lifts to an unique point $\mathcal{P}\in \mathcal{X}(\cO_v)$.
For a closed subscheme $\cD\subseteq \cX$ we consider the scheme theoretic intersection $\cP\cap \cD$
, which is defined as the fiber product of $\cP\colon \spec \cO_v\rightarrow \cX$ and the closed immersion $i_{\cD}\colon\cD\hookrightarrow \cX$:
$$\begin{tikzcd}
	\cP\cap \cD \arrow[d, hook] \arrow[r] & \cD \arrow[d, "i_{\cD}", hook] \\
	\spec \cO_v \arrow[r, "\cP"]  & \cX.  
\end{tikzcd}$$
As base change preserves closed immersions it follows that $\cP\cap \cD=\spec(\cO_v/I)$
for an ideal $I\subseteq \cO_v$. As $\cO_v$ is a discrete valuation ring, $I=(0)$ or $I=(\pi^n)$ for some $n\in\mathbb{N}$, where $\pi$ is a uniformizer of $\cO_v$. As in \cite[Definition 2.4]{MiNaSt22} we make the following definition.
\begin{definition}
The \textit{(local) intersection multiplicity} $n_v(\cD, \cP)$ is defined to be
$$n_v(\cD, \cP)=\begin{cases}
   n \quad &\text{if } I=(\pi^n), \\
   \infty \quad &\text{if } I=(0).
\end{cases}$$
Note in particular that $n_v(\cD, \cP)=\infty$ exactly if $\cP\subseteq \cD(\cO_v)$.
\end{definition}
This definition agrees with the classical notion of local intersection multiplicity: if $\cX$ is a smooth surface over an algebraically closed field, $\cO_v=\cO_{\cC,p}$ is the local ring of a point $p$ on a smooth curve $\cC\subset \cX$ and $\cD\subset \cX$ is a Cartier divisor, then $n_v(\cP, \cD)=(\cC\cap \cD)_p$ is the local intersection multiplicity of $\cC$ and $\cD$ in $p$ as defined in \cite[Chapter V]{Har77} unless $\cC\subset \cD$ in which case $n_v(\cD, \cP)=\infty$.

\begin{example}
    If $\cX=\P^n_{\cO_v}$ and $\cD_i$ is the $i$-th coordinate hyperplane, then given an integral point $\cP=(a_0:\dots : a_n)$, with $a_i\in \cO_v$ for all $i\in \{0,\dots, n\}$ and $v(a_i)=0$ for some $i\in \{0,\dots, n\}$, the intersection multiplicity is just the valuation $n_v(\cD_i,\cP)=v(a_i)$.
\end{example}

The next proposition shows that the intersection multiplicity respects addition of Cartier divisors.
\begin{proposition} \label{prop: additivity}
Let $\cD_1$ and $\cD_2$ be Cartier divisors on $\cX$ and let $\cP\in \cX(\cO_v)$. Then $$n_v(\cD_1+\cD_2,\cP)=n_v(\cD_1,\cP)+n_v(\cD_2,\cP).$$
\end{proposition}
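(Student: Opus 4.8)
The plan is to reduce the statement to the multiplicativity of the valuation $v$ on $\cO_v$ by passing to a local trivialisation of both invertible sheaves $\cO_\cX(-\cD_1)$ and $\cO_\cX(-\cD_2)$; here I take $\cD_1,\cD_2$ (and hence $\cD_1+\cD_2$) to be effective, as is needed for the intersection multiplicities to be defined. Write $x\in \cX$ for the image of the closed point of $\spec\cO_v$ under $\cP\colon \spec\cO_v\rightarrow \cX$. Since $\cX$ is a scheme, I would choose an affine open $\spec A\subseteq \cX$ containing $x$ on which the invertible ideal sheaf $\cO_\cX(-\cD_1)$ is principal, then shrink it to a distinguished affine open, still containing $x$, on which $\cO_\cX(-\cD_2)$ is also principal; let $f_1,f_2\in A$ be generators of the corresponding ideals of $A$. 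Because $\spec\cO_v$ is local and its closed point maps into $\spec A$, the morphism $\cP$ factors through $\spec A$, hence corresponds to a ring homomorphism $\varphi\colon A\rightarrow \cO_v$.

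Next I would identify the three scheme-theoretic intersections. Base change along $\varphi$ turns the ideal $f_iA\subseteq A$ into $\varphi(f_i)\cO_v\subseteq \cO_v$, so $\cP\cap \cD_i=\spec\bigl(\cO_v/\varphi(f_i)\cO_v\bigr)$ and therefore $n_v(\cD_i,\cP)=v(\varphi(f_i))$, with the convention $v(0)=\infty$. By the definition of the sum of Cartier divisors recalled in Section~\ref{subsection: pairs}, the ideal sheaf $\cO_\cX(-(\cD_1+\cD_2))$ restricts on $\spec A$ to $f_1f_2A$, so likewise $\cP\cap(\cD_1+\cD_2)=\spec\bigl(\cO_v/\varphi(f_1)\varphi(f_2)\cO_v\bigr)$ and $n_v(\cD_1+\cD_2,\cP)=v\bigl(\varphi(f_1)\varphi(f_2)\bigr)$.

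The proof then concludes via the identity $v(ab)=v(a)+v(b)$ for $a,b\in\cO_v$: when $a,b$ are both nonzero this is the defining multiplicativity of the discrete valuation, and when one of them vanishes both sides equal $\infty$ in $\lineN$ since $\cO_v$ is a domain. Applying this with $a=\varphi(f_1)$ and $b=\varphi(f_2)$ gives $n_v(\cD_1+\cD_2,\cP)=n_v(\cD_1,\cP)+n_v(\cD_2,\cP)$.

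There is no serious obstacle in this argument; the only points requiring a little care are the reduction to a single affine chart trivialising both line bundles — which uses only that $x$ lies in the intersection of the two trivialising opens and that $\spec\cO_v$ is local, so that $\cP$ necessarily factors through it — and the bookkeeping with the value $\infty$, in particular the case where $\cP$ is contained in one of the $\cD_i$.
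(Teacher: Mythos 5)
Your proof is correct and follows essentially the same route as the paper's: the paper's one-line argument is that the inverse image ideal sheaf of $\cO_\cX(-(\cD_1+\cD_2))=\cO_\cX(-\cD_1)\cO_\cX(-\cD_2)$ on $\spec\cO_v$ is the product of the inverse image ideal sheaves of the two factors, and your local trivialisation on an affine chart through which $\cP$ factors is just the explicit verification of that identity, concluding with additivity of $v$ on products (including the $\infty$ cases). Nothing further is needed.
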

\begin{proof}
This follows from the equality
 $$\cP^*\cO_X(-(\cD_1+\cD_2))\cO_Y=\cP^*(\cO_X(-\cD_1)\cO_X(-\cD_2))\cO_Y=\cP^* \cO(-\cD_1)\cO_Y \cdot \cP^*\cO(-\cD_2)\cO_Y,$$
        of ideal sheaves on $Y=\spec \cO_v$,
	which implies the identity. 
\end{proof}
Given a PF field $(K,C)$ and a pair $(\cX,\cM)$ over $B\subset C$ and a finite place $v\in B$, we define the map
\begin{align*}
\mult_{v}\colon \cX(\cO_v)\rightarrow \lineN^\cA, \quad
\cP \mapsto (n_v(\cD_\alpha, \cP))_{\alpha\in \cA}.
\end{align*}
We also define, for a field extension $L/K$, the map
\begin{align*}
\mult_L\colon X(L)\rightarrow \{0,\infty\}^\cA, \quad
\cP \mapsto (n_L(D_\alpha, P))_{\alpha\in \cA},
\end{align*}
where $$n_L(D_\alpha, P):=\begin{cases}
    0 \quad &\text{if } P\not \in D_\alpha(L), \\
    \infty \quad &\text{if } P\in D_\alpha(L)
\end{cases}$$
indicates whether the point $P$ lies in $D_{\alpha}$.
Using these notions we are finally ready to define $\cM$-points.
\begin{definition} \label{def: M-points}
Let $(K,C)$ be a PF field, and let $(X,M)$ be a pair over $K$ with integral model $(\cX,\cM)$ over an open subscheme $B\subset C$.
For a field extension $L/K$, we set
$$(X,M)(L)=(\cX,\cM)(L)=\{P\in X(L)\mid \mult_L(P)\in \fM\}.$$
For a finite place $v\in B$, the set of \textit{$v$-adic $\cM$-points on $(\cX,\cM)$} is defined as
\begin{equation} \label{equation: v-adic M-points}
    (\cX,\cM)(\cO_v)=\{\cP\in \cX(\cO_v)\mid \mult_v(\cP)\in \fM\}.
\end{equation}
If $v\in \Omega_K\setminus B$, we set
$$(\cX,\cM)(\cO_v)=(X,M)(K_v).$$

The set of \textit{$\cM$-points on $(\cX,\cM)$ over $B$} is defined as the subset of $\cX(B)$ satisfying Condition \eqref{equation: v-adic M-points} at every place $v\in B$:
$$(\cX,\cM)(B)=\{\cP\in \cX(B)\mid \mult_v(\cP_v)\in \fM\text{ for all } v\in B\}.$$
\end{definition}

Note that $\bigcup_{B\subset C}(\cX,\cM)(B)=(X,M)(K)$, where the union runs over all nonempty open subschemes $B$ of $C$.
\begin{definition}
	Let $(X,M)$ be a pair over $K$ with integral model $(\cX,\cM)$ be a pair over a scheme $B$. We define $\fM_{\fin}=\fM\cap \mathbb{N}^\cA$ and we define $M_{\fin}$ and $\cM_{\fin}$ by replacing $\fM$ by $\fM_{\fin}$.
\end{definition}
 The points on $(\cX,\cM_{\fin})$ are those in $(\cX,\cM)$ whose image does not lie in the boundary $\bigcup_{\alpha\in \cA} \cD_{\alpha}$.

\begin{remark}
If we assume that the subschemes $\cD_{\alpha}$ are all Cartier divisors on $\cX$, then we can give a different description of $(\cX,\cM)(B)$. Namely, if the image of a morphism $\cP\colon B\rightarrow \cX$ does not lie in the boundary $\bigcup_{\alpha\in \cA} \cD_{\alpha}$, then the intersection with $\cD_{\alpha}$ is simply the pullback $\cP\cap \cD_{\alpha}=\cP^*\cD_{\alpha}$. So it follows that
$(\cX,\cM_{\fin})(B)$ is the set of points $\cP\in \cX(B)$ not contained in the boundary, such that there exist $\mathbf{m}_1,\dots, \mathbf{m}_k\in \fM_{\fin}$ and distinct prime divisors $\widetilde{\cD}_1,\dots,\widetilde{\cD}_k$ in $B$ such that $$\cP^*\cD_{\alpha}= \sum_{i=1}^k m_{i,\alpha} \widetilde{\cD}_i$$ for all $\alpha\in \cA$.
\end{remark}

For many interesting choices of $\cM$, the set of multiplicities $\fM$ is an open subset of $\lineN^\cA$. For example, this is the case for integral points, Campana points and strict Darmon points. In this case, the next proposition shows that the property of being a $\cM$-point is open in $X(K_v)$.
\begin{proposition} Let $X$ be a proper variety over a PF field $(K,C)$ and let $(X,M)$ be a pair with integral model $(\cX,\cM)$ over $B\subset C$. 
Then the map $\mult_v\colon X(K_v)\rightarrow \lineN^\cA$ is continuous for every finite place $v\in B$.

Therefore, if $\fM\subseteq \lineN^\cA$ is an open (or closed) subset, then $(\cX,\cM)(\cO_v)$ is an open (or closed) subset of $X(K_v)$.
\end{proposition}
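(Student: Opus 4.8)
The plan is to reduce the continuity of $\mult_v$ to the continuity of each coordinate function $P\mapsto n_v(\cD_\alpha,\cP)$, and then to prove the latter by a local, coordinate computation using the valuative criterion of properness. First I would fix a finite place $v\in B$ and note that by the product topology on $\lineN^\cA$ it suffices to show that for each $\alpha\in\cA$ the map $X(K_v)\to\lineN$, $P\mapsto n_v(\cD_\alpha,\cP)$, is continuous, where $\cP\in\cX(\cO_v)$ is the unique lift of $P$ guaranteed by properness. Since the topology on $\lineN$ has a basis consisting of the sets $\{0,1,\dots,N\}$ (for $N\in\mathbb{N}$) together with the "neighbourhoods of infinity" $\{N,N+1,\dots\}\cup\{\infty\}$, continuity amounts to showing that for every $N$ the set $\{P : n_v(\cD_\alpha,\cP)\le N\}$ is open and the set $\{P: n_v(\cD_\alpha,\cP)\ge N\}$ is open; equivalently, that $n_v(\cD_\alpha,\cdot)$ is "lower semicontinuous and upper semicontinuous" in the appropriate sense, i.e. $\{n_v=j\}$ is open for every finite $j$ and $\{n_v\ge N\}$ is open.

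The heart of the argument is a local computation near a given point $P_0\in X(K_v)$. After choosing an affine open $\spec A\subseteq\cX$ containing the image of the closed point of $\spec\cO_v$ under the lift $\cP_0$, and shrinking so that $\cD_\alpha\cap\spec A$ is cut out by an ideal $(f_1,\dots,f_r)\subset A$, the lift $\cP_0$ corresponds to a ring homomorphism $A\to\cO_v$; by the valuative criterion every $P$ in a small neighbourhood of $P_0$ in $X(K_v)$ also lifts into $\spec A$, and the induced maps $A\to\cO_v$ vary continuously in the sense that the images of the finitely many generators of $A$ vary continuously in $\cO_v$ with respect to $|\cdot|_v$. The intersection multiplicity is then
\[
n_v(\cD_\alpha,\cP)=\min_{1\le i\le r} v\bigl(\cP^\sharp(f_i)\bigr),
\]
with the convention $v(0)=\infty$, since the pullback ideal in $\cO_v$ is generated by $\cP^\sharp(f_1),\dots,\cP^\sharp(f_r)$ and $\cO_v$ is a DVR. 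Now I would use that on $\cO_v$ the valuation $x\mapsto v(x)\in\lineN$ is continuous: the set $\{x: v(x)=j\}$ is the open set $\{|x|_v=|\pi|_v^{j}\}$ for finite $j$, and $\{x: v(x)\ge N\}=\{|x|_v\le|\pi|_v^{N}\}$ is open. Since a finite minimum of continuous $\lineN$-valued functions is continuous, and $P\mapsto\cP^\sharp(f_i)$ is continuous $X(K_v)\to\cO_v$, the composite $P\mapsto n_v(\cD_\alpha,\cP)$ is continuous on a neighbourhood of $P_0$; as $P_0$ was arbitrary, it is continuous on $X(K_v)$.

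Granting continuity of $\mult_v$, the final assertion is immediate: if $\fM\subseteq\lineN^\cA$ is open (resp. closed), then $(\cX,\cM)(\cO_v)=\mult_v^{-1}(\fM)$ is the preimage of an open (resp. closed) set under a continuous map, hence open (resp. closed) in $X(K_v)$. The main obstacle I anticipate is bookkeeping rather than conceptual: one must check carefully that, in a neighbourhood of $P_0$, all the lifts $\cP$ factor through the same affine chart $\spec A$ and that $P\mapsto\cP^\sharp(f_i)\in\cO_v$ is genuinely continuous for the topology induced from an embedding $\spec A\hookrightarrow\mathbb{A}^n_{\cO_v}$ — this is where properness (to get the lift) and the definition of the $v$-adic topology on $X(K_v)$ via affine charts interact, and where the convention $v(0)=\infty$, i.e. the case where $\cP$ lands inside $\cD_\alpha$, must be handled as the "value $\infty$" limit point in $\lineN$.
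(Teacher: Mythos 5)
Your argument is correct, and it reaches the conclusion by a somewhat different route than the paper. The paper avoids charts entirely: it characterizes $n_v(\cD,\cP)$ as the largest $n_0$ for which $\cP\bmod\pi_v^{n_0}$ factors through $\cD\times_{\cO_v}\spec\cO_v/(\pi_v^{n_0})$, deduces that two points with the same reduction mod $\pi_v^n$ have either equal multiplicities $<n$ or both multiplicities $\geq n$, and concludes using the congruence neighbourhoods $U(\cP,n)$ as a basis of $\cX(\cO_v)$. You instead localize to an affine chart, write $n_v(\cD_\alpha,\cP)=\min_i v(\cP^\sharp(f_i))$ for generators $f_1,\dots,f_r$ of the ideal of $\cD_\alpha$, and compose the continuity of $P\mapsto\cP^\sharp(f_i)$ with the continuity of $v\colon\cO_v\to\lineN$ and of finite minima. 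Both rest on the same ultrametric fact (the valuation of an element is determined by its reduction mod $\pi^n$ once it is $<n$); the paper's version is cleaner in that it needs no choice of chart or generators and no identification of the intersection ideal, whereas yours is more explicit and makes the formula for the multiplicity visible. The bookkeeping you flag — that the locus of points whose lift lands in a given affine open $\spec A\subseteq\cX$ is open in $X(K_v)$ (it is the preimage of an open subset of $\cX(k_v)$ under the locally constant reduction map, and a lift whose closed point lands in $\spec A$ factors entirely through it), and that $\cP\mapsto\cP^\sharp(f_i)$ is a polynomial in the coordinates hence continuous — is genuinely needed but standard, so there is no gap.
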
 \label{prop: continuity multiplicity}
\begin{proof}
It suffices to prove $n_v(\cD,-)\colon X(K_v)\rightarrow \lineN$ is continuous for a single subscheme $\cD$, as continuity is equivalent to continuity in all coordinates. Note that for $\cP\in \cX(\cO_v)=X(K_v)$, the multiplicity $n_v(\cD,\cP)$ is the largest integer $n_0$ such that there exists a factorisation
$$\begin{tikzcd}
	& \cD\times_{\cO_v} \spec \cO_v/(\pi_v^{n_0}) \arrow[d] \\
	\spec \cO_v/(\pi_v^{n_0}) \arrow[r] \arrow[ru, dotted] & \cX\times_{\cO_v} \spec \cO_v/(\pi_v^{n_0}),                                      
\end{tikzcd}
$$
where the horizontal and vertical homomorphisms are induced by $\cP$ and by the inclusion morphism $i_{\cD}\colon \cD\rightarrow \cX$, respectively. In particular, if two points $\cP, \cP'\in \cX(\cO_v)$ have the same reduction modulo $(\pi_v^n)$ for some integer $n$, either $$n_v(\cD,\cP)=n_v(\cD,\cP')<n$$ or $$\min\{n_v(\cD,\cP),n_v(\cD,\cP') \}\geq n.$$ 
Since the collection of open sets of the form $$U(\cP,n)=\{\cP'\in \cX(\cO_v)\mid \cP \bmod \pi_v^n=\cP' 
\bmod \pi_v^n\in \cX(\cO_v/\pi_v^n)\},$$ $\cP\in X(K_v)$ and $n\in \mathbb{N}$, forms a basis for the topology on $\cX(\cO_v)$, it follows that $n_v(\cD,-)$ is indeed continuous. Thus $\mult_v$ is a continuous map.
\end{proof}

\subsection{Examples of \texorpdfstring{$\cM$}{M}-points}\label{subsection: M-points examples}
Let $X$ be a proper variety over a PF field $(K,C)$ with a finite collection of closed subschemes $(D_{\alpha})_{\alpha\in \cA}$. Fix an integral model $\cX$ of $X$ over $B\subset C$ and set $\cD_{\alpha}=\cD^c_{\alpha}$. By choosing different subsets $\fM\subset \lineN^\cA$, we can construct many different pairs $(\cX,\cM)$. We consider some choices, and afterwards we describe $\cM$-points on projective space for these choices. We write $U=X\setminus \bigcup_{\alpha\in \cA} D_{\alpha}$ and $\cU=\cX\setminus \bigcup_{\alpha\in \cA} \cD_{\alpha}$ for the complement of the boundary.
\begin{enumerate}
\item If $\fM=\{(0,\dots, 0)\}$, then the $\cM$-points over $B$ are the integral points on $\cU$: $(\cX,\cM)(B)=\mathcal{U}(B)$ and $(X,M)(K)=U(K)$.

More generally, if $\cB\subset \cA$ and $\fM=\{\bf{m}\in \lineN^\cA\mid m_{\alpha}=0 \text{ if }\alpha\in \cB\}$, then the $\cM$-points over $B$ are the integral points on $\cX\setminus \bigcup_{\alpha\in\cB}\cD_{\alpha}$:
$(\cX,\cM)(B)=(\cX\setminus \bigcup_{\alpha\in\cB}\cD_{\alpha})(B)$ and $(X,M)(K)=(X\setminus \bigcup_{\alpha\in\cB}D_{\alpha})(K)$. \label{pair integral points}
\item If $\fM=\lineN^\cA$, then the set of $\cM$-points is the entire set of rational points: $(\cX,\cM)(B)=(X,M)(K)=X(K)$. If on the other hand $\fM=\mathbb{N}^\cA$, then the set consists of only the points not contained in the boundary: $(\cX,\cM)(B)=(X,M)(K)=U(K)$.
\item If $\fM=\{0,1\}^\cA$, then $(\cX,\cM)(B)$ is the set of points on $\cX$ over $B$ that intersect all $\cD_{\alpha}$ transversally. As we will see, we can think of these points as a sort of ``squarefree'' points. We again have $(X,M)(K)=U(K)$. 
\item If $$\fM=\bigcup_{\alpha\in\cA}\{\mathbf{m}\in \lineN^\cA\mid w_{\alpha'}=0\, \forall \alpha'\neq \alpha\},$$ then $(\cX,\cM)(B)$ is the set of points on $\cX$ over $B$ which do not meet any intersection $\cD_{\alpha}\cap \cD_{\alpha'}$ for $\alpha,\alpha'\in\cA$, $\alpha\neq \alpha'$, while $(X,M)(K)$ consists of the rational points not contained in any of of the intersections $D_{\alpha}\cap D_{\alpha'}$.  
\end{enumerate}
For the following examples we assume that the closed subschemes $D_{\alpha}$ and $\cD_{\alpha}$ are prime Weil divisors, and $D_{\alpha}\neq D_{\alpha'}$ if $\alpha\neq \alpha'$. Consider a vector of multiplicities $\textbf{m}=(m_\alpha)_{\alpha\in \cA}$, where $m_{\alpha}\in \mathbb{N}^*\cup \{\infty\}$ and define the $\mathbb{Q}$-Weil divisor $\cD_{\textbf{m}}=\sum_{\alpha\in \cA}\left(1-\frac{1}{m_\alpha}\right)\cD_{\alpha}$, where we set $\frac{1}{\infty}=0$.
\begin{definition} For $\cX$ and $\cD_{\mathbf{m}}$ as above, we define special points as follows.
\label{def: Campana points and Darmon points}
\begin{itemize}
\item \textit{Campana points on $(\cX,\cD_{\mathbf{m}})$ over $B$} are the $\cM$-points over $B$ for the pair $(\cX,\cM)$, where $\fM$ is the collection of $\bf{w}\in \lineN^\cA$ such that for all $\alpha\in \cA$ we have
\begin{enumerate}
    \item $w_\alpha=0$ if $m_{\alpha}=\infty$ and
    \item $w_\alpha=0$ or $w_\alpha\geq m_\alpha$ if $m_\alpha\neq \infty$.
\end{enumerate}
\item \textit{Weak Campana points on $(\cX,\cD_{\mathbf{m}})$ over $B$} are the $\cM$-points over $B$ for $(X, \cM)$, where $\fM$ is the collection of all $\bf{w}$ such that
\begin{enumerate}
    \item $w_\alpha=0$ if $m_{\alpha}=\infty$ and
    \item either $w_\alpha=0$ for all $\alpha\in \cA$ or $$\sum_{\substack{\alpha\in \cA \\ m_{\alpha}\neq 1}} \frac{w_\alpha}{m_\alpha}\geq 1.$$
\end{enumerate}
\item \textit{Strict Darmon  points on $(\cX,\cD_{\mathbf{m}})$ over $B$} are the $\cM$-points where $\fM$ is the collection of $\bf{w}\in \mathbb{N}^\cA$ for which $m_\alpha|w_\alpha$ for all $\alpha\in \cA$. Here we use the convention that the only integer divisible by $
\infty$ is $0$. If we take the closure of $\fM$ in $\lineN^\cA$ then we obtain the \textit{Darmon points on $(\cX,\cD_{\mathbf{m}})$ over $B$}.
\end{itemize}
\end{definition}
Note that if $m_{\alpha}=\infty$ for all $\alpha\in \cA$, then all of the sets of $\cM$-points in Definition \ref{def: Campana points and Darmon points} reduce to the set of integral points on $\cU$.

Note that all examples given in this section satisfy the property that $\fM$ is an open subset of $\lineN^\cA$, except for the set $\fM$ encoding the multiplicities for the Darmon points. This follows from the fact that $\mathbb{N}\subset \lineN$ is an open subspace with the discrete topology and if $U\subset\lineN$ contains $\infty$, then $U$ is open if and only if it contains all integers greater than a fixed integer $N_0$. The set of multiplicities is also closed for the other examples except for the strict Darmon points, the integral points on $\cU$ and the rational points on $U$.

If we additionally assume that $X$ is smooth, $\cX$ is flat over $B$ and the $D_{\alpha}$ are Cartier divisors, then (weak) Campana points and Darmon points agree with their definition as given in \cite{MiNaSt22}. If $\sum_{\alpha\in \cA}D_{\alpha}$ is furthermore a strict normal crossings divisor, $X$ is geometrically integral and $\cX$ is regular, then the (weak) Campana points agree with the definition given in \cite{PSTV20}.

\begin{remark}
In \cite{MiNaSt22} strong Campana points and strong Darmon points are defined, of which the former were called Campana points in \cite{NaSt20, Str21}. The set of strong Campana points and the set of strong Darmon points are generally not examples of sets of $\cM$-points if the divisors $\cD_{\alpha}$ are not geometrically integral, as those points are defined using the intersection multiplicities of the irreducible components of $\cD_{\alpha, \cO_v}$. However, if the $D_{\alpha}$ are geometrically integral then strong Campana points and strong Darmon points coincide with Campana points and Darmon points, respectively.
\end{remark}
\begin{remark}
Consider a positive integer $m$ and a prime Cartier divisor $D$ on a smooth proper variety $X$ extending to a prime Cartier divisor $\cD$ on an integral model $\cX$. Then the Campana points and weak Campana points on $(\cX,(1-\tfrac{1}{m})\cD)$ coincide and agree with the weak Campana points as defined in \cite{Str21}, even if $D$ is not geometrically irreducible.
\end{remark}

\subsubsection{\texorpdfstring{$\cM$}{M}-points on projective space}\label{subsection: M-points on projective space}
To further illustrate the examples given above, we fix $K=\mathbb{Q}$, $B=\spec \Z$ and $X=\mathbb{P}^n_\mathbb{Q}$ with integral model $\cX=\mathbb{P}^n_\Z$. Fix $\cA=\{0,\dots, n\}$. For $i\in \{1,\dots, n\}$, we define $D_i$ to be the coordinate hyperplane $\{x_i=0\}$ and let $\cD_i$ be the Zariski closure of $D_i$ in $\cX$. Then for a point $P=(a_0:\dots: a_n)$ with $a_i\in \Z$ for all $i\in\{1,\dots, n\}$ and $\gcd(a_0,\dots,a_n)=1$, the identity $n_p(D_i,P)=v_p(a_i)$ holds for every prime number $p$. In particular, given a set $\fM\subseteq \lineN^{\{0,\dots, n\}}=\lineN^{n+1}$, we see that
$$(\mathbb{P}^n,\cM)(\Z)=\{(a_0:\dots: a_n)\in \mathbb{P}^n(\Z)\mid (v_p(a_0),\dots, v_p(a_n))\in \fM\text{ for all prime numbers } p\}.$$
So in particular:
\begin{itemize}
	\item If $\fM=\{0,1\}^{n+1}$, then
	$$(\mathbb{P}^n,\cM)(\Z)=\{(a_0:\dots: a_n)\in \mathbb{P}^n(\Z)\mid a_0,\dots, a_n\text{ squarefree}\}.$$
	\item If $\fM=\bigcup_{i=0}^n\{\mathbf{w}\in \lineN^{n+1}\mid w_{j}=0\, \forall j\neq i\}$, then
	$$(\mathbb{P}^n,\cM)(\Z)=\{(a_0:\dots: a_n)\in \mathbb{P}^n(\Z)\mid \gcd(a_i,a_j)=1\,\forall i,j, \, i\neq j\}.$$
	\item The set of Campana points for the multiplicities $m_0,\dots,m_n$ is 
	$$(\mathbb{P}^n,\cM)(\Z)=\{(a_0:\dots: a_n)\in \mathbb{P}^n(\Z)\mid a_i\text{ is } m_i\text{-full}\}.$$
	Here we recall that for an integer $m\geq 1$, we say that an integer $n$ is $m$-full if $p|n$ implies $p^m|n$ for every prime number $p$. Furthermore, we define $-1,0$ and $1$ to be the only $\infty$-full numbers.
	\item The set of Darmon points for the multiplicities $m_0,\dots,m_n$ is 
	$$(\mathbb{P}^n,\cM)(\Z)=\{(a_0:\dots: a_n)\in \mathbb{P}^n(\Z)\mid \;|a_i| \text{ is an } m_i\text{-th power}\}.$$
        \item If $\fM=\{\mathbf{w}\in \lineN^{n+1} \mid w_i\leq w_j \text{ if } i\leq j\},$
        then $$(\mathbb{P}^n,\cM)(\Z)=\{(a_0:\dots: a_n)\in \mathbb{P}^n(\Z)\mid \; a_i \text{ divides } a_j \text{ if }i\leq j\}.$$ 
\end{itemize}
\begin{remark}
The above descriptions easily generalize to the case where $B=\spec R$ for a principal ideal domain $R$. If on the other hand $R$ is not a principal ideal domain, then more care is needed since then it is not possible to write every rational point $P$ as $P=(a_0:\dots: a_n)$ such that there is an equality $(a_0,\dots, a_n)=R$ as ideals. For example, if $R=\mathbb{Z}[\sqrt{-5}]$, then the rational point $(2:1+\sqrt{-5})\in \P^1(\mathbb{Q}(\sqrt{-5}))$ cannot be written in such a form.
\end{remark}
For smooth split toric varieties with $D_\alpha$ the torus invariant prime divisors, we will see that we have a very similar concrete description for $\cM$-points, see Remark \ref{remark: description M-points toric variety}.

\subsection{Equivalence of pairs}
From this section onward, $X$ is a variety over a PF field $(K,C)$, $(X,M)$ is a pair and $B\subset C$ is an open subscheme, unless specified otherwise.
It is clear from the definition of $\cM$-points that some pairs $(\cX,\cM)$, $(\cX,\cM')$ have the same set of points or one is contained in the other for geometric reasons. Therefore we introduce the following definition.
\begin{definition}
	Let $(X,M)$ and $(X,M')$ be pairs with integral models $(\cX,\cM)$ and $(\cX',\cM')$ over $B$. Then we say that $(\cX,\cM)$ is \textit{equivalent} to $(\cX',\cM')$ 
	if $$(\cX,\cM)(\cO_v)=(\cX',\cM')(\cO_v) \text{ as subsets of }X(K_v)$$ for all places $v\in B$. If $\cX=\cX'$ we simply say that $\cM$ is equivalent to $\cM'$. Similarly we write $(\cX,\cM)\subseteq (\cX',\cM')$ if $$(\cX,\cM)(\cO_v)\subseteq(\cX',\cM')(\cO_v) \text{ as subsets of }X(K_v)$$ for all places $v\in \Omega_K$. If $\cX=\cX'$ we simply write $\cM\subseteq \cM'$.

    If there exists a nonempty open subset $B\subseteq C$ and equivalent integral models $(\cX,\cM)$ and $(\cX',\cM')$ over $B$, then we say that $M$ is equivalent to $M'$. Similarly if there exists a nonempty open subset $B\subseteq C$ and integral models $(\cX,\cM)\subseteq(\cX',\cM')$, then we write $M\subseteq M'$.
\end{definition}

The next proposition implies that any two integral models $(\cX,\cM)$ and $(\cX,\cM')$ of $(X,M)$ over $B$ become equivalent over some nonempty open subset $B'\subseteq B$.
\begin{proposition} \label{prop: spread out models}
	Let $(X,M)$ be a pair with integral models $(\cX,\cM)$ and $(\cX', \cM')$ over $B$. Then there is a nonempty open subset $B'\subseteq B$ such that there is an isomorphism $f\colon \cX_{B'}\rightarrow \cX'_{B'}$ such that $\cD_{\alpha, B'}$ maps isomorphically to $\cD'_{\alpha, B'}$ for all $\alpha\in \cA$. In particular, $(\cX, \cM)_{B'}$ is equivalent to $(\cX', \cM')_{B'}$.
\end{proposition}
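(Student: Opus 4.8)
The plan is to use spreading-out in two stages: first to transport the two integral models of $X$ to a common one, then to match up the chosen integral models of each boundary component $D_\alpha$.

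\medskip

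\noindent\textbf{Step 1: Matching the integral models of $X$.}
Both $\cX$ and $\cX'$ are proper $B$-schemes with generic fiber canonically identified with $X$. Since $B$ is integral with generic point $\spec K$, the identity map on $X$ is an isomorphism $\cX_K \xrightarrow{\sim} \cX'_K$ over $K$. By the standard spreading-out results for morphisms and isomorphisms of finitely presented schemes (see \cite[\S 3.2]{Poo17}), this isomorphism extends to an isomorphism $f\colon \cX_{B'}\xrightarrow{\sim}\cX'_{B'}$ of $B'$-schemes over some nonempty open $B'\subseteq B$, compatible with the identification on the generic fibers. (Here one uses that $\cX,\cX'$ are of finite type over the Noetherian scheme $B$; in the number field case $B=\spec\cO_K$ is already affine, and in the function field case one may shrink $B$ to an affine open, replacing $B$ by $B'$ throughout.)

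\medskip

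\noindent\textbf{Step 2: Matching the boundary components.}
Fix $\alpha\in\cA$. Via $f$, both $\cD_{\alpha,B'}$ and $f^{-1}(\cD'_{\alpha,B'})$ are closed subschemes of $\cX_{B'}$ whose generic fibers equal $D_\alpha\subseteq X$. Two closed subschemes of a finitely presented $B'$-scheme that agree on the generic fiber agree after restricting to a smaller nonempty open of the base: this again follows from spreading out, since the two quasi-coherent ideal sheaves $\cI_{\cD_{\alpha,B'}}$ and $f^{-1}\cI_{\cD'_{\alpha,B'}}$ restrict to the same ideal sheaf on $\cX_K$, and an isomorphism of coherent sheaves over the generic point spreads out. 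Carrying this out for each of the finitely many $\alpha\in\cA$ and intersecting the resulting open subsets of $B'$ (a finite intersection of nonempty opens in the irreducible scheme $B'$ is again nonempty), we obtain a single nonempty open $B'\subseteq B$ and an isomorphism $f\colon\cX_{B'}\xrightarrow{\sim}\cX'_{B'}$ carrying $\cD_{\alpha,B'}$ isomorphically onto $\cD'_{\alpha,B'}$ for every $\alpha$.

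\medskip

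\noindent\textbf{Step 3: Deducing equivalence.}
It remains to check $(\cX,\cM)_{B'}$ and $(\cX',\cM')_{B'}$ are equivalent, i.e. $(\cX,\cM)(\cO_v)=(\cX',\cM')(\cO_v)$ inside $X(K_v)$ for all $v\in B'$. Both sets are cut out inside $X(K_v)$ by the condition $\mult_v(\cP)\in\fM$, using the same set of multiplicities $\fM$ (which is part of both $\cM$ and $\cM'$). Since $f$ is an isomorphism over $B'$ restricting to the identity on $X$, the valuative criterion of properness gives the same lift $\cP\in\cX(\cO_v)$ resp. $f(\cP)\in\cX'(\cO_v)$ of any point $P\in X(K_v)$, and because $f$ identifies $\cD_{\alpha,v}$ with $\cD'_{\alpha,v}$ the scheme-theoretic intersections $\cP\cap\cD_\alpha$ and $f(\cP)\cap\cD'_\alpha$ coincide as subschemes of $\spec\cO_v$; hence $n_v(\cD_\alpha,\cP)=n_v(\cD'_\alpha,f(\cP))$ for all $\alpha$, so $\mult_v$ agrees, and the two $\cM$-point sets coincide. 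For places $v\in B'$ lying over $\Omega_K\setminus B'$ — there are none, since $B'\subseteq B\subseteq C$ — there is nothing further to check.

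\medskip

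\noindent I expect \textbf{Step 2} to be the only real content: one must phrase the spreading-out of ``equality of closed subschemes with prescribed generic fiber'' carefully, and handle the two cases of a PF field ($B$ affine versus $B=C$ in the function field case) uniformly by passing to an affine open of $B$ at the outset. Everything else is formal, using the valuative criterion of properness exactly as in the construction of $\mult_v$ in Definition \ref{def: M-points}.
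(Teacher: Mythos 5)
Your proof is correct and takes essentially the same spreading-out approach as the paper: the paper lifts $\id_X$ to morphisms $f$ and $g$ in both directions, shrinks $B$ until they are mutually inverse (via EGA IV, Th\'eor\`eme 8.10.5(i)) and until $f(\cD_{\alpha})\subset\cD'_{\alpha}$ and $g(\cD'_{\alpha})\subset\cD_{\alpha}$, whereas you spread out the isomorphism directly and then match the boundary subschemes by spreading out the equality of their ideal sheaves — a cosmetic reorganization of the same argument. Your Step 3 just makes explicit the routine verification of the ``in particular'' clause that the paper leaves implicit.
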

\begin{proof}
As we can restrict to an open subset of $B$, we can without loss of generality assume that $\cX$ and $\cX'$ are flat over $B$.
The proof works via spreading out, and is analogous to \cite[Theorem 3.2.1(iii)]{Poo17}. Here we use the fact that $\cX$ and $\cX'$ are finitely presented over $B$ and therefore $\cD_{\alpha}$ and $\cD_{\alpha}'$ are as well. 
\cite[Theorem 3.2.1(iii)]{Poo17} implies that the identity $\id_X$ lifts to a morphism $f\colon\cX_{B'}\rightarrow \cX'_{B'}$ and a morphism $g\colon \cX'_{B'}\rightarrow \cX_{B'}$. By the same reasoning, we can take $B'\subseteq B$ a small enough open such that, for all $\alpha\in \cA$, we have $f(\cD_{\alpha,B'})\subset \cD_{\alpha,B'}'$ and $g(\cD_{\alpha,B'}')\subset \cD_{\alpha,B'}$ as schemes. As $g\circ f\colon \cX_{B'}\rightarrow \cX_{B'}$ and $f\circ g\colon \cX_{B'}'\rightarrow \cX_{B'}'$ restrict to the identity on $X_K$, it follows from \cite[Th\'{e}or\`{e}me 8.10.5(i)]{EGA-IV3} that there exists a nonempty open subset $B''\subset B'$ such that $f_{B''}$ and $g_{B''}$ are isomorphisms and are inverses of each other and therefore also identify $\cD_{\alpha, B''}$ and $\cD_{\alpha, B''}'$ under the isomorphism.
\end{proof}

It can sometimes be convenient to remove the elements in $\fM$ which correspond to empty intersections of the boundary components $D_{\alpha}$ in $X$, which is why we make the following definition.
\begin{definition}
	Let $(X,M)$ be a pair. Consider the subset $\fM_{\red}\subseteq \fM$ defined by
	$$\fM_{\red}=\{\mathbf{m} \in \fM\mid \cap_{m_{\alpha}\neq 0} D_{\alpha}\neq \emptyset\}.$$
	We write $M_{\red}=((D_{\alpha})_{\alpha\in \cA}, \fM_{\red})$. If $\fM=\fM_{\red}$, then we call $\fM$ \textit{reduced}.
\end{definition}
\begin{proposition} \label{prop: M red equivalent}
	If $(X,M)$ is a pair, then $M$ and $M_{\red}$ are equivalent.
\end{proposition}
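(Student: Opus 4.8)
The plan is to unwind both sides of the claimed equivalence $(\cX,\cM)(\cO_v) = (\cX,\cM_{\red})(\cO_v)$ for an arbitrary integral model $(\cX,\cM)$ of $(X,M)$ over $B$ and every place $v\in B$, and then invoke Proposition~\ref{prop: spread out models} to pass from a single model to equivalence of $M$ and $M_{\red}$ as parameter sets. Since $\fM_{\red}\subseteq \fM$, the inclusion $(\cX,\cM_{\red})\subseteq (\cX,\cM)$ is immediate: any point whose multiplicity vector lies in $\fM_{\red}$ has multiplicity vector in $\fM$. The content is the reverse inclusion, so the core of the proof is to show that if $\cP\in \cX(\cO_v)$ satisfies $\mult_v(\cP)\in \fM$, then in fact $\mult_v(\cP)\in \fM_{\red}$.

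First I would fix a finite place $v\in B$ (the infinite-place and $v\notin B$ cases reduce to the statement for $X(K_v)$, handled by the same argument with $\mult_L$ in place of $\mult_v$, or trivially). Write $\mathbf{m} = \mult_v(\cP) = (n_v(\cD_\alpha,\cP))_{\alpha\in\cA}\in\fM$. By definition of $\fM_{\red}$, I need to show $\bigcap_{m_\alpha\neq 0} D_\alpha\neq\emptyset$. The key observation is that the unique lift $\cP\in\cX(\cO_v)$ supplied by the valuative criterion of properness gives a point, namely the image of the closed point $\spec k_v \to \spec\cO_v\xrightarrow{\cP}\cX$, which lies in $\cD_\alpha$ (as a closed subscheme of $\cX$) precisely when $n_v(\cD_\alpha,\cP)\geq 1$, i.e. when $m_\alpha\neq 0$; this is exactly the content of the scheme-theoretic intersection $\cP\cap\cD_\alpha = \spec(\cO_v/(\pi^{m_\alpha}))$ being nonempty. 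Thus the closed point $\cP(\spec k_v)$ lies in $\bigcap_{m_\alpha\neq 0}\cD_\alpha$, so that intersection is a nonempty closed subscheme of $\cX$. To conclude the analogous statement for the generic fibers $D_\alpha\subset X$ over $K$, I would note that $\bigcap_{m_\alpha\neq 0}\cD_\alpha$ is a nonempty closed subscheme of $\cX$ which is proper over $B$, hence its image in $B$ is closed and contains $v$; since a nonempty closed subscheme of a proper $B$-scheme has nonempty generic fiber as soon as its image contains the generic point of $B$, I need only handle the case where the image of $\bigcap\cD_\alpha$ misses the generic point. But $\cD_\alpha$ is an integral model of $D_\alpha$, so $\cD_{\alpha,K} = D_\alpha$; if some $D_\alpha$ with $m_\alpha\neq 0$ were empty then $\cP\in D_\alpha(K_v) $ forces $\cP\in\cX(\cO_v)$ to factor through $\cD_\alpha$ on the generic fiber, a contradiction unless we are more careful — so really the clean route is: $n_v(\cD_\alpha,\cP)\geq 1$ does not by itself force $P\in D_\alpha(K_v)$, only reduction mod $\pi$ lies in $\cD_\alpha$. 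Hence I should instead argue directly that $\bigcap_{m_\alpha\neq 0}D_\alpha\neq\emptyset$ by a separate spreading-out/semicontinuity input, or simply observe $\fM_{\red}$ depends only on $X$ and the $D_\alpha$, and that $\mathbf{m}\in\fM$ with some $m_\alpha=\infty$ forces $P\in D_\alpha(K_v)$, while the finite case is where the real subtlety sits.

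Consequently the main obstacle is precisely bridging from ``the special fiber of $\cP$ meets $\bigcap_{m_\alpha\neq 0}\cD_\alpha$'' to ``$\bigcap_{m_\alpha\neq 0}D_\alpha\neq\emptyset$ over $K$''. The cleanest fix is to first shrink $B$ using Proposition~\ref{prop: spread out models} so that we may take $\cD_\alpha = \cD_\alpha^c$ the closures and, after further shrinking, assume that $\bigcap_{\alpha\in S}\cD_\alpha$ is flat over $B$ (or empty) for every subset $S\subseteq\cA$ — possible since there are only finitely many subsets and spreading-out preserves fiber dimensions. Then for a subset $S$ with $\bigcap_{\alpha\in S}D_\alpha = \emptyset$, flatness forces $\bigcap_{\alpha\in S}\cD_\alpha = \emptyset$ over a dense open, so no $\cO_v$-point can have $m_\alpha\geq 1$ simultaneously for all $\alpha\in S$; this shows that for $\cP$ with $\mathbf{m}\in\fM$, the support $S_0 = \{\alpha : m_\alpha\neq 0\}$ satisfies $\bigcap_{\alpha\in S_0}D_\alpha\neq\emptyset$, i.e. $\mathbf{m}\in\fM_{\red}$. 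Therefore $(\cX,\cM)_{B'}$ and $(\cX,\cM_{\red})_{B'}$ have identical local point sets at every $v\in B'$, so $M$ and $M_{\red}$ are equivalent, as desired. I would close by remarking this argument also shows $\fM_{\red}$ still satisfies the mild closure condition of Definition~\ref{def: pair} (if $\mathbf{m}\in\fM_{\red}$ then the associated $\mathbf{m}'$ has the same support, hence lies in $\fM_{\red}$), so $(X,M_{\red})$ is indeed a pair.
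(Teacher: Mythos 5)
Your final argument is correct and is essentially the paper's proof: for each subset $V\subseteq\cA$ with $\bigcap_{\alpha\in V}D_\alpha=\emptyset$, shrink $B$ so that $\bigcap_{\alpha\in V}\cD_{\alpha}$ is also empty (finitely many subsets, so one nonempty open $B'$ works for all), whence every multiplicity vector actually realized by an $\cO_v$-point already lies in $\fM_{\red}$ and the two local point sets coincide. The detour through flatness and the worry about passing from the special fiber to the generic fiber are unnecessary once you phrase it this way, but the argument you land on is sound and matches the paper.
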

\begin{proof}
	Let $\cX$ be an integral model of $X$ over an open subset $B\subseteq C$. Then for all subsets $V\subseteq \cA$ with $\cap_{\alpha\in V} D_{\alpha}=\emptyset$, there exists a nonempty open subset $B_V\subseteq B$ such that $\cap_{\alpha\in V} \cD^c_{\alpha, B_V}=\emptyset$. The intersection $B'=\cap B_V$ over all such $V$ is a nonempty open subset of $B$, since $\cA$ is finite. In particular, we see that for any integral model $(\cX,\cM)$ of $(X,M)$ over $B$, $(\cX,\cM)_{B'}$ is equivalent to  $(\cX,\cM_{\red})_{B'}$.
\end{proof}

\begin{example}
To illustrate the difference between $M$ and $M_{\red}$, we take $X=\P^1_{\mathbb{Q}}$ with integral model $\cX=\P^1_{\Z}$ and consider the disjoint divisors $D_1=\{X_0=0\}$ and $D_2=\{X_0-2X_1=0\}$ on $\P^1_{\mathbb{Q}}$ defined by the homogeneous ideals $(X_0)$ and $(X_0-2X_1)$. Then $\cD_1^c$ and $\cD_2^c$ are the divisors defined by the same ideals and $\cD_1^c\cap \cD_2^c$ is the subscheme defined by the homogeneous ideal $(2X_1,X_0)$. Note that $(2X_1,X_0)$ and $(2,X_0)$ define the same subscheme of $\P^1_{\Z}$ since $X_0$ and $X_1$ cannot vanish simultaneously. We define $$M=((D_1,D_2), \lineN^2),$$ so $$M_{\red}=((D_1,D_2),\{0\}\times \lineN\cup \lineN\times\{0\}).$$ Then $(\cX,\cM^c)(\Z)=\P^1(\mathbb{Q})$, while $$(\cX,(\cM_{\red})^c)(\Z)=\{(x:y)\in \P^1(\mathbb{Q})\mid x,y\in \Z,\, \gcd(x,y)=1,\, x\text { odd}\}$$ since $\cD^c_2\times_{\Z} \spec \mathbb{F}_2=\cD^c_1\times_{\Z} \spec \mathbb{F}_2=\{X_0=0\}\subset \P^1_{\mathbb{F}_2}$, where $\mathbb{F}_2$ is the field with two elements. However, if we invert $2$, then the two pairs become equivalent:
$(\cX,\cM^c)(\Z[\tfrac{1}{2}])=(\cX,(\cM_{\red})^c)(\Z[\tfrac{1}{2}])=\P^1(\mathbb{Q})$.
\end{example}
\begin{example}
Let $X$ be a curve with disjoint divisors $D_{\alpha}$ and multiplicities $m_{\alpha}\in \mathbb{N}^*\cup\{\infty\}$ for $\alpha\in \cA$, and consider the pairs $(X,M)$ and $(X,M')$ associated to Campana points and weak Campana points on $(X,D_{\mathbf{m}})$ as in Definition \ref{def: Campana points and Darmon points}. Then by Proposition \ref{prop: M red equivalent} the pairs $(X,M)$ and $(X,M')$ are equivalent as $M'_{\red}=M_{\red}$. In particular, for every choice of integral models $(\cX,\cM)$ and $(\cX,\cM')$ there exists an open subset $B\subset C$ such that $(\cX,\cM)(B')=(\cX,\cM')(B')$ whenever $B'\subset B$ is an open subset.
\end{example}

\subsection{Inverse image of a pair}
If we have a pair $(X,M)$ and a morphism $f\colon Y\rightarrow X$, we often want to to pull back the structure on $X$ to $Y$ to get a pair $(Y, f^{-1}M)$ such that, given a lift $f\colon \cY\rightarrow \cX$ between integral models, there is an equality $$f^{-1}((\cX,\cM)(B))=(\cY,f^{-1}\cM)(B)$$ of subsets of $\cY(B)$. 
\begin{definition} \label{def: inverse image pair}
	Let $f\colon \cY\rightarrow\cX$ be a morphism of schemes over a scheme $B$ and let $(\cX,\cM)$ be a pair over $B$. We define the inverse image of $(\cX,\cM)$ under $f$ to be the pair $(\cY,f^{-1}\cM)$, where
	$$f^{-1}\cM=((f^{-1}\cD_{\alpha})_{\alpha\in \cA},\fM),$$
    and $f^{-1}\cD_{\alpha}:=\cD_{\alpha}\times_\cX \cY$.

\end{definition}
\begin{proposition} \label{prop: inverse pair}
    Let $(K,C)$ be a PF field, let $(\cX,\cM)$ be a pair over $B$ and let $f\colon \cY\rightarrow \cX$ be a morphism of schemes over an open subscheme $B\subset C$, where $\cY$ is an integral model over $B$ of a variety $Y$ over $K$.
 Then for all closed points $v\in B$,
	$$(\cY,f^{-1}\cM)(\cO_v)=\{P\in Y(K_v)\mid f(P)\in (\cX,\cM)(\cO_v)\},$$
	and therefore
	$$(\cY,f^{-1}\cM)(B)=\{P\in Y(K)\mid f(P)\in (\cX,\cM)(B)\}.$$
\end{proposition}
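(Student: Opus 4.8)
The plan is to first establish the local statement at each closed point $v\in B$ and then deduce the statement over $B$ formally from the definition of $\cM$-points. Throughout I would use the valuative criterion of properness, which applies since the integral models $\cX$ and $\cY$ are proper over $B$.

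First, fix a closed point $v\in B$ and a point $P\in Y(K_v)$. Since $\cY$ is proper over $B$, $P$ extends uniquely to a point $\cP\in\cY(\cO_v)$. As $f$ is a morphism over $B$, it restricts on generic fibres to a morphism $f_K\colon Y\to X$; writing $f(P)\in X(K_v)$ for the image of $P$ under the base change of $f_K$, properness of $\cX$ over $B$ shows that $f(P)$ extends uniquely to a point of $\cX(\cO_v)$, and by uniqueness this extension must be the composite $\spec\cO_v\xrightarrow{\cP}\cY\xrightarrow{f}\cX$, which I denote $f\circ\cP$. Under the identification $X(K_v)=\cX(\cO_v)$ this reduces the first claim to showing that $\mult_v(\cP)\in\fM$ if and only if $\mult_v(f\circ\cP)\in\fM$; since $\cM$ and $f^{-1}\cM$ share the same set of multiplicities $\fM$, it is enough to prove the stronger equality $\mult_v(\cP)=\mult_v(f\circ\cP)$, i.e.\ $n_v(f^{-1}\cD_\alpha,\cP)=n_v(\cD_\alpha,f\circ\cP)$ for every $\alpha\in\cA$.

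This equality I would extract from the cancellation property of fibre products. By Definition \ref{def: inverse image pair} one has $f^{-1}\cD_\alpha=\cD_\alpha\times_\cX\cY$, so
\[
\cP\cap f^{-1}\cD_\alpha=\spec\cO_v\times_\cY(\cD_\alpha\times_\cX\cY)=\spec\cO_v\times_\cX\cD_\alpha=(f\circ\cP)\cap\cD_\alpha
\]
as closed subschemes of $\spec\cO_v$, where in the middle fibre product the structure morphism $\spec\cO_v\to\cX$ is $f\circ\cP$. Since $n_v$ depends only on this closed subscheme of $\spec\cO_v$, the desired identity of multiplicities follows, and hence so does the first displayed equality of the proposition.

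Finally, for the statement over $B$: by Definition \ref{def: M-points}, $(\cY,f^{-1}\cM)(B)$ consists of the $\cP\in\cY(B)$ with $\cP_v\in(\cY,f^{-1}\cM)(\cO_v)$ for all $v\in B$, and similarly for $(\cX,\cM)(B)$. By the valuative criterion of properness, restriction to the generic fibre gives bijections $\cY(B)\xrightarrow{\sim}Y(K)$ and $\cX(B)\xrightarrow{\sim}X(K)$ compatible with $f$, and for each closed $v\in B$ the point $\cP_v$ is the unique lift over $\cO_v$ of the corresponding point of $Y(K_v)$ (and likewise $(f\circ\cP)_v$ over $\cX$). Applying the first equality at every $v\in B$ then shows that a point $P\in Y(K)$ lies in $(\cY,f^{-1}\cM)(B)$ precisely when $f(P)\in(\cX,\cM)(B)$, which is the second equality. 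I do not expect a genuine obstacle here: the argument is essentially a diagram chase, and the only points needing care are the uniqueness in the valuative criterion (so that $f\circ\cP$ is forced to be the lift of $f(P)$, and so that $B$-sections correspond to generic-fibre points) together with the cancellation of fibre products in the multiplicity computation.
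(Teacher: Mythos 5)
Your proof is correct and follows essentially the same route as the paper: the key step in both is that pasting the Cartesian square defining $f^{-1}\cD_\alpha=\cD_\alpha\times_\cX\cY$ with the one defining $\cP\cap f^{-1}\cD_\alpha$ identifies $\cP\cap f^{-1}\cD_\alpha$ with $(f\circ\cP)\cap\cD_\alpha$ as closed subschemes of $\spec\cO_v$, giving $n_v(f^{-1}\cD_\alpha,\cP)=n_v(\cD_\alpha,f\circ\cP)$. Your extra care with the valuative criterion (that $f\circ\cP$ is forced to be the unique $\cO_v$-lift of $f(P)$) is implicit in the paper but entirely consistent with it.
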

\begin{proof}
	Let $v\in \Omega_K$, let $\cP_v\in \cY(\cO_v)$ and let $\alpha\in \cA$.
	Since every square in the diagram
	$$\begin{tikzcd}
		\cP_v\cap f^{-1}\cD_{\alpha} \arrow[d] \arrow[r] & f^{-1}\cD_{\alpha} \arrow[d] \arrow[r] & \cD_{\alpha} \arrow[d] \\
		\spec \cO_v \arrow[r, "\cP_v"]  & \cY \arrow[r, "f"] & \cX  
	\end{tikzcd}$$
	is Cartesian, the fiber product of $f\circ \cP_v$ and $\cD_{\alpha}\rightarrow \cX$ is the the same as the fiber product of $\cP_v$ and $f^{-1}\cD_{\alpha}\rightarrow \cX$. Therefore, $n_v(\cD_{\alpha},f\circ \cP_v)=n_v(f^{-1}\cD_{\alpha},\cP_v)$, and thus the desired equalities hold.
\end{proof}

Note that taking the induced integral model does not need to commute with taking the inverse image since we can have
$$(\cY, (f^{-1}\cM)^c)(B)\subsetneq (\cY,f^{-1}(\cM^c))(B),$$
as the next example shows.
\begin{example}
	Let $K=k(t)$, for $k$ a field, and let $X=Y=\P^1_K$, $f=\id_X$ and $D=\{(0:1)\}\in X$. Choose the integral model $\cX=\P^1_k\times_k \mathbb{A}^1_k$ of $X$ over $\mathbb{A}_k^1$, let $P=((0:1),0)\in \cX$, and let $\cY=\Bl_P \cX$ be the blowup of $\cX$ in $P$. Then $\cY$ is an integral model of $Y$ and $f$ lifts to the blowup morphism $f\colon \cY\rightarrow \cX$. Furthermore $(f^{-1}\cD)^c$ is the strict transform of $\cD^c$, which is strictly contained in the inverse image:
	$f^{-1}(\cD^c)=(f^{-1}\cD)^c\cup f^{-1}(P)$. If we restrict the models to the open subset $B=\mathbb{G}_{m,k}\subset \mathbb{A}^1_k$, this discrepancy disappears and we find that $\cX_B\cong \cY_B$.
\end{example}

Note that if $f\colon Y\rightarrow X$ is a dominant map of integral varieties and the closed subschemes $D_\alpha$ are Cartier divisors, the schemes $f^{-1}D_{\alpha}$ are Cartier divisors \cite[Tag 02OO]{Stacks}, but usually not prime divisors. It can be convenient to have a pair equivalent to $(Y,f^{-1}M)$ where the chosen closed subschemes are prime divisors, so we introduce the following notion.
\begin{definition}
Let $f\colon \cY\rightarrow \cX$ be a morphism of schemes over a scheme $B$ and let $(\cX,\cM)$ be a pair over $B$. Assume furthermore that $f^{-1}\cD_{\alpha}$ is a sum of prime Cartier divisors for all $\alpha\in\cA$. Then we define the \textit{pullback of $(\cX,\cM)$ under $f$} to be the pair $(\cY,f^* \cM)$, where $f^* \cM=(\{\widetilde{D}_{\beta}\}_{\beta\in \cB}, f^* \fM)$. The $\widetilde{\cD}_{\beta}$ are the prime divisors on $\cY$ contained in $f^{-1}\cD_{\alpha}$ for some $\alpha\in \cA$, without repetitions. We define
$$f^*\fM=\left\{{w}'\in \lineN^\cB \Biggm| \left(\sum_{\beta\in \cB} c_{\alpha,\beta}w'_\beta\right)_{\alpha\in \cA}\in \fM\right\},$$
where the $c_{\alpha,\beta}$ are given by
$$f^{-1} \cD_{\alpha}=\sum_{\beta\in \cB} c_{\alpha,\beta} \widetilde{\cD}_{\beta}.$$
\end{definition}
Note that this definition is unique up to changing the indexing on the $\widetilde{\cD}_\beta$. As a consequence of Proposition \ref{prop: additivity}, $(\cY,f^*\cM)$ is equivalent to $(\cY,f^{-1}\cM)$. 




\section{Adelic \texorpdfstring{$M$}{M}-points and \texorpdfstring{$M$}{M}-approximation} \label{section: Adelic M-points and approx}
\subsection{Adelic \texorpdfstring{$M$}{M}-points}
In this section we introduce adelic $M$-points and integral adelic $\cM$-points.
Recall from Section \ref{section: PF fields} that the adele ring of a PF field $(K,C)$ prime to a finite set of places $T\subset \Omega_K$ is the restricted product
$$\mathbf{A}_K^T=\prod_{v\in \Omega_K\setminus T}(K_v, \cO_v),$$
which is given the structure of a topological $K$-algebra as in Definition \ref{def: ring of adeles}. Using the restricted product, we generalize the notion of adelic points on a variety $X$ to adelic $M$-points on a pair $(X,M)$.
\begin{definition} \label{def: adelic M-points}
    Let $(K,C)$ be a PF field, let $T\subset \Omega_K$ be a finite sets of places, and let $B\subset C$ be an open subscheme. Let $(X,M)$ be a pair over $(K,C)$ with integral model $(\cX,\cM)$ over $B$. We define the space of \textit{integral adelic $\cM$-points over $B$ prime to $T$} to be the product $$(\cX,\cM)(\mathbf{A}_B^T)=\prod_{v \in \Omega_K\setminus T} (\cX, \cM)(\cO_v)$$
    with the product topology.
    The space of \textit{adelic $M$-points over $B$ prime to $T$} is defined as the restricted product
    $$(X,M)(\mathbf{A}_K^T)=\prod_{v \in \Omega_K\setminus T} ((X,M)(K_v),(\cX, \cM)(\cO_v)).$$
\end{definition}
In Appendix \ref{appendix} we will generalize these notions further in order to compare them to the adelic points considered in \cite{MiNaSt22}.
While the space of integral adelic $\cM$-points depends on the choice of an integral model, even as a set, the space of adelic $M$-points does not depend on such a choice.
\begin{proposition} \label{prop: independence model}
	If $(X,M)$ is a pair over $(K,C)$ with integral models $(\cX,\cM)$ and $(\cX',\cM')$ over $B\subset C$, and $T\subset \Omega_K$ is a finite set of places, then there is a canonical homeomorphism
        $$\prod_{v \in \Omega_K\setminus T} ((X,M)(K_v),(\cX, \cM)(\cO_v))\rightarrow \prod_{v \in \Omega_K\setminus T} ((X,M)(K_v),(\cX', \cM')(\cO_v)).$$
\end{proposition}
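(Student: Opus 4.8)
The plan is to exploit Proposition \ref{prop: spread out models}, which says that any two integral models $(\cX,\cM)$ and $(\cX',\cM')$ of $(X,M)$ over $B$ become equivalent after restriction to some nonempty open $B'\subseteq B$. Concretely, there is an isomorphism $f\colon \cX_{B'}\xrightarrow{\sim}\cX'_{B'}$ identifying the $\cD_{\alpha,B'}$ with the $\cD'_{\alpha,B'}$, so that $(\cX,\cM)(\cO_v)=(\cX',\cM')(\cO_v)$ as subsets of $X(K_v)$ for every closed point $v\in B'$. Write $T'=T\cup(B\setminus B')$, which is still a finite set of places since $B\setminus B'$ is a finite set of closed points. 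For every $v\in\Omega_K\setminus T'$ we then have an honest equality $(\cX,\cM)(\cO_v)=(\cX',\cM')(\cO_v)$ inside $X(K_v)$ (and likewise the ``completed'' sets $(\cX,\cM)(\cO_v)=(X,M)(K_v)=(\cX',\cM')(\cO_v)$ for $v\notin B$). Hence the two restricted products
$$\prod_{v\in\Omega_K\setminus T'}\bigl((X,M)(K_v),(\cX,\cM)(\cO_v)\bigr)\quad\text{and}\quad\prod_{v\in\Omega_K\setminus T'}\bigl((X,M)(K_v),(\cX',\cM')(\cO_v)\bigr)$$
are literally the same topological space, since the restricted product in Definition \ref{def: restricted product} depends only on the pairs of spaces $(X_i,U_i)$.

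The remaining task is to bridge the gap between ``prime to $T$'' and ``prime to $T'$'', i.e.\ to reinsert the finitely many places in $T'\setminus T$. Using the natural homeomorphism from Section \ref{subsection: restricted products and adeles}, for any finite $J\subset\Omega_K\setminus T$ we have
$$\prod_{v\in\Omega_K\setminus T}(X_v,U_v)\;\cong\;\prod_{v\in J}X_v\times\prod_{v\in\Omega_K\setminus(T\cup J)}(X_v,U_v),$$
applied with $J=T'\setminus T$, $X_v=(X,M)(K_v)$ and $U_v=(\cX,\cM)(\cO_v)$. So both $(X,M)(\mathbf{A}_K^T)$ for the model $(\cX,\cM)$ and for the model $(\cX',\cM')$ decompose as $\prod_{v\in T'\setminus T}(X,M)(K_v)$ times the corresponding restricted product prime to $T'$. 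Since the finite factors $\prod_{v\in T'\setminus T}(X,M)(K_v)$ are identical and, by the previous paragraph, the two restricted products prime to $T'$ are identical, we obtain the desired homeomorphism. Composing these identifications gives a canonical homeomorphism independent of the auxiliary choice of $B'$ (any two choices of $B'$ can be compared via a common refinement, and all the identifications are the ``obvious'' ones coming from Proposition \ref{prop: spread out models} and the functoriality of the restricted product under shrinking the index set).

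The only mildly delicate point is canonicity: Proposition \ref{prop: spread out models} produces an isomorphism $f$ over some $B'$, but $f$ itself is non-canonical. However, the \emph{map on points} it induces is canonical on the nose: for $v\in B'$ the equality $(\cX,\cM)(\cO_v)=(\cX',\cM')(\cO_v)$ holds as subsets of the fixed set $X(K_v)$ (using the valuative criterion of properness, both are the set of $P\in X(K_v)$ whose canonical lift has the prescribed intersection multiplicities, and these agree because $f$ identifies the boundary divisors and intersection multiplicity is preserved by isomorphism, cf.\ the argument of Proposition \ref{prop: inverse pair}). Thus no choices actually enter the final identification of subsets of $X(K_v)$, and the homeomorphism is canonical. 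I expect this bookkeeping — tracking that everything reduces to literal equality of subsets of the fixed sets $X(K_v)$ — to be the only real content; the rest is a formal manipulation of restricted products via Proposition \ref{prop: inclusions restricted product} and the homeomorphism recalled in Section \ref{subsection: restricted products and adeles}.
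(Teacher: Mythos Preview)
Your proof is correct and follows essentially the same approach as the paper: invoke Proposition~\ref{prop: spread out models} to obtain a nonempty open $B'\subseteq B$ over which the two models are equivalent, then use the natural homeomorphism of Section~\ref{subsection: restricted products and adeles} to split off the finitely many bad places $S':=B\setminus(B'\cup T)$ (your $T'\setminus T$) as an unrestricted product of $(X,M)(K_v)$'s, leaving the two restricted products literally equal. Your additional paragraph on canonicity is more detailed than the paper's treatment, but the substance of the argument is the same.
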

\begin{proof}
	By Proposition \ref{prop: spread out models} there exists a nonempty open $B'\subseteq B$ over which $(\cX,\cM)$ and $(\cX',\cM')$ are equivalent. Denote by $S'$ the set of places in $B\setminus (B'\cup T)$. 
	By the properties of the restricted product as recalled in Section \ref{subsection: restricted products and adeles},
	\begin{align*}
	&\prod_{v \in \Omega_K\setminus T} ((X,M)(K_v),(\cX, \cM)(\cO_v)) \\ &\cong\prod_{v\in S'} (X,M)(K_v) \times \prod_{v\in \Omega_{K}\setminus (T\cup S')} ((X,M)(K_v), (\cX, \cM)(\cO_v))
	\\& \cong\prod_{v \in \Omega_K\setminus T} ((X,M)(K_v),(\cX', \cM')(\cO_v))
	\end{align*}
\end{proof}
In particular, $(X,M)(\mathbf{A}_K^T)$ is well-defined for any pair $(X,M)$, because every pair has an integral model.
\begin{example}
    If $\fM=\mathbb{N}^{\cA}$, then $$(\cX,\cM)(\mathbf{A}_B^T)=(X,M)(\mathbf{A}_K^T)=\prod_{v\in \Omega_K\setminus T} U(K_v),$$
    where $U=X\setminus \bigcup_{\alpha\in \cA} D_\alpha$.
\end{example}
\begin{example}
    If $\fM=\{(0,\dots,0)\}$, then the space of integral adelic points on $\cU$ (defined in \cite[page 2]{LoSa16} as $S$-adelic points) is the space of integral adelic $\cM$-points on $\cX$: 
    $$\cU(\mathbf{A}_B^T):=(\cX,\cM)(\mathbf{A}_B^T)=\prod_{v\in \Omega_K\setminus S\cup T}  \mathcal{U}(\mathcal{O}_v)\times \prod_{v\in S\setminus T} U(K_v),$$
    where we write $S=\Omega_K\setminus B$, $U=X\setminus \bigcup_{\alpha\in \cA} D_\alpha$ and $\cU=\cX\setminus \bigcup_{\alpha\in \cA} \cD_\alpha$.
    The space of adelic points on $U$ as in \cite[page 2]{LoSa16} is the space of adelic $M$-points:
    $$U(\mathbf{A}_K^T):=(X,M)(\mathbf{A}_K^T)=\prod_{v\in \Omega_K\setminus S\cup T} \left(U(K_v), \mathcal{U}(\mathcal{O}_v)\right)\times \prod_{v\in S\setminus T} U(K_v).$$
\end{example}

Integral adelic $\cM$-points also generalize the notion of adelic Campana points given in \cite{PSTV20,NaSt20}: if $\cM$ encodes the Campana condition for a divisor $\cD_{\mathbf{m}}$ as in Definition \ref{def: Campana points and Darmon points}, then we have an equality of topological spaces
$$(\cX,\cM)(\mathbf{A}_B^T)= (\cX,\cD_{\mathbf{m}})(\mathbf{A}_K^T),$$
where the right hand side is defined as in \cite[Section 3.2]{PSTV20} and \cite[Definition 2.4]{NaSt20}.

Given inclusions $(X,M)\subseteq (X,M')$, the natural injection $(X,M)(\mathbf{A}_K^T)\hookrightarrow (X,M')(\mathbf{A}_K^T)$ is continuous but it need not be a topological embedding. This is because generally the restricted product topology is strictly finer than the subspace topology on the product. This map does have dense image, as the next proposition shows.
\begin{proposition} \label{prop: inclusion M}
	Let $(X,M)\subseteq (X,M')$ be pairs over $(K,C)$ and let $(\cX, \cM)$ be an integral model of the former pair over an open subscheme $B\subset C$, and let $T\subset \Omega_K$ be a finite set of places. Then:
	\begin{enumerate}
		\item The natural inclusion $(X,M)(\mathbf{A}_K^T)\hookrightarrow (X,M')(\mathbf{A}_K^T)$ is continuous. Furthermore, it has dense image if for all $v\in \Omega_K\setminus T$, the subset $(X,M)(K_v)\subset (X,M')(K_v)$ is dense and $(X,M)(\mathbf{A}_K^T)\neq \emptyset$. The former assumption is automatic if $X$ is smooth and none of the $D_\alpha$ contain irreducible components of $X$.
		\item The natural inclusion $(\cX,\cM)(\mathbf{A}_B^T)\hookrightarrow (X,M)(\mathbf{A}_K^T)$ is a topological embedding. Furthermore, if $(\cX,\cM)(\cO_v)\subset (X,M)(K_v)$ is open for all $v\in B$, then the inclusion is an open embedding.
	\end{enumerate}
\end{proposition}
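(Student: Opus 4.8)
The plan is to deduce both statements from Proposition~\ref{prop: inclusions restricted product}, after putting the integral models into a convenient position. By Proposition~\ref{prop: independence model} neither $(X,M)(\mathbf{A}_K^T)$ nor $(X,M')(\mathbf{A}_K^T)$ depends on the chosen integral model up to canonical homeomorphism, so, using the definition of $(X,M)\subseteq (X,M')$ together with Proposition~\ref{prop: spread out models}, I would first pass to a common nonempty open $B\subseteq C$ and integral models $(\cX,\cM)$ of $(X,M)$ and $(\cX',\cM')$ of $(X,M')$ over $B$ such that
$$(\cX,\cM)(\cO_v)\ \subseteq\ (\cX',\cM')(\cO_v)\ \subseteq\ X(K_v)\qquad \text{for all } v\in\Omega_K.$$
I would also record the two inclusions $(\cX,\cM)(\cO_v)\subseteq (X,M)(K_v)$ and $(\cX',\cM')(\cO_v)\subseteq (X,M')(K_v)$: for $v\notin B$ these are equalities by definition, and for $v\in B$ they hold because $\mult_{K_v}(P)$ is the image of $\mult_v(\cP)$ under the operation $\mathbf{m}\mapsto\mathbf{m}'$ of Definition~\ref{def: pair}, so that the closure condition on $\fM$ forces $\mult_v(\cP)\in\fM\Rightarrow\mult_{K_v}(P)\in\fM$, as was already noted after Definition~\ref{def: pair}.

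For part~(1), write $X_v=(X,M')(K_v)$, $Y_v=(X,M)(K_v)$, $Z_v=(\cX,\cM)(\cO_v)$ and $Z_v'=(\cX',\cM')(\cO_v)$, so that $Z_v\subseteq Y_v\subseteq X_v$ and $Z_v\subseteq Z_v'\subseteq X_v$ by the first paragraph. The inclusion of adelic $M$-points then factors as the composite of
$$\textstyle\prod_{v\in\Omega_K\setminus T}(Y_v,Z_v)\ \hookrightarrow\ \prod_{v\in\Omega_K\setminus T}(X_v,Z_v)\ \hookrightarrow\ \prod_{v\in\Omega_K\setminus T}(X_v,Z_v').$$
By Proposition~\ref{prop: inclusions restricted product}(2) the first arrow is a topological embedding and by Proposition~\ref{prop: inclusions restricted product}(1) the second is continuous, so the composite is continuous; moreover the second arrow has dense image as soon as $\prod_v(X_v,Z_v)$ is nonempty, which follows from $(X,M)(\mathbf{A}_K^T)=\prod_v(Y_v,Z_v)\neq\emptyset$. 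To obtain the dense-image assertion it then remains to check that the first arrow has dense image when each $Y_v$ is dense in $X_v$; this is the argument already used in the proof of Proposition~\ref{prop: inclusions restricted product}(1) --- a nonempty basic open of $\prod_v(X_v,Z_v)$ contains a box $\prod_v V_v$ with $V_v$ open in $X_v$ at finitely many $v$ and $V_v=Z_v$ otherwise, and one picks a point of $Y_v\cap V_v$ at the former places and of $Z_v\subseteq Y_v$ at the latter --- after which the composite has dense image because the second, continuous, dense-image arrow carries the dense image of the first onto a dense subset. Finally, when $X$ is smooth and no $D_\alpha$ contains an irreducible component of $X$, the complement $U=X\setminus\bigcup_\alpha D_\alpha$ is dense open in $X$, $U(K_v)$ is dense in $X(K_v)$ (the standard density statement for a smooth variety over the complete field $K_v$, which \'etale-locally reduces to density of the complement of a proper closed subset in $\mathbb{A}^d(K_v)$), and $U(K_v)\subseteq (X,M)(K_v)\subseteq (X,M')(K_v)\subseteq X(K_v)$ since $(0,\dots,0)\in\fM$, so $(X,M)(K_v)$ is indeed dense in $(X,M')(K_v)$.

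For part~(2), I would first observe that, since the ``large'' and ``small'' factors of the restricted product coincide, the space $(\cX,\cM)(\mathbf{A}_B^T)=\prod_{v\in\Omega_K\setminus T}(\cX,\cM)(\cO_v)$ with its product topology is nothing but the restricted product $\prod_{v\in\Omega_K\setminus T}\bigl((\cX,\cM)(\cO_v),(\cX,\cM)(\cO_v)\bigr)$. Its inclusion into $(X,M)(\mathbf{A}_K^T)=\prod_{v\in\Omega_K\setminus T}\bigl((X,M)(K_v),(\cX,\cM)(\cO_v)\bigr)$ is then exactly the map of Proposition~\ref{prop: inclusions restricted product}(2) with $X_i=(X,M)(K_v)$ and $Y_i=Z_i=(\cX,\cM)(\cO_v)$, hence a topological embedding; it is an open embedding once $(\cX,\cM)(\cO_v)$ is open in $(X,M)(K_v)$ for all $v\in\Omega_K\setminus T$, and since for $v\notin B$ these two sets coincide it suffices to impose this for $v\in B$, as in the statement.

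The one genuinely non-formal ingredient is the density of $U(K_v)$ in $X(K_v)$ for $X$ smooth over a possibly non-local complete field $K_v$ (the delicate case being a function field over an infinite constant field); I expect this to be the step needing the most care, although it is a standard consequence of smoothness and the infinitude of $K_v$. The bookkeeping in the first paragraph --- arranging all the $v$-adic inclusions to hold simultaneously via spreading out --- is routine given Propositions~\ref{prop: independence model} and~\ref{prop: spread out models}.
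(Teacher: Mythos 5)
Your proof is correct and follows essentially the same route as the paper, which disposes of both parts by appealing to Proposition \ref{prop: inclusions restricted product} and to the density of $U(K_v)$ in $X(K_v)$ for smooth $X$ (Proposition \ref{prop: Open subset dense}). The only difference is one of detail: you make explicit that part (1) is a composite of the two cases of Proposition \ref{prop: inclusions restricted product} (changing the small spaces and then the large ones) rather than literally a single special case, and you re-derive the density statement instead of citing it.
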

\begin{proof}
    These statements follow from general properties of restricted products and are a special case of Proposition \ref{prop: inclusions restricted product}. If $X$ is connected and none of the $D_\alpha$ contain irreducible components of $X$, then $(X,M_{\fin})(K_v)\subset X(K_v)$ is dense by Proposition \ref{prop: Open subset dense}, proved below.
\end{proof}

\begin{proposition} \label{prop: Open subset dense}
    Let $X$ be a connected smooth variety over a PF field $(K,C)$ and let $U\subset X$ be a nonempty open subset. Then $U(K_v)\subset X(K_v)$ is dense for all places $v\in \Omega_K$. Therefore any nonempty analytic open $V\subset X(K_v)$ is Zariski dense.
\end{proposition}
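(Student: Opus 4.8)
The plan is to deduce the final assertion formally from the first one, so I would first prove that for every nonempty Zariski-open $U\subseteq X$ the set $U(K_v)$ is $v$-adically dense in $X(K_v)$. Indeed, once this is known, suppose some nonempty $v$-adic open $V\subseteq X(K_v)$ is not Zariski dense; letting $Y\subsetneq X$ be its Zariski closure and $U=X\setminus Y$, the set $U(K_v)=X(K_v)\setminus Y(K_v)$ is dense and hence meets $V$, which is absurd since $V\subseteq Y(K_v)$.

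For the density statement I would argue as follows. A connected smooth $K$-scheme is integral and equidimensional, so $X$ is integral; write $n=\dim X$, and assume $X(K_v)\neq\emptyset$ (otherwise there is nothing to prove). Fix $P\in X(K_v)$ and a $v$-adic open neighbourhood $\mathcal{W}\subseteq X(K_v)$ of $P$; we must show $\mathcal{W}\cap U(K_v)\neq\emptyset$. Since $X$ is smooth of dimension $n$, the image point of $P$ admits an affine Zariski-open neighbourhood $W$ together with an \'etale morphism $\phi\colon W\to\mathbb{A}^n_K$; after shrinking we may assume $\mathcal{W}\subseteq W(K_v)$. By the implicit function theorem over $K_v$ (for finite $v$ this is Hensel's lemma applied to a standard-\'etale presentation of $\phi$; for archimedean $v$ it is classical), $\phi$ induces a local homeomorphism $W(K_v)\to K_v^n$, so $\phi(\mathcal{W})$ is a nonempty $v$-adic open subset of $K_v^n$.

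Now the crux: since $K_v$ is nondiscrete, hence infinite, no nonzero polynomial over $K_v$ vanishes on a nonempty $v$-adic open subset of $K_v^n$ (reduce to a product of open balls and induct on $n$), so $\phi(\mathcal{W})$ is Zariski dense in $\mathbb{A}^n_K$. I would transfer this back through $\phi$: letting $Y'\subseteq W$ be the Zariski closure of $\mathcal{W}$, the morphism $\phi|_{Y'}$ has Zariski-dense image in the irreducible space $\mathbb{A}^n_K$, so some irreducible component $Y'_0$ of $Y'$ dominates $\mathbb{A}^n_K$; as $\phi$ is quasi-finite this forces $\dim Y'_0=n$, and since $W$ is irreducible of dimension $n$ we conclude $Y'_0=W$ and hence $Y'=W$. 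Thus $\mathcal{W}$ is Zariski dense in $W$; as $W\cap U$ is a nonempty Zariski-open subset of the irreducible $W$, the dense subset $\mathcal{W}$ must meet $(W\cap U)(K_v)\subseteq U(K_v)$, which completes the proof.

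The only genuinely non-formal input is the passage from the \'etale algebraic map $\phi$ to a local homeomorphism on $K_v$-points, i.e.\ the implicit function theorem over $K_v$; everything else (the polynomial vanishing lemma, the quasi-finite dimension count, and the reduction of the ``therefore'') is bookkeeping.
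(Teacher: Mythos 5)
Your proof is correct and follows essentially the same route as the paper: both reduce to affine space via the fact that an \'etale map to $\mathbb{A}^n_K$ induces a local homeomorphism on $K_v$-points (the paper cites \cite[Lemma 5.3]{Con12} for exactly this), and both conclude from the fact that a nonzero polynomial cannot vanish on a nonempty $v$-adic open subset of $K_v^n$. The only differences are cosmetic — the paper kills the polynomial by restricting to lines rather than inducting on boxes, and pushes $Z$ forward into a hypersurface of $\mathbb{A}^n$ instead of pulling Zariski density back through $\phi$ as you do.
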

\begin{proof}
Consider the complement $Z=X\setminus U$. Since $X$ is smooth, \cite[Lemma 5.3]{Con12} allows us to reduce to when $X=\mathbb{A}^n_{K}$ and $Z$ is contained in the zero locus of a single nonzero polynomial $P$. If $X(K_v)=K_v^n$ contained a nonempty open $V$ such that $V\subset Z(K_v)$, then $P$ vanishes identically on $V$. For any point $u\in V$ and a line $L\subset K_v^n$ through $x$, $P$ vanishes identically on $L$ since $L\cap V$ is infinite and any nonzero univariate polynomial only has finitely many zeroes. Since though any two points in $K_v^n$ there exists a line through these points, this implies that $P$ vanishes on $K_v^n$ and therefore is the zero polynomial, which is a contradiction. Thus $Z(K_v)$ contains no nonempty open sets in $X(K_v)$, so $U(K_v)\subset X(K_v)$ is dense.

Suppose $X$ is connected and $V\subset X(K_v)$ is an analytic open. By the previous part, $V$ cannot lie in a proper closed subscheme of $X$, and therefore $V$ is Zariski dense in $X$.
\end{proof}
\begin{example}
If $M\subseteq M'$, then Proposition \ref{prop: inclusion M} implies that the natural inclusion $(X,M)(\mathbf{A}_K^T)\rightarrow (X,M')(\mathbf{A}_K^T)$ has dense image. The analogous statement for integral adelic $\cM$-points is not always true, however. Consider $\cX= \P^1_\mathbb{Z}$, $M=((1:0), \{0\})$ and $M'=((1:0), \{0,\infty\})$. Then the set of points in $(\cX,\cM^c)(\Z_p)=\{(a:1)\mid a\in \mathbb{Z}_p\}$ is a closed subset of $\P^1_{\Z}(\Z_p)$ which is strictly smaller than $(\cX,\cM'^c)(\Z_p)=\{(a:1)\mid a\in \mathbb{Z}_p\}\cup \{(1:0)\}$.
\end{example}

\subsection{\texorpdfstring{$M$}{M}-approximation}
\label{section: M-approximation}
We now generalize the notion of strong approximation to $\cM$-points.
\begin{definition} \label{Def: M-approx}
	Let $(K,C)$ be a PF-field, let $T\subset \Omega_K$ be a finite set of places, let $(X,M)$ be a pair over $(K,C)$ with integral model $(\cX,\cM)$ over $B\subset C$. Then we say that $(\cX,\cM)$ satisfies \textit{integral $\cM$-approximation off $T$} if
	$$(\cX,\cM)(B)\hookrightarrow (\cX,\cM)(\mathbf{A}_B^T)$$ has dense image, and we say that $(X,M)$ satisfies \textit{$M$-approximation off $T$} if
	$$(X,M)(K)\hookrightarrow (X,M)(\mathbf{A}_K^T)$$ has dense image. We say that $(X,M)$ satisfies $M$-approximation if it satisfies $M$-approximation off $T=\emptyset$.
\end{definition}
Note that these notions only depend on the equivalence classes of $(\cX,\cM)$ and $(X,M)$.

If we take $(X,M)$ to encode the integrality condition of an open subset $U\subset X$, then we recover strong approximation as in \cite[\S 2.6.4.5]{Poo17}.
\begin{definition}
The open subset $U\subset X$ satisfies \textit{strong approximation off $T$} if
$(X,M)$ satisfies $M$-approximation off $T$ with $\fM=\{(0,\dots, 0)\}$, where $U=X\setminus \bigcup_{\alpha\in\cA} D_{\alpha}$. Explicitly this says that
$$U(K)\hookrightarrow U(\mathbf{A}_K^T)$$
has dense image. If $\cM=0$, then we also say that $X=U$ satisfies \textit{weak approximation off $T$}.
\end{definition}

\begin{example}
If $\cM$ is the Campana condition for $\cD_{\mathbf{m}}$ as defined in Definition \ref{def: Campana points and Darmon points}, then integral $\cM$-approximation for $(\cX,\cM)$ coincides with weak Campana approximation for $(\cX,\cD_{\mathbf{m}})$ as studied in \cite{NaSt20}. 
\end{example}

Now we will relate the different notions of approximation to each other. The next proposition shows that integral $\cM$-approximation is equivalent to integral $\cM_\fin$-approximation, provided that the $\cM$-points in the boundary lie in the closure of the set of $\cM_\fin$-points. 
\begin{proposition} \label{prop: relation M and Mfin}
    Let $T\subset \Omega_K$ be a finite set of places, let $(X,M)$ be a pair over $(K,C)$ with integral model $(\cX,\cM)$ over $B\subset C$.
    \begin{enumerate}
        \item If $(X,M)$ satisfies $M$-approximation off $T$, then $(X,M_\fin)$ satisfies $M_\fin$-approximation off $T$. If for every place $v\in \Omega_K\setminus T$, the subset $(X,M_\fin)(K_v)\subset (X,M)(K_v)$ is dense, then the converse also holds.
        \item If $(\cX,\cM)$ satisfies integral $\cM$-approximation off $T$, then $(\cX,\cM_\fin)$ satisfies $\cM_\fin$-approximation off $T$. If for every place $v\in \Omega_K\setminus T$, the subset $(\cX,\cM_\fin)(\cO_v)\subset (\cX,\cM)(\cO_v)$ is dense, then the converse also holds.
    \end{enumerate}
\end{proposition}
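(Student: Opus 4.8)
The statement is essentially a formal consequence of Proposition \ref{prop: inclusions restricted product} applied to the chain of subspaces $(\cX,\cM_\fin)(\cO_v) \subseteq (\cX,\cM)(\cO_v)\subseteq (X,M)(K_v)$, together with the observation that a rational point is an $\cM_\fin$-point if and only if it is an $\cM$-point not lying in the boundary, so $(X,M_\fin)(K)=(X,M)(K)\cap(X,M_\fin)(\mathbf{A}_K^T)$ and likewise $(\cX,\cM_\fin)(B)=(\cX,\cM)(B)\cap (\cX,\cM_\fin)(\mathbf{A}_B^T)$ as sets. I would prove parts (1) and (2) in parallel since the arguments are identical with $K_v$ replaced by $\cO_v$ and $\mathbf{A}_K^T$ by $\mathbf{A}_B^T$; let me describe part (1).

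\textbf{Forward direction.} Suppose $(X,M)$ satisfies $M$-approximation off $T$, so $(X,M)(K)$ is dense in $(X,M)(\mathbf{A}_K^T)$. By Proposition \ref{prop: inclusions restricted product}(2), the inclusion $(X,M_\fin)(\mathbf{A}_K^T)\hookrightarrow (X,M)(\mathbf{A}_K^T)$ is a topological embedding, so it suffices to show $(X,M_\fin)(K)$ is dense in the open-or-not subspace $(X,M_\fin)(\mathbf{A}_K^T)$. Given a nonempty open $W\subseteq (X,M_\fin)(\mathbf{A}_K^T)$, the embedding gives a nonempty open $\widetilde W\subseteq (X,M)(\mathbf{A}_K^T)$ with $\widetilde W\cap (X,M_\fin)(\mathbf{A}_K^T)=W$; by hypothesis there is a rational point $P\in (X,M)(K)\cap \widetilde W$. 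It remains to check that $P$ actually lies in $(X,M_\fin)(\mathbf{A}_K^T)$, hence in $W$: but $P$, viewed adelically, has the \emph{same} local multiplicities at every $v$ as $P$ viewed at any single place, and since $P\in\widetilde W\cap(X,M_\fin)(\mathbf{A}_K^T)$-nonempty forces... — more carefully, the point is that $W$ nonempty means $\widetilde W$ meets $(X,M_\fin)(\mathbf{A}_K^T)$, and since $(X,M_\fin)(\mathbf{A}_K^T)$ is exactly the locus of adelic points whose local components avoid the boundary at all but finitely many places, while a rational point's boundary behavior is constant across places, I would instead argue directly: the natural map $(X,M)(K)\to(X,M)(\mathbf{A}_K^T)$ sends $(X,M_\fin)(K)$ into $(X,M_\fin)(\mathbf{A}_K^T)$ and sends $(X,M)(K)\setminus (X,M_\fin)(K)$ into its complement, so density of $(X,M)(K)$ in $(X,M)(\mathbf{A}_K^T)$ restricts to density of $(X,M_\fin)(K)$ in the subspace $(X,M_\fin)(\mathbf{A}_K^T)$ (intersect a dense subset of a space with an arbitrary subspace $Y$: the part landing in $Y$ is dense in $Y$ only after using that the ambient dense set is partitioned compatibly — here it is, because the complement of $(X,M_\fin)(\mathbf{A}_K^T)$ contains no rational points except those of $(X,M)(K)\setminus(X,M_\fin)(K)$, and one checks an open $W\subseteq(X,M_\fin)(\mathbf{A}_K^T)$ extends to an open $\widetilde W$ of the ambient space meeting no boundary-rational-points by shrinking, using the embedding).

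\textbf{Reverse direction.} Now assume $(X,M_\fin)$ satisfies $M_\fin$-approximation off $T$ and that $(X,M_\fin)(K_v)$ is dense in $(X,M)(K_v)$ for every $v\in\Omega_K\setminus T$. By Proposition \ref{prop: inclusions restricted product}(1) applied with $Z_v=(X,M_\fin)(K_v)\subseteq Y_v=(X,M)(K_v)=X_v$ — noting the nonemptiness hypothesis $(X,M_\fin)(\mathbf{A}_K^T)\neq\emptyset$ follows from $M_\fin$-approximation plus $(X,M_\fin)(K)\neq\emptyset$ (if the latter is empty both sides of the claimed equivalence hold vacuously, as $(X,M_\fin)(\mathbf{A}_K^T)$ would then also be empty) — the inclusion $(X,M_\fin)(\mathbf{A}_K^T)\hookrightarrow(X,M)(\mathbf{A}_K^T)$ has dense image. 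Since $(X,M_\fin)(K)$ is dense in $(X,M_\fin)(\mathbf{A}_K^T)$ by assumption, and $(X,M_\fin)(K)\subseteq (X,M)(K)$, the composite $(X,M)(K)\supseteq (X,M_\fin)(K)\hookrightarrow (X,M_\fin)(\mathbf{A}_K^T)\hookrightarrow (X,M)(\mathbf{A}_K^T)$ has dense image, which is exactly $M$-approximation off $T$. Part (2) is proved identically, replacing Proposition \ref{prop: inclusions restricted product} applications with the spaces $(\cX,\cM_\fin)(\cO_v)\subseteq(\cX,\cM)(\cO_v)$ and using that $(\cX,\cM_\fin)(B)\subseteq(\cX,\cM)(B)$.

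\textbf{Main obstacle.} The only delicate point is the forward direction: density of $(X,M)(K)$ in the full adelic $M$-point space does not automatically restrict to density of $(X,M_\fin)(K)$ in the \emph{subspace} $(X,M_\fin)(\mathbf{A}_K^T)$, since the restricted-product topology is finer than any product topology and a general subset of a space need not inherit density of a dense subset. The resolution is precisely Proposition \ref{prop: inclusions restricted product}(2): $(X,M_\fin)(\mathbf{A}_K^T)\hookrightarrow(X,M)(\mathbf{A}_K^T)$ is a topological \emph{embedding}, so its topology is the subspace topology, and then one uses the set-theoretic fact that a rational point lies in $(X,M_\fin)(\mathbf{A}_K^T)$ if and only if it lies in $(X,M_\fin)(K)$ — because for a rational point the local multiplicity $\mathrm{mult}_v$ records boundary membership uniformly in $v$ — to conclude that intersecting the dense set $(X,M)(K)$ with the embedded subspace yields exactly $(X,M_\fin)(K)$, which is therefore dense in it.
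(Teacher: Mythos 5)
Your reverse directions are fine and agree with the paper's: Proposition \ref{prop: inclusions restricted product}(1) gives dense image of $(X,M_\fin)(\mathbf{A}_K^T)\rightarrow(X,M)(\mathbf{A}_K^T)$ under the local density hypothesis, and composing dense maps finishes it. The gap is in the forward direction, exactly where you flagged the ``main obstacle'': you resolve it by asserting that $(X,M_\fin)(\mathbf{A}_K^T)\hookrightarrow(X,M)(\mathbf{A}_K^T)$ is a topological embedding via Proposition \ref{prop: inclusions restricted product}(2), and then concluding that the trace of the dense set $(X,M)(K)$ on this subspace is dense in it. Neither step holds. Proposition \ref{prop: inclusions restricted product}(2) applies to inclusions $\prod(Y_i,Z_i)\hookrightarrow\prod(X_i,Z_i)$ with the \emph{same} integral subspaces $Z_i$ on both sides; here the integral subspaces change from $(\cX,\cM_\fin)(\cO_v)$ to $(\cX,\cM)(\cO_v)$, and the paper explicitly warns just before Proposition \ref{prop: inclusion M} that such maps need not be embeddings, because the restricted product topology is finer than the subspace topology. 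Moreover, even granting the embedding, a dense subset of a space does not restrict to a dense subset of an arbitrary subspace — that requires the subspace to be open, or some substitute for openness, which you never supply.

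The missing ingredient, and the paper's actual one-line argument, is \emph{openness at each place}: $(X,M_\fin)(K_v)$ is open in $(X,M)(K_v)$ (it is the complement of the closed boundary), and $(\cX,\cM_\fin)(\cO_v)$ is open in $(\cX,\cM)(\cO_v)$ because $\fM_\fin=\fM\cap\mathbb{N}^{\cA}$ is open in $\fM$ and $\mult_v$ is continuous by Proposition \ref{prop: continuity multiplicity}. With this, the forward direction runs as follows: a nonempty open subset of $(X,M_\fin)(\mathbf{A}_K^T)$ contains a nonempty set $\prod_{v\in J}U_v\times\prod_{v\notin J}(\cX,\cM_\fin)(\cO_v)$ with $J$ finite and nonempty and each $U_v$ open in $(X,M_\fin)(K_v)$, hence open in $(X,M)(K_v)$; the set $\prod_{v\in J}U_v\times\prod_{v\notin J}(\cX,\cM)(\cO_v)$ is therefore open in $(X,M)(\mathbf{A}_K^T)$ and contains some $P\in(X,M)(K)$; since $P\in U_{v_0}\subseteq(X,M_\fin)(K_{v_0})$ for one $v_0\in J$, your (correct) observation that boundary membership of a rational point is uniform in $v$ places $P$ in $(X,M_\fin)(K)$ and in $(\cX,\cM_\fin)(\cO_v)$ for all $v\notin J$, i.e., in the original open set. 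So your set-theoretic input is right, but the topological lever is openness of the $M_\fin$-condition, not the embedding property; as written, the argument does not close.
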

\begin{proof}
    The first statement follows from the fact that for any place $v\in \Omega_K\setminus T$, $(X,M_\fin)(K_v)$ is open in $(X,M)(K_v)$. The converse statement follows from the fact that $(X,M_\fin)(\mathbf{A}_K^T)\rightarrow (X,M)(\mathbf{A}_K^T)$ has dense image by Proposition \ref{prop: inclusion M}.
    Now we prove the statements for integral $\cM$-approximation. Assume that $(\cX,\cM)$ satisfies integral $\cM$-approximation off $T$. Then in particular, for any place $v\in \Omega_K\setminus T$, $(\cX,\cM_{\fin})(B)$ is dense in $(\cX,\cM_\fin)(\cO_{v})\times (\cX,\cM)(\mathbf{A}_B^{T\sqcup \{v\}})$, since $(\cX,\cM_\fin)(\cO_{v})$ is open in $(\cX,\cM)(\cO_{v})$ by Proposition \ref{prop: continuity multiplicity}. Therefore every open $\prod_{v\in \Omega_K\setminus T} U_v\subset (\cX,\cM_{\fin})(\mathbf{A}_B^{T\sqcup \{v\}})$ contains an element in $(\cX,\cM_{\fin})(B)$, so $(\cX,\cM_{\fin})$ satisfies integral $\cM_{\fin}$-approximation off $T$.
    If $(\cX,\cM_\fin)(\cO_v)$ is a dense subset of $(\cX,\cM)(\cO_v)$ for all $v\in \Omega_K\setminus T$, then the inclusion map $(\cX,\cM_\fin)(\mathbf{A}_B^T)\rightarrow (\cX,\cM)(\mathbf{A}_B^T)$ has dense image. Therefore, if $(\cX,\cM_\fin)$ satisfies integral $\cM_\fin$-approximation off $T$, then $(\cX,\cM)$ satisfies integral $\cM$-approximation off $T$.
\end{proof}

Using the Proposition \ref{prop: relation M and Mfin}, we are able to relate integral $\cM$-approximation and $M$-approximation.
\begin{proposition} \label{prop: inclusion M approx}
Let $(X,M)$ be a pair over $(K,C)$ with an integral model $(\cX,\cM)$ over $B\subset C$. Let $T\subset\Omega_K\setminus B$ be a set of places disjoint from the points of $B$. If $(X,M)$ satisfies $M$-approximation off $T$, then $(\cX,\cM_{\fin})$ satisfies integral $\cM_{\fin}$-approximation off $T$.
\end{proposition}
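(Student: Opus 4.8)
The plan is to reduce to the case of finite multiplicities and then exploit that, for such multiplicities, the $\cM$-condition is \emph{open} locally, so the integral adelic $\cM$-points sit as an open subspace of the adelic $M$-points, across which density transports formally.

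First I would invoke Proposition~\ref{prop: relation M and Mfin}(1): $M$-approximation off $T$ for $(X,M)$ implies $M_{\fin}$-approximation off $T$ for $(X,M_{\fin})$. Since $(\cM_{\fin})_{\fin}=\cM_{\fin}$, this reduces the statement to the following: if $\fM\subseteq\mathbb N^{\cA}$ and $(X,M)$ satisfies $M$-approximation off $T$, then $(\cX,\cM)$ satisfies integral $\cM$-approximation off $T$. The key structural observation is then that such an $\fM$ is open in $\lineN^{\cA}$: indeed $\mathbb N$ is an open subspace of $\lineN$ carrying the discrete topology, so $\mathbb N^{\cA}$ is open and discrete in $\lineN^{\cA}$, whence every subset of it — in particular $\fM$ — is open in $\lineN^{\cA}$. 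By Proposition~\ref{prop: continuity multiplicity} it follows that $(\cX,\cM)(\cO_v)=\mult_v^{-1}(\fM)$ is open in $X(K_v)$, hence open in the subspace $(X,M)(K_v)$, for every finite place $v\in B$ (for $v\in\Omega_K\setminus B$ there is nothing to check, as $(\cX,\cM)(\cO_v)=(X,M)(K_v)$ by definition).

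Next I would apply Proposition~\ref{prop: inclusion M}(2) in light of this openness: the natural inclusion $(\cX,\cM)(\mathbf A_B^T)\hookrightarrow (X,M)(\mathbf A_K^T)$ is an \emph{open} topological embedding, so $(\cX,\cM)(\mathbf A_B^T)$ may be regarded as an open subspace of $(X,M)(\mathbf A_K^T)$. As $M$-approximation off $T$ says precisely that $(X,M)(K)$ is dense in $(X,M)(\mathbf A_K^T)$, intersecting this dense set with the open subspace $(\cX,\cM)(\mathbf A_B^T)$ produces a dense subset of $(\cX,\cM)(\mathbf A_B^T)$. To conclude I would identify this intersection with the diagonal image of $(\cX,\cM)(B)$: a point $P\in X(K)$ whose adelic image lies in $(\cX,\cM)(\mathbf A_B^T)$ has $\mult_v(\cP_v)\in\fM\subseteq\mathbb N^{\cA}$ for all $v\in B$, so every intersection multiplicity $n_v(\cD_\alpha,\cP_v)$ is finite; this forces $P\notin D_\alpha$ for all $\alpha$, whence $\mult_K(P)=0\in\fM$, so $P\in(X,M)(K)$ automatically and moreover $\cP_v\in(X,M)(K_v)$ at the places $v\in\Omega_K\setminus(B\cup T)$. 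Conversely, since $\cX$ is proper over the Dedekind scheme $B$ every $P\in X(K)$ extends uniquely to a $B$-section of $\cX$ by the valuative criterion and glueing, so $(\cX,\cM)(B)=\{P\in X(K):\mult_v(\cP_v)\in\fM\text{ for all }v\in B\}$; comparing the two descriptions gives $(X,M)(K)\cap(\cX,\cM)(\mathbf A_B^T)=(\cX,\cM)(B)$ inside $(\cX,\cM)(\mathbf A_B^T)$. Density of the left-hand side is then integral $\cM$-approximation off $T$, and undoing the first reduction completes the proof.

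I expect the one non-formal point to be the observation in the second paragraph — namely that passing to $\fM_{\fin}$ makes the local $\cM$-condition open, regardless of whether the original $\fM$ was open. This is exactly what upgrades Proposition~\ref{prop: inclusion M}(2) from a topological embedding to an open embedding, and hence what makes the ``intersect a dense set with an open set'' step legitimate; without it that step collapses. The remaining ingredients — the reduction to $M_{\fin}$, the restricted-product bookkeeping, and the identification of $(\cX,\cM)(B)$ with the rational points that are $\cO_v$-integral $\cM$-points for all $v\in B$ — are routine.
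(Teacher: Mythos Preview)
Your argument is correct and follows essentially the same route as the paper's proof: reduce to $M_{\fin}$ via Proposition~\ref{prop: relation M and Mfin}, observe that $\fM_{\fin}\subset\mathbb N^{\cA}$ is open so that Proposition~\ref{prop: inclusion M}(2) yields an \emph{open} embedding $(\cX,\cM_{\fin})(\mathbf A_B^T)\hookrightarrow (X,M_{\fin})(\mathbf A_K^T)$, and then intersect the dense set $(X,M_{\fin})(K)$ with this open subspace. You spell out in more detail than the paper why the intersection is exactly $(\cX,\cM_{\fin})(B)$ (using finiteness of all $n_v$ to force $P\notin D_\alpha$ and hence $\mult_L(P)=0$ at the remaining places), but this is the same underlying mechanism.
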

\begin{proof}
By Proposition \ref{prop: relation M and Mfin}, $(X,M_\fin)$ satisfies $M_\fin$-approximation off $T$. By Proposition \ref{prop: inclusion M}, the inclusion  $(\cX,\cM_\fin)(\mathbf{A}_B^T)\hookrightarrow (X,M_\fin)(\mathbf{A}_K^T)$ is an open embedding, and $(X,M_\fin)(K)\cap (\cX,\cM_\fin)(\mathbf{A}_B^T)=(\cX,\cM_\fin)(B)$. Since $(X,M_\fin)(K)$ is dense in $(X,M_\fin)(\mathbf{A}_K^T)$, $(\cX,\cM_\fin)(B)$ is dense in $(\cX,\cM_\fin)(\mathbf{A}_B^T)$ as the intersection of a dense set with an open subset is dense in the open subset.
\end{proof}

The following proposition is an analog of \cite[Proposition 3.22]{MiNaSt22} (and a generalization of \cite[Lemma 2.8]{NaSt20}). It is a partial converse to Proposition \ref{prop: inclusion M approx} and that states if integral $\cM$-approximation on $(\cX,\cM)$ is preserved under restricting the base $B$, then $(X,M)$ satisfies $M$-approximation.
\begin{proposition}
    Let $(X,M)$ be a pair over $(K,C)$ with integral model $(\cX,\cM)$ over $B\subset C$ and let $T\subset \Omega_K$ be a finite set of places. If for every nonempty open subscheme $B'\subset B$, $(\cX,\cM)_{B'}$ satisfies integral $\cM_{B'}$-approximation off $T$, then $(X,M)$ satisfies $M$-approximation off $T$. If furthermore, $\fM\subset \lineN^\cA$ is open, then the converse also holds.
\end{proposition}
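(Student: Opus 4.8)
The statement has two directions. For the forward direction, suppose that $(\cX,\cM)_{B'}$ satisfies integral $\cM_{B'}$-approximation off $T$ for every nonempty open $B'\subseteq B$. I want to show $(X,M)(K)$ is dense in $(X,M)(\mathbf{A}_K^T)$. An open neighbourhood of a point in the restricted product $(X,M)(\mathbf{A}_K^T)=\prod_{v\in\Omega_K\setminus T}\bigl((X,M)(K_v),(\cX,\cM)(\cO_v)\bigr)$ is, by the definition of the restricted-product topology, contained in one of the form $\prod_{v\in S}W_v\times\prod_{v\notin S\cup T}(\cX,\cM)(\cO_v)$ for a finite set $S\subseteq\Omega_K\setminus T$, with $W_v\subseteq (X,M)(K_v)$ open. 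Shrinking $B$ to a nonempty open $B'\subseteq B$ that avoids the finitely many finite places in $S$ (possible since $C\setminus B$ is finite and $S$ is finite), the places of $S$ all lie in $\Omega_K\setminus B'$, so $(\cX,\cM)(\cO_v)=(X,M)(K_v)$ for $v\in S$ and the given open set contains $\prod_{v\in S}W_v\times\prod_{v\in B'}(\cX,\cM)(\cO_v)\times\prod_{v\notin B'\cup S\cup T, v\notin B'}(X,M)(K_v)$, which is a nonempty open subset of $(\cX,\cM)(\mathbf{A}_{B'}^T)$ (regarding $(\cX,\cM)(\mathbf{A}_{B'}^T)=\prod_{v\in B'}(\cX,\cM)(\cO_v)\times\prod_{v\notin B'}(X,M)(K_v)$). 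By hypothesis $(\cX,\cM)_{B'}(B')=(\cX,\cM)(B')$ is dense in $(\cX,\cM)(\mathbf{A}_{B'}^T)$, hence meets this open set; and $(\cX,\cM)(B')\subseteq (X,M)(K)$, so we produce a rational $\cM$-point in the original neighbourhood. This gives $M$-approximation off $T$.

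For the converse, assume $\fM\subseteq\lineN^\cA$ is open and $(X,M)$ satisfies $M$-approximation off $T$; fix a nonempty open $B'\subseteq B$, and I must show $(\cX,\cM)(B')$ is dense in $(\cX,\cM)(\mathbf{A}_{B'}^T)$. Since $\fM$ is open, Proposition \ref{prop: continuity multiplicity} shows $(\cX,\cM)(\cO_v)$ is open in $(X,M)(K_v)$ for every finite place $v\in B'$; consequently $(\cX,\cM)(\mathbf{A}_{B'}^T)=\prod_{v\in B'}(\cX,\cM)(\cO_v)\times\prod_{v\notin B'}(X,M)(K_v)$ is an open subset of $(X,M)(\mathbf{A}_K^T)$ — more precisely, the inclusion is a topological embedding with open image, by the same restricted-product formalism used in Proposition \ref{prop: inclusion M}(2) (apply it with $Y_v=(\cX,\cM)(\cO_v)\subseteq X_v=(X,M)(K_v)$ for $v\in B'$, and $Y_v=X_v$ otherwise, noting $Y_v\subseteq X_v$ is open in all cases). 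One then checks the set-theoretic identity $(X,M)(K)\cap(\cX,\cM)(\mathbf{A}_{B'}^T)=(\cX,\cM)(B')$: a rational point lies in the left side iff it is an $\cM$-point over $\cO_v$ for all $v\in B'$, which by Definition \ref{def: M-points} is exactly membership in $(\cX,\cM)(B')$. Since a dense subset intersected with an open subset is dense in that open subset, density of $(X,M)(K)$ in $(X,M)(\mathbf{A}_K^T)$ forces density of $(\cX,\cM)(B')$ in $(\cX,\cM)(\mathbf{A}_{B'}^T)$, as desired. This mirrors the argument of Proposition \ref{prop: inclusion M approx}.

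The only real subtlety — and the step I would write most carefully — is the bookkeeping in the forward direction: verifying that an arbitrary basic open set of the restricted product $(X,M)(\mathbf{A}_K^T)$ can be captured inside $(\cX,\cM)(\mathbf{A}_{B'}^T)$ for a suitable $B'$, using that only finitely many places are ``bad'' (those in $S$, in $C\setminus B$, or in $T$) so that $B$ may be shrunk to avoid the finite places among them. Everything else is a direct unwinding of the definitions and the restricted-product topology recalled in Section \ref{subsection: restricted products and adeles}, together with Proposition \ref{prop: continuity multiplicity} to handle the openness hypothesis in the converse.
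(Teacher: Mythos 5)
Your proof is correct and follows essentially the same route as the paper: the forward direction works by trapping any nonempty open subset of $(X,M)(\mathbf{A}_K^T)$ inside $(\cX,\cM)_{B'}(\mathbf{A}_{B'}^T)$ for a suitably shrunk $B'$ avoiding the finitely many relevant places, and the converse uses Proposition \ref{prop: continuity multiplicity} plus the open-embedding statement of Proposition \ref{prop: inclusion M} together with the identity $(X,M)(K)\cap(\cX,\cM)_{B'}(\mathbf{A}_{B'}^T)=(\cX,\cM)_{B'}(B')$, exactly as in the paper. One small wording slip: in the first sentence of the forward direction the neighbourhood should \emph{contain} (not be contained in) a nonempty basic open set of the indicated form; the rest of your paragraph makes clear this is what you intended.
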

\begin{proof}
    We first assume that $(\cX,\cM)_{B'}$ satisfies integral $\cM_{B'}$-approximation off $T$ for all $B'\subset B$.
    Note that every $P\in (X,M)(\mathbf{A}_K^T)$ lies in the subspace $(\cX,\cM)_{B'}(\mathbf{A}_{B'}^T)$ for some $B'\subset B$. Thus since every open neighbourhood of $P$ in $(\cX,\cM)_{B'}(\mathbf{A}_{B'}^T)$ has nonempty intersection with $(\cX,\cM)_{B'}(B')$, the same holds for any open neighbourhood of $P$ in $(X,M)(\mathbf{A}_K^T)$. Hence $(X,M)$ satisfies $M$-approximation off $T$. 

    In the other direction, note that if $\fM\subset \lineN^\cA$ is open, then $(\cX,\cM)(\cO_v)\subset (X,M)(K_v)$ is open by Proposition \ref{prop: continuity multiplicity}. Therefore, by Proposition \ref{prop: inclusion M}, $(\cX,\cM)_{B'}(\mathbf{A}_{B'}^T)$ is open in $(X,M)(\mathbf{A}_K^T)$ so $(X,M)(K)\cap (\cX,\cM)_{B'}(\mathbf{A}_{B'}^T)=(\cX,\cM)_{B'}(B')$ is dense in $(\cX,\cM)_{B'}(\mathbf{A}_{B'}^T)$.
\end{proof}

Using the concept of an inverse image of a pair introduced in Definition \ref{def: inverse image pair}, we can use birational morphisms, such as a resolution of singularities, to understand $M$-approximation.

\begin{proposition} \label{prop: birational morphisms M-approximation}
Let $(K,C)$ be a PF field, let $T\subset \Omega_K$ be a finite set of places and let $f\colon Y\rightarrow X$ be a birational morphism of proper integral $K$-varieties, where $Y$ is smooth over $K$. 
Suppose that the induced map $Y(K_v)\rightarrow X(K_v)$ is surjective for all places $v\in \Omega_{K}\setminus T$.
Then any pair $(X,M)$ satisfies $M$-approximation off $T$ if and only if $(Y,f^{-1}M)$ satisfies $f^{-1}M$-approximation off $T$.

If $B\subset C$ is an open subscheme and $f\colon Y\rightarrow X$ spreads out to a morphism $f\colon \cY\rightarrow\cX$ of integral models, then any pair $(\cX,\cM)$ satisfies integral $\cM$-approximation off $T$ if and only if $(\cY,f^{-1}\cM)$ satisfies integral $f^{-1}\cM$-approximation off $T$.
\end{proposition}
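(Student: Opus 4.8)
The plan is to deduce the whole statement from one fact: $f$ induces a continuous surjective map on the relevant adelic spaces, which over the locus where $f$ is an isomorphism is moreover an open embedding. First I would reduce to the case in which $f$ spreads out to a morphism $f\colon\cY\to\cX$ of integral models over $B$; for the assertion about $M$-approximation this is harmless, since $M$-approximation is independent of the chosen integral model by Proposition~\ref{prop: independence model} and any birational morphism of proper $K$-varieties spreads out over some nonempty open of $C$, while for the assertion about integral $\cM$-approximation this is exactly the hypothesis. I would then check that $f$ induces a continuous surjective map
$$g\colon(Y,f^{-1}M)(\mathbf A_K^T)\longrightarrow(X,M)(\mathbf A_K^T)$$
(respectively $g\colon(\cY,f^{-1}\cM)(\mathbf A_B^T)\to(\cX,\cM)(\mathbf A_B^T)$) which carries the diagonal image of $(Y,f^{-1}M)(K)$ into that of $(X,M)(K)$ (resp.\ $(\cY,f^{-1}\cM)(B)$ into $(\cX,\cM)(B)$). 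Continuity is immediate from the description of the restricted-product topology, as $f$ is continuous on $K_v$-points and on $\cO_v$-points. Surjectivity is checked place by place: given a point of $(\cX,\cM)(\cO_v)$, lift the underlying $K_v$-point along the surjection $Y(K_v)\to X(K_v)$, extend it to an $\cO_v$-point of $\cY$ via the valuative criterion of properness, observe by the uniqueness in that criterion that this extension maps to the chosen $\cO_v$-point of $\cX$, and deduce from Proposition~\ref{prop: inverse pair} that it lies in $(\cY,f^{-1}\cM)(\cO_v)$; since almost every coordinate of an adelic point is integral, these local lifts assemble into a global one.

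Granting this, the implication ``$(Y,f^{-1}M)$ satisfies $f^{-1}M$-approximation off $T$ implies $(X,M)$ satisfies $M$-approximation off $T$'', together with its integral analogue, is formal: the image under the continuous surjection $g$ of the dense subset $(Y,f^{-1}M)(K)$ is dense in $(X,M)(\mathbf A_K^T)$, and it is contained in the diagonal image of $(X,M)(K)$.

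For the converse I would exploit that $f$ is birational. Let $V\subseteq X$ be the dense open over which $f$ is an isomorphism and set $V'=f^{-1}(V)$, so that $f$ restricts to an isomorphism of pairs $(V',(f^{-1}M)|_{V'})\xrightarrow{\sim}(V,M|_V)$, where $M|_V$ denotes the evident restriction; this extends to the integral models, where for the integral statement one takes $\cV'\subseteq\cY$ and $\cV\subseteq\cX$ to be the locus where $f\colon\cY\to\cX$ is an isomorphism, whose generic fibres are $V'$ and $V$ by a spreading-out argument. Because $\cV\subseteq\cX$ and $\cV'\subseteq\cY$ are open, $(V,M|_V)(\mathbf A_K^T)$ and $(V',(f^{-1}M)|_{V'})(\mathbf A_K^T)$ are open subspaces of $(X,M)(\mathbf A_K^T)$ and $(Y,f^{-1}M)(\mathbf A_K^T)$ respectively, and one has $(V,M|_V)(K)=(X,M)(K)\cap(V,M|_V)(\mathbf A_K^T)$ and likewise on $Y$. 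Hence if $(X,M)$ satisfies $M$-approximation off $T$, intersecting the dense set $(X,M)(K)$ with the open $(V,M|_V)(\mathbf A_K^T)$ shows that $(V,M|_V)$ satisfies $M|_V$-approximation off $T$; transporting along the isomorphism, $(V',(f^{-1}M)|_{V'})(K)$ is dense in $(V',(f^{-1}M)|_{V'})(\mathbf A_K^T)$. Since this set of $K$-points is contained in $(Y,f^{-1}M)(K)$, it now suffices to prove that $(V',(f^{-1}M)|_{V'})(\mathbf A_K^T)$ is dense in $(Y,f^{-1}M)(\mathbf A_K^T)$.

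This last density statement is the heart of the proof and the step I expect to be the main obstacle. As the adelic space is a restricted product, density reduces to density place by place. At the archimedean places and the non-integral finite places this follows from the smoothness of $Y$: the set $(Y,f^{-1}M)(K_v)$ contains $f^{-1}(U)(K_v)$ with $U=X\setminus\bigcup_{\alpha}D_{\alpha}$, and $f^{-1}(U\cap V)(K_v)$ is a dense open subset of $Y(K_v)$ by Proposition~\ref{prop: Open subset dense}. At the finite places $v\in B$ relevant to the integral statement one must instead show that $(\cV',(f^{-1}\cM)|_{\cV'})(\cO_v)$ is dense in $(\cY,f^{-1}\cM)(\cO_v)$, and this is precisely where the hypothesis that $Y(K_v)\to X(K_v)$ be surjective enters: combined with the valuative criterion of properness it forces the isomorphism locus of $f\colon\cY\to\cX$ to behave well along the special fibres (for all but finitely many $v$ its complement has codimension at least one in $\cY_v$), so that a Hensel's-lemma argument controlling $(\cY,f^{-1}\cM)(\cO_v)$ through its reduction modulo $\pi_v$ — together with, in the $M$-approximation statement, shrinking $B$ to discard the finitely many remaining bad places — yields the required density. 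The remaining bookkeeping, such as the compatibility of $f^{-1}$ with intersection multiplicities and the various spreading-out steps, is routine in view of Propositions~\ref{prop: inverse pair} and~\ref{prop: spread out models}.
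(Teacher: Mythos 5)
Your forward direction is exactly the paper's: the place-by-place lifting via the surjectivity of $Y(K_v)\rightarrow X(K_v)$, the valuative criterion of properness and Proposition \ref{prop: inverse pair} produce a continuous surjection of adelic spaces, and surjectivity transports density. The skeleton of your converse (restrict to the isomorphism locus, transport density through the isomorphism, then show the isomorphism-locus points are adelically dense) also matches the paper. The difficulty is the step you yourself single out as the heart of the proof.

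The claim that $(\cV',(f^{-1}\cM)|_{\cV'})(\cO_v)$ is dense in $(\cY,f^{-1}\cM)(\cO_v)$ is false in general, and the surjectivity hypothesis does not rescue it. Take $X=Y=\P^1_K$ with $f=\id$ and $M$ trivial, but let $\cY\rightarrow\cX=\P^1_{\cO_v}$ be the blow-up of a $k_v$-rational point $x_0$ of the special fibre: $Y(K_v)\rightarrow X(K_v)$ is bijective, yet the isomorphism locus $\cV'\subset\cY$ omits the exceptional divisor, so $\cV'(\cO_v)$ consists of the sections whose reduction avoids $x_0$, i.e.\ the complement of a nonempty open residue disc in $\cY(\cO_v)$, which is not dense. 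So the proposed Hensel's-lemma derivation cannot succeed, and ``density place by place'' is in any case not how density in a restricted product is established. The correct route, which is what the paper does, needs only a weaker input. Given a basic open $\prod_{v\in J}W_v\times\prod_{v\notin J}(\cY,f^{-1}\cM)(\cO_v)$ with $J\neq\emptyset$, use smoothness of $Y$ and Proposition \ref{prop: Open subset dense} to see that $W_v\cap V'(K_v)\neq\emptyset$ for the finitely many $v\in J$, push these opens forward along the isomorphism $f|_{V'}$, and apply approximation on $X$ inside the open set $\prod_{v\in J}f(W_v\cap V'(K_v))\times\prod_{v\notin J}(\cX,\cM)(\cO_v)$. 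The resulting rational point $P$ lies in the open subscheme $f(V')$ at one place, hence at every place; its preimage $Q=f|_{V'}^{-1}(P)$ is then a rational point of $Y$ landing in $W_v$ for $v\in J$ and, by Proposition \ref{prop: inverse pair}, in $(\cY,f^{-1}\cM)(\cO_v)$ for $v\notin J$. No control of the special fibres of the exceptional locus is required, and the surjectivity hypothesis plays no role in this direction.
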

\begin{proof}
By the assumption, it follows that the induced map $(Y,f^{-1}M)(\mathbf{A}_K^T)\rightarrow (X,M)(\mathbf{A}_K^T)$ is surjective by Proposition \ref{prop: inverse pair}. Therefore, if $(Y,f^{-1}M)$ satisfies $f^{-1}M$-approximation, then $(X,M)$ satisfies $M$-approximation. In the other direction, if $V\subseteq Y$ is the locus over which $f$ is an isomorphism, then $f^{-1}((X,M)(K))\cap V(K)$ is dense in $(Y,f^{-1}M)(\mathbf{A}_K^T)\cap \prod_{v\in \Omega_{K}\setminus T} V(K_v)$. Since $Y$ is smooth, $V(K_v)$ is dense in $Y(K_v)$ by Proposition \ref{prop: Open subset dense}, so $f^{-1}((X,M)(K))\cap V(K)$ is dense in $(Y,f^{-1}M)(\mathbf{A}_K^T)$ for every place $v\in \Omega_K\setminus T$.
\end{proof}
\begin{remark}
    The surjectivity condition in Proposition \ref{prop: birational morphisms M-approximation} is referred to as ``arithmetic surjectivity'' in the literature, see \cite{LSS20}.
\end{remark}

\subsection{Integral \texorpdfstring{$\cM$}{M}-approximation and the \texorpdfstring{$\cM$}{M}-Hilbert property}
Now we will explore the relationship between integral $\cM$-approximation, the $\cM$-Hilbert property and Zariski density. This is an extension of the classical theory of the Hilbert property and weak approximation as presented in \cite[Chapter 3]{Ser16}, and of the Campana version introduced in \cite{NaSt20}.
\begin{definition} \label{def: thin set}
	Let $X$ be an integral variety over $K$ and let $A\subseteq X(K)$. We say that $A$ is of \textit{type I} if $A\subseteq Z(K)$ for some proper closed subset $Z$ of $X$.
	
	We say that $A$ is of \textit{type II} if there is an integral variety $Y$ with $\dim Y=\dim X$ and a generically finite morphism $f\colon Y\rightarrow X$ of degree $\geq 2$ such that $A\subseteq f(Y(K))$.
	
	We say that $A$ is \textit{thin} if it is a finite union of sets of type I and II.
\end{definition}
\begin{remark} \label{remark: different definition thin}
    We do not assume that the morphisms in Definition \ref{def: thin set} are separable, unlike the definitions given in \cite{BFP13,Lug22}. In particular, thin sets as considered in those articles are thin sets as defined here, but not vice versa. For example, if $k$ is a perfect field of characteristic $p>0$, then $k(t^p)\subset \mathbb{A}^1(k(t))$ is a thin set in the terminology of this article, but not according to the notion in \cite{BFP13,Lug22}.
\end{remark}
\begin{definition}
	Let $(\cX,\cM)$ be an integral model over $B$ of a pair $(X,M)$. We say that $(\cX,\cM)$ satisfies the \textit{$\cM$-Hilbert property over $B$} if $(\cX,\cM)(B)$ is not thin as a subset of $X(K)$.
\end{definition}
Note that every PF field $K$ is Hilbertian (see for example \cite[Chapter 13]{FrJa05}) and imperfect if it is of positive characteristic, so by \cite[Proposition 12.4.3]{FrJa05}, $\mathbb{A}^1(K)$ is an example of a set which is not thin.

In order to relate the $\cM$-Hilbert property to integral $\cM$-approximation over global function fields as we do in Theorem \ref{theorem: M-approx implies not thin}, we need the following lemma.
\begin{lemma} \label{lemma: image inseparable nowhere dense}
    Let $k$ be a field of positive characteristic and let $f\colon Y\rightarrow X$ be an inseparable generically finite morphism of integral varieties over $k\lParen t\rParen$, where $X$ is smooth over $k\lParen t\rParen$. Then the image of the induced map $Y(k\lParen t\rParen)\rightarrow X(k\lParen t\rParen)$ is a nowhere dense subset of $X(k\lParen t\rParen)$, where the topologies are induced by the topology on $k\lParen t\rParen$.
\end{lemma}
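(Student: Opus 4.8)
The plan is to reduce to a local statement about the image of a single inseparable field extension in $k\lParen t\rParen$-points, using that inseparability forces a $p$-th power condition on coordinates. First I would replace $X$ by a dense affine open over which $f$ is finite, so that without loss of generality $X = \spec A$ is smooth affine and $Y = \spec B$ with $A \subset B$ a finite ring extension inducing a finite field extension $L = k\lParen t\rParen(Y)$ over $K = k\lParen t\rParen(X)$ that is not separable. Since $\mathrm{char}(k) = p > 0$ and $L/K$ is not separable, there is an intermediate field $K \subsetneq K' \subseteq L$ with $K'/K$ purely inseparable of degree $p$; composing, it suffices to show the image of $Y'(K)\to X(K)$ is nowhere dense where $Y' = \spec B'$ corresponds to $K'$, since the image of $Y(K)\to X(K)$ is contained in it. So I may assume $f\colon Y \to X$ is finite and purely inseparable of degree $p$.

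Next I would exploit the structure of purely inseparable degree-$p$ maps. The function field extension is $K' = K(\gamma)$ with $\gamma^p = a \in K$ for some $a \notin K^p$ (note $K^p = k\lParen t^p\rParen(X^{(p)})$-type subfield, in any case $K^p \subsetneq K$). After shrinking $X$ further I can arrange that $a \in A$ and that $Y' \to X$ factors through the relative Frobenius-type map, so that on a neighborhood the morphism looks like $\spec A[\gamma]/(\gamma^p - a) \to \spec A$. A $K$-point of $X$ in the image of $Y'(K) \to X(K)$ is then a point $P \in X(K)$ such that $a(P) \in K^p$, i.e., $a(P)$ is a $p$-th power in $k\lParer t\rParen$. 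Thus I need: the set $\{P \in X(K) : a(P) \in K^{\times p} \cup \{0\}\}$ is nowhere dense in $X(K)$ for the analytic topology, where $a$ is a nonconstant-on-fibers regular function (more precisely $a \notin K^p$, so $a$ is not a $p$-th power as a rational function, hence in particular not locally constant).

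The key local computation is then the following: since $K = k\lParen t\rParen$ and $a$ is a regular function on smooth affine $X$ which is not a $p$-th power in the function field, I want to show $a$ cannot be a $p$-th power on any nonempty analytic open $V \subset X(K)$. By Proposition~\ref{prop: Open subset dense} and smoothness, I can pass to coordinates and assume $X$ is an open in $\mathbb{A}^n_K$ and, restricting to a generic line, reduce to the one-variable case: it suffices to show that a nonzero polynomial or power series $g(x) \in k\lParen t\rParer[x]$ which is not a $p$-th power in $k\lParen t\rParer(x)$ cannot take values in $k\lParen t\rParer^{\times p}$ on an open subset of $k\lParer t\rParer$. Writing any $c \in k\lParen t\rParer^\times$ as $c = t^{m} u$ with $u$ a unit of $k\lBracket t \rBracket$, being a $p$-th power forces $p \mid m$ and $u \in (k\lBracket t\rBracket^\times)^p$; the unit condition defines a closed (indeed open-and-closed) subgroup of index a power of $p$, so the $p$-th powers form a thin analytic set, and an infinite subset of $k\lParer t\rParer$ on which $g$ lands there would force $g$ to be a $p$-th power by a Hensel/valuation-theoretic argument comparing Newton–Puiseux data. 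I expect the main obstacle to be exactly this last step: carefully showing that if $g$ takes $p$-th-power values on an analytic open then $g$ itself must be a $p$-th power in $k\lParer t\rParer(x)$ (or at least on that component), which requires controlling how the valuation $v_t(g(x_0))$ and the unit part vary with $x_0$, and invoking that $k\lParer t\rParer^{\times}/(k\lParer t\rParer^\times)^p$ is understood via $v_t$ and the residue-field Frobenius. Once that is in hand, nowhere density follows since the complement of such a thin set is dense, completing the reduction from the general inseparable $f$ by the factorization through degree-$p$ steps.
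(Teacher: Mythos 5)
There is a genuine gap at your very first reduction. You claim that since $L=k\lParen t\rParen(Y)$ is inseparable over $K=k\lParen t\rParen(X)$, there must be an intermediate field $K\subsetneq K'\subseteq L$ with $K'/K$ purely inseparable of degree $p$. This is false in general: the canonical factorization of a finite extension puts the \emph{separable} part at the bottom and the purely inseparable part at the top ($K\subseteq L_{s}\subseteq L$ with $L_{s}/K$ separable and $L/L_{s}$ purely inseparable), and an inseparable extension need not contain any purely inseparable subextension other than $K$ itself. A standard counterexample is $K=\mathbb{F}_p(u,v)$ and $L=K(\alpha)$ with $\alpha^{p^2}+u\alpha^p+v=0$: this extension is inseparable of degree $p^2$, yet $L\cap K^{1/p}=K$. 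Such fields occur as function fields of varieties over $k\lParen t\rParen$, so your reduction to the local model $\spec A[\gamma]/(\gamma^p-a)\to \spec A$, and hence to the condition ``$a(P)$ is a $p$-th power in $k\lParen t\rParen$'' for a single regular function $a$ on $X$, does not go through. The correct reduction factors $Y\dashrightarrow Z\dashrightarrow X$ with $Z\dashrightarrow X$ separable (étale after shrinking, hence a local homeomorphism on $k\lParen t\rParen$-points) and $Y\dashrightarrow Z$ purely inseparable; the $p$-th power condition then lives on $Z$, not on $X$, and nowhere-density has to be transported down along the étale map.

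Even granting a reduction to a purely inseparable degree-$p$ morphism, the step you yourself flag as the ``main obstacle'' --- that a function $a\notin K(X)^p$ cannot take $p$-th-power values on a nonempty analytic open --- is essentially the full content of the lemma in dimension one, since the locus $\{P: a(P)\in k\lParen t\rParen^p\}$ is precisely the image of the purely inseparable cover $y^p=a$; restricting to a generic line therefore does not by itself make progress. The paper's proof sidesteps this entirely: using the primitive element theorem and a carefully chosen separable projection, it produces étale dominant maps from opens of $Y$ and of $X$ to $\mathbb{A}^d$ fitting into a Cartesian square over the map $h\colon \mathbb{A}^d\to\mathbb{A}^d$ given by the $p$-power map in one coordinate and the identity in the others. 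Nowhere-density then follows from the elementary observation that $x\mapsto x^p$ has nowhere dense image in $k\lParen t\rParen$ (its image lies in the closed set of Laurent series whose coefficients vanish in all degrees prime to $p$), transported along the étale local homeomorphisms. To repair your argument you would need either to adopt this separable-first factorization or to supply an independent proof of your one-variable claim, which at present is assumed rather than proved.
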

\begin{proof}
    The proof is based on the observation that $k\lParen t \rParen\rightarrow k\lParen t \rParen$ given by $x\mapsto x^p$ has nowhere dense image in $k\lParen t \rParen$, where $p=\CHAR(k)$. This is because the image is contained in the closed set $\bigcap_{i\in \mathbb{Z}, p\nmid i} C_i$, where $C_i$ is the set of Laurent polynomials with vanishing $i$-th coefficient. Since every nonempty open set contains elements outside of this closed set, the image is nowhere dense in $k\lParen t \rParen$.

    First we show it suffices to prove the statement for an open subvariety $U\subset X$. Since $X$ is smooth, by \cite[\S 2.2, Proposition 11]{BLR90} it is covered by open subvarieties $U\subset X$ such that there exists étale morphism $U\rightarrow\mathbb{A}^d_{k\lParen t \rParen}$, where $d$ is the dimension of $X$. By \cite[Lemma 5.3]{Con12} the induced map $U(k\lParen t \rParen)\rightarrow\mathbb{A}^d_{k\lParen t \rParen}(k\lParen t \rParen)$ is a local homeomorphism. Since for a closed subvariety $V\subset \mathbb{A}^d_{k\lParen t \rParen}$, the subset $V(k\lParen t \rParen)\subset \mathbb{A}^d_{k\lParen t \rParen}(k\lParen t \rParen)$ is nowhere dense, it follows that for any proper closed subvariety $V\subset X$, the subset $V(k\lParen t \rParen)\subset X(k\lParen t \rParen)$ is nowhere dense. Therefore it suffices to prove the statement for some nonempty open subvariety $U\subset X$.

    For a generically finite dominant morphism $f\colon Y\rightarrow X$ of integral varieties, we can factor the extension $K(Y)/K(X)$ as a separable extension $L/K(X)$ followed by a totally inseparable extension $K(Y)/L$, see \cite[Tag 030K]{Stacks}. This corresponds to factoring the morphism into rational maps $Y\dashrightarrow Z\dashrightarrow X$, where $Z\dashrightarrow X$ is separable and $Y\dashrightarrow Z$ is totally inseparable.
    We can assume that $Z$ is not geometrically integral, as otherwise \cite[Tag 0CDW]{Stacks} implies that $Z(k\lParen t \rParen)$ is not Zariski dense in $Z$.
    Since we can restrict $X$ to a nonempty open $U$, without loss of generality, we can assume that the maps are morphisms and $Z\rightarrow X$ is étale. Therefore, by \cite[Lemma 5.3]{Con12} we can without loss of generality assume that $f\colon Y\rightarrow X$ is purely inseparable of degree equal to $p:=\CHAR(k)$.
    
    Choose a separable generically finite rational map $X\dashrightarrow \mathbb{A}^d_{k\lParen t \rParen}$. Then the field extension $k\lParen t \rParen(x_1,\dots,x_d)\subset k\lParen t \rParen(Y)$ induced by this rational map has inseparable degree $p$.
    There are only finitely many fields in $k\lParen t \rParen(Y)$ containing $k\lParen t \rParen(x_1,\dots,x_d)$. Indeed, by Galois theory there are only finitely many separable extensions $L$ of $k\lParen t \rParen(x_1,\dots,x_d)$ in $k\lParen t \rParen(Y)$. Furthermore, for any such a field extension $L$, there is at most a single nontrivial purely inseparable extension $L'/L$ in $k\lParen t \rParen(Y)$, since given another such extension $L''$ the compositum $L''L'/L$ is purely inseparable and therefore has degree $p$, by multiplicativity of inseparable degrees \cite[Tag 09HK]{Stacks}.
    The primitive element theorem \cite[Tag 030N]{Stacks} implies that $k\lParen t \rParen(x_1,\dots,x_d)\subset k\lParen t\rParen(Y)$ is a simple extension. Therefore for some nonzero polynomial $g\in k\lParen t\rParen[y,x_1,\dots, x_d]$ which is separable in $y$,
    $$k\lParen t\rParen(Y)=k\lParen t \rParen(x_1,\dots,x_d)[z]/(g(z^p,x_1,\dots,x_d)),$$
    where the $p$-th power is present since the extension is not separable.
    The separable closure of $k\lParen t \rParen$ in this field is
    $$k\lParen t\rParen(X)=k\lParen t \rParen(x_1,\dots,x_d)[y]/(g(y,x_1,\dots,x_d)).$$

    In order to reduce the argument to the power map $x\mapsto x^p$, we construct separable dominant maps $Y\dashrightarrow \mathbb{A}^d_{k\lParen t \rParen}$ and $X\dashrightarrow \mathbb{A}^d_{k\lParen t \rParen}$. If $\frac{\d g}{\d x_d}$ is not the zero polynomial, then the Jacobi criterion implies that the projections to affine space corresponding to the inclusions $$k\lParen t \rParen(y,x_1,\dots,x_{d-1})\subset k\lParen t \rParen(X)$$ and 
    $$k\lParen t \rParen(z,x_1,\dots,x_{d-1})\subset k\lParen t \rParen(Y)$$ are separable. If $\frac{\d g}{\d x_d}$ is the zero polynomial, we can reduce to the case that $\frac{\d g}{\d x_d}$ is not identically zero, by using the fact that $g$ is separable in $y$ and by using linear change of variables replacing $y$ with $y+x_d$.
    
    Therefore, we get a Cartesian diagram of dominant rational maps
    $$\begin{tikzcd}
    Y \arrow[r, dashed] \arrow[d, "f"] & \mathbb{A}^d_{k\lParen t \rParen} \arrow[d, "h"] \\
    X \arrow[r, dashed]                   & \mathbb{A}^d_{k\lParen t \rParen},        \end{tikzcd}$$
    where the horizontal maps are separable, and the map $h$ is the $p$-th power map in the first coordinate and the identity in the other coordinates. Since we can without loss of generality restrict $X$ to an open subset $U$, we can assume that the horizontal maps are étale morphisms.
    Since the image of $\mathbb{A}^d_{k\lParen t \rParen}(k\lParen t \rParen)\rightarrow \mathbb{A}^d_{k\lParen t \rParen}(k\lParen t \rParen)$ is a nowhere dense subset, applying \cite[Lemma 5.3]{Con12} once more to the horizontal maps shows that the image of $Y(k\lParen t \rParen)$ in $X(k\lParen t \rParen)$ is nowhere dense.
    \end{proof}

Now we will prove Theorem \ref{theorem: M-approx implies not thin}, which we will prove separately for global fields and for function fields over infinite fields.
\begin{proof}[Proof of Theorem \ref{theorem: M-approx implies not thin} for global fields]

First we prove the statements for global fields.
This part of the proof is based on \cite[Theorem 1.1, Remark 2.12]{NaSt20}, and generalizes it to global fields and varieties which are not normal or even integral.

We first assume that $X$ is geometrically irreducible. Note that the statement of the theorem is equivalent to saying that if the $\cM$-Hilbert property over $B$ is not satisfied and $(\cX,\cM)(B)\neq \emptyset$, then integral $\cM$-approximation does not hold for any finite set of places $T$. We proceed by proving the following stronger claim: if $A\subseteq (\cX,\cM)(B)$ is thin, then there is a finite set of places $T'\subset \Omega_K$ disjoint from $T$ such that $A$ is not dense in $\prod_{v\in T'} (\cX,\cM)(\cO_v)$. We recover the original statement by taking $A=(\cX,\cM)(B)$. By the argument in \cite[Proof of Theorem 3.5.3]{Ser16} if the claim holds for thin sets $A_1, A_2\in (\cX,\cM)(B)$, then it holds for $A_1\cup A_2$.

For a set $A$ which is not Zariski dense (thin of type I), the result follows from the Lang--Weil bound \cite{LaWe54} in the same manner as in the proof of \cite[Theorem 1.1]{NaSt20}. In particular, we conclude that $(\cX,\cM)(B)$ is Zariski dense in $X$.

Now let $A\subset f(Y(K))\cap (\cX,\cM)(B)$ for $Y$ an integral variety over $K$ and $f\colon Y\rightarrow X$ a dominant generically finite morphism of degree $\geq 2$. Without loss of generality, we can assume that $f$ is finite, since the image of a Zariski closed subset in $Y$ is a type I thin set. There are two cases to consider: either $f$ is separable or it is inseparable. If $f$ is separable, then the result follows from \cite[Theorem 3.6.2]{Ser16} (which extends to separable morphisms over global fields) and the proof of \cite[Theorem 1.1]{NaSt20}, see also \cite[Remark 2.12]{NaSt20}. This finishes the claim for separable morphisms. For global function fields we need to consider inseparable morphisms.

Thus we now assume $f \colon Y\rightarrow X$ is an inseparable morphism. For every place $v\in \Omega_{K}$, the Cohen structure theorem \cite[Tag 0C0S]{Stacks} implies $\cO_v\cong k_v\lBracket t\rBracket$. Therefore Lemma \ref{lemma: image inseparable nowhere dense} implies that for every place $v\in \Omega_{K}$ the subset $f(Y(K_v))\cap U(K_v)\subset U(K_v)$ is nowhere dense, where $U$ is the smooth locus of $X$. Since $(\cX,\cM_\fin)(\cO_v)$ is nonempty and open in $X(K_v)$ by Proposition \ref{prop: continuity multiplicity}, $A\cap U(K_v)\cap (\cX,\cM_\fin)(\cO_v)\subset (\cX,\cM_\fin)(\cO_v)$ is nowhere dense. Since $X(K_v)\setminus U(K_v)$ is a thin set of type I, its restriction to $(\cX,\cM_\fin)(\cO_v)$ is not dense in $(\cX,\cM_\fin)(\cO_v)$. Thus $A\cap (\cX,\cM_\fin)(\cO_v)\subset (\cX,\cM_\fin)(\cO_v)$ is not dense so $A\subset (\cX,\cM)(\cO_v)$ is not dense. This proves the claim.

Now assume that $X$ is irreducible, but not necessarily geometrically irreducible. We will prove that $(\cX,\cM)(\cO_v)$ contains a smooth point in $X(K_v)$ for some place $v\in \Omega_K$, and then conclude that therefore $X(K)$ contains a smooth point. As $X$ is irreducible, $U=X\setminus \bigcup_{\alpha\in\cA} D_\alpha$ is irreducible as well. Write $\cU=\cX\setminus \bigcup_{\alpha\in\cA} \cD_\alpha$.
If $X$ is not geometrically irreducible, choose a finite separable extension $K'$ of $K$ such that $X_{K'}$ splits into geometrically irreducible components $X_i$. Write $B'$ for the regular scheme associated to the integral closure of $\cO_B$ in $K'$. The splitting also induces a splitting of the integral model $\cX\times_{B} B'$ into components $\cX_i$ which are integral models of the $X_i$. Write $\cU_i=\cX_i\cap (\cU\times_{B} B')$. Write $T'$ for the places in $\Omega_{K'}$ above $T$. As in the previous part, the Lang--Weil bound \cite{LaWe54} implies that, for all but finitely many places $v'\in \Omega_{K'}$, $\cU_i(k_{v'})$ contains a smooth point. In particular, if we choose a finite place $v\in B\setminus T$ which splits completely in $K'$ and such that for a place $v'\in B'$ above $v$, $\cU_i(k_{v'})$ contains a smooth point, then $\cU(k_v)=\cU(k_{v'})$ contains a smooth point as it contains $\cU_i(k_{v'})$. Thus by Hensel's lemma \cite[Theorem 3.5.63]{Poo17} $\cU(\cO_v)$ contains a smooth point. Since $(\cX,\cM)(B)\neq \emptyset$ and $(\cX,\cM)$ satisfies integral $\cM$-approximation off $T$, we see that $X(K)$ contains a smooth point, and thus by \cite[Tag 0CDW]{Stacks}, we see that $X$ is geometrically integral.

If $X$ is not irreducible, then there are disjoint open sets $\cU_1,\cU_2\subset \cU$, where $\cU$ is as above. By the same reasoning as when $X$ was irreducible, the Lang--Weil bound implies that we can find distinct places $v_1,v_2\in \Omega_K\setminus T$ such that $\cU_1(\cO_{v_1})$ and $\cU_2(\cO_{v_2})$ are nonempty. But $P\in (\cX,\cM)(B)$ cannot simultaneously simultaneously lie in the open sets $\cU_1(\cO_{v_1})$ and $\cU_2(\cO_{v_2})$ as $\cU_1$ is disjoint from $\cU_2$. This is a contradiction, so we find that $X$ has to be irreducible.

\end{proof}
\begin{proof}[Proof of Theorem \ref{theorem: M-approx implies not thin} for function fields over infinite fields]
Now assume that $K=k(C)$ for an infinite field $k$ and a regular curve $C$ over $k$.
First we assume that $X$ is irreducible. Let $\cU=\cX\setminus \bigcup_{\alpha\in\cA} \cD_\alpha$ be the closure of $U=X\setminus \bigcup_{\alpha\in\cA} D_\alpha$ in $\cX$. Since $X$ is geometrically reduced, there exists a nonempty open subset $\cY\subseteq \cU$ such that the restriction of the structure morphism $\cX\rightarrow B$ to $\cY$, written $f\colon \cY\rightarrow B$, is smooth.
Note that $B$ is a Jacobson scheme \cite[Tag 01P2]{Stacks}, $f$ is dominant and of finite presentation, and by \cite[Théorème 4.2.3.(1)]{MoBa20} $B$ is a ``schéma de Poonen''. Therefore \cite[Théorème 3.2.(3)]{MoBa20} implies that there exists a point $y\in \cY$ such that $f(y)$ is a closed point and the residue field satisfies $k(y)=k(f(y))$. If we write $v=f(y)$, then this implies that $y\in\cY(k_v)\subset \cU(k_v)$ is a rational $k_v$-point, and thus by Hensel's Lemma \cite[Theorem 3.5.63]{Poo17}, $\cU(\cO_v)$ contains a smooth point $P$. In particular, \cite[Tag 0CDW]{Stacks} implies that $X$ is geometrically integral.

Since $P$ is smooth, there exists an open $V\subset X$ which is smooth over $K_v$. Thus Proposition \ref{prop: Open subset dense} implies $V\cap \cU(\cO_v)$ is Zariski dense as it contains $P$, so $\cU(\cO_v)$ is Zariski dense. Since $(\cX,\cM)(B)$ is dense in $(\cX,\cM)(\cO_v)$ in the analytic topology, it is Zariski dense in $X$.

Now if we assume that $X$ is not irreducible, we find a contradiction in the exact same manner as in the global function field case. So $X$ has to be geometrically integral.
\end{proof}

The next example shows that the set of rational points on a variety that satisfies weak approximation can fail to be Zariski dense if the variety is not geometrically reduced.
\begin{example} \label{example: M-approx not Zariski dense}
    Let $k=\mathbb{F}_p(a,b)$ and $K=k(t)$ for algebraically independent variables $a,b,t$.
    Define $X$ to be the closed subvariety of $\P^3_{k}(x,y,z,w)$ given by $$x^p-z^p a= y^p-z^p b=0.$$ For every place of $v$ of $K$, $K_v$ is a simple extension of $k\lParen t\rParen$. However, $k\lParen t\rParen(a^{1/p}, b^{1/p})$ is a degree $p^2$ extension of $k\lParen t\rParen$ which is not simple. Indeed if it were simple, then there would be a primitive element $\alpha$ such that $k\lParen t\rParen(a^{1/p}, b^{1/p})=k\lParen t\rParen(\alpha)$, but $\alpha^p\in k\lParen t\rParen$ would imply that the extension has degree $p$ rather than $p^2$. Thus we see that $K_v$ cannot contain $k\lParen t\rParen(a^{1/p}, b^{1/p})$. Consequently, for any $K_v$-point on $X$ we must have $z=0$, and therefore
	$$X(K_v)=X(K)=X(k)=\{(0:0:0:1)\}.$$
	This implies that $X$ satisfies weak approximation while $X(K)$ is not Zariski dense.
\end{example}

As a consequence of Theorem \ref{theorem: M-approx implies not thin}, we give a new proof of Minchev's theorem.
\begin{corollary}[{\cite[Theorem 1]{Min89}}] \label{cor: Minchev}
	Let $U$ be a normal variety over a number field $K$, which satisfies strong approximation off a finite set of places $T\subset \Omega_{K}$. Then $U_{\overline{K}}$ is algebraically simply connected, i.e. $\pi_1(U_{\overline{K}})=\{1\}$.
\end{corollary}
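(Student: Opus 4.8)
The plan is a proof by contradiction that reduces the statement to Theorem~\ref{theorem: M-approx implies not thin}. Suppose $\pi_1(U_{\overline K})\neq\{1\}$; we may assume $U$ is geometrically connected and $U(\mathbf{A}_K^T)\neq\emptyset$, as is implicit in the hypothesis. Fix a dense open immersion $U\subseteq X$ into an integral proper $K$-variety, and regard $(X,M)$ as the pair with $\bigcup_{\alpha}D_\alpha=X\setminus U$ and $\fM=\{(0,\dots,0)\}$, so that for any integral model $(\cX,\cM)$ over an open $B\subseteq C$ the set $(\cX,\cM)(B)$ is exactly the set of $B$-integral points of $U$ inside $X(K)$, and strong approximation off $T$ for $U$ is precisely $M$-approximation off $T$ for $(X,M)$.

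First I would construct an auxiliary cover. Since the open normal subgroups of finite index of $\pi_1(U)$ intersect in $\{1\}$ and $\pi_1(U_{\overline K})\neq\{1\}$, there is such a subgroup $N$ with $N\not\supseteq\pi_1(U_{\overline K})$; let $f\colon V\to U$ be the associated connected finite étale cover. Then $V$ is an integral variety (it is normal, being étale over the normal $U$, and connected), $f$ is Galois over $K$ of degree $d:=[\pi_1(U):N]\geq 2$, and because $N$ is \emph{normal} the base change $V_{\overline K}\to U_{\overline K}$ has no connected component of degree~$1$. Spreading out, choose a finite set $S\supseteq T$ of places so that $f$ extends to a finite étale morphism $\cV\to\cU$ of $B$-schemes, where $B=C\setminus S$ and $\cU=\cX\setminus\bigcup_\alpha\cD_\alpha$, and — enlarging $S$ if needed, using that $U$ is geometrically integral together with \cite{LaWe54} and Hensel's lemma — so that moreover $\cU(\cO_v)\neq\emptyset$ for every $v\in B$.

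The key step is that $(\cX,\cM)(B)$ is \emph{thin} in $X(K)$. Let $x\in(\cX,\cM)(B)$; then $x^{\ast}\cV$ is finite étale over $B$ of degree $d$, so the fibre $V_x=f^{-1}(x)$ is $\coprod_i\spec L_i$ with each $L_i/K$ finite separable of degree $\leq d$ and unramified outside $S$. By the Hermite–Minkowski finiteness theorem these $L_i$ range over a finite set $\mathcal L$ of extensions of $K$, which we may take to contain $K$. For $L\in\mathcal L$ put $W_L:=U\times_{\mathrm R_{L/K}(U_L)}\mathrm R_{L/K}(V_L)$, where $\mathrm R_{L/K}$ is Weil restriction of scalars along $\spec L\to\spec K$: this is a finite étale cover of $U$ of degree $d^{[L:K]}\geq 2$, hence normal, and a section of $W_L\to U$ would give a section of $V_L\to U_L$ and thus, after base change to $\overline K$, a degree-$1$ component of $V_{\overline K}\to U_{\overline K}$, which does not exist. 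Hence every connected component $W'$ of every $W_L$ is an integral $K$-variety with $\dim W'=\dim X$ admitting a degree-$\geq 2$ map to $X$. Since $x$ lifts to an $L_i$-point of $V$, it lies in the image of $W_{L_i}(K)\to U(K)$; therefore $(\cX,\cM)(B)$ is contained in the union of the images of the sets $W'(K)$ over the finitely many pairs $(L,W')$, which by Definition~\ref{def: thin set} exhibits it as a finite union of sets of type II, hence as a thin set.

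Finally I would derive the contradiction. As $(X,M)$ satisfies $M$-approximation off $T$ and $T\cap B=\emptyset$, Proposition~\ref{prop: inclusion M approx} gives that $(\cX,\cM)$ satisfies integral $\cM$-approximation off $T$; since $\cU(\cO_v)\neq\emptyset$ for all $v\in B$ and $U(K_v)\neq\emptyset$ for the finitely many $v\in S\setminus T$, the space $(\cX,\cM)(\mathbf{A}_B^T)$ is nonempty, whence $(\cX,\cM)(B)\neq\emptyset$. The remaining hypotheses of Theorem~\ref{theorem: M-approx implies not thin} hold ($X$ is integral, hence geometrically reduced since $\CHAR K=0$; no $D_\alpha$ contains a component of the irreducible $X$; and $K$ is a global field), so that theorem asserts $(\cX,\cM)(B)$ is \emph{not} thin in $X(K)$ — contradicting the previous paragraph. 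Thus $\pi_1(U_{\overline K})=\{1\}$. The main obstacle is the thinness step: it relies on the finiteness furnished by Hermite–Minkowski and on correctly encoding, via Weil restriction of scalars, the condition ``$x$ lifts to an $L$-point of $V$'' as membership in the image of the rational points of an honest degree-$\geq 2$ cover of $U$; the requirement that $N$ be normal in the construction of $V$ is exactly what guarantees that $V_{\overline K}\to U_{\overline K}$ has no degree-$1$ component, so that none of the covers $W_L$ splits off a copy of $U$.
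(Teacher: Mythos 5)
Your proof is correct, and its skeleton is the same as the paper's: deduce integral strong approximation from Proposition~\ref{prop: inclusion M approx}, invoke Theorem~\ref{theorem: M-approx implies not thin} to conclude that the $S$-integral points of $U$ are not thin, and derive a contradiction by showing that a nontrivial $\pi_1(U_{\overline{K}})$ would force those integral points to be thin. The difference lies entirely in that last step: the paper spreads out a geometrically integral finite étale cover of $U$ and cites \cite[Theorem 1.8]{Lug22} as a black box, while you prove the thinness directly by a Chevalley--Weil argument --- the residue fields of the fibres of the spread-out cover are unramified outside $S$ and of bounded degree, Hermite--Minkowski reduces them to a finite list, and Weil restriction converts ``$x$ lifts to an $L$-point of $V$'' into ``$x$ lies in the image of the $K$-points of the degree $d^{[L:K]}\geq 2$ cover $W_L\to U$'', with the normality of your chosen open subgroup guaranteeing that no component of any $W_L$ maps isomorphically onto $U$. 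This makes your argument self-contained where the paper's is not, at the cost of being tied to number fields through Hermite--Minkowski. Two small remarks: geometric integrality of $U$ need not be assumed at the outset, since Theorem~\ref{theorem: M-approx implies not thin} delivers it; and the nonemptiness hypothesis $U(\mathbf{A}_K^T)\neq\emptyset$ that you make explicit is indeed tacitly required in the paper's formulation of the corollary as well.
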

\begin{proof}
	Let $X$ be a normal compactification of $U$ and let $\cX$ be a normal integral model of $X$ over $\cO_S$, where $T\subset S$ is some finite set of places. Let $\cD$ be the Zariski closure of $X\setminus U$ in $\cX$ and let $\cU=\cX\setminus \cD$ be its complement. By Proposition \ref{prop: inclusion M approx}, $\cU$ satisfies integral strong approximation off $S$. Therefore, Theorem \ref{theorem: M-approx implies not thin} implies that $X$ is geometrically integral and $\cU(\cO_S)\subset X(K)$ is not thin. Assume that $U_{\overline{K}}$ is not simply connected. Then \cite[Lemma 3.5.57]{Poo17} implies that there exists a nontrivial finite étale morphism $f\colon Y\rightarrow U$ where $Y$ is a geometrically integral variety over $K$. By spreading out \cite[Theorem 3.2.1(ii)]{Poo17}, there exists a finite set of places $S'$ containing $S$ and a finite étale morphism $\cY\rightarrow \cU\times_{\cO_S} \spec \cO_{S'}$ extending $f$, where $\cY$ is a scheme with $\cY_K\cong Y$. This contradicts \cite[Theorem 1.8]{Lug22}, so $U_{\overline{K}}$ has to be simply connected.
\end{proof}

\section{Split toric varieties and \texorpdfstring{$M$}{M}-approximation}
\subsection{Cox coordinates on toric varieties} \label{section: Cox coordinates}
In this section we introduce toric varieties and their Cox coordinates, following and generalizing \cite[Chapter 5.1]{CLS11} and \cite[Chapter 8]{Sal98}. Given any fan $\Sigma$ as in \cite[Definition 8.1.1]{Sal98} (not necessarily complete or regular), we define the toric scheme associated to $\Sigma$ over $\Z$ to be $\cX_{\Sigma,\Z}$ as in \cite[Remark 8.6]{Sal98}. This is a normal, separated scheme over $\Z$ and it is proper (resp. smooth) over $\Z$ if and only if $\Sigma$ is complete (resp. regular). For any scheme $S$, we define the toric scheme associated to $\Sigma$ over $S$ to be $\cX_{\Sigma,S}:=\cX_{\Sigma}\times_{\Z} S$. For the remainder of the section, we let $K$ be a field and write $X_{\Sigma}:=\cX_{\Sigma,K}$ for the normal split toric variety associated to the fan $\Sigma$.

For a fan $\Sigma$, we write $\Sigma(1)$ for the collection of rays in $\Sigma$, and $\{D_1,\dots, D_n\}$ for the set of torus invariant prime divisors on $X:=X_{\Sigma}$. We denote the lattice of cocharacters of $\cX$ by $N$ and its dual by $N^\vee$. For a torus invariant divisor $D_i$, we write $\rho_i\subset N_{\mathbb{R}}$ for the associated ray and $n_{\rho_i}\in N$ for its ray generator.

\begin{definition}
	If $X=X_{\Sigma}$ is a normal split toric variety over a PF field $(K,C)$, then its \textit{toric integral model} over $B\subset C$ is $\cX=\cX_{\Sigma,B}$. If $(X,M)$ is a pair with $\cA=\{1,\dots,n\}$ and $D_1,\dots,D_n$ the torus invariant prime divisors on $X$, then we call $(X,M)$ a \textit{toric pair} and we say that its \textit{toric integral model} over $B$ is $(\cX_{\Sigma,B},\cM^c)$. We denote the open torus in $X$ by $U$.
\end{definition}

In the remainder of this section, we assume that the ray generators $n_{\rho_i}$ span $N_\mathbb{R}$. This is equivalent to the split toric variety $X=X_{\Sigma,K}$ not having torus factors, or equivalently $\cO_X(X_{\overline{K}})^\times=\overline{K}^\times$.

Now we introduce Cox coordinates on the integral points on the toric variety: we write the toric scheme $\cX=\cX_{\Sigma,\Z}$ as a quotient $\cX=\cY/\cG$ for some open subscheme $\cY\subset \mathbb{A}^n_\Z$ and a group scheme $\cG\subset \mathbb{G}_{m,\Z}^n$.
We have an exact sequence
\begin{equation} \label{equation: exact sequence class group}
0\rightarrow N^\vee\rightarrow \Z^{\Sigma(1)}\rightarrow \Cl(\Sigma)\rightarrow 0, 
\end{equation}
as in \cite[Theorem 4.1.3]{CLS11}, where $\Cl(\Sigma)$ is the class group of any toric variety over a field with the fan $\Sigma$.
We define the $\Z$-group scheme $$\cG=\cHom(\Cl(\Sigma),\mathbb{G}_{m,\Z}),$$
which by the above exact sequence is the kernel of the homomorphism
$$\mathbb{G}_{m,\Z}^n\cong\cHom(\Z^{\Sigma(1)},\mathbb{G}_{m,\Z})\rightarrow \mathbb{G}_{m,\Z}^d\cong\cHom(N^\vee,\mathbb{G}_{m,\Z})$$
induced by the inclusion
$N^\vee\hookrightarrow \Z^{\Sigma(1)}$. Here $d$ is the rank of the lattice $N$. In particular, for every ring $R$, we have the description
$$\cG(R)=\left\{\mathbf{t}\in (R^\times)^n \,\middle\vert\, \prod_{i=1}^n t_i^{\langle m, n_{\rho_i} \rangle}=1 \text{ for all } m\in N^\vee\right\},$$
where $\langle \cdot , \cdot \rangle\colon N^\vee\times N\rightarrow \Z$ is the natural pairing. Now for each cone $\sigma\in \Sigma$, let $$x^{\hat{\sigma}}=\prod_{\substack{i=1 \\ \rho_i\not\subseteq \sigma}}^nx_i,$$ and let $$\cZ=\{x^{\hat{\sigma}}=0\mid \text{for all } \sigma\in \Sigma\}\subset \mathbb{A}_\Z^n.$$
Then $\cY=\mathbb{A}_\Z^n\setminus \cZ$ carries the natural structure of a toric scheme, and the subscheme $\cG$ acts on it by coordinate-wise multiplication. Similarly to \cite[Theorem 5.1.9]{CLS11}, we find the \textit{Cox morphism} $\pi \colon\mathbb{A}_\Z^n\setminus \cZ\rightarrow \cX$ of toric schemes, which is constant on $\cG$-orbits and gives a bijection between the closed toric subschemes of $\cY$ and those of $\cX$.
\begin{lemma} \label{lemma: Cox quotient universal}
    Let $\cX$, $\cY$ and $\pi$ be as in the discussion above. The morphism $\pi \colon \cY\rightarrow \cX$ is an universal categorical quotient for the action of $\cG$ on $\mathbb{A}_\Z^n\setminus \cZ$. If furthermore $\Sigma$ is regular and $\Cl(\Sigma)$ is torsion-free, then $\pi$ is a $\cG$-torsor.
\end{lemma}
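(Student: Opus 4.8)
The plan is to follow Cox's quotient construction \cite[\S 5.1]{CLS11} but over $\Z$, reducing everything to an affine computation governed by the $\Cl(\Sigma)$-grading, which is insensitive to the base. Write $M=N^\vee$ and $A=\Cl(\Sigma)$, so that $\cG=\cHom(A,\mathbb{G}_{m,\Z})=\spec\Z[A]$ is the diagonalizable $\Z$-group scheme associated to $A$. For a cone $\sigma\in\Sigma$ let $U_\sigma=\spec\Z[\sigma^\vee\cap M]\subset\cX$ be the standard affine chart; by construction of $\pi$ its preimage is the $\cG$-stable affine open $\cY_{\hat\sigma}=\spec\Z[x_1,\dots,x_n][(x^{\hat\sigma})^{-1}]$, and the $\cY_{\hat\sigma}$ cover $\cY$ compatibly with the cover $\{U_\sigma\}_{\sigma\in\Sigma}$ of $\cX$, since $\cY_{\hat\sigma}\cap\cY_{\hat{\sigma'}}=\cY_{\widehat{\sigma\cap\sigma'}}$ lies over $U_\sigma\cap U_{\sigma'}=U_{\sigma\cap\sigma'}$. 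So it suffices to treat each chart and then glue.

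First I would carry out the invariant computation. Grade $\Z[x_1,\dots,x_n]$ and its localizations by $A$ via the surjection $\Z^{\Sigma(1)}\to A$ of \eqref{equation: exact sequence class group}; by definition of $\cG$ this is the grading for which $\cG$-invariants form the degree-zero part. A Laurent monomial $\prod_i x_i^{a_i}$ has degree zero exactly when $(a_i)_i$ lies in the image of $M\hookrightarrow\Z^{\Sigma(1)}$, i.e. $(a_i)_i=(\langle m,n_{\rho_i}\rangle)_i$ for some $m\in M$, and it lies in $\Z[x_1,\dots,x_n][(x^{\hat\sigma})^{-1}]$ precisely when $a_i\geq 0$ for every $\rho_i\subseteq\sigma$, that is when $m\in\sigma^\vee$. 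Hence $\cO(\cY_{\hat\sigma})^{\cG}=\Z[\sigma^\vee\cap M]=\cO(U_\sigma)$, so $\pi|_{\cY_{\hat\sigma}}\colon\cY_{\hat\sigma}\to U_\sigma$ is the invariant-ring quotient map. The formation of this quotient commutes with arbitrary base change, because if $R=\bigoplus_{a\in A}R_a$ is $A$-graded then for any $R_0$-algebra $S$ the ring $R\otimes_{R_0}S$ is again $A$-graded with degree-zero part $S$ (tensor product commutes with the direct sum); thus $\pi|_{\cY_{\hat\sigma}}$ is a universal categorical quotient for the $\cG$-action, by exactly the argument used over a field in the proof of \cite[Theorem 5.1.10]{CLS11}. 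Gluing the charts, $\pi\colon\cY\to\cX$ is a universal categorical quotient.

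For the torsor statement, assume in addition that $\Sigma$ is regular and $A$ is torsion-free, so $\cG\cong\mathbb{G}_{m,\Z}^{n-d}$ is a split torus. I would again work over a chart $U_\sigma$. Since $\sigma$ is smooth, its ray generators $n_{\rho_i}$ with $\rho_i\subseteq\sigma$ extend to a $\Z$-basis of $N$, and a direct computation in the dual basis shows that in each degree $a\in A$ the graded piece $\cO(\cY_{\hat\sigma})_a$ is a free rank-one $\cO(U_\sigma)$-module generated by a single monomial, any two choices of generator differing by a unit, and that the multiplication $\cO(\cY_{\hat\sigma})_a\otimes_{\cO(U_\sigma)}\cO(\cY_{\hat\sigma})_b\to\cO(\cY_{\hat\sigma})_{a+b}$ is an isomorphism. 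Thus $\cO(\cY_{\hat\sigma})=\bigoplus_a L_a$ with each $L_a$ an invertible (here free) $\cO(U_\sigma)$-module and $L_a\otimes L_b\cong L_{a+b}$; this is exactly the data of a $\cG$-torsor over $U_\sigma$ (a trivial one, since the $L_a$ are free), and in particular $\cY_{\hat\sigma}\to U_\sigma$ is faithfully flat. Hence $\pi$ is a $\cG$-torsor Zariski-locally on $\cX$, so it is a $\cG$-torsor.

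I do not expect a deep obstacle: the point is that every ingredient of Cox's quotient is controlled by the $\Cl(\Sigma)$-grading, which behaves identically over $\Z$ and over a field, so the arguments of \cite{CLS11} transfer; the base-change (``universal'') part is immediate because direct sums are exact. The steps requiring genuine care are the two grading computations — identifying $\cO(\cY_{\hat\sigma})^{\cG}$ with $\Z[\sigma^\vee\cap M]$, and, in the smooth case, exhibiting a monomial generator in each degree and checking it multiplies compatibly — together with the routine bookkeeping that the affine categorical quotients glue correctly along the $U_{\sigma\cap\sigma'}$.
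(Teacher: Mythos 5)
Your proof is correct, and its overall skeleton (reduce to the $\cG$-stable affine charts $\cY_{\hat\sigma}\to U_\sigma$ and control everything by the $\Cl(\Sigma)$-grading) is the same as the paper's, which simply imports the argument of \cite[Theorem 5.1.11]{CLS11}. Where you genuinely diverge is in how the two key inputs are supplied. For the quotient statement, the paper replaces \cite[Proposition 5.0.9]{CLS11} by Proposition \ref{prop: affine scheme categorical quotient}, i.e.\ it invokes Alper's good moduli space machinery for the linearly reductive group $\cG$ to get that $\spec A\to\spec A^{\cG}$ is a universal categorical quotient; you instead compute $\cO(\cY_{\hat\sigma})^{\cG}=\Z[\sigma^\vee\cap M]$ directly from the $A$-grading and observe that the degree-zero projection (the Reynolds operator for a diagonalizable group) and its compatibility with base change make the field argument of \cite{CLS11} go through over $\Z$. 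That is a perfectly valid and more elementary substitute; the only point you compress is the gluing step, which needs the \emph{good} (not merely categorical) quotient property on each chart to be local on the target — but that property does follow from your degree-zero projection exactly as in \cite[Proposition 5.0.9]{CLS11}. For the torsor statement, the paper cites \cite[Theorem 3.3.19]{CLS11}, using regularity to see that the cone maps $\hat\sigma\to\sigma$ are bijective on lattice points; you instead verify directly that $\cO(\cY_{\hat\sigma})=\bigoplus_{a\in A}L_a$ with each $L_a$ free of rank one over $\cO(U_\sigma)$ and the multiplication maps isomorphisms, which is the graded-algebra description of a (here trivial) $D(A)$-torsor. Your verification that a monomial generator of each degree exists is exactly where smoothness of $\sigma$ enters (the $n_{\rho_i}$ with $\rho_i\subseteq\sigma$ extend to a basis of $N$), and torsion-freeness of $A$ is what lets you conclude chart-wise triviality of the torsor from freeness of the $L_a$. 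The net effect is a self-contained proof that avoids both Alper's theorem and \cite[Theorem 3.3.19]{CLS11}, at the cost of writing out the two grading computations; I see no gap.
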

If $\Sigma$ is regular, the Cox morphism is referred to as the universal torsor of $\cX$, such as in \cite{Sal98,FrPi13}. To prove the lemma we first need the following generalization of \cite[Proposition 5.0.9]{CLS11} to general rings.
\begin{proposition} \label{prop: affine scheme categorical quotient}
    Let $\cG$ be a linearly reductive group scheme over a ring $R$ (as in \cite[Section 12]{Alp13}) acting on an affine $R$-scheme $\spec A$. Then the induced morphism $\spec A\rightarrow \spec A^\cG$ is a universal categorical quotient for this action, where $A^\cG$ is the subalgebra containing the elements in $A$ invariant under the action of $\cG$.
\end{proposition}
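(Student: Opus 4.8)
The plan is to recognize the morphism $\spec A\to \spec A^{\cG}$ as the good moduli space of the quotient stack $[\spec A/\cG]$ in the sense of \cite{Alp13}, and to deduce the universal categorical quotient property from the general theory of good moduli spaces; this generalizes \cite[Proposition 5.0.9]{CLS11}. First I would check that $[\spec A/\cG]\to \spec A^{\cG}$ is a good moduli space. Linear reductivity of $\cG$ means exactly that the pushforward of quasi-coherent sheaves along $[\spec A/\cG]\to \spec A^{\cG}$ is exact, and, since $\cG$-invariants commute with localization at elements of $A^{\cG}$, the structure sheaf of $\spec A^{\cG}$ coincides with the pushforward of the structure sheaf of $[\spec A/\cG]$; this is precisely the setting of \cite[Section 12]{Alp13}.

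Granting this, I would prove the categorical quotient property in two stages; write $\pi\colon\spec A\to\spec A^{\cG}$ for the canonical morphism. For an affine target $\spec C$, a $\cG$-invariant morphism $\spec A\to \spec C$ is the same datum as a ring homomorphism $C\to A$, and unwinding $\cG$-invariance against the coaction of $\cG$ on $A$ shows that its image lies in $A^{\cG}$; hence it factors through $\pi$, uniquely because $A^{\cG}\hookrightarrow A$ is injective. For a general target $W$ and a $\cG$-invariant morphism $\phi\colon\spec A\to W$, I would cover $W$ by affine opens $W_i$ and invoke the basic geometric properties of good moduli space morphisms from \cite{Alp13}: $\pi$ is surjective and universally closed, and two points of $\spec A$ have the same image under $\pi$ exactly when the closures of the corresponding $\cG$-orbits meet. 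These imply that $\pi$ is submersive and that $\cG$-invariant opens of $\spec A$ are saturated, so $\phi^{-1}(W_i)=\pi^{-1}(V_i)$ for opens $V_i\subseteq\spec A^{\cG}$; since the good moduli space property is local on the target, $\pi^{-1}(V_i)\to V_i$ is again a good moduli space, and the affine case lets one factor the restriction $\phi^{-1}(W_i)\to W_i$ of $\phi$ through $V_i$. The uniqueness from the affine case forces these local factorizations to agree on overlaps, so they glue to a morphism $\spec A^{\cG}\to W$ through which $\phi$ factors, and global uniqueness follows in the same way.

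For universality, I would use that good moduli spaces are stable under arbitrary base change \cite{Alp13}: for any morphism $W'\to \spec A^{\cG}$ the projection $[\spec A/\cG]\times_{\spec A^{\cG}} W'\to W'$ is again a good moduli space, and it is the quotient stack of the group scheme $\cG_{W'}$ (linearly reductive, since linear reductivity is preserved by base change) acting on $\spec A\times_{\spec A^{\cG}}W'$, which is affine over $W'$. Working locally on $W'$ then reduces to the previous paragraph and shows $\spec A\times_{\spec A^{\cG}}W'\to W'$ is a categorical quotient for the induced action, which is exactly the assertion of the proposition. The step I expect to be the main obstacle is the passage from affine to general targets: it is there that one genuinely needs the geometric description of the fibers of a good moduli space morphism in terms of intersecting orbit closures, rather than merely the exactness of invariants, and one must take some care that the local factorizations are mutually compatible; everything else is either formal or a direct appeal to \cite{Alp13}.
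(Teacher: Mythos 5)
Your opening and closing steps coincide with the paper's proof: identify $[\spec A/\cG]\rightarrow \spec A^\cG$ as a good moduli space via linear reductivity, and get universality from stability of good moduli spaces under base change. The paper then simply cites the fact that good moduli spaces are universal for maps to algebraic spaces (\cite[Theorem 4.16(vi)]{Alp13}), whereas you re-derive the categorical quotient property by hand, and it is in that detour that your argument has a genuine gap: you assert that $\cG$-invariant open subsets of $\spec A$ are saturated for $\pi$, i.e.\ of the form $\pi^{-1}(V_i)$. This is false. Take $\cG=\mathbb{G}_m$ acting on $\mathbb{A}^2=\spec k[x,y]$ by scaling; then $A^\cG=k$, the good moduli space is a point, and the invariant open $\mathbb{A}^2\setminus\{0\}$ is not the preimage of any open of $\spec k$. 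The description of the fibers of $\pi$ in terms of intersecting orbit closures only shows that invariant \emph{closed} sets have closed, fiber-saturated images; it does not make invariant opens saturated, because an invariant open containing a point need not contain the closure of its orbit.

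The standard repair is to set $V_i=\spec A^\cG\setminus \pi\bigl(\spec A\setminus \phi^{-1}(W_i)\bigr)$, which is open since $\pi$ maps invariant closed sets to closed sets; one then checks $\pi^{-1}(V_i)\subseteq \phi^{-1}(W_i)$ and that the $V_i$ cover $\spec A^\cG$ (using that $\pi$ separates disjoint invariant closed sets), and runs your affine-case argument on $\pi^{-1}(V_i)\rightarrow V_i$. With that correction your proof goes through, but note that all of this is exactly the content of \cite[Theorem 4.16(vi)]{Alp13}, which can be invoked directly, as the paper does, to pass from the good moduli space property to the categorical quotient property in one step.
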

\begin{proof}
   Since $\cG$ is linearly reductive, by \cite[Remark 4.8]{Alp13} (see also \cite[Theorem 13.2]{Alp13}) the morphism of $R$-stacks $[\spec A/\cG]\rightarrow \spec A^\cG$ is a good moduli space \cite[Definition 4.1]{Alp13}. In particular, any morphism $\spec A\rightarrow S$ to some $R$-scheme $S$ which is constant on $\cG$-orbits factors uniquely as $\spec A\rightarrow [\spec A/\cG]\rightarrow \spec A^\cG\rightarrow S$, since good moduli spaces are universal for maps to schemes \cite[Theorem 4.16(vi)]{Alp13}. Thus we see that $\spec A\rightarrow \spec A^\cG$ is a categorical quotient. Since good moduli spaces are preserved under base change \cite[Proposition 4.7]{Alp13}, it is furthermore a universal categorical quotient.
\end{proof}
\begin{proof}[Proof of Lemma \ref{lemma: Cox quotient universal}]
    The proof of the first part is exactly as in \cite[Theorem 5.1.11]{CLS11}, where we replace the use of \cite[Proposition 5.0.9]{CLS11} by Proposition \ref{prop: affine scheme categorical quotient}, where we use that $\cG$ is linearly reductive by \cite[Example 12.4.(2)]{Alp13}.
    The second part follows from \cite[Theorem 3.3.19]{CLS11}, where regularity is used to ensure that the bijective morphisms of cones $\hat{\sigma}\rightarrow \sigma$ induced by $\pi$ restrict to bijective maps $\hat{\sigma}\cap \left(\Z^{\Sigma(1)}\right)^\vee\rightarrow \sigma\cap N$.
\end{proof}



Using the construction just given, we can define Cox coordinates on a split toric variety.
\begin{definition}
    Let $\cX$ and $\pi\colon \cY\rightarrow \cX$ be as above, and let $B$ be a scheme.
    For any $P\in \cY(B)$, we say that $P=(P_1, \dots, P_n)$ are \textit{Cox coordinates} for the point $\pi(P)\in\cX(B)$, where $P_i\in \cO(B)$. We will write $\pi(P)=(P_1: \dots: P_n)$, in analogy with homogeneous coordinates.
\end{definition}

When can all points in $\cX(B)$ be represented by Cox coordinates for a scheme $B$? If $\Sigma$ is regular and complete, then $\Cl(\Sigma)$ is torsion-free. Therefore Lemma \ref{lemma: Cox quotient universal} shows that the Cox morphism is a $\cG$-torsor. Therefore \cite[Proposition 2.1]{FrPi13} gives a decomposition
\begin{equation}
\cX(B)=\bigsqcup_{[W]\in H^1_{fppf}(B,\cG)} {}_W\pi( {}_W\cY(B)), 
\end{equation}
for every scheme $B$. Here $_W\pi\colon _W \cY\rightarrow \cX$ is the twist of $\pi$ by $-[W]\in H^1_{fppf}(B,\cG)$ as defined in \cite[p.22]{Sko01}.

Note that in this case $\cG\cong \mathbb{G}_{m,B}^{n-d}$. Thus $H_{fppf}^1(B,\cG)\cong \Pic(B)^{n-d}$ if $B$ is regular. If $B=\spec R$ for a unique factorisation domain then this implies that every $R$-point is represented by Cox coordinates.
\begin{proposition} \label{prop: Cox coordinates}
Let $\Sigma$ be a regular and complete fan, let $\cX=\cX_\Sigma$, and let $R$ be a unique factorisation domain. Then every $P\in \cX(R)$ is represented by Cox coordinates:
\begin{equation}
\cX(R)=\pi(\cY(R)),
\end{equation}
where $\pi$ is the Cox morphism.
\end{proposition}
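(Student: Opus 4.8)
The plan is to deduce the statement from the torsor decomposition recalled just above, together with the vanishing of the Picard group of a unique factorisation domain. First I would record that, since $\Sigma$ is regular and complete, $\Cl(\Sigma)$ is torsion-free, so the exact sequence \eqref{equation: exact sequence class group} shows $\Cl(\Sigma)\cong \Z^{n-d}$, where $d=\rank N$. Hence $\cG=\cHom(\Cl(\Sigma),\mathbb{G}_{m,\Z})\cong \mathbb{G}_{m,\Z}^{n-d}$, and by Lemma \ref{lemma: Cox quotient universal} the Cox morphism $\pi\colon \cY\to \cX$ is a $\cG$-torsor. Consequently \cite[Proposition 2.1]{FrPi13} applies with $B=\spec R$ and gives
\[
\cX(R)=\bigsqcup_{[W]\in H^1_{fppf}(\spec R,\cG)} {}_W\pi\bigl({}_W\cY(R)\bigr).
\]

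Next I would show that the index set is trivial. Since $\cG\cong \mathbb{G}_{m,\Z}^{n-d}$, a $\cG$-torsor over $\spec R$ is the same datum as an $(n-d)$-tuple of $\mathbb{G}_m$-torsors, i.e.\ of invertible $R$-modules, so $H^1_{fppf}(\spec R,\cG)\cong \Pic(R)^{n-d}$; this comparison uses only faithfully flat descent for $\mathbb{G}_m$ and needs no regularity assumption on $R$. Because $R$ is a UFD it is an integrally closed Krull domain with trivial divisor class group, and the natural injection $\Pic(R)\hookrightarrow \Cl(R)$ then forces $\Pic(R)=0$ (alternatively, one can argue as in the proof of Proposition \ref{prop: Picard group Adeles}: an invertible ideal is finitely generated by \cite[Tag 0B8N]{Stacks} and is principal in a UFD). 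Hence $H^1_{fppf}(\spec R,\cG)=0$, the disjoint union above has a single summand, namely the one indexed by the trivial class $[W]=0$, and for that class ${}_W\pi=\pi$ and ${}_W\cY=\cY$ by the definition of the twist. Therefore $\cX(R)=\pi(\cY(R))$, as claimed.

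I do not expect a serious obstacle here: the argument is a formal consequence of the torsor structure from Lemma \ref{lemma: Cox quotient universal} and the decomposition of \cite{FrPi13}, with the only substantive ingredient being the classical vanishing of $\Pic$ for a UFD. The one point I would make explicit is that the $[W]=0$ term of the decomposition is literally $\pi(\cY(R))$, so that collapsing the disjoint union yields the stated equality rather than merely a statement up to twisting.
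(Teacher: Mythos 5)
Your proof is correct and follows essentially the same route as the paper, which derives the proposition from the torsor decomposition of \cite[Proposition 2.1]{FrPi13} (justified by Lemma \ref{lemma: Cox quotient universal} and the torsion-freeness of $\Cl(\Sigma)$ for a regular complete fan) together with $H^1_{fppf}(\spec R,\cG)\cong\Pic(R)^{n-d}=0$ for a UFD. Your only addition is the correct observation that the identification $H^1_{fppf}(B,\mathbb{G}_m)\cong\Pic(B)$ needs no regularity hypothesis, which slightly sharpens the paper's phrasing but does not change the argument.
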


If we instead consider singular toric varieties, then we can still show that rational points are represented by Cox coordinates, as long as $\Cl(\Sigma)$ is torsion-free.
\begin{proposition} \label{prop: Cox coordinates singular}
Let $\Sigma$ be a fan such that $\Cl(\Sigma)$ is torsion-free and let $k$ be a field. Then every point $P\in \cX(k)=\cX_\Sigma(k)$ is represented by Cox coordinates:
\begin{equation}
\cX(k)=\pi(\cY(k)),
\end{equation}
where $\pi$ is the Cox morphism.
\end{proposition}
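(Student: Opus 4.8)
The plan is to bypass the feature that made Proposition~\ref{prop: Cox coordinates} immediate: there the Cox morphism $\pi$ is a $\cG$-torsor and \cite[Proposition 2.1]{FrPi13} applies verbatim, whereas when $\Cl(\Sigma)$ is only assumed torsion-free the morphism $\pi$ need not be a torsor (Lemma~\ref{lemma: Cox quotient universal} uses regularity), so that decomposition is unavailable. Instead I would lift $k$-points orbit by orbit via the torus action. Base change all schemes to $k$, keeping the notation $\cX$, $\cY=\mathbb{A}^n_k\setminus\cZ$, $\cG$, $\pi$, and write $U\subset\cX$ for the open torus, so that $U\cong\cHom(N^\vee,\mathbb{G}_{m,k})$.

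The first step would be to record a short exact sequence of tori. Since $\Cl(\Sigma)$ is a quotient of $\Z^{\Sigma(1)}$ it is finitely generated, and being torsion-free it is free; hence \eqref{equation: exact sequence class group} splits, and applying $\cHom(-,\mathbb{G}_{m,k})$ gives a split short exact sequence of split tori over $k$
$$1\longrightarrow\cG\longrightarrow\mathbb{G}_{m,k}^{\,n}\xrightarrow{\ q\ }U\longrightarrow 1,$$
where $q$ is the homomorphism induced by $N^\vee\hookrightarrow\Z^{\Sigma(1)}$, i.e.\ the homomorphism of tori underlying the toric morphism $\pi$. Thus $\pi$ is equivariant: $\pi(t\cdot y)=q(t)\cdot\pi(y)$ for $t\in\mathbb{G}_{m,k}^{\,n}(k)$ and $y\in\cY(k)$, where $U$ acts on $\cX$ by the toric action. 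Since $\cG$ is a split torus, $H^1_{fppf}(k,\cG)=0$ (this is where torsion-freeness of $\Cl(\Sigma)$ enters), so $q$ is surjective on $k$-points.

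The second step would be to reduce an arbitrary $P\in\cX(k)$ to a distinguished point. By the orbit--cone correspondence, valid over any field by base change from the $\Z$-model of \cite{Sal98} (cf.\ \cite[\S 3.2]{CLS11}), $\cX$ is the disjoint union of its locally closed $U$-orbits $O(\sigma)$, $\sigma\in\Sigma$; thus $P$ factors through the unique stratum $O(\sigma)$ containing its image point, i.e.\ $P\in O(\sigma)(k)$. The distinguished point $\gamma_\sigma\in O(\sigma)(k)$ equals $\pi(y_\sigma)$, where $y_\sigma\in\mathbb{A}^n_k(k)$ has $i$-th coordinate $0$ if $\rho_i\subseteq\sigma$ and $1$ otherwise; as $x^{\hat\sigma}(y_\sigma)=1\neq0$, one has $y_\sigma\in\cY(k)$, so $\gamma_\sigma\in\pi(\cY(k))$. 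The orbit map $U\to O(\sigma)$, $t\mapsto t\cdot\gamma_\sigma$, is a surjection of split tori whose kernel, the stabiliser of $\gamma_\sigma$, is again a split torus because $N\cap\operatorname{span}\sigma$ is saturated in $N$; hence it is surjective on $k$-points, and $P=t\cdot\gamma_\sigma$ for some $t\in U(k)$. Choosing $\tilde t\in\mathbb{G}_{m,k}^{\,n}(k)$ with $q(\tilde t)=t$ and using equivariance, $\pi(\tilde t\cdot y_\sigma)=q(\tilde t)\cdot\pi(y_\sigma)=t\cdot\gamma_\sigma=P$, with $\tilde t\cdot y_\sigma\in\cY(k)$; hence $P\in\pi(\cY(k))$. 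As $P$ was arbitrary and the reverse inclusion is trivial, $\cX(k)=\pi(\cY(k))$.

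The main obstacle — and the exact point at which the torsion-freeness hypothesis is essential — is the surjectivity of $q$ (and of the orbit maps) on $k$-points, which rests on $\cG$ being a \emph{split} torus so that $H^1_{fppf}(k,\cG)=0$. Were $\Cl(\Sigma)$ to have torsion, $\cG$ would pick up a factor $\mu_\ell$, and the class in $H^1_{fppf}(k,\mu_\ell)=k^\times/(k^\times)^\ell$ cutting out the fibre $\pi^{-1}(P)$ could be nontrivial; this is precisely why not every $k$-point of $\mathbb{A}^2/\mu_2$ admits Cox coordinates. The remaining inputs — the orbit stratification over a field, the description of the distinguished points in Cox coordinates, and the toric equivariance of $\pi$ — I would take from \cite{Sal98} and \cite[Chapter 5]{CLS11}, where they are set up over a general base.
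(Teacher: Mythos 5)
Your proposal is correct and follows essentially the same route as the paper's proof: reduce to torus orbits via the orbit--cone correspondence, use the splitting of the class group sequence (forced by torsion-freeness) to get surjectivity of $\mathbb{G}_{m}^{n}(k)\rightarrow U(k)$, and combine this with surjectivity of the orbit map $U(k)\rightarrow O(\sigma)(k)$ to lift an arbitrary $k$-point. The only differences are presentational — you make the distinguished point $y_\sigma$ and the equivariance of $\pi$ explicit, and invoke vanishing of $H^1$ for split tori where the paper uses the section directly.
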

\begin{proof}
By the Orbit-Cone Correspondence \cite[Theorem 3.2.6, Proposition 3.2.7]{CLS11}, the torus invariant orbits $V\cong \mathbb{G}^{s}_m(k)$ on a split toric scheme $\cX$ correspond to the closed toric subschemes $\cZ$ of $\cY$ of dimension $s$ by restricting to the dense torus in $\cZ$ and by taking $k$-points.
Since $\pi$ induces a bijection between the closed toric subschemes of $\cY$ and those of $\cX$, it also induces a bijection between the torus orbits in $\cY(k)$ and in $\cX(k)$. Since every torus orbit contains a $k$-point, every torus orbit $V\subset \cX(k)$ contains the image of a point in $\cY(k)$. Since $\Cl(\Sigma)$ is torsion-free, the short exact sequence $0\rightarrow N^\vee\rightarrow \Z^{\Sigma(1)}\rightarrow \Cl(\Sigma)\rightarrow 0$ splits, which gives a splitting of the associated exact sequence
$$0\rightarrow \cG\rightarrow \mathbb{G}_{m,\Z}^n\rightarrow \mathbb{G}_{m,\Z}^d\rightarrow 0,$$
and thus the map $\mathbb{G}_{m,\Z}^n(k)\rightarrow \mathbb{G}_{m,\Z}^d(k)$ is surjective. Let $V\subset \cX(k)$ be a torus orbit and consider the map $\phi\colon \mathbb{G}_{m,\Z}^d(k)\rightarrow V$ from the $k$-points of the dense torus in $\cX$ to $V$, induced by the map on tori. By combining \cite[Lemma 3.2.5]{CLS11} and the same splitting argument as above to the exact sequence (3.2.6) in \cite{CLS11}, we see that $\phi$ is surjective. Combining the surjectivity of these maps, the composite map $\mathbb{G}_{m,\Z}^n(k)\rightarrow V$ is surjective. By the commutative diagram
    
    $$\begin{tikzcd}
    \mathbb{G}_{m,\Z}^n(k) \arrow[d] \arrow[r] & V' \arrow[d] \\
    \mathbb{G}_{m,\Z}^d(k) \arrow[r, "\phi"]           & V,           
    \end{tikzcd}$$
where $V'$ is the torus orbit in $\cY(k)$ above $V$, the map $V'\rightarrow V$ is surjective. Since every rational point lies in a torus orbit, the map $\cY(k)\rightarrow \cX(k)$ is surjective.
\end{proof}

By applying Proposition \ref{prop: Cox coordinates singular}, we will show that any toric resolution of singularities of normal split toric varieties is surjective on rational points. For this we first need the following proposition, which characterizes when the class group of a split toric variety is torsion-free.
\begin{proposition} \label{prop: torsion-free Picard}
Let $X$ be a normal split toric variety without torus factors.
Then $\Cl(X)$ is torsion-free if and only if the ray generators $\{n_{\rho}\mid \rho\in \Sigma(1)\}$, span $N$ as a lattice. Similarly, for a prime number $p$, $\Cl(X)$ does not have $p$-torsion if and only if the ray generators $\{n_{\rho}\mid \rho\in \Sigma(1)\}$ span $N/pN$ as a lattice.
\end{proposition}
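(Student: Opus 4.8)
The plan is to dualize the exact sequence \eqref{equation: exact sequence class group} and extract the torsion of $\Cl(\Sigma)=\Cl(X)$ from the cokernel of the dual of $\iota\colon N^\vee\hookrightarrow\Z^{\Sigma(1)}$. Concretely, I would apply $\Hom_{\Z}(-,\Z)$ to $0\to N^\vee\xrightarrow{\iota}\Z^{\Sigma(1)}\to\Cl(\Sigma)\to 0$; since $\Z^{\Sigma(1)}$ is free we have $\operatorname{Ext}^1_{\Z}(\Z^{\Sigma(1)},\Z)=0$, so this produces an exact sequence
\[
0\to\Cl(\Sigma)^\vee\to\bigl(\Z^{\Sigma(1)}\bigr)^\vee\xrightarrow{\ \iota^\vee\ }(N^\vee)^\vee\to\operatorname{Ext}^1_{\Z}(\Cl(\Sigma),\Z)\to 0 .
\]
Using the canonical identifications $\bigl(\Z^{\Sigma(1)}\bigr)^\vee\cong\Z^{\Sigma(1)}$ and $(N^\vee)^\vee\cong N$, together with the description of $\iota$ from \cite[Theorem 4.1.3]{CLS11} as $m\mapsto(\langle m,n_{\rho}\rangle)_{\rho\in\Sigma(1)}$, one checks that $\iota^\vee$ is the map $(a_\rho)_\rho\mapsto\sum_{\rho}a_\rho n_\rho$, whose image is the sublattice $L:=\sum_{\rho\in\Sigma(1)}\Z\,n_\rho\subseteq N$ generated by the ray generators. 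Hence $\operatorname{Ext}^1_{\Z}(\Cl(\Sigma),\Z)\cong N/L$.

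Next I would invoke the standard fact that $\operatorname{Ext}^1_{\Z}(A,\Z)\cong A_{\mathrm{tors}}$ for every finitely generated abelian group $A$ (immediate from the structure theorem together with $\operatorname{Ext}^1_{\Z}(\Z,\Z)=0$ and $\operatorname{Ext}^1_{\Z}(\Z/n\Z,\Z)\cong\Z/n\Z$); in particular $A$ is torsion-free exactly when $\operatorname{Ext}^1_{\Z}(A,\Z)=0$, and $A$ has nontrivial $p$-torsion exactly when $\operatorname{Ext}^1_{\Z}(A,\Z)$ does. Applying this with $A=\Cl(\Sigma)$ yields the first assertion at once: $\Cl(X)$ is torsion-free $\iff N/L=0\iff L=N$, i.e.\ $\iff$ the ray generators span $N$ as a lattice.

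For the statement about $p$-torsion the remaining input is that $L$ has finite index in $N$ — this is precisely where the ``no torus factors'' hypothesis is used, since it says the $n_\rho$ span $N_{\mathbb{R}}$ — so that $N/L$ is finite. A finite abelian group $G$ has no $p$-torsion if and only if multiplication by $p$ is surjective on $G$, i.e.\ $G=pG$; taking $G=N/L$ gives: $N/L$ has no $p$-torsion $\iff N=pN+L\iff$ the images of the $n_\rho$ span $N/pN$. Combined with $N/L\cong\operatorname{Ext}^1_{\Z}(\Cl(\Sigma),\Z)$ this is the second assertion. The only steps that need any care are the identification of the dual map $\iota^\vee$ and, for the $p$-torsion case, the passage via finiteness of $N/L$ from ``no $p$-torsion'' to the divisibility condition $N=pN+L$; everything else is formal.
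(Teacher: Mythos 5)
Your argument is correct, and it reaches the same pivot as the paper — namely that the dual of $N^\vee\hookrightarrow\Z^{\Sigma(1)}$ is the map $(a_\rho)_\rho\mapsto\sum_\rho a_\rho n_\rho$, so everything reduces to surjectivity of $\Z^{\Sigma(1)}\to N$ (resp.\ $(\Z/p\Z)^{\Sigma(1)}\to N/pN$) — but the mechanism linking torsion to that surjectivity is different. The paper argues: torsion-free $\iff$ $\Cl(X)$ projective $\iff$ the sequence \eqref{equation: exact sequence class group} splits $\iff$ a retraction exists $\iff$ the dual map has a section $\iff$ the dual map is surjective, and then disposes of the $p$-torsion case by the terse remark ``tensor with $\Z/p\Z$.'' You instead run the $\Hom_{\Z}(-,\Z)$ long exact sequence to get $\operatorname{Ext}^1_{\Z}(\Cl(\Sigma),\Z)\cong N/L$ with $L=\sum_\rho\Z\,n_\rho$, and combine this with $\operatorname{Ext}^1_{\Z}(A,\Z)\cong A_{\mathrm{tors}}$ for finitely generated $A$. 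Your route buys two things: it produces the explicit isomorphism $\Cl(X)_{\mathrm{tors}}\cong N/L$ (strictly more than the stated proposition), and it makes the $p$-torsion case genuinely routine, since the isomorphism respects $p$-primary parts and a finite group has no $p$-torsion iff multiplication by $p$ is surjective on it. One small correction of emphasis: the ``no torus factors'' hypothesis is not only needed for finiteness of $N/L$ in the second assertion — it is already needed for the left-exactness of \eqref{equation: exact sequence class group} (injectivity of $N^\vee\to\Z^{\Sigma(1)}$ is exactly the statement that the $n_\rho$ span $N_{\mathbb{R}}$), so it underlies the first assertion as well; the finiteness of $N/L$ then also follows for free from $N/L\cong\Cl(X)_{\mathrm{tors}}$.
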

\begin{proof}
The class group being torsion-free is equivalent to $\Cl(X)$ being a projective $\Z$-module, which by the splitting lemma is equivalent to the existence of a retraction $\Z^{\Sigma(1)}\rightarrow N^\vee$ of the exact sequence \eqref{equation: exact sequence class group}. Taking $\Z$-duals, we see this is equivalent to the projection $\Z^{\Sigma(1)}\rightarrow N$ having a section, and therefore to it being surjective. The second statement follows from tensoring with $\Z/p\Z$ and using the same argument for $\Z/p\Z$ instead of $\Z$.
\end{proof}
\begin{corollary} \label{corollary: resolution of singularities surjective}
Let $k$ be a field and let $X$ be a normal split toric variety over $k$. Then any proper birational toric morphism $f\colon \widetilde{X}\rightarrow X$ induces a surjection $\widetilde{X}(k)\rightarrow X(k)$. In particular, this holds whenever $f$ is a resolution of singularities.
\end{corollary}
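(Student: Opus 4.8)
The plan is to translate the statement into combinatorics and then reduce to a situation in which Cox coordinates are available on the \emph{source} of the morphism. A proper birational toric morphism $f\colon\widetilde X\to X$ corresponds to a refinement $\widetilde\Sigma$ of the fan $\Sigma$ of $X$: the two fans have the same support and every cone of $\widetilde\Sigma$ lies in a cone of $\Sigma$ (see \cite[Theorem 3.3.20, Theorem 3.4.11]{CLS11}). I would first reduce to the case that $\widetilde X$ is smooth: subdividing $\widetilde\Sigma$ to a regular refinement \cite[Theorem 11.1.9]{CLS11} gives a proper birational toric morphism $g\colon\widehat X\to\widetilde X$ with $\widehat X$ smooth, and since $f\circ g$ factors through $\widetilde X$, surjectivity of $(f\circ g)(k)$ implies that of $f(k)$. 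Next I would reduce to $X$ having no torus factor: if $X\cong X'\times\mathbb G_{m,k}^{\,s}$ then, the support of $\widetilde\Sigma$ being that of $\Sigma$, one gets $\widetilde X\cong\widetilde X'\times\mathbb G_{m,k}^{\,s}$ compatibly with $f$, and the claim for $X'$ yields it for $X$. After these reductions $\widetilde X$ is smooth without torus factor, so $\widetilde\Sigma$ contains a $d$-dimensional (hence smooth) cone, and therefore the ray generators of $\widetilde\Sigma$ span $N$.

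Now Proposition \ref{prop: torsion-free Picard} shows $\Cl(\widetilde\Sigma)$ is torsion-free, so Proposition \ref{prop: Cox coordinates singular} gives $\widetilde X(k)=\widetilde\pi(\widetilde\cY(k))$ for the Cox morphism $\widetilde\pi\colon\widetilde\cY\to\widetilde X$ (which by Lemma \ref{lemma: Cox quotient universal} is even a torsor under $\widetilde\cG\cong\mathbb G_{m,k}^{\,\widetilde n-d}$). Consequently $f(\widetilde X(k))=(f\circ\widetilde\pi)(\widetilde\cY(k))$, so it suffices to prove that the toric morphism $h:=f\circ\widetilde\pi\colon\widetilde\cY\to X$ is surjective on $k$-points. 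Here $\widetilde\cY$ is an open toric subscheme of $\mathbb A^{\widetilde n}_k$, its rays are the coordinate rays and span the cocharacter lattice $\Z^{\widetilde n}$, and the lattice homomorphism underlying $h$ is the surjection $\Z^{\widetilde n}\twoheadrightarrow N$, $e_\rho\mapsto n_{\rho}$. This is exactly the point where passing to the smooth model matters: over $X$ itself a general $k$-point sits on a nontrivial twist of the Cox torsor (torsion in $\Cl(X)$) and need not admit Cox coordinates, whereas on $\widetilde X$ the twists disappear.

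To finish I would argue orbit by orbit using the Orbit--Cone Correspondence \cite[Theorem 3.2.6]{CLS11}: $X(k)=\bigsqcup_{\gamma\in\Sigma}O(\gamma)(k)$ with each $O(\gamma)$ a split torus. Given $\gamma\in\Sigma$, choose $\widetilde\gamma\in\widetilde\Sigma$ with $\widetilde\gamma\subseteq\gamma$ and $\dim\widetilde\gamma=\dim\gamma$ (a top-dimensional cone of the induced subdivision of $\gamma$); then $\operatorname{relint}\widetilde\gamma\subseteq\operatorname{relint}\gamma$, so $\gamma$ is the smallest cone of $\Sigma$ containing $\widetilde\gamma$ and, by \cite[Lemma 3.3.21]{CLS11}, $h$ maps the torus orbit $O_{\widetilde\cY}(\widehat{\widetilde\gamma})$ onto $O(\gamma)$. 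The induced morphism of orbit tori is a homomorphism of split $k$-tori whose cocharacter-lattice map is induced by the surjection $\Z^{\widetilde n}\twoheadrightarrow N$ and hence is itself surjective; since source and target are free abelian groups this surjection splits, so the morphism of tori admits a section and is in particular surjective on $k$-points. Thus every point of $O(\gamma)(k)$ lifts to $\widetilde\cY(k)$, proving that $h$, and therefore $\widetilde X(k)\to X(k)$, is surjective. The main obstacle is the bookkeeping around torsion in $\Cl(X)$ (so that one cannot work on $X$ directly) together with verifying that the enlarged torsor $\widetilde\cY\to X$ still meets every torus orbit of $X$ over $k$; the reductions to $\widetilde X$ smooth and to $X$ without torus factor are what make Propositions \ref{prop: torsion-free Picard} and \ref{prop: Cox coordinates singular} applicable.
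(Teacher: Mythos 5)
Your reduction chain contains a false step on which the rest of the argument genuinely depends. You claim that once $\widetilde X$ is smooth and without torus factors, ``$\widetilde\Sigma$ contains a $d$-dimensional (hence smooth) cone, and therefore the ray generators of $\widetilde\Sigma$ span $N$.'' Having no torus factors only means the ray generators span $N_{\mathbb{R}}$ as a real vector space; it does not produce a full-dimensional cone, and without one the ray generators of a regular fan need not generate $N$ as a lattice. For instance, take $N=\Z^2$ and the fan consisting of the origin and the two rays through $(1,0)$ and $(1,2)$: every cone is smooth and there is no torus factor, but the rays generate an index-$2$ sublattice, so $\Cl(\widetilde\Sigma)\cong\Z/2\Z$ (this is the variety $(\mathbb{A}^2\setminus\{0\})/\mu_2$). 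For a non-identity instance of $f$ with the same defect, take $N=\Z^3$, let $\Sigma$ have maximal cones $\operatorname{cone}((1,0,0),(0,1,0))$ and the ray through $(1,1,2)$, and let $\widetilde\Sigma$ be the star subdivision of the first cone at $(1,1,0)$. The failure is not cosmetic: Proposition \ref{prop: Cox coordinates singular} is then not applicable to $\widetilde X$, and---more seriously---the lattice map $\Z^{\widetilde n}\rightarrow N$ underlying $h=f\circ\widetilde\pi$ is not surjective, so the induced maps on orbit cocharacter lattices need not be surjective and your split-torus argument for lifting points of $O(\gamma)(k)$ breaks down (in the example above it already fails over the dense torus, where the map $\mathbb{G}_m^4\rightarrow\mathbb{G}_m^3$ is $(t_1,t_2,t_3,t_4)\mapsto(t_1t_3t_4,\,t_2t_3t_4,\,t_4^2)$, and also for the orbit of the ray $(1,1,2)$). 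Since your entire endgame routes through $\widetilde\cY$, the proof does not close in these cases.

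The statement is nevertheless true, and the repair is essentially the paper's: the problem is Zariski-local around the given point $P\in X(k)$, so one first compactifies $X$, picks a maximal cone $\sigma$ of the compactification with $P\notin U_\sigma$, and subdivides $\sigma$ into smooth full-dimensional cones; the resulting model has a smooth maximal cone, hence torsion-free class group by Proposition \ref{prop: torsion-free Picard}, and one then applies Proposition \ref{prop: Cox coordinates singular} on the \emph{target} and lifts the Cox coordinates of $P$ by inserting $1$'s in the new coordinates. Alternatively, your orbit-by-orbit idea can be salvaged without any Cox coordinates at all: for each $\gamma\in\Sigma$ choose $\widetilde\gamma\in\widetilde\Sigma$ with $\widetilde\gamma\subseteq\gamma$ and $\dim\widetilde\gamma=\dim\gamma$; then $\widetilde\gamma$ and $\gamma$ have the same linear span, so the induced map $N/N_{\widetilde\gamma}\rightarrow N/N_{\gamma}$ of orbit cocharacter lattices is the identity and $f$ restricts to an isomorphism $O(\widetilde\gamma)\rightarrow O(\gamma)$, which settles surjectivity on $k$-points orbit by orbit.
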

\begin{proof}
	Let $\Sigma$ and $\widetilde{\Sigma}$ be the fans of $X$ and $\widetilde{X}$ in a common co-character lattice $N$ and let $P\in X(k)$ be a point, not contained in the open torus. We first show that we can assume that $\Cl(X)$ is torsion free. We compactify $X$ to obtain a complete toric variety $X\subseteq X'$. Since the intersection of all toric affine opens $U_\sigma\subset X'$ corresponding to maximal cones $\sigma\in \Sigma$ is just the torus, there exists such an open $U_\sigma$ not containing $P$. We subdivide $\sigma$ into a collection of smooth maximal cones including $\sigma''$ and let $X''$ be the resulting complete toric variety. This yields a birational morphism $X''\rightarrow X'$ of complete toric varieties, such that $X''$ contains an affine open $U_{\sigma''}$ corresponding to a smooth maximal cone. Therefore Proposition \ref{prop: torsion-free Picard} implies that the class group of $X''$ is torsion-free. Since the statement to be proved is Zariski local, and there exists an open subset $P\in U\subset X$ such that the restriction $\tilde{X}\times_{X'} U\rightarrow U$ is an isomorphism, we can assume without loss of generality that $\Cl(X)$ is torsion free.
 
    The toric birational morphism $\tilde{X}\rightarrow X$ is induced by subdividing the cones in $\Sigma$ into smaller cones, which gives an inclusion $\Sigma(1)\subseteq \widetilde{\Sigma}(1)$ of sets of rays. Let $\widetilde{\Sigma}(1)\setminus\Sigma(1)=\{\rho_{n+1},\dots, \rho_{\tilde{n}}\}$ be the rays in $\widetilde{\Sigma}$ which do not lie in $\Sigma$. By Proposition \ref{prop: Cox coordinates singular} there exists Cox coordinates for $P$: $P=(a_1:\dots: a_n)\in X(K)$. Then consider the point $\tilde{P}=(a_1:\dots:a_n:1\dots:1)\in \widetilde{X}(k)$. Then it follows from the construction of Cox coordinates that $f(\widetilde{P})=P$.
\end{proof}

\subsection{\texorpdfstring{$M$}{M}-approximation for normal split toric varieties}
Now we restrict ourselves to the case where $X$ is a complete normal split toric variety of dimension $d$ over a PF field $K$. Consider a toric pair $(X,M)$, where $M=((D_i)_{i=1}^n, \fM)$, and let $(\cX,\cM)$ be its toric integral model. These assumptions are fixed for the rest of the paper, unless specified otherwise.

To understand whether a toric pair $(X,M)$ satisfies $M$-approximation, we can use Proposition \ref{prop: birational morphisms M-approximation} to reduce to the case of a smooth toric variety:
\begin{corollary} \label{corollary: M approx toric stable under birational transformations}
Let $(K,C)$ be a PF field, let $T\subset \Omega_K$ be a finite set of places, let $(X,M)$ be a toric pair and let $f\colon \widetilde{X}\rightarrow X$ be a birational toric morphism of complete normal split toric varieties over $K$. Then $(X,M)$ satisfies $M$-approximation off $T$ if and only if $(\widetilde{X}, f^{-1}M)$ satisfies $f^{-1}M$-approximation off $T$.
\end{corollary}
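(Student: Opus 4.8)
Since $\widetilde X$ is only assumed to be normal, not smooth, Proposition \ref{prop: birational morphisms M-approximation} cannot be applied directly to $f$. The plan is therefore to reduce to the case of a smooth source by inserting a toric resolution of singularities. First I would refine the fan of $\widetilde X$ to a regular fan with the same support; this produces a smooth complete split toric variety $X'$ over $K$ together with a proper birational toric morphism $g\colon X'\to \widetilde X$, and then $f\circ g\colon X'\to X$ is again a proper birational toric morphism of complete normal split toric varieties, with source $X'$ smooth over $K$.

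Next I would verify the arithmetic surjectivity hypothesis needed to invoke Proposition \ref{prop: birational morphisms M-approximation}. For every place $v\in\Omega_K$, the base changes $g_{K_v}\colon X'_{K_v}\to \widetilde X_{K_v}$ and $(f\circ g)_{K_v}\colon X'_{K_v}\to X_{K_v}$ are proper birational toric morphisms of normal split toric varieties over the field $K_v$, so Corollary \ref{corollary: resolution of singularities surjective} (applied with $k=K_v$) shows that $X'(K_v)\to \widetilde X(K_v)$ and $X'(K_v)\to X(K_v)$ are surjective. Thus both $g$ and $f\circ g$ are arithmetically surjective off $T$ (indeed off $\emptyset$).

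Now I would apply Proposition \ref{prop: birational morphisms M-approximation} twice. Applied to $f\circ g$ it gives that $(X,M)$ satisfies $M$-approximation off $T$ if and only if $(X',(f\circ g)^{-1}M)$ satisfies $(f\circ g)^{-1}M$-approximation off $T$. Applied to $g$ — here $f^{-1}M$ is a pair on $\widetilde X$ via Definition \ref{def: inverse image pair} — it gives that $(\widetilde X, f^{-1}M)$ satisfies $f^{-1}M$-approximation off $T$ if and only if $(X', g^{-1}(f^{-1}M))$ satisfies $g^{-1}(f^{-1}M)$-approximation off $T$. It remains to identify the two pairs on $X'$: for each $i$, transitivity of fiber products gives $g^{-1}(f^{-1}D_i)=(D_i\times_X\widetilde X)\times_{\widetilde X}X'=D_i\times_X X'=(f\circ g)^{-1}D_i$, and the set of multiplicities $\fM$ is unchanged, so $g^{-1}(f^{-1}M)=(f\circ g)^{-1}M$ by Definition \ref{def: inverse image pair}. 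Chaining the two equivalences through the common pair $(X',(f\circ g)^{-1}M)$ proves the corollary.

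The argument is essentially formal. The only points requiring (routine) care are the existence of the toric resolution $g$ and the fact that $g$ and $f\circ g$ are proper birational toric morphisms, which is classical toric geometry, together with the compatibility of inverse images with composition, which is just transitivity of fiber products; I do not anticipate a genuine obstacle.
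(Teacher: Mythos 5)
Your proof is correct and takes essentially the same approach as the paper, which simply says to combine Proposition \ref{prop: birational morphisms M-approximation} with Corollary \ref{corollary: resolution of singularities surjective}. Your insertion of the toric resolution $g\colon X'\to\widetilde X$ and the double application of the proposition is exactly the right way to spell out the detail the paper leaves implicit, namely that the smoothness hypothesis on the source of the birational morphism must be arranged before the proposition can be invoked.
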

\begin{proof}
This follows directly from combining Proposition \ref{prop: birational morphisms M-approximation} with Corollary \ref{corollary: resolution of singularities surjective}.
\end{proof}

\subsection{Monoids in Theorem \ref{theorem: M-approx}}
In this section we introduce the monoids $N_M,N_M^+$ and $\rho(K,C)$, which are used in Theorem \ref{theorem: M-approx} and Theorem \ref{theorem: thinness}. These monoids indicate how $(\cX,\cM)(B)$ is distributed in $X(K)$.
\begin{definition} \label{def: invariants}
Let $(X,M)$ be a toric pair where $X$ is a complete smooth split toric variety. Define the homomorphism of monoids
\begin{align*} \label{equation: phi}
	\phi\colon \mathbb{N}^n &\rightarrow N\\
	(m_1,\dots, m_n) &\mapsto \sum_{i=1}^n m_i n_{\rho_i}.
\end{align*}
Define the sublattice
$$N_M=\left\langle\phi(\mathbf{m})\mid \mathbf{m}\in \fM_{\fin,\red}\right\rangle\subset N,$$
and the submonoid $N_M^+$ generated by nonnegative linear combinations of the same elements. 
\end{definition}
The restriction to the reduced part reflects the fact that for every finite place $v\in \Omega_K$ and a point $P\in X(K_v)$, $\mult_v (P)$ lies in $\fM_{\red}$.
\begin{remark}
    The monoid $N_M^+$ is equal to $N_M$ if and only if the cone $N_{M,\mathbb{R}}^+$ generated by $N_M^+$ is equal to $\mathbb{R}^d$. This will be used in the proof of Theorem \ref{theorem: M-approx}.
\end{remark}

For each finite place $v$, we write
\begin{equation} \label{eq: phi_v}
\phi_v:=\phi\circ \mult_v\colon U(K_v)\rightarrow N.  
\end{equation}
The map $\phi_v\colon U(K_v)\rightarrow N$ is a group homomorphism, as the next proposition shows.
\begin{proposition} \label{prop: Compatibility multiplicity with fan}
	Let $X$ be a smooth, complete split toric variety. For each finite place $v$, we write
\begin{equation}
\phi_v:=\phi\circ \mult_v\colon U(K_v)\rightarrow N.  
\end{equation}
	The map $\phi_v\colon U(K_v)\rightarrow N$ defined above is a surjective group homomorphism with kernel $\cU(\cO_v)$, where $\cU\cong \mathbb{G}^d_{m}$ is the open torus in $\cX$. The homomorphism can be given in Cox coordinates as
	$$\phi_v(u_1\pi^{w_1}: \dots: u_n\pi^{w_n})=\sum_{i=1}^n w_i n_{\rho_i},$$
	where $w_i\in \Z$, $u_i\in \cO_v^\times$ and $\pi\in \cO_v$ is a uniformizer.
	This homomorphism gives a splitting
	$$U(K_v)\cong \cU(\cO_v)\oplus N.$$
	Furthermore, if $\widetilde{X}$ is a smooth complete split toric variety and $f\colon \widetilde{X}\rightarrow X$ is a toric morphism corresponding to the morphism of lattices $\overline{f}\colon \widetilde{N}\rightarrow N$, then we have $\phi_v\circ f=\overline{f}\circ \phi_v$. The map $\phi_v$ on the left corresponds to the map on the points in $X(K_v)$ and the map $\phi_v$ on the right corresponds to the map on the points in $\widetilde{X}(K_v)$.

    As a consequence if $f$ is birational, then for any toric pair $(X,M)$ there are equalities of monoids $N_M^+=N^+_{f^*M}$ and $N_M=N_{f^*M}$.
\end{proposition}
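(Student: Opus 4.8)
The plan is to route everything through the Cox coordinates on $\cX=\cX_{\Sigma,B}$. Fix a finite place $v\in B$ and a uniformizer $\varpi$ of $\cO_v$. Since $\cX_v$ is proper over $\cO_v$, the valuative criterion identifies $X(K_v)$ with $\cX(\cO_v)$, and as $\cO_v$ is a UFD, Proposition~\ref{prop: Cox coordinates} writes every such point as $P=(P_1:\dots:P_n)$ with $P_i\in\cO_v$. Under the Cox morphism $\pi\colon\cY\to\cX$ the divisor $\cD_i$ corresponds to the coordinate hyperplane $\{x_i=0\}$, so forming scheme-theoretic intersections commutes with base change along $\pi$ (as in the proof of Proposition~\ref{prop: inverse pair}) and yields $n_v(\cD_i,P)=v(P_i)$; hence $\mult_v(P)=(v(P_1),\dots,v(P_n))$ and, for $P$ in the open torus (equivalently all $P_i\neq 0$), $\phi_v(P)=\sum_i v(P_i)n_{\rho_i}$. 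I would then use the quotient presentation to see $\phi_v$ is a well-defined surjective homomorphism: the Cox morphism restricts to the surjection of split tori $\mathbb{G}_{m}^{n}\to U$ (realizing $U$ as a quotient torus with cocharacter lattice $N$) with kernel $\cG$, and since $X$ is smooth $\Cl(\Sigma)$ is torsion-free, so $\cG\cong\mathbb{G}_{m}^{n-d}$ and $H^1_{fppf}(K_v,\cG)=0$, giving $U(K_v)=(K_v^\times)^n/\cG(K_v)$. The composite $(K_v^\times)^n\xrightarrow{v}\Z^n\xrightarrow{\phi}N$ is a surjective homomorphism ($\phi$ is onto because the $n_{\rho_i}$ span $N$ by Proposition~\ref{prop: torsion-free Picard}) which kills $\cG(K_v)$, since for $\mathbf t\in\cG(K_v)$ and $m\in N^\vee$, $\langle m,\phi(v(\mathbf t))\rangle=\sum_i v(t_i)\langle m,n_{\rho_i}\rangle=v\bigl(\prod_i t_i^{\langle m,n_{\rho_i}\rangle}\bigr)=0$. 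Thus $\phi_v$ is the induced surjective homomorphism, and since it factors through $(K_v^\times)^n/\cG(K_v)$ the claimed Cox formula holds for arbitrary representatives in $(K_v^\times)^n$.

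Next I would pin down the kernel and the splitting. Because $\Pic(\cO_v)=0$ we also have $H^1_{fppf}(\cO_v,\cG)=0$, so $\cU(\cO_v)=(\cO_v^\times)^n/\cG(\cO_v)$; hence $P\in U(K_v)$ lies in $\cU(\cO_v)$ iff it has a Cox representative with unit coordinates. Since $v(\cG(K_v))=\ker(\phi\colon\Z^n\to N)$ --- the inclusion $\subseteq$ is the computation above, and $\supseteq$ because $\sum w_i n_{\rho_i}=0$ forces $(\varpi^{w_i})_i\in\cG(K_v)$ --- a point $P$ has a unit Cox representative exactly when the class of $v(\mathbf P)$ in $\Z^n/\ker\phi$ vanishes, i.e.\ when $\phi_v(P)=0$. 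So $\ker\phi_v=\cU(\cO_v)$, and as $N$ is free the sequence $0\to\cU(\cO_v)\to U(K_v)\xrightarrow{\phi_v}N\to 0$ splits. Functoriality becomes transparent after identifying $U(K_v)=N\otimes_\Z K_v^\times$: the formula above says $\phi_v=\mathrm{id}_N\otimes v$, which is natural in $N$, and a toric morphism $f$ with cocharacter map $\overline{f}$ restricts on open tori to $\overline{f}\otimes\mathrm{id}$, whence $\phi_v\circ f=\overline{f}\circ\phi_v$ there (alternatively one checks this in Cox coordinates using the toric pullback formula for the $\widetilde\cD_\beta$).

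For the final assertion the plan is to establish a description of $N_M$ and $N_M^+$ that only involves a single place: for any finite $v\in B$,
\[
N_M^+=\bigl\langle\,\phi_v(P):P\in(\cX,\cM)(\cO_v)\cap U(K_v)\,\bigr\rangle_{\mathrm{monoid}},\qquad N_M=\bigl\langle\,\phi_v(P):P\in(\cX,\cM)(\cO_v)\cap U(K_v)\,\bigr\rangle_{\mathrm{group}}.
\]
For one inclusion, given $\mathbf m\in\fM_{\fin,\red}$ the point with Cox coordinates $(\varpi^{m_1},\dots,\varpi^{m_n})$ lies in $\cY(\cO_v)$ --- here $\mathbf m\in\fM_{\red}$ is precisely what makes its special fibre avoid $\cZ$ --- lies in $U(K_v)$, has $\mult_v=\mathbf m\in\fM$, and maps to $\phi(\mathbf m)$ under $\phi_v$. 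For the other, if $P\in(\cX,\cM)(\cO_v)\cap U(K_v)$ then $\mathbf m:=\mult_v(P)\in\mathbb{N}^n\cap\fM$, and the indices with $m_\alpha\neq 0$ are precisely those for which the reduction $\overline{P}$ lies in $\cD_\alpha$, so the corresponding rays span a cone of $\Sigma$, whence $\mathbf m\in\fM_{\fin,\red}$ and $\phi_v(P)=\phi(\mathbf m)\in N_M^+$. Now if $f\colon\widetilde X\to X$ is birational, after identifying $\widetilde N$ with $N$ we may take $\overline{f}=\mathrm{id}_N$, so $\widetilde\Sigma$ refines $\Sigma$, $\Sigma(1)\subseteq\widetilde\Sigma(1)$, and $f$ is the identity on the common torus; spreading $f$ out to a toric morphism $\widetilde\cX\to\cX$ over $B$ and using the equivalence $(\widetilde\cX,f^*\cM)\sim(\widetilde\cX,f^{-1}\cM)$ together with Proposition~\ref{prop: inverse pair}, one identifies $(\widetilde\cX,f^*\cM)(\cO_v)\cap\widetilde U(K_v)$ with $(\cX,\cM)(\cO_v)\cap U(K_v)$ via $f=\mathrm{id}$, and the functoriality above (with $\overline{f}=\mathrm{id}$) makes this identification intertwine $\phi_v^{\widetilde X}$ and $\phi_v^{X}$. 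So the two generating sets coincide, and $N_M^+=N_{f^*M}^+$, $N_M=N_{f^*M}$. The step I expect to be most delicate is establishing this place-independent description --- matching the \emph{reduced} condition on $\fM$ with which $\cD_\alpha$ the reduction of a torus point meets, realizing every $\mathbf m\in\fM_{\fin,\red}$ by an honest $v$-adic $\cM$-point, and checking the vanishing of $H^1_{fppf}(\,\cdot\,,\cG)$ over $K_v$ and $\cO_v$; it is precisely this description that delivers birational invariance without an explicit subdivision argument with the toric pullback formula $f^{-1}\cD_i=\widetilde\cD_i+\sum_\beta c_{i\beta}\widetilde\cD_\beta$.
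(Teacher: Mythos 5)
Your proof is correct and, at its core, takes the same route as the paper: both arguments run through Cox coordinates, reduce the multiplicity computation to $n_v(\cD_i,P)=v(P_i)$ for an integral Cox representative, and use the defining equations of $\cG$ to see that $\sum_i v(P_i)n_{\rho_i}$ does not depend on the chosen representative. The packaging differs in two places. Where you invoke the vanishing of $H^1_{fppf}(K_v,\cG)$ and $H^1_{fppf}(\cO_v,\cG)$ (valid since $\Cl(\Sigma)$ is torsion-free, so $\cG$ is a split torus) to present $U(K_v)$ and $\cU(\cO_v)$ as genuine quotients of $(K_v^\times)^n$ and $(\cO_v^\times)^n$ by $\cG$, the paper instead normalizes the Cox coordinates on an affine chart $\cV_\sigma\cong\mathbb{A}^d_{\cO_v}$ (setting $p_i=1$ for $\rho_i\not\subseteq\sigma$) and then extends the formula by $\cG(K_v)$-invariance; both work, and your torsor description gives a slightly cleaner identification of the kernel with $\cU(\cO_v)$ and of the functoriality via $\phi_v=\id_N\otimes v$ on $N\otimes_{\Z}K_v^\times$, where the paper checks charts. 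The more substantive difference is the last claim: the paper disposes of it with the single remark that $\overline{f}$ is the identity, whereas you supply the content that makes this work, namely the place-independent description of $N_M$ and $N_M^+$ as generated by $\phi_v(P)$ over local $\cM$-points in the torus. The two verifications you flag as delicate are exactly the right ones and both go through: every $\mathbf{m}\in\fM_{\fin,\red}$ is realized by the point with Cox coordinates $(\varpi^{m_i})_i$ (reducedness is equivalent to its special fibre avoiding $\cZ$, since for toric boundary divisors $\bigcap_{m_\alpha\neq 0}D_\alpha\neq\emptyset$ exactly when the corresponding rays span a cone), and conversely $\mult_v(P)\in\fM_{\fin,\red}$ for any local $\cM$-point $P$ in the torus because the divisors containing the reduction of $P$ correspond to the rays of the cone of its orbit. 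This gives a complete and self-contained proof of the birational invariance, which is more than the paper actually writes down.
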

\begin{proof}
Let $P\in X(K_v)$ be a point. Since $\cX_{\Sigma}$ has an open cover of affine toric schemes $\cV_{\sigma}=\mathbb{A}^d$ corresponding to maximal cones $\sigma\in \Sigma$, $P\in\cV_\sigma(\cO_v)$ is satisfied for some maximal cone $\sigma\in \Sigma$. 
Thus we can represent the the point with Cox coordinates $P=(p_1:\dots:p_n)$ such that $p_i=1$ if $\rho_i$ is not a ray of the cone $\sigma$. For such a point $P\in \mathbb{A}^d(\cO_v)\subset \cX_{\Sigma}$, it follows that $n_v(\cD^c_i,P)=v(p_i)$. Therefore, if we write $p_i=u_i\pi^{m_i}$ for units $u_1,\dots, u_n\in \cO_v^\times$ and $m_1,\dots, m_n\in \mathbb{N}^*$, then $\phi_v(u_1\pi^{m_1}:\dots: u_n\pi^{m_n})=\sum_{i=1}^n m_i n_{\rho_i}.$
We will now prove that this equality is still true even without the constraints on the $p_i$.
By the equality $$\cG(K_v)=\{(t_1,\dots, t_n)\mid \prod_{i=1}^n t_i^{\langle e_j, n_{\rho_i} \rangle}=1\text{ for all } 1\leq j\leq d\},$$
where $e_1,\dots, e_d$ is a choice of a basis of $N^\vee$,
we see that $(\pi^{m_1}:\dots: \pi^{m_n})=(1:\dots:1)$ if and only if $\sum_{i=1}^n m_i n_{\rho_i}=0$.
Therefore if $(\pi^{m_1}:\dots: \pi^{m_n})=(\pi^{m'_1}:\dots: \pi^{m'_n})$ then $\sum_{i=1}^n (m_i-m'_i) n_{\rho_i}=0$ so
$\sum_{i=1}^n m_i n_{\rho_i}=\sum_{i=1}^n m'_i n_{\rho_i}$.

Thus we see that
$$\phi_v(u_1\pi^{m_1}:\dots: u_n\pi^{m_n})=\sum_{i=1}^n m_i n_{\rho_i},$$
for all $m_i\in \Z$ and units $u_i\in \cO_v^\times.$
In particular it is clear that $\phi_v$ is a group homomorphism with kernel $\cU(\cO_v)$.

The surjectivity follows from the fact that $n_{\rho_1},\dots,n_{\rho_n}$ span $N$ as a lattice, combined with the identity $\phi_v(1:\dots: 1: \pi:1: \dots:1)=n_{\rho_i}$, where $i$ is index of the coordinate different from $1$.
The splitting is a direct consequence of the fact that $N$ is a free abelian group.

For verifying the identity $\phi_v\circ f=\overline{f}\circ \phi_v$, it suffices to consider affine opens $\mathbb{A}^{d'}_{\cO_v}\subseteq \widetilde{\cX}$ and $\mathbb{A}^{d}_{\cO_v}\subseteq \cX$ such that $f$ restricts to a morphism $\mathbb{A}^{d'}_{\cO_v}\rightarrow \mathbb{A}^{d}_{\cO_v}$. Now by comparing this map with the map $\overline{f}$, the result directly follows.

The final claims follows from the previous part by noticing that $\overline{f}$ is just the identity.
\end{proof}

\begin{remark} \label{remark: description M-points toric variety}
    By Proposition \ref{prop: Compatibility multiplicity with fan}, the description of $\cM$-points on projective space as in Section \ref{subsection: M-points on projective space} generalizes to toric pairs $(X,M)$, with $X$ complete and smooth, with toric integral model $(\cX,\cM)$. By replacing the homogeneous coordinates from that section with Cox coordinates, we obtain a description for the $\cM$-points on $(\cX, \cM)$.
\end{remark}

Now we can extend the definitions of $N_M$ and $N_M^+$ to the singular case.
\begin{definition} \label{def: N_M singular} 
    Let $X$ be a complete normal split toric variety with lattice of cocharacters $N$. We define $N_M=N_{f^*M}$ and $N_M^+=N_{f^* M}$ for any birational morphism $f\colon Y\rightarrow X$, where $Y$ is a complete smooth split toric variety, such that $D_i\times_X Y$ is a Cartier divisor for all $i\in\{1,\dots,n\}$.
\end{definition}

Such a $Y$ can always be found by taking $g\colon Z\rightarrow X$ to be the successive blowing up of the $D_i$, so that $g^{-1}D_i$ is a Cartier divisor, and then taking $Y$ to be a resolution of singularities of $Z$.

It follows from Proposition \ref{prop: Compatibility multiplicity with fan} that $N_M$ and $N_M^+$ are independent of the choice of the morphism $f$, so they are well defined. This is because for any two resolutions of singularities of $X$, there exists a common refinement of both.
\begin{remark}
If $X$ is a normal split toric variety such that $\Cl(X)$ contains torsion, then $N_M^+=N$ for the trival pair $(X,M)=(X,0)$. On the other hand, Proposition \ref{prop: torsion-free Picard} implies that the ray generators $n_{\rho_i}$ do not generate $N$, even as a group. Therefore, it is sometimes necessary to consider a resolution of singularities rather than directly trying to apply Definition \ref{def: invariants}, as that can give monoids which are too small. 
\end{remark}

The next notion measures divisibility of the unit group of completions of the field $K$.
\begin{definition}
For a PF field $(K,C)$ we define $\rho(K,C)$ to be the set of $n\in\mathbb{N}^*$ such that the group $\cO_v^\times$ is $n$-divisible for all $v\in\Omega_K$.
\end{definition}
The set $\rho(K,C)$ is a submonoid of $\mathbb{N}^*$ generated by a subset of the prime numbers. In order to describe this notion for function fields we introduce the following definitions:
\begin{definition}
	Let $k$ be a field and let $n>1$ be an integer with $\CHAR(k) \nmid n$. We say that $k$ is \textit{$n$-closed} if one of the following equivalent properties hold:
    \begin{enumerate}
    \item For every finite extension $l/k$, the degree $[l:k]$ is coprime to $n$.
    \item For every finite extension $l/k$, the group $l^\times$ is $n$-divisible.
    \end{enumerate} 
\end{definition}
\begin{example}
    Separably closed fields are $n$-closed for all $n$ not divided by the characteristic. For any prime number $p$ and an integer $n>1$ with $p \nmid n$ the union of finite fields $\bigcup_{m\geq 1}\mathbb{F}_{p^{n^m}}$ is $n$-divisible. Similarly for a separably closed field $k$ with $\CHAR(k)=p$ and an integer $n>1$ with $p \nmid n$, the field $\bigcup_{m\geq 1}k\lParen t^{-1/n^m}\rParen$ is $n$-divisible.
\end{example}
Recall that a field $k$ is formally real if there exists an ordering on $k$ and it is formally Euclidean if this ordering can be chosen such that every nonnegative element is a square.
\begin{definition}
    We say that $k$ is \textit{hereditarily Euclidean} if every formally real algebraic extension of $k$ is formally Euclidean.
\end{definition}
The following lemma allows us to easily compute $\rho(K,C)$ for both number fields and function fields.
\begin{lemma} \label{lemma: computation rho}
For any PF field $(K,C)$, the monoid $\rho(K,C)$ is computed as follows:
	\begin{enumerate}
	    \item If $K$ is a number field then $\rho(K,C)=1$.
            \item If $k$ is a field and $K=k(C)$, where $C$ is a regular curve over $k$, then a prime number $p$ belongs to $\rho(K,C)$ if and only if either
        \begin{enumerate}
            \item $p\neq \CHAR(k)$ and $k$ is $p$-closed,
            \item or all of the following are satisfied:
            \begin{itemize}
                \item $k$ is a hereditarily Euclidean field,
                \item $p=2$,
                \item $C(k')=\emptyset$, where $k'$ is a real closure of $k$.
            \end{itemize}
        \end{enumerate}
	\end{enumerate}
 Furthermore, for any PF field $(K,C)$, if $n\in \mathbb{N}^*$ is an integer such that $n\not\in \rho(K,C)$, then there are infinitely many places $v\in \Omega_K$ such that $\cO_v^\times$ is not $n$-divisible.
\end{lemma}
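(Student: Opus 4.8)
The first step is a local computation at each place. For a finite place $v$ with residue field $k_v$ and residue characteristic $\ell\geq 0$, and an integer $n\geq 1$, the group $1+\fm_v\subset\cO_v^\times$ is uniquely $n$-divisible whenever $\ell=0$ or $\ell\nmid n$: this follows from the filtration $U^{(i)}=1+\fm_v^i$, whose successive quotients are the additive group of $k_v$, on which multiplication by $n$ is bijective, together with completeness of $\cO_v$. Via the exact sequence $1\to 1+\fm_v\to\cO_v^\times\to k_v^\times\to 1$ this yields: if $\ell\nmid n$, then $\cO_v^\times$ is $n$-divisible if and only if $k_v^\times$ is; and if $\ell\mid n$, then $\cO_v^\times$ is not even $\ell$-divisible, because for $u\in 1+\fm_v$ one has $u^\ell\in U^{(2)}$ in residue characteristic $\ell$, while Frobenius injectivity on $k_v$ forces any $\ell$-th power in $\cO_v^\times$ lying in $1+\fm_v$ to already have its $\ell$-th root in $1+\fm_v$. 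Since $\rho(K,C)$ is the submonoid of $\mathbb{N}^*$ generated by the primes it contains (as noted before the lemma) and a $p$-divisible abelian group is $p^j$-divisible for all $j$, it suffices to decide, for each prime $p$, whether $p\in\rho(K,C)$; and since any $n\notin\rho(K,C)$ has a prime factor $p\notin\rho(K,C)$, for which $\cO_v^\times$ fails $n$-divisibility whenever it fails $p$-divisibility, the final ``furthermore'' assertion reduces to showing that every prime $p\notin\rho(K,C)$ is witnessed by infinitely many places.

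\textbf{Number fields and equal characteristic.} If $K$ is a number field, then for any prime $p$ the places above $p$ have residue characteristic $p$, so $\cO_v^\times$ is not $p$-divisible by the local fact; hence $\rho(K,C)=\{1\}$, and infinitely many witnesses for a prime $p$ are the degree-one places above rational primes $\ell\equiv 1\pmod p$ (infinitely many, by Dirichlet), for which $k_v^\times$ is cyclic of order $\ell-1$. Similarly, if $K=k(C)$ with $\CHAR(k)=p>0$, every finite place has residue characteristic $p$, so $\cO_v^\times$ is never $p$-divisible and every place witnesses $p\notin\rho(K,C)$; this is consistent with the statement, since $p=\CHAR(k)$ is excluded from case (a), and case (b) presupposes that $k$ admits a real closure, hence is formally real, hence of characteristic $0$.

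\textbf{Function fields with $p\neq\CHAR(k)$: the easy implications.} By the local fact, $p\in\rho(K,C)$ if and only if $k_v^\times$ is $p$-divisible for every closed point $v$ of $C$, the $k_v$ being finite extensions of $k$. If $k$ is $p$-closed this holds by definition, giving case (a). For the hereditarily Euclidean case (where necessarily $p=2$), I would invoke the structure theory of such fields: every non-formally-real finite extension of a hereditarily Euclidean field is quadratically closed, and such a field carries a unique ordering, with a real closure $k'$ unique up to $k$-isomorphism. Hence $k_v^\times$ fails $2$-divisibility precisely when $k_v$ is formally real, precisely when $k_v$ embeds into $k'$ over $k$, precisely when the closed point $v$ yields a $k'$-rational point of $C$. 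Since $C$ is smooth in characteristic $0$, $C(k')\neq\emptyset$ forces $C(k')$ to be infinite, producing infinitely many such $v$ (each $k'$-point factors through a closed point of finite residue degree, and only finitely many $k'$-points lie over a given such point), while $C(k')=\emptyset$ gives $2\in\rho(K,C)$; this is exactly case (b) with its witnesses.

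\textbf{The main obstacle: the remaining converse.} What remains is to show that if $p\neq\CHAR(k)$, $k$ is not $p$-closed, and we are not in the hereditarily Euclidean situation, then infinitely many closed points $v$ of $C$ have $k_v^\times$ not $p$-divisible. The strategy: $k$ being non-$p$-closed, the compositum of all finite extensions of $k$ of degree prime to $p$ is a proper subextension of $\overline{k}$, hence contains a finite subextension $E/k$ of degree prime to $p$ — with $\zeta_p\in E$ when $p$ is odd, and $E$ not formally real when $p=2$ (using that $k$ is not hereditarily Euclidean) — carrying an element $a\in E^\times$ that is not a $p$-th power in $E$; then $a$ remains a non-$p$-th power in any extension of $E$ of degree prime to $p$, so any closed point $v$ of $C$ with $E\hookrightarrow k_v$ and $[k_v:E]$ prime to $p$ is a witness. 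Producing infinitely many such $v$ is the crux: when $k$ is global it should follow from a Chebotarev-type density theorem for $C$ (using the point counts from the zeta function when $k$ is finite), and for an arbitrary $k$ one would combine a non-constant morphism $C\to\P^1_k$ with the abundance of closed points of $\P^1_k$ having a prescribed simple residue extension of $k$, arranging this extension to contain $E$ while controlling its degree modulo $p$. I expect the genuinely delicate points to be exactly this density/abundance input over a general base field and the appeal to the structure theory of hereditarily Euclidean fields; with those in hand, the case of composite $n$ is immediate from the reduction in the first step.
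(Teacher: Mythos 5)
Your local reduction via the filtration $1+\fm_v^i$, the number-field case, the equal-characteristic case, and the sufficiency of conditions (a) and (b) all agree in substance with the paper's argument (the paper phrases the local step as $\cO_v\cong k_v\lBracket t\rBracket$ and a binomial-series/Hensel argument, but the content is the same). The gap is exactly where you place it, and it is genuine: your plan for the remaining converse needs infinitely many closed points $v$ of $C$ with $E\hookrightarrow k_v$ \emph{and} $[k_v:E]$ prime to $p$, because your witness $a\in E^\times$ is only guaranteed to remain a non-$p$-th power under extensions of degree prime to $p$ (the norm argument). No density statement over an arbitrary base field $k$ controls $[k_v:E]$ modulo $p$, and even for $k$ finite a point count does not hand you infinitely many residue fields containing $E$ of degree prime to $p$ over it; so the argument as designed cannot be closed. (The preliminary step of extracting such an $E$ from "the compositum of all prime-to-$p$ extensions" is also not well founded, since prime-to-$p$ extensions are not closed under compositum.)

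The missing idea is a stronger "going-up" statement that makes the degree condition unnecessary. If $\CHAR(F)\neq p$ and $a\in F^\times$ is not a $p$-th power, then for $p$ odd the polynomial $X^{p^n}-a$ is irreducible over $F$ for every $n\geq 1$ (Lang, \emph{Algebra}, Thm.~9.1 of the chapter cited in the paper), so $a$ is not a $p^n$-th power in any finite extension $l/F$ with $[l:F]<p^n$; hence $l^\times$ fails $p$-divisibility for \emph{every} finite extension $l/F$. For $p=2$ the same conclusion holds unless $F$ is Euclidean, by the Elman--Wadsworth lemma the paper cites — this is precisely where the hereditarily Euclidean exception enters. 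With this in hand one first deduces that either $k^\times$ is $p$-divisible or ($p=2$ and $k$ is Euclidean); in the former subcase, if $k$ is not $p$-closed one takes a finite extension $l/k$ with $l^\times$ not $p$-divisible, splits it into a separable part $\tilde l/k$ (inseparable parts are harmless, since $q$-power roots exist) and embeds $\tilde l$ into the residue fields of infinitely many closed points of $\P^1_k$ via the primitive element theorem and a dominant map $C\to\P^1_k$; every such $k_v$ then fails $p$-divisibility with no constraint whatsoever on $[k_v:\tilde l]$. Replacing your prime-to-$p$ norm argument with this irreducibility input is the one substantive repair your proof needs.
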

\begin{remark}
In particular, if $K=k(C)$ with $k$ a finite field, a number field or a function field (of transcendence degree at least $1$), then $\rho(K,C)=1$. If on the other hand, $k$ is separably closed, then $\rho(K,C)=\{n\in \mathbb{N}^*\mid \CHAR(k) \nmid n\}$. Finally, if $k=\mathbb{R}$, then $\rho(K,C)=\{n\in \mathbb{N}^*\mid 2\nmid n\}$ if $C(\mathbb{R})\neq \emptyset$ and $\rho(K,C)=\mathbb{N}^*$ otherwise. 
\end{remark}
\begin{proof}
    We will prove the last statement in tandem with the computation of $\rho(K,C)$. Note that for this statement we can assume that $n=p$ is a prime.
    We split up the proof in two cases, depending on whether $K$ is a number field or a function field.
    We first treat the case where $K$ is a number field. For every prime number $p$ there exist infinitely many prime numbers $q\equiv 1\bmod p$ by Dirichlet's theorem on arithmetic progressions. For each place $v\in \Omega_K$ above such a prime number $q$, the group of units of the residue field $k_v^\times$ is not $p$-divisible since the order of $k_v^\times$ is divisible by $p$. Thus $\cO_v^\times$ is not $p$-divisible either for such $v$. In particular, $\cO_v^\times$ is not $p$-divisible for infinitely many places $v\in \Omega_K$ and therefore $\rho(K,C)=1$.

    Now we treat the case where $K$ is a function field of a curve over a ground field $k$.
	For any place $v\in \Omega_K$, the completion is given by $\cO_v\cong k_v\lBracket t\rBracket$, where $k_v$ is the residue field at $v$. For any $f\in k_v\lBracket t\rBracket^\times$ we can write $f=ag$ where $a\in k_v^\times$ and $g\in k_v\lBracket t\rBracket^\times$ has constant coefficient $1$. Therefore $f$ is a $p$-th power if and only if $a$ and $g$ are $p$-th powers. If $p\neq \CHAR(k)$, then for any $x\in k_v\lBracket t\rBracket$ with $|x|_v<1$ the $p$-th root $\sqrt[p]{1+x}$ is well defined and lies in $k_v\lBracket t\rBracket$. In particular, since $|g-1|_v<1$, $g$ is a $p$-th power as long as $p\neq \CHAR(k)$.
    If on the other hand $p=\CHAR(k)$, then $1+t\in k_v\lBracket t\rBracket^\times$ is not a $p$-th power. Therefore $\cO_v^\times$ is $p$-divisible if and only if $p\neq \CHAR(k)$ and $k_v^\times$ is $p$-divisible.
 
    Therefore it follows that $p\in \rho(K,C)$ if $k$ is $p$-closed and $p\neq \CHAR(k)$ since in that case $k_v$ is $p$-closed for all $v\in \Omega_K$. Similarly, $2\in \rho(K,C)$ if $k$ is hereditarily Euclidean and $C(k')=\emptyset$ for a real closure $k'/k$, since then $k_v$ is $2$-closed for all $v\in \Omega_K$.
	
    For the other direction, we assume that $\cO_v^\times$ is $p$-divisible for all but finitely many places. We will show $p$ satisfies the conditions given in the statement of the lemma, and thus in particular $p\in \rho(K,C)$.
 
 As we have seen we must have that $p\neq \CHAR(k)$ and that for all but finitely many places $v\in \Omega_K$, the group $k_v^\times$ is $p$-divisible.
 First we prove that for any field $\tilde{k}$, if $\tilde{k}$ is not $p$-divisible, but $l^\times$ is $p$-divisible for some some finite extension $l/\tilde{k}$, then $p=2$ and $\tilde{k}$ is an Euclidean field. If $p=2$, then this follows from \cite[Lemma 2(2)]{ElWa87}. Assume therefore that $p$ is odd. Let $a\in \tilde{k}$ be an element which is not a $p$-th power. Then \cite[Theorem 9.1]{Lan02} shows that the polynomial $X^{p^n}-a$ is irreducible over $\tilde{k}$ for every $n\geq 1$. Thus if $l/\tilde{k}$ is a finite extension, then $X^{p^n}-a$ does not have a linear factor over $l$ if $p^n$ is larger than the degree of the extension $l/\tilde{k}$. Therefore $l^\times$ is not $p$-divisible either since $a$ is not a $p^n$-th power in $l$.

 Since for any place $v\in \Omega_K$, $k_v/k$ is a finite extension, $k^\times$ has to be either $p$-divisible or $p=2$ and $k$ is Euclidean.

 Now assume that $k^\times$ is $p$-divisible and that $k_v^\times$ is $p$-divisible for all but finitely many places $v\in \Omega_K$, as before. We will show that this implies that $k$ is $p$-closed.
 Since $k^\times$ is $p$-divisible, $k$ is not Euclidean if $p=2$. If $k$ is not $p$-closed, then there exists a finite extension $l/k$ such that $l^\times$ is not $p$-divisible. We can factor $l/k$ as a separable extension $\tilde{l}/k$ followed by a totally inseparable extension $l/\tilde{l}$ by \cite[Tag 030K]{Stacks}. Since any separable extension of $k$ is simple, $\tilde{l}$ is contained in the residue field of infinitely many closed points in $\P^1_k$. Thus, since there exists a dominant morphism $C\rightarrow \P^1_k$, there exist infinitely many places $v$ of $K$ for which $\tilde{l}\subset k_v$. This implies that $k_v^\times$ is not $p$-divisible for infinitely many places $v\in \Omega_K$, which is a contradiction. Thus if $k^\times$ is $p$-divisible, then $\tilde{l}^\times$ is $p$-divisible for every separable extension $\tilde{l}/k$.

 For any totally inseparable extension $l/\tilde{l}$, any element $\alpha\in l^\times$ has a minimal polynomial of the form $X^{q^n}-\alpha^{q^n}$ for some $n\in\mathbb{N}$, where $q=\CHAR(k)$ and $\alpha^{q^n}\in \tilde{l}^\times$. Since $\alpha^{q^n}$ is a $p$-th power in $\tilde{l}$, $\alpha$ is a $p$-th power in $l$ and thus $l^\times$ is also $p$-divisible. Therefore, we see that $k$ is $p$-closed.
 By the same argumentation, we also see that if $k$ is Euclidean, then $k$ is hereditarily Euclidean.
	
If $k$ is hereditarily Euclidean, but $C$ contains a $k'$-rational point for its real closure $k'$, then $C(k')$ is infinite since a real closed field is an ample field \cite{Pop96}. Therefore there are infinitely many places $v$ such that $k_v^\times$ is not $2$-divisible.
\end{proof}

The next lemma will be used in the proof of Theorem \ref{theorem: M-approx} over function fields.
\begin{lemma} \label{lemma: Picard group divisible}
    Let $k$ be a field and let $C$ be a projective regular curve over $k$ and let $p\in \rho(K,C)$ be a prime number. Then for any affine open $B\subset C$, $\Pic(B)$ is a $p$-divisible group.
\end{lemma}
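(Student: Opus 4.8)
The plan is to deduce the $p$-divisibility of $\Pic(B)$ from two facts about the complete curve $C$: that the $p$-adic valuation of the degree of a closed point is independent of the point, and that $\Pic^0(C):=\ker(\deg\colon\Pic(C)\to\Z)$ is $p$-divisible. First I would replace $k$ by the algebraic closure $\tilde{k}$ of $k$ in $K$, a finite extension which changes neither the scheme $B$ nor the group $\Pic(B)$ and over which $C$ becomes geometrically connected; moreover $\rho(K,C)$ is still computed by Lemma~\ref{lemma: computation rho} applied to $(\tilde{k},C)$, so we are either in case \textbf{(a)}: $p\neq\CHAR(\tilde{k})$ and $\tilde{k}$ is $p$-closed, or in case \textbf{(b)}: $p=2$, $\tilde{k}$ is hereditarily Euclidean and $C(\tilde{k}')=\emptyset$ for a real closure $\tilde{k}'$; in both cases $p\neq\CHAR(\tilde{k})$. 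For the first fact, in case (a) the residue field of a closed point is a finite extension of the $p$-closed field $\tilde{k}$, so its degree is prime to $p$. In case (b) a closed point $w$ cannot have formally real residue field, since otherwise $\tilde{k}(w)$ embeds into $\tilde{k}'$ and yields a $\tilde{k}'$-point of $C$; using then that odd-degree extensions of the ordered field $\tilde{k}$ are formally real and that $\tilde{k}(\sqrt{-1})$ is quadratically closed (as $\tilde{k}$ is Euclidean), one checks $v_2([\tilde{k}(w):\tilde{k}])=1$. So in either case $v_p(\deg w)$ equals a constant $c$ for all closed points.

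For the second fact, the Leray spectral sequence for $\mathbb{G}_m$ along $C\to\spec\tilde{k}$ (using $H^0(C,\cO_C)=\tilde{k}$) gives $0\to\Pic(C)\to\mathbf{Pic}_{C/\tilde{k}}(\tilde{k})\to\Br(\tilde{k})$, hence, in degree $0$, $0\to\Pic^0(C)\to\mathbf{Pic}^0_{C/\tilde{k}}(\tilde{k})\to\Br(\tilde{k})$. Since $\mathbf{Pic}^0_{C/\tilde{k}}$ is a connected group scheme of finite type over $\tilde{k}$ and $\CHAR(\tilde{k})\neq p$, multiplication by $p$ on it is étale and surjective with finite étale $p$-torsion kernel. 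In case (a), $\tilde{k}$ being $p$-closed forces the pro-$p$ Sylow subgroups of $\gal(\overline{\tilde{k}}/\tilde{k})$ to be trivial, so $\mathrm{cd}_p(\gal(\overline{\tilde{k}}/\tilde{k}))=0$; this makes $\mathbf{Pic}^0_{C/\tilde{k}}(\tilde{k})$ $p$-divisible (the kernel torsor over any point is trivial) and gives $\Br(\tilde{k})[p]=H^2(\tilde{k},\mu_p)=0$, hence $\Br(\tilde{k})[p^\infty]=0$; a snake-lemma chase on the exact sequence then yields $\Pic^0(C)/p=0$. In case (b) ($p=2$) the argument instead passes through the real closure: $C(\tilde{k}')=\emptyset$ forces the Jacobian of $C_{\tilde{k}'}$ to have $2$-divisible group of $\tilde{k}'$-points (the real-closed analogue, by Tarski transfer from $\mathbb{R}$, of the fact that a real curve without real points has connected real Jacobian), and combining this with the Hasse principles for $H^1(-,J[2])$ and $\Br(-)[2]$ relative to the real closures — valid for hereditarily Euclidean fields, whose absolute Galois groups are generated by involutions — one deduces first that $\mathbf{Pic}^0_{C/\tilde{k}}(\tilde{k})$ is $2$-divisible and then that $\Pic^0(C)$ is $2$-divisible. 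I expect this step, and particularly case (b), to be the main obstacle; the remainder is bookkeeping.

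Finally I would assemble the two facts. As $B$ is affine, $C\setminus B$ is a nonempty finite set of closed points; let $e$ (resp.\ $f$) be the $\gcd$ of the degrees of all closed points of $C$ (resp.\ of those in $C\setminus B$), so $e\mid f$ and, by the first fact, $v_p(e)=v_p(f)=c$. The degree map $\Pic(C)\to\Z$ induces an exact sequence $0\to\im(\Pic^0(C)\to\Pic(B))\to\Pic(B)\to e\Z/f\Z\to 0$, the kernel being the image of $\Pic^0(C)$ since any class of degree in $f\Z$ can be corrected by point classes supported on $C\setminus B$ to have degree $0$. Here $e\Z/f\Z$ is finite of order $f/e$, which is prime to $p$ because $v_p(f)=v_p(e)$, so it is $p$-divisible; and the subgroup is a quotient of $\Pic^0(C)$, hence $p$-divisible by the second fact. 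Since an extension of a $p$-divisible group by a $p$-divisible group is $p$-divisible, $\Pic(B)$ is $p$-divisible.
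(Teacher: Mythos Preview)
Your overall structure---reduce to $p$-divisibility of $\Pic^0(C)$ and combine with the constancy of $v_p(\deg w)$---matches the paper's, and your treatment of case~(a) via $\mathrm{cd}_p(G_{\tilde k})=0$ is correct and equivalent to the paper's direct $H^1$ computation. The final bookkeeping with the sequence $0\to\im(\Pic^0(C)\to\Pic(B))\to\Pic(B)\to e\Z/f\Z\to 0$ is also correct, and in fact slightly more careful than the paper's version.

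Case~(b), however, contains a genuine error: the assertion that a real curve without real points has connected real Jacobian is false. Let $J/\mathbb{R}$ be an elliptic curve with $J(\mathbb{R})$ disconnected, say $y^2=x^3-x$. Then $H^1(\mathbb{R},J)\cong\pi_0(J(\mathbb{R}))\cong\Z/2$, so there is a nontrivial $J$-torsor $C$ of genus~$1$ with $C(\mathbb{R})=\emptyset$, while $\mathrm{Jac}(C)\cong J$ has $J(\mathbb{R})$ not $2$-divisible. What \emph{is} true is that the honest group $\Pic^0(C)$ lands in the identity component $J(\mathbb{R})^0$ (every degree-$0$ divisor on $C$ is a norm from $C_{\mathbb{C}}$, since all closed points split there), and $J(\mathbb{R})^0\cong(\mathbb{R}/\Z)^g$ is $2$-divisible. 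But your argument passes through $J(\tilde{k}')$ and then $\mathbf{Pic}^0_{C/\tilde k}(\tilde{k})$, both of which can fail to be $2$-divisible, so the Hasse-type descent you sketch rests on a false premise.

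The paper's route for~(b) is short and uses an ingredient you already have. You showed that every closed point of $C$ has residue field containing $l=\tilde{k}(\sqrt{-1})$; hence every divisor $D$ on $C$, pulled back to $C_l$, splits as $D'+\sigma(D')$. Since $l$ is $2$-closed (Elman--Wadsworth; this is where ``hereditarily'' is used), case~(a) applied over $l$ gives $D'\sim 2D''$ on $B_l$, whence $D\sim 2(D''+\sigma(D''))$ on $B_l$, and a Hilbert~90 argument descends this linear equivalence to $B$. This reduces~(b) to~(a) in one step and bypasses the Jacobian entirely.
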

\begin{proof}
    We first assume that $k$ is $p$-closed. Since the statement only depends on $p$ and on the scheme $B$, we can as in Remark \ref{remark: take geometrically connected} assume without loss of generality that $k$ is algebraically closed in $K$, so that $B$ is a geometrically integral curve over $k$. The connected component of the identity of the Picard scheme $J=\mathrm{Jac}(C)$ is a group scheme of finite type over $k$ \cite[Section 8.2, Theorem 3]{BLR90}. We will first prove that $J(k)$ is $p$-divisible. The multiplication-by-$p$ map $[p]$ gives the exact sequence of $G$-modules
    $$0 \to J(k^{\mathrm{sep}})[p] \to J(k^{\mathrm{sep}}) \stackrel{[p]}{\to} J(k^{\mathrm{sep}}) \to 0,$$
    where $k^{\mathrm{sep}}$ is the separable closure of $k$ and $G=\gal(k^{\mathrm{sep}}/k)$ is the absolute Galois group of $k$. By the induced long exact sequence in Galois cohomology, this implies that $J(k)$ is $p$-divisible if $H^1(G, J(k^{\mathrm{sep}})[p])=0$, which we will now show.

    By \cite[Chapter 7, Corollary 5.23.]{Liu06}, $J(k^{\mathrm{sep}})[p]$ is finite since $p$ is different from the characteristic.
    Furthermore every finite quotient of $G$ has order coprime to $p$, since $k$ is $p$-closed. Thus by \cite[Corollary 1.49]{Har20}
    we see that, for any normal subgroup $H\triangleleft G$ of finite index, $H^1(G/H, (J(k^{\mathrm{sep}})[p])^H)=0$. Thus the inflation-restriction exact sequence \cite[Theorem 1.42]{Har20} implies that the restriction map $H^1(G,J(k^{\mathrm{sep}})[p])\rightarrow H^1(H,J(k^{\mathrm{sep}})[p])^{G/H}$ is injective. For every continuous map $G\rightarrow J(k^{\mathrm{sep}})[p]$, one of the fibers contains an open neighbourhood of $1\in G$, and therefore a normal subgroup $H$ of finite index. Thus the map of the associated cohomology class gets sent to $0$ by the restriction map. Since the map is injective, $H^1(G,J(k^{\mathrm{sep}})[p])=0$ and thus we see that $J(k)$ is $p$-divisible.

    By applying \cite[Section 8.1, Proposition 4]{BLR90} with $S=T=\spec k$, we have an exact sequence
    $$0\rightarrow \Pic^0(C)\rightarrow J(k)\rightarrow \Br(k),$$
    where $\Pic^0(C)$ is the group of isomorphism classes of invertible sheaves on $C$ with degree $0$ and $\Br(k)$ is the Brauer group of $k$. Since $k$ is $p$-closed, $\Br(k)[p]=0$, since there does not exist a central division algebra of degree $p$ over $k$, as its splitting field would have degree $p$ over $k$. Thus it follows that $\Pic^0(C)$ is also $p$-divisible.

    Since $C$ is geometrically connected, the degree map gives an isomorphism $\Pic(C)/\Pic^0(C)\cong \mathbb{Z}$.
    Since the restriction map $\Pic(C)\rightarrow \Pic(B)$ is surjective, there is an exact sequence $$\Pic^0(C)\rightarrow \Pic(B)\rightarrow \Z/m\Z\rightarrow 0.$$
    Here $m$ is the greatest common divisor of the degrees of closed points in $C\setminus B$. The degree of a closed point on $C$ is not divisible by $p$, as the residue field of such a divisor would be an extension of $k$ with degree divisible by $p$. Therefore $p$ does not divide $m$, so $\Z/p\Z$ is $p$-divisible and thus $\Pic(B)$ is $p$-divisible.

    Now we treat the case where $k$ is not $p$-closed. As we have seen in Lemma \ref{lemma: computation rho}, this implies that $p=2$, $k$ is hereditarily Euclidean and $C$ is a curve with no points defined over a real closure of $k$. This implies that every divisor on $C$ defined over $k$ is of the form $D=D'+\sigma(D')$, where $D'$ is a divisor defined over the unique quadratic extension $l=k(\sqrt{-1})$ of $k$, and $\sigma$ is the automorphism of $l/k$ that sends $\sqrt{-1}$ to $-\sqrt{-1}$. As $l$ is $2$-closed by \cite[Lemma 2]{ElWa87}, it follows by the previous argument that $\Pic(B_l)$ is $2$-divisible, so there exists a divisor $D''$ over $l$ such that $2D''\sim D'$, and thus we see $D\sim 2(D''+\sigma(D''))$. Therefore $\Pic(B)$ is $2$-divisible as well.
\end{proof}

\subsection{Squarefree strong approximation on the affine line} \label{section: squarefree strong approximation}
In the proof of Theorem \ref{theorem: M-approx}, we will need the following definitions.
\begin{definition}
	Let $R$ be an Dedekind domain. An element $r\in R$ is \textit{squarefree} if $r$ is not contained in $\fp^2$ for any prime ideal $\fp\subset R$. If $R=\cO(B)$ for some regular curve $B$ over a field $k$, then we call an element $r\in R$ \textit{separable} if it is squarefree in $R\otimes_k k'$ for every extension $k'$ of $k$.
\end{definition}
Note that if $R=k[t]=\cO(\mathbb{A}_k^1)$, then we recover the familiar notions of squarefree polynomials and separable polynomials. If $k$ is perfect, then $r\in\cO(B)$ is separable if and only if it is squarefree.

Before we can carry out the proof of Theorem \ref{theorem: M-approx}, we prove a stronger version of strong approximation on $\mathbb{A}^1$ for both global fields and function fields. 

\begin{lemma} \label{Lift number field}
Let $(K,C)$ be a global field, let $S\subset \Omega_K$ be a finite set of places containing a distinguished place $v_0\in S$. Let $x_v\in K_{v}^\times$ for $v\in S$, and $\epsilon>0$. Then there exist infinitely many pairwise coprime squarefree elements $f\in \cO(B)$ such that $$|f-x_v|_{v}<\epsilon \text{ for all }v\in S\setminus\{v_0\},$$
where $B=C\setminus S$.

Let $R$ be an integer and assume that $v_0$ is an infinite place if $K$ is a number field.
If $|x_{v_0}|_{v_0}$ is sufficiently large, depending on $\epsilon$, $R$ and $|x_v|_{v}$ for $v\in S\setminus\{v_0\}$, then there exist at least $R$ such $f$ which additionally satisfy $$|f-x_{v_0}|_{v_0}<\epsilon |x_{v_0}|_{v_0}.$$

Furthermore, $f$ can be taken to be a prime element if $K$ is a function field and $v_0$ is a $k$-rational point, or if $K$ is a number field (with no condition on $v_0$). In general, $f$ can be taken to be the product of two prime elements. 


\end{lemma}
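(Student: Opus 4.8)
The plan is to rephrase the approximation conditions as ``$f$ lies in $\cO(B)$, viewed as a discrete cocompact subgroup of $\prod_{v\in S}K_v$, intersected with a suitable open region'', and then to produce $f$ by a sieve (for the squarefree claims) or by the prime ideal / prime divisor theorem in ray classes (for the prime claims), with strong approximation for $\mathbb A^1$ as the starting point. Concretely: $B=C\setminus S$ is affine, so $\cO(B)$ is a Dedekind domain whose nonzero primes are exactly the closed points outside $S$, and the conditions $f\in\cO(B)$, $|f-x_v|_v<\epsilon$ for $v\in S\setminus\{v_0\}$ say precisely that $f\in\cO(B)\cap W$, where $W\subseteq\prod_{v\in S}K_v$ is the open region with $|z_v-x_v|_v<\epsilon$ at each $v\in S\setminus\{v_0\}$ and no restriction at $v_0$; density of $\cO(B)$ in $\prod_{v\in S\setminus\{v_0\}}K_v$ gives $\cO(B)\cap W\neq\emptyset$. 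For a prime $\fp$ of $\cO(B)$ and $e\ge 1$, the multiples of $\fp^e$ form a sublattice of index $N\fp^e$, and imposing congruences at finitely many such primes is independent of $W$; so for a relatively compact open $W'$ one has $\#(\cO(B)\cap W')=\operatorname{vol}(W')/\operatorname{covol}(\cO(B))$ up to a boundary error, with the usual congruence refinements.

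For the first assertion, order $\cO(B)\cap W$ by $|\cdot|_{v_0}$: the number of elements of size $\le H$ grows like a fixed positive power of $H$, while the standard squarefree sieve (inclusion–exclusion over $\fp^2$ with $N\fp^2\le H$, plus a crude estimate for $N\fp^2>H$) shows the non-squarefree ones form a proportion at most $1-\prod_{\fp}(1-N\fp^{-2})<1$ of the total, the product over primes of $\cO(B)$ being positive. Hence $\cO(B)\cap W$ contains infinitely many squarefree elements. To obtain a pairwise coprime sequence, choose them inductively: having picked $f_1,\dots,f_m$, additionally require $f\equiv 1\pmod{\fq}$ for each prime $\fq\mid f_1\cdots f_m$; this cuts $W$ down by a bounded positive factor, is compatible with squarefreeness, and the sieve still supplies a squarefree $f_{m+1}$ coprime to the previous ones.

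For the refinement, also impose $|f-x_{v_0}|_{v_0}<\epsilon|x_{v_0}|_{v_0}$, which replaces the unbounded $v_0$-factor of $W$ by a ball of radius $\epsilon|x_{v_0}|_{v_0}$, producing a relatively compact region $W_0$ whose volume grows with $|x_{v_0}|_{v_0}$ — like $|x_{v_0}|_{v_0}$ or $|x_{v_0}|_{v_0}^2$ when $K$ is a number field, according as $v_0$ is real or complex, the hypothesis that $v_0$ is archimedean being used precisely so that this ball is Euclidean rather than itself confined to a lattice — with implied constants depending only on $\epsilon$ and the fixed $|x_v|_v$, $v\neq v_0$. Thus $\#(\cO(B)\cap W_0)\to\infty$ as $|x_{v_0}|_{v_0}\to\infty$, and since the squarefree sieve discards only a bounded proportion, at least $R$ squarefree $f\in\cO(B)\cap W_0$ survive once $|x_{v_0}|_{v_0}$ is large enough in terms of $\epsilon$, $R$ and the $|x_v|_v$.

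For the prime claims, replace the squarefree sieve by a prime count: $f\in\cO(B)$ is a prime element exactly when $(f)=\fq$ is a prime ideal, and then $\fq\notin S$ lies in a prescribed ray class of $\cO(B)$ — the class of $W$ (resp. $W_0$) modulo $\fm=\prod_{v\in S\setminus\{v_0\}}\fp_v^{N_v}$ and the archimedean data — which is realized because $\cO(B)\cap W\neq\emptyset$. The prime ideal theorem for ray class groups of a number field (Chebotarev; the relevant Hecke $L$-functions are entire and nonvanishing on $\Re s=1$), and over a function field its analogue via the polynomiality of the corresponding $L$-functions, give $\sim cH/\log H$ such primes of norm $\le H$, resp. $\sim c\,q^n/n$ of degree $n$ in each ray class of that degree (here $q=\#k$); this yields infinitely many non-associate prime elements for the first assertion and $\ge R$ of them in the refinement, after pinning $N\fq$ (resp. the degree) to the narrow window forced by $|x_{v_0}|_{v_0}$. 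One obtains a single prime when $K$ is a number field, and when $K$ is a function field with $\deg v_0=1$ (then the pole order of $f$ at $v_0$, hence the degree and class of $\fq$, is unconstrained); when $\deg v_0>1$ the attainable pairs (degree of $\fq$, ray class) are restricted, but writing $f=f_1f_2$ with $f_1,f_2$ prime of complementary zero-degrees restores the needed freedom and gives the ``product of two prime elements''. The main obstacle is the uniformity of the lattice-point and prime counts as $W_0$ grows with $|x_{v_0}|_{v_0}$ and uniformly in $\epsilon$ and the $|x_v|_v$, which is what produces an explicit threshold for ``$\ge R$''; and, in the function-field case with $\deg v_0>1$, the bookkeeping with the degree map and ray class group of $\cO(B)$ showing that two primes always suffice although one need not.
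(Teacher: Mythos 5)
Your approach is correct in outline but takes a genuinely different route from the paper. The paper never runs a squarefree sieve: in every case it produces prime elements (or, when $K$ is a function field and $\deg v_0>1$, products of two primes), so squarefreeness and pairwise coprimality come for free, and the engine is Lang's idelic equidistribution theorem applied to a retraction $\mathbf{J}\to\mathbf{J}^0$ normalized at $v_0$, i.e.\ the statement that the classes $\lambda(\fq)$ of prime ideals equidistribute in $\mathbf{J}^0/K^\times$. Your route instead treats the squarefree assertion by geometry of numbers plus a sieve, and the prime assertions by the prime ideal theorem in ray classes; this buys a uniform treatment of the case where $K$ is a number field and $v_0$ is finite (the paper needs a separate unit trick there), and it isolates the ``$\geq R$'' threshold as a transparent volume statement. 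What it costs is that the analytic content is the same but hidden: (i) the condition $|f-x_v|_v<\epsilon$ at an archimedean $v$ is a \emph{sector} condition, not a narrow ray class condition, so ``ray class plus archimedean data'' must be read as equidistribution with respect to Hecke Gr\"ossencharacters --- which is exactly what the cited result of Lang packages; and (ii) your lattice-point count ``$\#(\cO(B)\cap W')=\operatorname{vol}(W')/\operatorname{covol}$ up to a boundary error'' is applied to regions that are thin (radius $\epsilon$) in the $S\setminus\{v_0\}$ directions and long only in the $v_0$ direction, where the na\"ive boundary estimate does not suffice and one again needs Hecke-type equidistribution (or Riemann--Roch in the function field case); note the paper's remark that the heuristic genuinely fails when $v_0$ is finite and all places of $S$ are constrained, which is why the second assertion assumes $v_0$ archimedean for number fields. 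Finally, for the tail of the sieve you should say explicitly that $f\in\fp^2\cap W'$ forces $N\fp^2$ to be at most the norm bound $\prod_{v\in S}|f|_v$ imposed by $W'$ and the product formula, so the sum over large $\fp$ is finite and negligible.

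One point that needs more than bookkeeping in the function field case with $\deg v_0>1$: producing $f=f_1f_2$ with the two primes of ``complementary degrees'' is not enough by itself, because the multiplicative approximation $|f_1f_2-x_v|_v<\epsilon|x_v|_v$ at each $v\in S$ requires first factoring each target as $x_v=c_v^{\deg v_0}d_v^{\deg v'}$ (using $\gcd(\deg v_0,\deg v')=1$ for an auxiliary place $v'$) and then approximating the two factors separately by $f_1$ and $f_2$, with an ultrametric triangle-inequality argument to recombine; the paper carries this out in its Step~2 for function fields, and your proposal should not treat it as pure degree/ray-class bookkeeping.
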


\begin{remark}
Note that for $K=\mathbb{Q}$ this lemma is just a consequence of the prime number theorem for arithmetic progressions \cite[Theorem 1.1]{BMBR18}. If $K$ is a number field and if $S$ does not contain an infinite place, then the statement from the lemma follows from Chebotarev's density theorem applied to $L/K$, where $L$ is the ray class field associated to the modulus $\infty I$ (see \cite[Chapter 2, \S 8]{Cox22}), where $\infty$ is the product of the infinite places and $I$ is the ideal in $\cO_K$ consisting of the elements $x\in \cO_K$ with $|x|_{v}<\epsilon$ for all $v\in S$.
\end{remark}
\begin{remark}
If $S$ contains all infinite places in the setting of Lemma \ref{Lift number field}, then there exist only finitely many coprime elements $f\in \cO(B)$ satisfying $|f-x_v|_{v}<\epsilon$ for all $v\in S\setminus\{v_0\}$ and $|f-x_{v_0}|_{v_0}<\epsilon |x_{v_0}|_{v_0},$ since these inequalities imply an upper bound on the norm of the ideal $(f)\in B$.
\end{remark}
\begin{proof}
The proof of the lemma uses the language of ideles and mainly relies on \cite[Chapter XV, Theorem 6]{Lan94}, which is an equidistribution result that can be viewed as a generalization of Chebotarev's density theorem. For more background on ideles and equidistribution, see \cite[Chapter VII]{Lan94} and \cite[Chapter XV]{Lan94}, respectively.

Denote the idele group of $K$ by $\mathbf{J}= \prod_{v\in \Omega_K} (K_v^\times, \cO_v^\times)$ and denote the $S$-idele group by $\mathbf{J}_{S}=\prod_{v\in \Omega_K\setminus S} \cO_v^\times\times \prod_{v\in S} K_v^\times$. The norm of an idele $a\in \mathbf{J}$ is $\| a\|=\prod_{v\in \Omega_K} |a_v|_v$. We denote the subgroups of elements of norm $1$ in $\mathbf{J}$ and $\mathbf{J}_S$ by $\mathbf{J}^0$ and $\mathbf{J}_S^0$.

The proof is split up in two parts, depending on whether $K$ is a number field or a function field. While the proofs of these cases differ, they follow the same general ideas and both hinge on applying \cite[Chapter XV, Theorem 6]{Lan94} to a retraction $\phi\colon \mathbf{J}\rightarrow \mathbf{J}^0$. First we give some generalities common to both proofs.
\subsubsection*{Step 0}
Without loss of generality we can assume that $r\geq 1$ and that $\epsilon<\min_{v\in S\setminus\{v_0\}}(1,|x_v|_{v})$.
Choose for each finite place $\mathfrak{q}\in \Omega_K^{<\infty}$ a uniformizer $\pi_{\mathfrak{q}}\in K^\times$ at $\mathfrak{q}$. Define the map $\tau\colon \Omega_{K}^{<\infty}\rightarrow \mathbf{J}$ by $(\tau(\mathfrak{q}))_v=\pi_{\mathfrak{q}}$ if $v\neq \mathfrak{q}$ and $(\tau(\mathfrak{q}))_{\mathfrak{q}}=1$. Note that for $\mathfrak{q}\not\in S$, $\tau(\mathfrak{q})\in K^\times \mathbf{J}_{S}$ if and only if $\mathfrak{q}$ is a principal ideal in $\cO(B)$. This is because $\tau(\mathfrak{p})\in K^\times \mathbf{J}_{S}$ means that there exists $u\in K^\times$ such that $u\pi_{\fq}\in \cO_v^\times$ for all $v\in \Omega_K\setminus (S\cup \{\fq\})$ and such that $u\in \cO_{\fq}^\times$, so $(u\pi_\fq)\in B$ is a prime ideal.

\subsubsection*{Step 1 for number fields: $v_0$ infinite}
We assume without loss of generality that $S$ contains all infinite places.
We also first treat the case when $v_0$ is an infinite place.
Define the retraction $\phi\colon \mathbf{J}\rightarrow \mathbf{J}^0$ by
$$\phi(a)_v=\begin{cases}
	a_v & \text{if } v\neq v_0, \\
	a_v/\| a\|^{1/e} &\text{if } v=v_0,
\end{cases}$$
where $e=1$ if $v_0$ is real and $e=2$ if $v_0$ is complex.
We define $\sigma$ to be the composition of $\phi$ with the quotient map $\mathbf{J}^0\rightarrow \mathbf{J}^0/K^\times$. Then $\sigma(\mathbf{J}^0)=\mathbf{J}^0/K^\times$, and $\sigma(K^\times)=1$. Therefore, by \cite[Chapter XV, Theorem 6]{Lan94}, $\Omega_K^{<\infty}$ is $\lambda$-equidistributed in $\mathbf{J}^0/K^\times$, where $\lambda=\sigma\circ \tau$. The map $\tau$ here is defined differently from \cite{Lan94}, but the composition yields the same map $\lambda$ after composing with the inversion map $(\cdot)^{-1}\colon \textbf{J}^0/K^\times\rightarrow \textbf{J}^0/K^\times$ and therefore the conclusion still follows.

Let
\begin{align*}
    U:=&\left\{z\in \mathbf{J}_S^0 \colon  |z_{v}-x_{v}|_{v}<\epsilon\min\left(1,\frac{|x_{v}|_{v}}{2^{r+2}}\right),\, \forall v\in S\setminus\{v_0\}\right\}\\ &\cap \left\{z\in \mathbf{J}_S^0 \colon  |\arg(z_{v_0})-\arg(x_{v_0})|_{v_0}<\frac{\epsilon}{4}\right\},
\end{align*}
is a nonempty open set of $\mathbf{J}^0$, where $r$ is the cardinality of $S$ and $\arg(z)$ is the principal argument of $z\in \mathbb{C}^\times$. Note that $U$ is a nonempty open set of $\mathbf{J}^0$. Denote the image of $U$ in $\mathbf{J}^0/K^\times$ by $\overline{U}$. The maps we have defined together form the following commutative diagram
$$
\begin{tikzcd}
&& U \arrow[d, hook] \arrow[r] & \overline{U} \arrow[d, hook] \\
\Omega_K^{<\infty} \arrow[r, "\tau"] \arrow[rrr, "\lambda", bend right] & \mathbf{J} \arrow[r, "\phi"] \arrow[rr, "\sigma", bend right] & \mathbf{J}^0 \arrow[r]      & \mathbf{J}^0/K^\times.       
\end{tikzcd}
$$

As the indicator function of $\overline{U}$ is integrable, a positive density of prime ideals $\mathfrak{q}\in B$, ordered by their norms, satisfy $\lambda(\mathfrak{q})\in \overline{U}$, by the definition of equidistribution given in \cite[page 316]{Lan94}. Since the image of $\mathbf{J}_S^0$ in $\mathbf{J}^0/K^\times$ is $K^\times\mathbf{J}^0_S/K^\times$, such prime ideals are principal as we have seen in Step 0. 
By the Landau prime ideal theorem \cite[page 670]{Lan03} the number of prime ideals of norm up to $X$ grows asymptotically as $X/\log X$. Therefore the number of prime ideals $\fq\in B$ with $\lambda(\fq)\in \overline{U}$ of norm up to $X$ grows asymptotically as $cX/\log X$ for some constant $c>0$. Therefore, for $N_0\in \mathbb{R}$ sufficiently large, there exist $R$ distinct principal prime ideals $\fq=(q)\in B$ with $\lambda((q))\in \overline{U}$ and of norm $(1-\tfrac{\epsilon}{4})N_0<N(q)<(1+\tfrac{\epsilon}{4})N_0$. Note that $(\phi\circ\tau)(q)_v=uq$ for all $v\in \Omega_K\setminus\{v_0,\fq\}$, $(\phi\circ\tau)(q)_{\fq}=u$ and $(\phi\circ\tau)(q)_{v_0}=uq/N(q)^{1/e}$, where $u\in K^\times\cap \cO_{\fq}^\times$ and $N(q)=\prod_{v\in S} |q|_{v}$ is the norm of $q$ in $\cO(B)$.

By definition of $\overline{U}$, for all prime ideals $(q)$ of $\cO(B)$  with $\lambda((q))\in \overline{U}$, there exists $u'\in K^\times$ such that $u(\phi\circ\tau)(q)\in U$. This implies that $uu'q\in \cO_v^\times$ for all $v\in \Omega_K\setminus\{v_0,\fq\}$, $uu'\in \cO_{\fq}^\times$ and $|\arg((uq/N(q)^{1/e})_{v_0})-\arg(x_{v_0})|_{v_0}<\frac{\epsilon}{4}$. Thus $uu'q\in \cO(B)$, $(uu'q)=(q)$ and $uu'q\in U$ via the natural embedding $K^\times\subset \mathbf{J}^0$, since the argument of $uu'q$ in $K_{v_0}$ is not affected by scaling by a positive real number. Therefore, we can choose the prime element $q\in \cO(B)$ to lie in $U$ itself.

Without loss of generality, we assume $|x_{v_0}|_{v_0}$ is sufficiently large so that $N_0=\prod_{v\in S} |x_v|_{v}$ is large enough for $R$ pairwise coprime elements $q\in U$ to exist. Then we have
\begin{equation} \label{eq: inequalities 1}
(1-\tfrac{\epsilon}{4})|x_{v_0}|_{v_0}< \frac{N(q)}{\prod_{v\in S\setminus\{v_0\}} |x_v|_{v}}<(1+\tfrac{\epsilon}{4})|x_{v_0}|_{v_0},  
\end{equation}
for such $p\in U$.

The triangle inequality implies $\left|\frac{q}{x_{v}}\right|_{v}-1\leq \left|\frac{q}{x_{v}}-1\right|_{v}\leq \left|\frac{q}{x_{v}}\right|_{v}+1$ and by combining this with the definition of $U$ we find $1-\frac{\epsilon}{2^{r+2}}<\frac{|q|_{v}}{|x_{v}|_{v}}<1+\frac{\epsilon}{2^{r+2}}$ for $v\in S\setminus\{v_0\}$, and thus 
\begin{equation} \label{eq: inequalities 2}
  1-\tfrac{\epsilon}{8}<\prod_{v\in S\setminus\{v_0\}} \frac{|p|_{v}}{|x_{v}|_{v}}<1+\tfrac{\epsilon}{8}.  
\end{equation}
By combining the inequalities \eqref{eq: inequalities 1} and \eqref{eq: inequalities 2}, we obtain
$$\left(1-\frac{\epsilon}{2}\right)|x_{v_0}|_{v_0} <|q|_{v_0}<\left(1+\frac{\epsilon}{2}\right)|x_{v_0}|_{v_0}.$$
If $v_0$ is real, then the inequalities on the argument show that
$v_0(q)$ has the same sign as $x_{v_0}$ so $\left|\frac{q}{x_{v_0}}-1\right|_{v_0}<\frac{\epsilon}{2}$. If $v_0$ is complex, then the inequalities on the argument show
$$\left|\frac{q}{x_{v_0}}-1\right|_{v_0}<\left|\left(1+\frac{\epsilon}{2}\right)e^{\epsilon i/4}-1\right|^2<\left(\frac{\epsilon}{2}+\left(1+\frac{\epsilon}{2}\right)\frac{\epsilon}{4}+\sum_{n=2}^\infty \frac{(\epsilon/4)^n}{n!}\right)^2<\epsilon^2< \epsilon.$$
This proves the existence of $R$ coprime prime elements $f=q\in U$ satisfying the second condition of the lemma and by definition of $U$, they also satisfy the first condition. 

\subsubsection*{Step 2 for number fields: $v_0$ finite}
Now we will prove the lemma when $K$ is a number field and $v_0$ is a finite place. We will derive this from the previously treated case when $v_0$ was infinite by choosing an infinite place $v'\in S$ and by letting it play the role of $v_0$ so that we can apply the previously proven case of the lemma. By the generalization of Dirichlet's unit theorem to $S$-integers \cite[Theorem 3.12]{Nar04}, there exists $u\in \cO(B)^\times$ with $|u|_{v}=1$ for all finite places $v\in \Omega_K^{<\infty}\setminus \{v_0\}$ and positive valuation at $v_0$. Therefore by the product formula there exists an infinite place $v'$ such that $|u|_{v'}>1$, and by taking powers of we can take $|u|_{v'}$ to be larger than any given bound.

For $\epsilon'>0$ and any integer $R>0$, if $|u|_{v'}$ is sufficiently large, the part of the lemma proven in Step 1 implies that we can find $R$ pairwise coprime prime elements $q\in \cO_S$ such that $|q-ux_v|_{v}<\epsilon'$ for all places $v\in S\setminus\{v_0,v'\}$ and $|q-ux_{v'}|_{v'}<\epsilon' |ux_{v'}|_{v'}$. If we set $f=q/u$, then $|f-x_v|_{v}<\epsilon'$ for all places $v\in S\setminus\{v_0,v'\}$ and $|f-x_{v'}|_{v'}< \epsilon' |x_{v'}|_{v'}$. As $R$ was arbitrary, this implies that for every $\epsilon>0$ there exist infinitely many pairwise coprime prime elements $f\in \cO(B)$ such that $|f-x_v|_{v}<\epsilon$ for all $v\in S\setminus\{v_0\}$ since we can take $\epsilon'=\epsilon/\max(1, |x_{v'}|_{v'})$.

\subsubsection*{Step 1 for function fields: $v_0$ a rational point}
Now we will prove the statement for global function fields using a similar strategy as for number fields, relying on \cite[Chapter XV, Theorem 6]{Lan94}. While this theorem is formulated for number fields, the statement is true for global fields. This is because the proof of this result relies on Theorems 1, 2, 3 and 5 as well as Proposition 1 in \cite[Chapter XV]{Lan94}. Theorem 1 and Proposition 1 are purely analytic statements, not involving number fields, while Theorem 2, 3 and 5 are true over global fields using the same argumentation as given in the book. As noted in Remark \ref{remark: take geometrically connected}, we can take $k$ to be the field of constants of $C$ so that $C$ is geometrically integral.

We will first assume that $v_0$ is a $k$-rational point and prove the general case afterwards.
Let $l$ be the cardinality of $k$ and define the retraction $\phi\colon \mathbf{J}\rightarrow \mathbf{J}^0$ by
$$\phi(a)_v=\begin{cases}
	a_v & \text{if } v\neq v_0, \\
	a_v/\pi_{v_0}^{\log_l \|a\|} &\text{if } v=v_0,
\end{cases}$$
where $\pi_{v_0}\in \cO_{v_0}$ is a uniformizer and $\|a\|$ is the norm of $a$.
We define $\sigma$ to be the composition of $\phi$ with the quotient map $\mathbf{J}^0\rightarrow \mathbf{J}^0/K^\times$, as in the case for number fields. Then $\sigma(\mathbf{J}^0)=\mathbf{J}^0/K^\times$, and $\sigma(K^\times)=1$.

Therefore we can use \cite[Chapter XV, Theorem 6]{Lan94} as in Step 1 for number fields to conclude that $\Omega_K$ is $\lambda$-equidistributed in $\mathbf{J}^0/K^\times$, where $\lambda=\sigma\circ \tau$.

For any $b\in K_{v_0}^\times$ satisfying $|b|_{v_0}\prod_{v\in S\setminus\{v_0\}} |x_v|_{v}=1$, define the nonempty open subset
\begin{align*}
U_{b}:=\left\{z\in \mathbf{J}_S^0 \colon  \begin{matrix} |z_{v}-x_{v}|_{v}&\!\!\mkern-18mu\quad< \epsilon,\, \forall v\in S\setminus\{v_0\}\\ |z_{v_0}-b|_{v_0}&\!\!\mkern-18mu\quad< \epsilon/\prod_{v\in S\setminus\{v_0\}}|x_v|_{v}\end{matrix}\right\}
\end{align*}
and denote its image in $\mathbf{J}^0/K^\times$ by $\overline{U}_b$.
 As in the proof for number fields, the maps defined fit into the following commutative diagram
$$
\begin{tikzcd}
&& U_b \arrow[d, hook] \arrow[r] & \overline{U}_b \arrow[d, hook] \\
\Omega_K^{<\infty} \arrow[r, "\tau"] \arrow[rrr, "\lambda", bend right] & \mathbf{J} \arrow[r, "\phi"] \arrow[rr, "\sigma", bend right] & \mathbf{J}^0 \arrow[r]      & \mathbf{J}^0/K^\times.       
\end{tikzcd}
$$

Note that $(\phi\circ\tau)(q)_v=uq$ for all $v\in \Omega_K\setminus\{v_0,\fq\}$, $(\phi\circ\tau)(q)_{\fq}=u$ and $(\phi\circ\tau)(q)_{v_0}=uq/\pi_{v_0}^{\log_l N(q)}$, where $u\in K^\times\cap \cO_{\fq}^\times$ and $N(q)=\prod_{v\in S} |q|_{v}$ is the norm of $q$ in $\cO(B)$.

Let $n\geq 1$ be an integer. By the Hasse-Weil bound \cite[Corollary 7.2.1]{Poo17} for $B$ over $\mathbb{F}_{l^n}$, $\cO(B)$ has at least $l^n+O(l^{n/2})$ prime ideals of norm $l^n$, where implied constant depends on $C$ but not on $n$. Thus if $n$ is sufficiently large, then for every $b\in K_{v_0}^\times$ with $|b|_{v_0}\prod_{v\in S\setminus\{v_0\}} |x_v|_{v}=1$ there exist at least $R$ pairwise coprime primes $\fq=(q)$ of norm $l^n$ with $\lambda(\fq)\in\overline{U}_b$. In particular, if $|x_{v_0}|_{v_0}$ is sufficiently large, then there exists at least $R$ pairwise coprime primes $\fq=(q)$ of norm $\prod_{v\in S}|x_v|_v$ with $\lambda(\fq)\in\overline{U}_b$.

By definition of $\overline{U}_b$, for all prime ideals $(q)$ of $\cO(B)$  with $\lambda((q))\in \overline{U}_b$, there exists $u'\in K^\times$ such that $u(\phi\circ\tau)(q)\in U_b$. This implies that for all $uu'q\in \cO_v^\times$ for all $v\in \Omega_K\setminus\{v_0,\fq\}$, $uu'\in \cO_{\fq}^\times$ and $|uu'q/\pi_{v_0}^{\log_l N(q)}-b|_{v_0}<\epsilon/\prod_{v\in S\setminus \{v_0\}}|x_{v}|_{v}$. This implies that $uu'q\in \cO(B)$ and $(uu'q)=(q)$. Furthermore, if $(q)$ has norm $N(q)=\prod_{v\in S}|x_v|_v$, this implies that $|uu'q-b\pi_{v_0}^{\log_l N(q)}|_{v_0}<\epsilon|x_{v_0}|_{v_0}$

In particular, by taking $b=x_{v_0}\pi_{v_0}^{-\log_l \prod_{v\in S} |x_v|_{v}}$ it follows that there exist $R$ coprime prime elements $f=q\in \cO_S$ with $|f-x_v|_{v}<\epsilon$ for $v\in S\setminus\{v_0\}$ and $|f-x_{v_0}|_{v_0}<\epsilon |x_{v_0}|_{v_0}$.

\subsubsection*{Step 2 for function fields: $v_0$ not a rational point}
Now it remains to consider the case where $v_0$ is not a $k$-rational point. By the Hasse-Weil bound, there exists a place $v'\in \Omega_K\setminus S$ such that $\gcd(\deg(v), \deg(v'))=1$ for every $v\in S$. For $v\in S$ choose a factorisation $x_v=c_v^{\deg(v_0)} d_v^{\deg(v')}$ where $c_v,d_v\in K_{v}^\times$ and such that $0\leq -v(d_v)<\deg(v_0)$ and thus $|d_v|_v\geq 1$. Let $\widetilde{k}$ be the splitting field of the closed point $v'$ and let $\widetilde{K}$ be the fraction field of $C_{\widetilde{k}}$. Denote the complement of $S\sqcup\{v'\}$ in $C$ by $B'$. For every place in $S$ there is a unique place $\widetilde{v}\in \Omega_{\widetilde{K}}$ lying above it, by the coprimality assumption. Let $\widetilde{S}$ be the set of places in $\Omega_{\widetilde{K}}$ above the places in $S$.
Every place $\widetilde{v}'\in \Omega_{\widetilde{K}}$ above $v'$ has degree $1$ and thus corresponds to a rational point on $B'_{\widetilde{k}}$. As we already know that the statement is true if $v_0$ is a rational point, we can apply the lemma to $C_{\widetilde{k}}$, where $B'_{\widetilde{k}}$ plays the role of $B$ and $\widetilde{v}'$ plays the role of $v_0$, for some choice of $\widetilde{v}'\in \Omega_{\widetilde{K}}$ above $v'$. Therefore, for every $\epsilon_2>0$ we can find 
$R$ coprime prime elements $\widetilde{q}\in \cO(B'_{\widetilde{k}})$ such that for every place $v\in S$, we have $$|\widetilde{q}-d_v|_{\widetilde{v}}<\epsilon_2.$$ 

By taking $\epsilon_2<\min(|d_v|_{\widetilde{v}},1)$, we ensure that $|\widetilde{q}|_{\widetilde{v}}=|d_v|_{\widetilde{v}}=|d_v|_{v}^{\deg(v')}.$

By the Hasse-Weil bound $\cO(B'_{\widetilde{k}})$ has $l^n+O(l^{n/2})$ prime ideals of norm $l^n$ lying above completely split primes in $\cO(B')$. This is because the Hasse-Weil bound implies that the number of primes ideals in $\cO(B'_{\widetilde{k}})$ of norm $l^n$ lying above primes in $\cO(B')$ which are not completely split is bounded from above by $\sum_{d|n}\left(l^d+O(l^{d/2})\right)$. Therefore the prime element $\widetilde{q}$ can be chosen such that $q_2:=\widetilde{q}\sigma(\widetilde{q})\dots \sigma^{\deg(v')-1}(\widetilde{q})$ is a prime element in $\cO(B')$, where $\sigma$ is a generator of $\gal(\widetilde{k}/k)$. Note furthermore that for all $v\in S$ and $a\in \tilde{K}$, we have $|\sigma(a)|_{\widetilde{v}}=|a|_{\widetilde{v}}$, since $\widetilde{v}$ is the unique place above $v$.
Therefore, by the ultrametric triangle inequality, there exist $R$ coprime prime elements $q_2\in \cO(B')$ such that
\begin{align*}
|q_2-d_v^{\deg(v')}|^{\deg(v')}_{v}&=|q_2-d_v^{\deg(v')}|_{\widetilde{v}}
\\&\leq \max(|\widetilde{q}\sigma(\widetilde{q})\dots \sigma^{\deg(v')-2}(\widetilde{q})d_v-q_2|_{\widetilde{v}},\dots, |d_v^{\deg(v')}-\widetilde{q}d_v^{\deg(v')-1}|_{\widetilde{v}})
\\&
<\epsilon_2\prod_{i=0}^{\deg(v')-1}{|\sigma^i(\widetilde{q})|_{\widetilde{v}}}/\min(|\widetilde{q}|_{\widetilde{v}},\dots, |\sigma^{\deg(v')-1}(\widetilde{q})|_{\widetilde{v}})\\&
= \epsilon_2|\widetilde{q}|_{\widetilde{v}}^{\deg(v')-1}=\epsilon_2|q_2|_{v}^{\deg(v')-1}
\end{align*}
for all $v\in S$.

By the same reasoning, for every $\epsilon_1>0$ and sufficiently large $|c_{v_0}|_{v_0}$, there also exist $R$ pairwise coprime prime elements $q_1$, pairwise coprime to the chosen prime elements $q_2$, with
$$|q_1-c_v^{\deg(v_0)}|^{\deg(v_0)}_{v}<\epsilon_1 |q_1|_{v}^{\deg(v_0)-1}\text{ for all } v\in S$$
and
$$|q_1-c_{v_0}^{\deg(v_0)}|^{\deg(v_0)}_{v_0}<\epsilon_1 |q_1|_{v_0}^{\deg(v_0)},$$
and $|q_1q_2|_{v'}=1$. Hence $f:=q_1 q_2$ is a squarefree element in $\cO(B)$.

Note that for all $v\in S$ the ultrametric triangle inequality implies $$|f-x_v|_{v}=|q_1q_2-c_v^{\deg(v_0)}d_v^{\deg(v')}|_{v}\leq \max(|q_2|_{v} |q_1-c_v^{\deg(v_0)}|_{v}, |q_1|_{v} |q_2-d_v^{\deg(v')}|_{v}).$$ Combining this inequality with the inequalities on $|q_1-c_v^{\deg(v_0)}|_{v}$ and $|q_2-d_v^{\deg(v')}|_{v}$ gives

$$|f-x_v|_{v}\leq |x_v|_{v}\max\left(\epsilon_1|q_1|_{v}^{-1/\deg(v_0)}, \epsilon_2|q_2|_{v}^{-1/\deg(v')}\right)\leq |x_v|_{v}\max\left(\epsilon_1|q_1|_{v}^{-1/\deg(v_0)}, \epsilon_2\right)$$
for $v\in S\setminus\{v_0\}$ and
$$|f-x_{v_0}|_{v_0}\leq |x_{v_0}|_{v_0}\max\left(\epsilon_1, \epsilon_2|q_2|_{v_0}^{-1/\deg(v')}\right)\leq |x_{v_0}|_{v_0}\max(\epsilon_1, \epsilon_2).$$
In particular if we choose $$\epsilon_1=\epsilon/\max_{v\in S\setminus \{v_0\}}(1,|x_v|_{v}|q_1|_{v}^{-1/\deg(v_0)})$$ and
$$\epsilon_2=\epsilon/\max_{v\in S\setminus\{v_0\}}(1,|x_v|_v)$$
then $f$ is a squarefree element in $\cO(B)$ satisfying the desired conditions and by varying the choices for $q_1$ and $q_2$ there are at least $R$ pairwise coprime elements $f$ satisfying the conditions.
\end{proof}

Now we prove the analogous statement for function fields of a curve over an infinite field.
\begin{lemma} \label{Lift function field}
	Let $K$ be a function field of a regular projective curve $C$ over an infinite field $k$ and let $S\subset \Omega_{K}$ be a finite set of places containing a distinguished place $v_0\in S$. For $v\in S$ Let $x_v\in K_{v}^\times$ and let $\epsilon>0$. Then there exist infinitely many pairwise coprime separable elements $f\in \cO(B)$ such that $$|f-x_v|_{v}<\epsilon \text{ for all }v\in S\setminus\{v_0\},$$
 where $B=C\setminus S$.
 Furthermore, if $|x_{v_0}|_{v_0}$ is sufficiently large, depending on $\epsilon$ and $|x_v|_{v}$ for $v\in S\setminus\{v_0\}$, then there exist infinitely many such $f$ which additionally satisfy $$|f-x_{v_0}|_{v_0}<\epsilon |x_{v_0}|_{v_0}.$$
\end{lemma}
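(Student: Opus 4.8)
The plan is to produce $f$ inside a Riemann--Roch space on $C$, turning the approximation conditions into an affine-linear constraint, and then to arrange separability by a Bertini-type dimension count over the infinite field $k$, iterating to obtain a coprime family.

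First I would reduce to a statement about congruences. Shrinking $\epsilon$ we may assume $\epsilon<\min_{v\in S\setminus\{v_0\}}(1,|x_v|_v)$, and in the ``furthermore'' part also $\epsilon<1$. Then any $f$ meeting the required inequalities has $v(f)=v(x_v)$ for $v\in S\setminus\{v_0\}$ (and $v_0(f)=v_0(x_{v_0})$ in the second part), so the inequalities are equivalent to congruences $f\equiv g_v\ (\mathrm{mod}\ \mathfrak{m}_v^{n_v})$ for fixed local expansions $g_v$ and integers $n_v$ depending only on $\epsilon$ and $x_v$; let $S'\subseteq S$ be the set of places so constrained (with $v_0\in S'$ only in the second part). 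In the second part, comparing degrees in the principal divisor $\div(f)$ gives $\deg\div_0(f)|_B=-\sum_{v\in S}\deg(v)\,v(x_v)$, which is $\geq 0$ exactly when $\prod_{v\in S}|x_v|_v\geq 1$; this, together with a bound leaving room for the construction below, is precisely where the hypothesis that $|x_{v_0}|_{v_0}$ be sufficiently large (given $\epsilon$ and the other $|x_v|_v$) is used.

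Next I would set up the linear system. Put $D_\infty=\sum_{v\in S}[v]$ and, for $N\gg 0$ large enough that $ND_\infty$ dominates the pole parts of all $g_v$ (here again $|x_{v_0}|_{v_0}$ large merely forces $N$ larger) and large enough for the cohomological vanishing below, set $W_N=\{f\in H^0(C,\cO(ND_\infty)) : f\equiv g_v\ (\mathrm{mod}\ \mathfrak{m}_v^{n_v})\ \forall v\in S'\}$. Since $H^1$ of a line bundle of degree $>2g-2$ vanishes, the truncation map $H^0(C,\cO(ND_\infty))\to\bigoplus_{v\in S'}\cO_{C,v}/\mathfrak{m}_v^{n_v}$ is surjective for $N$ large, so $W_N$ is a nonempty affine subspace with translation space $W_N^0=H^0(C,\cO(ND_\infty-\sum_{v\in S'}n_v[v]))$ and $\dim W_N\to\infty$. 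Every $f\in W_N$ lies in $\cO(B)$ (its poles lie on $S$), satisfies all the required inequalities, and has $\div_0(f)|_B=\div(f)|_B$. For $N$ large, $W_N^0$ is moreover base-point-free and separates $1$-jets at every closed point of $C\setminus S$ (cohomological vanishing once more). Working over $\overline{k}$, inside $(C\setminus S)\times W_N$ the closed subscheme $\cI$ of pairs $(p,f)$ with $\operatorname{ord}_p f\geq 2$ has fibre of codimension $\geq 2$ over each $p$ by this $1$-jet separation, so $\dim\cI<\dim W_N$; hence by Chevalley its image in $W_N$ lies in a proper closed subvariety $\Delta$, which, being cut out by conditions defined over $k$, descends to $k$. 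For $f\notin\Delta$ the divisor $\div_0(f)|_{B_{\overline{k}}}$ is reduced, equivalently (as $C/k$ is smooth) $\div_0(f)|_B$ is étale over $k$, i.e.\ $f$ is a separable element of $\cO(B)$. Since $k$ is infinite and $\dim W_N\geq 1$, $(W_N\setminus\Delta)(k)$ is Zariski-dense, hence infinite, giving infinitely many separable $f$ satisfying the approximation conditions. For the coprime family I would argue inductively: given separable $f_1,\dots,f_m\in W_N$ with finite zero sets $Z_i\subset B$, the further requirement that $f$ not vanish on $Z_1\cup\dots\cup Z_m$ excludes another proper closed subvariety $\Delta'\subset W_N$ (proper because $W_N^0$ is base-point-free on $C\setminus S$), so $(W_N\setminus(\Delta\cup\Delta'))(k)$ is still infinite and we may pick $f_{m+1}$ separable, distinct from and coprime to $f_1,\dots,f_m$; this produces an infinite pairwise coprime family and establishes both assertions.

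The main obstacle is the Bertini step over a possibly imperfect field $k$: generic smoothness is unavailable, and a member of the linear system with \emph{reduced} divisor need not give a \emph{separable} element, since reducedness is weaker than geometric reducedness over an imperfect field. This is handled by running the dimension count after base change to $\overline{k}$ and observing that the resulting bad locus is defined over $k$, together with the verification that the sub-linear system $W_N^0$ separates $1$-jets everywhere on $C\setminus S$ — the latter being exactly what makes the dimension count go through in arbitrary characteristic.
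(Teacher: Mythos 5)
Your construction of the approximating family via a Riemann--Roch space with congruence conditions is sound and runs parallel to the paper's use of Riemann--Roch for the approximation step; the genuine divergence is in how separability is produced, and that is where there is a gap. Your Bertini argument hinges on the parenthetical assertion ``as $C/k$ is smooth'': but the lemma only assumes $C$ \emph{regular}, and over an imperfect base field a regular projective curve need not be smooth over $k$ --- indeed Remark \ref{remark: take geometrically connected} exhibits regular curves $C$ for which $C_{\overline{k}}$ is singular or even everywhere non-reduced, and these cases are within the scope of Theorem \ref{theorem: M-approx}, where the lemma is applied. When $C_{\overline{k}}$ is not smooth, your incidence-variety count breaks in two places: (i) ``separates $1$-jets'' and ``$\operatorname{ord}_p f\geq 2$'' are the wrong notions at points where $\cO_{C_{\overline{k}},p}$ is not a discrete valuation ring (the relevant condition from the paper's definition of separability is $f\in\mathfrak{p}^2$ in a local ring of embedding dimension $\geq 2$, possibly with nilpotents, and the codimension of that condition in $W_N$ has to be re-established); and (ii) the chain of equivalences ``$\div_0(f)|_{B_{\overline{k}}}$ reduced $\Leftrightarrow$ $\div_0(f)|_B$ \'etale over $k$ $\Leftrightarrow$ $f$ separable'' uses smoothness of $C$ over $k$ and, in addition, checking squarefreeness only over $\overline{k}$ rather than over \emph{every} extension $k'/k$ as the definition requires. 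None of these is automatic, and your closing paragraph, which presents the base change to $\overline{k}$ as the complete fix for imperfectness, does not address them.

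For contrast, the paper's proof is engineered precisely to avoid smoothness of $C$: after building $h$ by Riemann--Roch it forms the pencil $X=\{h+tg=0\}\subset B\times_k\mathbb{A}^1_k$ with $g$ chosen so that the projection $X\rightarrow\mathbb{A}^1_k$ has degree prime to $\CHAR(k)$, hence is a \emph{separable} generically finite map of integral schemes; generic flatness plus generic geometric regularity of fibers of such a family then yields a dense open set of $c\in k$ for which $\div(h+cg)\cap B$ is geometrically regular, i.e.\ $h+cg$ is separable, with no hypothesis on $C_{\overline{k}}$. Your argument, as written, proves the lemma when $C$ is smooth over $k$ (in particular over perfect $k$), and your Riemann--Roch setup and the coprimality induction are fine (modulo the small slip that the translation space has coefficient $-n_v$, not $N-n_v$, at the constrained places, and that in the second assertion the size of the linear system is governed by $|x_{v_0}|_{v_0}$ rather than by a freely chosen $N$); but to cover the stated generality you would need to either rework the incidence-variety count directly in terms of the ideals $\mathfrak{p}^2$ on the possibly non-reduced scheme $B_{\overline{k}}$ and justify that $\overline{k}$ detects all extensions $k'$, or switch to a separable-pencil argument as in the paper.
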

\begin{proof}
    For $v\in S$ we write $D_v$ for the divisor on $C$ associated to the place $v$ and $g(C)$ for the genus of $C$. Let $n>0$ be some integer such that $p^{-n}<\epsilon$, where $p=\CHAR(k)$ if $k$ has positive characteristic and $p=2$ if $k$ has characteristic $0$. For an integer $m>0$ and a place $v\in S\setminus\{v_0\}$, we write
    $$\tilde{D}_{v,m}=mD_{v_0}-v(x_v)D_v-\sum_{\tilde{v}\in S\setminus\{v_0,v\}} n D_{\tilde{v}}.$$
    For every place $v\in S\setminus\{v_0\}$, Riemann-Roch \cite[Theorem 7.3.26]{Liu06} implies
    $h^0(\tilde{D}_{v,m})= \deg(\tilde{D}_{v,m})-g(C)+1$ and $h^0(\tilde{D}_{v,m}-D_v)= \deg(\tilde{D}_{v,m})-\deg(D_v)-g(C)+1$, as long as 
    $\deg(\tilde{D}_{v,m})-\deg(D_v)>2g(C)-2$. This inequality is satisfied whenever $m$ is large enough, so for such $m$ we have $$h^0(\tilde{D}_{v,m})-h^0(\tilde{D}_{v,m}-D_v)\geq \deg(D_v)\geq 1.$$
    Therefore there exists an element $\tilde{h}_v\in\cO_C(\tilde{D}_{v,m})\setminus \cO_C(\tilde{D}_{v,m}-D_v)\subset \cO(B)$, which therefore satisfies $v(\tilde{h}_v)=v(x_v)$ and $|\tilde{h}_v|_{\tilde{v}}<\epsilon$ for all $\tilde{v}\in S\setminus\{v_0,v\}$. Furthermore we have $|\tilde{h}_v|_{v_0}\leq p^{-m\deg(D_{v_0})}<\epsilon|x_{v_0}|_{v_0}$ whenever $|x_{v_0}|_{v_0}$ is sufficiently large. The divisor 
    $$\tilde{D}_{v_0}=-v_0(x_{v_0})D_{v_0}-\sum_{v\in S\setminus\{v_0\}} n D_v$$ is very ample if $|x_{v_0}|_{v_0}$ is sufficiently large, as this implies that $-v(x_{v_0})$ is a large positive integer, so in the same manner we construct $\tilde{h}_{v_0}\in \cO(B)$ with $v_0(\tilde{h}_{v_0})=v_0(x_{v_0})$ and $|\tilde{h}_{v_0}|_{v}<\epsilon$ for all $v\in S\setminus\{v_0\}$.
	
    For any $v\in S$ we have the inequality $v(x_v-c_v\tilde{h}_v)<v(x_v)$ for some $c_v\in k^\times$. For $v\in S\setminus\{v_0\}$, by applying the the above construction of $\tilde{h}_v$ to $x_v-c_v\tilde{h}_v$ instead of $x_v$ we iteratively construct $\tilde{h}_{v,1},\dots, \tilde{h}_{v,r}\in \cO(B)$ and $c_{v,1},\dots, c_{v,r}\in k^\times$ such that $|\tilde{h}_{v,1}|_{\tilde{v}},\dots, |\tilde{h}_{v,r}|_{\tilde{v}} <\epsilon$ for all $\tilde{v}\in S\setminus\{v_0,v\}$, $|\tilde{h}_{v,1}|_{v_0},\dots, |\tilde{h}_{v,r}|_{v_0} <\epsilon|x_{v_0}|_{v_0}$ and
    $$v\left(x_v-\sum_{i=1}^{j} c_{v,i}\tilde{h}_{v,i}\right)<v\left(x_v-\sum_{i=1}^{j-1} c_{v,i}\tilde{h}_{v,i}\right)$$
    for all $j\in \{1,\dots, r\}$. By taking $r=v(x_v)+n$ and by setting $$h_v=\sum_{i=1}^r c_{v,i}\tilde{h}_{v,i}\in \cO(B),$$ the ultrametric triangle inequality implies $|h_v|_{\tilde{v}}<\epsilon$ for all $\tilde{v}\in S\setminus\{v,v_0\}$, $|h_v|_{v_0}<\epsilon|x_{v_0}|_{v_0}$ and $|h_v-x_v|<\epsilon$.
    
    In this manner, we also construct $h_{v_0}\in \cO(B)$ with $|h_{v_0}|_{v}<\epsilon$ for all $v\in S\setminus\{v_0\}$ and $|h_{v_0}-x_{v_0}|_{v_0}<\epsilon|x_{v_0}|_{v_0}$. If we write $h=\sum_{v\in S} h_v$ then it follows that $|h-x_v|_{v}<\epsilon$ for all $v\in S\setminus\{v_0\}$ and $|h-x_{v_0}|<\epsilon|x_{v_0}|_{v_0}$. So now we have found an $h\in \cO(B)$ with the desired properties, except for the fact that $h$ need not be separable. We resolve this by slightly perturbing $h$, by adding a function to it with small valuations at the places $v\in S$.
	
In a similar way as before we use Riemann-Roch to construct $g\in \cO(B)$ satisfying $|g|_{v}<\epsilon$ for all $v\in S\setminus\{v_0\}$ and $|g|_{v_0}<\epsilon |x_{v_0}|_{v_0}$, whenever $|x_{v_0}|_{v_0}$ is sufficiently large, such that $h$ and $g$ do not share any zeroes.
Additionally, we construct $g$ such that $\deg(g)$ is not divisible by the characteristic of $k$ and such that $\deg(g)>\deg(h)$. Then the closed subscheme $$X=\{h+tg=0\}\subset B\times_k \mathbb{A}_k^1$$ is integral, since $h$ and $g$ do not share any zeroes. The projection morphism $\pi\colon X\rightarrow \mathbb{A}_k^1$ has degree coprime to the characteristic and is therefore separable. By generic flatness \cite[Tag 052A]{Stacks}, there exists an nonempty open $V\subset \mathbb{A}_k^1$ such that the restriction $X\times_{\mathbb{A}_k^1} V\rightarrow V$ of $\pi$ is flat. Therefore, \cite[Proposition 2.4(1)]{Lui98} implies there exists a nonempty open $U\subseteq \mathbb{A}_k^1$ such that the fiber $\pi^{-1}(c)$ is geometrically regular for all $c\in U$. In particular, since $k$ is infinite, there exist infinitely many $c\in k$ such that for $f=h+cg\in \cO(B)$ the scheme $\div(f)\cap B$ is geometrically regular, which proves that $f$ is separable. Furthermore, any two different choices of $c\in k$ yield functions $f$ and $\tilde{f}$ which are coprime to each other.
\end{proof}
\subsection{Proof of Theorem \ref{theorem: M-approx}}
In this section we prove Theorem \ref{theorem: M-approx}, and thus completely characterize when a toric pair $(X,M)$ satisfies $M$-approximation off a finite set of places $T$. We will treat the cases $T\neq \emptyset$ and $T=\emptyset$ separately.

By Corollary \ref{corollary: M approx toric stable under birational transformations} we can assume without loss of generality that $X$ is smooth. Furthermore, by Proposition \ref{prop: relation M and Mfin} and Proposition \ref{prop: Open subset dense} we can also assume and $M=M_{\fin}$. We additionally assume without loss of generality that $\fM=\fM_{\red}$.

The pair $(X,M)$ satisfies $M$-appproximation off $T$ if and only if for every finite set of places $S$ containing $T\cup\Omega_K^{\infty}$, any choice of a point $Q_v=(q_{v,1}:\dots: q_{v,n})\in X(K_v)$ and any analytic open neighborhood $Q_v\in V_v$ for every $v\in S\setminus T$, there exists a rational point $Q=(q_1:\dots:q_n)\in X(K)$ such that $Q\in(\cX,\cM)(B)$ and $Q\in V_v$ for every $v\in S\setminus T$. Here $B=\Omega_K\setminus S$ and $(\cX,\cM)$ is the toric integral model of $(X,M)$ over $B$. We will write $d$ for the dimension of $X$, $\cU\cong \mathbb{G}_{m,\mathbb{Z}}^d$ for the open torus in $\cX$ and $U$ for its base change to $K$.

\begin{proof}[Proof of sufficiency of the conditions]
We will first show that $(X,M)$ satisfies $M$-approximation off $T$ if $T\neq \emptyset$ and $|N:N_M|\in \rho(K,C)$ or $T=\emptyset$ and $N=N_M^+$. The majority of the proofs of the two cases are the same, with only the last part of the proofs differing. The Cox morphism $\pi\colon \cY\rightarrow \cX$, introduced in Section \ref{section: Cox coordinates}, induces for every $v\in S\setminus T$ a continuous map $\cY(K_v)\rightarrow \cX(K_v)$. Therefore, there exists $\epsilon>0$ such that if for any $Q=(q_1:\dots:q_n)\in X(K)$ is a point such that $|q_i-q_{v,i}|_v<\epsilon|q_{v,i}|_v$ for all $i\in \{1,\dots, n\}$ and $v\in S\setminus T$, then $Q\in V_v$ for all $v\in S\setminus T$.  


We will now show that we can reduce to the case where $\mult_v(Q_v)\in N_M$ for all $v\in S\setminus T$. If $\rho(K,C)=\{1\}$ or $T=\emptyset$, then this is trivially true since then $N_M=N$. In particular, we only need to show this when $K$ is a function field.

If $K$ is a function field, Lemma \ref{lemma: Picard group divisible} implies that $\Pic(C\setminus T)$ is $|N:N_M|$-divisible. This means that for every divisor $D$ on $C\setminus T$, there is $u\in K^\times$ such that $D+\div u=|N:N_M|D'$ for some divisor $D'$ on $C\setminus T$. Therefore, there exists $\mathbf{u}=(u_1:\dots:u_n)$ with $u_1,\dots, u_n\in K^\times$ such that $\mult_v(u_1q_{v,1}:\dots: u_nq_{v,n})\in N_M$ for all $v\in S\setminus T$ and $\mult_v(\mathbf{u})\in N_M$ for all $v\in \Omega_K^{<\infty}\setminus S$. Let $S'$ be the finite set consisting of the places in $S$ together with all places $v$ for which $\mult_v(\mathbf{u})\neq 0$. For $v\in S\setminus T$, we set
$$Q'_v=(u_1q_{v,1}: \dots: u_nq_{v,n})$$
and $V'_v=\mathbf{u} V_v$, and set
$Q'_v=\mathbf{u}$ and $V'_v=\mathbf{u}\cU(\cO_v)$
for $v\in S'\setminus S$, where the multiplication is done coordinate-wise. If there exists $Q'\in (\cX,\cM)(B')$, where $B'=C\setminus S'$, such that $Q'\in V'_v$ for all $v\in S$, then $Q=\mathbf{u}^{-1}Q'\in \cU(\cO_v)$ for all $v\in S'\setminus S$. Therefore $Q$ satisfies $Q\in (\cX,\cM)(B)$ and $Q\in V_v$ for all $v\in S$, as desired.

If $K$ is a number field, we simply let $S'=S$, $Q_v'=Q_v$ and $V'_v=V_v$ for $v\in S\setminus T$.

Choose $\mathbf{m}_1,\dots, \mathbf{m}_l\in \fM$ such that $\phi(\mathbf{m}_1), \dots, \phi(\mathbf{m}_l)$ generate $N_M$ as a lattice. If $T=\emptyset$ and $N_M^+=N$ assume furthermore that they generate $N$ as a monoid. Let $\cG\cong \mathbb{G}_{m,\mathbb{Z}}^{n-d}$ be the torus as in Section \ref{section: Cox coordinates} and let $\pi_v\in \cO_v$ be a uniformizer for all $v\in S\setminus T$.
For a place $v\in S\setminus T$, we are going to construct $c_{\mathbf{m}_1, v},\dots, c_{\mathbf{m}_l,v}\in K_v^\times$ such that we have
\begin{equation} \label{eq: choice good coordinates}
\prod_{s=1}^l (c_{\mathbf{m}_s, v}^{m_{s,1}}:\dots: c_{\mathbf{m}_s, v}^{m_{s,n}})=Q'_{v}=(q'_{v,1}:\dots: q'_{v,n}),    
\end{equation}
in Cox coordinates, where the multiplication is defined coordinate-wise.
This is equivalent to the existence of $(t_1,\dots, t_n)\in \cG(K_v)$ 
for which 
$$
\prod_{s=1}^l (c_{\mathbf{m}_s, v}^{m_{s,1}},\dots, c_{\mathbf{m}_s, v}^{m_{s,n}})=(t_1,\dots, t_n)\cdot(q'_{v,1},\dots, q'_{v,n}),    
$$
where the products are again defined by coordinate-wise multiplication.

By the definition of $\cG$ this is equivalent to
$$\prod_{i=1}^n\left(\left(\prod_{s=1}^l c_{\mathbf{m}_s, v}^{m_{s,i}}\right)/q'_{v,i}\right)^{\langle n_{\rho_i},e_j\rangle}=1$$
for every $j=1,\dots, d$, where $\{e_1,\dots, e_d\}$ is a basis of $N$.
This, in turn is equivalent to
$$\prod_{s=1}^l c_{\mathbf{m}_s, v}^{\langle \phi(\mathbf{m}_s), e_j\rangle}=\prod_{i=1}^n {q'_{v,i}}^{\langle n_{\rho_i},e_j\rangle}$$
for every $j=1,\dots, d$. If we write $\gamma_{j,s}=\langle \phi(\mathbf{m}_s), e_j\rangle$ and $a_j=\prod_{i=1}^n {q'_{v,i}}^{\langle n_{\rho_i},e_j\rangle}$ this equation becomes
$$\prod_{s=1}^l c_{\mathbf{m}_s, v}^{\gamma_{j,s}}=a_j.$$
The $d\times l$ matrix $\Gamma$ with entries $\gamma_{j,s}$ induces a group homomorphism $\Gamma_{K_v}\colon (K_v^\times)^l\rightarrow (K_v^\times)^d\cong U(K_v)$ given by $$(c_1,\dots,c_l)\mapsto \left(\prod_{s=1}^l c_s^{\gamma_{1,s}}, \dots, \prod_{s=1}^l c_s^{\gamma_{d,s}}\right),$$
where the latter isomorphism is given by the choice of basis of $N$.
Since $\phi(\mathbf{m}_1),\dots, \phi(\mathbf{m}_l)$ span $N_M$ and $|N:N_M|\in \rho(K,C)$, this homomorphism restricts to a surjective group homomorphism $(\cO_v^\times)^l\rightarrow (\cO_v^\times)^d$. Since $K_v^\times\cong \mathbb{Z}\times \cO_v^\times$, the image of $\Gamma_{K_v}$ is exactly the points $\widetilde{Q}\in U(K_v)$ for which $\mult_v(\widetilde{Q})\in N_M$. Since $\mult_v(q_{v,1}:\dots:q_{v,n})\in N_M$ for all $v\in S\setminus T$,
for each place $v\in S\setminus T$ we can find $c_{\mathbf{m}_1, v},\dots, c_{\mathbf{m}_l, v}\in K_v^\times$ satisfying condition \eqref{eq: choice good coordinates}.

Now we distinguish between whether $T\neq\emptyset$ or $T=\emptyset$.
If $T\neq \emptyset$, then by Lemma \ref{Lift number field} if $K$ is a global field and by Lemma \ref{Lift function field} if $K$ is another function field, we can find coprime squarefree elements $c_{\mathbf{m}_1},\dots c_{\mathbf{m}_l}\in \cO(B)$ such that $|c_{\mathbf{m}_i}/c_{\mathbf{m}_i, v}-1|_v<\epsilon\left(\sum_{s=1}^l m_{s,i}\right)^{-1}$ and $|c_{\mathbf{m}_i}|_v\leq |c_{\mathbf{m}_i,i}|_v$ for every $v\in S\setminus T$.

Therefore if we take $Q'=(q'_1,\dots,q'_n)\in (\cX,\cM)(B)$ where $$q'_i=\prod_{s=1}^l c_{\mathbf{m}_s}^{m_{s,i}} \in \cO(B),$$
then $|q'_i-q'_{v,i}|_v=|\prod_{s=1}^l c_{\mathbf{m}_s}^{m_{s,i}}-\prod_{s=1}^l c_{\mathbf{m}_s,v}^{m_{s,i}}|_v<\epsilon|q'_{v,i}|_v$. Here we used the elementary fact that for any tuple $a_1,\dots,a_r\in K_v$ with $|a_i|_v\leq 1$, we have  $|\prod_{i=1}^r a_i-1|_v\leq \sum_{i=1}^r |a_i-1|_v$. Therefore \eqref{eq: choice good coordinates} implies that $Q'\in V'_v$ for all $v\in S'\setminus T$, as desired.

Now we assume $T=\emptyset$ and we assume $N=N_M^+$. Without loss of generality we assume that $S$ contains a place $v_0$, which is an infinite place if $K$ is a number field. 
Since $\phi(\mathbf{m}_1),\dots, \phi(\mathbf{m}_l)$ generate $N$ as a monoid, there exist integers $d_1,\dots, d_l>0$ such that $\sum_{i=1}^l d_i \phi(\mathbf{m}_i)=0$. Therefore for the place $v_0\in S$ we can rescale the constants $c_{\mathbf{m}_i,v_0}$ to $\tilde{c}_{\mathbf{m}_i,v_0}:=r^{Cd_i}c_{\mathbf{m}_i,v_0}$ for some integer $C>0$ and $r\in K_{v_0}$ with $|r|_{v_0}>1$, without changing the $K_{v_0}$-rational point defined, since in the torus we have 
\begin{align*}
\prod_{k=1}^l \left(c_{\mathbf{m}_k,v_0}^{m_{k,1}}:\dots: c_{\mathbf{m}_k,v_0}^{m_{k,n}}\right)&=\prod_{k=1}^l \left(\left(r^{Cd_k} c_{\mathbf{m}_k,v_0}\right)^{m_{k,1}}:\dots: \left(r^{Cd_k} c_{\mathbf{m}_k,v_0}\right)^{m_{k,n}}\right)\\
&=\prod_{k=1}^l \left(\tilde{c}_{\mathbf{m}_k,v_0}^{m_{k,1}}:\dots: \tilde{c}_{\mathbf{m}_k,v_0}^{m_{k,n}}\right)
\end{align*}
where the products are defined by the action of the torus on itself.
Therefore for every $\epsilon>0$, by taking $C$ large enough we can apply the stronger form of Lemma \ref{Lift number field} for global fields and Lemma \ref{Lift function field} for other function fields to get a lifting $c_{\mathbf{m}_i}\in \cO(B)$ satisfying
$$|c_{\mathbf{m}_i}/c_{\mathbf{m}_i, v}-1|_v<\epsilon\left(\sum_{s=1}^l m_{s,i}\right)^{-1}$$
for every $v\in S\setminus\{v_0\}$
as before, but with the additional condition that
$$|c_{\mathbf{m}_i}\left(c_{\mathbf{m}_i, v} r^{Cd_i}\right)^{-1}-1|_v<\epsilon\left(\sum_{s=1}^l m_{s,i}\right)^{-1}.$$ 
Thus if we define
$Q'=(q'_1:\dots:q'_n)\in (\cX,\cM)(B)$
where
$$q'_i=\prod_{k=1}^l c_{\mathbf{m}_k}^{m_{k,i}},$$
as before, then $Q'\in V'_v$ for all $v\in S$ when $\epsilon$ is chosen sufficiently small.
Therefore $(X,M)$ satisfies $M$-approximation.
\end{proof}

\begin{proof}[Proof of necessity of $|N:N_M|\in \rho(K,C)$ when $\Pic(C)$ is finitely generated and $T\neq \emptyset$]
Now we will prove that if $\Pic(C)$ is finitely generated and $|N:N_M|\not \in \rho(K,C)$, then $(X,M)$ does not satisfy $M$-approximation off $T$ for any finite set of places $T\subset \Omega_K$.
Since $\Pic(C)$ is finitely generated, we can find a finite set of places $S\subset \Omega_K$ containing the infinite places such that $B=\Omega_K\setminus S$ satisfies $\Pic(B)=1$. We will show that this implies that for every finite set of places $T\subset S$, the toric integral model $(\cX,\cM)$ does not satisfy integral $\cM$-approximation off $T$. By Proposition \ref{prop: inclusion M approx}, this in turn implies that $(X,M)$ does not satisfy $M$-approximation off $T$. Since every finite set of places $T$ is contained in a subset $S$ with $\Pic(B)=1$, $(X,M)$ does not satisfy $M$-approximation off $T$ for every finite set of places $T$.

Let $\mathbf{m}_1,\dots, \mathbf{m}_l$ generate $N_M$ as before in the proof of the sufficiency of the condition. Additionally, we first assume that $|N:N_M|$ is not a power of the characteristic of $K$.
By Proposition \ref{prop: Compatibility multiplicity with fan}, the image of 
the set $(\cX,\cM)(B)\subset X(K)$ is contained in the image of the map
$$\cU(B)\times \left(\cO(B)\setminus \{0\}\right)^l\rightarrow X(K)$$
given by
$$((u_1:\dots:u_n),(a_1,\dots,a_l))\mapsto \left(u_1\prod_{i=1}^l a_i^{m_{i,1}}:\dots: u_n\prod_{i=1}^l a_i^{m_{i,n}}\right),$$
where $\cU\cong \mathbb{G}_{m,\mathbb{Z}}^d$ is the open torus in $\cX$.
In particular, if $(\cX,\cM)$ were to satisfy integral $\cM$-approximation off $T$, then the induced map
$$g_{S'}\colon \cU(B)\times \prod_{v\in S'} (k_v^\times)^l\rightarrow \prod_{v\in S'} \cU(k_v)$$
would be surjective for any finite set of places $S'\subset \Omega_K\setminus S$. 

As in the proof of the sufficiency of $N=N_M$ for $M$-approximation off $T$, let $\{e_1,\dots,e_d\}$ be a basis of $N$ and $\Gamma$ be the $d\times l$ matrix with coefficients $\gamma_{j,s}=\langle \phi(\mathbf{m}_s),e_j \rangle$. The isomorphism $N\cong \mathbb{Z}^d$ induced by the choice of this basis induces an isomorphism $\cU(k_v)\cong (k_v^\times)^d$ for all $v\in \Omega_K$. Under this isomorphism, the homomorphism $(k_v^\times)^l\rightarrow \cU(k_v)$
$$(a_1,\dots,a_l)\mapsto \left(\prod_{i=1}^l a_i^{m_{i,1}}:\dots: \prod_{i=1}^l a_i^{m_{i,n}}\right)$$
is given by the homomorphism $\Gamma_{k_v}\colon (k_v^\times)^l\rightarrow (k_v^\times)^d$ induced by the matrix $\Gamma$. Since $|N:N_M|\not\in \rho(K,C)$, $\Gamma_{k_v}$ is not surjective for infinitely many choices of $v\in \Omega_K$.
If $K$ is a global field, then $\cU(B)$ is finitely generated, say by $t$ elements. Then $g_{S'}$ cannot be surjective for any finite set of places $S'$ containing strictly more than $t$ places $v\in \Omega_K\setminus S$ for which $\Gamma_{k_v}$ is not surjective.

Now assume that $K$ is a function field of a curve $C$ over a field $k$.
If $g_{S'}$ is surjective, then the induced homomorphism $$\{f\in \cU(B)\mid f(v')=(1:\dots:1)\}\times \prod_{v\in S'\setminus \{v'\}}(k_v^\times)^l\rightarrow \prod_{v\in S'\setminus\{v'\}} \cU(k_v)$$ is also surjective, where $f(v')$ is the image of $f$ in $\cU(k_{v'})$.
We now give an analogous argument as for global fields. The group $\{f\in \cU(B)\mid f(v')=(1:\dots:1)\}$ injects into $\cU(B)/\cU(k)$, since if $f(v')=(1:\dots:1)$ and $g\in \cU(k)\setminus\{(1:\dots:1)\}$, then $(f\cdot g)(v')\neq (1:\dots:1)$.
Therefore, since $\cU(B)/\cU(k)\cong \left(\cO(B)^\times/\cO(k)^\times\right)^d$ is finitely generated, the group $\{f\in \cU(B)\mid f(v')=(1:\dots:1)\}$ is finitely generated as well. Suppose that it is generated by $t$ elements, then $g_{S'}$ cannot be surjective as soon as it contains strictly more than $t+1$ places $v$ for which $\Gamma_v$ is not surjective.

Finally, if $|N:N_M|$ is a power of the characteristic of $K$, the argument given above might not work, since in this case $\Gamma_{k_v}$ could be surjective for all $v\in \Omega_K$. However there are still infinitely many places $v\in \Omega_K$ such that the map $(\cO_v^\times)^l\rightarrow (\cO_v^\times)^d$ induced by $\Gamma$ is not surjective. Therefore, the argument is easily amended by replacing the role of $k_v$ by $\cO_v/\pi_v^{n_v}$ for these places $v$, where $\pi_v$ is a uniformizer and $n_v$ is the least positive integer such that the homomorphism $((\cO_v/\pi_v^{n_v})^\times)^l\rightarrow ((\cO_v/\pi_v^{n_v})^\times)^d$ is not surjective.

\end{proof}
\begin{proof}[Proof of necessity of $N=N_M^+$ when $T=\emptyset$.]
Now we will show that if $(X,M)$ satisfies $M$-approximation, then $N_M^+=N$. We argue by contradiction and assume $N_M^+\neq N$. Then we have $N_M\neq N$ or $N_{M,\mathbb{R}}^+\neq N\otimes_{\Z} \mathbb{R}$, where $N_{M,\mathbb{R}}^+$ is the convex cone generated by  $N_M^+$. This follows from the fact that $N_{M,\mathbb{R}}^+= N\otimes_{\Z} \mathbb{R}$ implies that $N_M^+$ contains a lattice of finite index in $N$, so combined with $N_M=N$ this gives $N_M^+=N$.

First we assume $N\neq N_M$. If $K$ is a global field, then we have seen that this assumption implies that $(X,M)$ does not satisfy $M$-approximation off $T$ for $T$ nonempty, since then $\rho(K,C)=1$. Thus we assume that $K$ is a function field of a curve, and we consider the map $U(K)\rightarrow N$ given by $P\mapsto \sum_{v\in \Omega_K} \phi_v(P)$, where $\phi_v$ is defined as in \eqref{eq: phi_v}. By Proposition \ref{prop: Compatibility multiplicity with fan} this map is identically zero, as $\deg\div(f)=0$ in $\Pic(C)$ for any $f\in K^\times$. Furthermore, if $P\in (\cX,\cM)(\cO_v)$ for some place $v\in \Omega_K^{<\infty}$, then $\phi_v(P)\in N_M$ by definition of $N_M$. In particular, if $\phi_{v'}(Q_{v'})=0$ for some place $v'\in S$ and $a\in N\setminus N_M$, and $\phi_v(Q_v)=0$ for all $v\in S\setminus\{v'\}$, then these points cannot be all simultaneously well approximated by some $Q\in (\cX,\cM)(B)$. This is because the equality $\phi_v(Q)=\phi_v(Q_v)$ for all $v\in S$ is incompatible with $\sum_{v\in \Omega_K} \phi_v(Q)=0$. Thus $(X,M)$ does not satisfy $M$-approximation.

Now we assume $N_{M,\mathbb{R}}^+\neq N\otimes_{\Z} \mathbb{R}$. The cone $N_{M,\mathbb{R}}^+$ is contained in some half space $H$ of $N\otimes_{\mathbb{Z}}\mathbb{R}$, which is given as $$H=\left\{\sum_{i=1}^d x_ie_i\middle\vert \sum_{i=1}^d a_ix_i\geq 0\right\}\subset N\otimes_{\mathbb{Z}}\mathbb{R},$$ where the $e_1,\dots,e_d$ form a basis of $N$ and $a_1,\dots, a_d\in \mathbb{R}$, not all zero.

Let $S$ be a nonempty set of places containing $\Omega_K^\infty$ and let $Q\in (\cX,\cM)(B)$. Under the isomorphism $\cU(K)\cong (K^\times)^d$ induced by the choice of basis of $N$ given above, we can write $Q=(x_1,\dots,x_d)\in (K^\times)^d$.
Since $Q\in (\cX,\cM)(B)$, we have by Proposition \ref{prop: Compatibility multiplicity with fan} that $\sum_{i=1}^d a_i v(x_i)\geq 0$ for every $i\in\{1,\dots, d\}$ and every place $v\in \Omega_K\setminus S$. Furthermore, the product formula gives $\prod_{v\in \Omega_K}|x_i|_v=1$ and thus we see $\prod_{v\in S}\prod_{i=1}^d |x_i|_v^{a_i}\geq 1$. However for $v\in S$, we can consider points $Q_v\in X(K_v)$ which are sent to $(x_{v,1},\dots, x_{v,d})\in \left(K_v^\times\right)^d$ under the isomorphism induced by $N$, such that $\prod_{v\in S}\prod_{i=1}^d |x_{v,i}|_v^{a_i}<\frac{1}{2}$. Such a tuple $(Q_v)_{v\in S}$ cannot lie in the closure of the map $(\cX,\cM)(B)\rightarrow \prod_{v\in S} U(K_v)$, and hence $(X,M)$ does not satisfy $M$-approximation.
\end{proof}

\subsection{Integral \texorpdfstring{$\cM$}{M}-approximation on toric varieties}
For completeness, we also characterize when integral $\cM$-approximation holds on a toric variety.
\begin{proposition} \label{prop: integral M-approximation on toric}
Let $X$ be a complete normal split toric variety and let $T\subset\Omega_K$ be a finite set of places. If $(X,M)$ satisfies $M$-approximation off $T$, then the toric integral model $(\cX,\cM)$ satisfies integral $\cM$-approximation off $T$ if and only if $\fM_{\text{red}}$ is contained in the closure of $\fM_{\fin}$.
\end{proposition}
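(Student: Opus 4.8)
The plan is to relate integral $\cM$-approximation to $M$-approximation using Proposition \ref{prop: inclusion M approx} and its partial converse, together with the observation that on a complete split toric variety the integral model is the canonical toric one, which behaves well under the comparison results in Section \ref{section: Adelic M-points and approx}. First I would observe that the condition ``$\fM_{\red}$ is contained in the closure of $\fM_{\fin}$'' is exactly the hypothesis needed to compare $\cM$-points with $\cM_{\fin}$-points: for every finite place $v$ and every $P \in X(K_v)$, we have $\mult_v(P) \in \fM_{\red}$ by the remark following Definition \ref{def: invariants}, so $(\cX,\cM)(\cO_v)$ is the closure of $(\cX,\cM_{\fin})(\cO_v)$ in $(\cX,\cM)(\cO_v)$ precisely when $\fM_{\red} \subseteq \overline{\fM_{\fin}}$. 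Hence under this hypothesis, $(\cX,\cM_{\fin})(\cO_v)$ is dense in $(\cX,\cM)(\cO_v)$ for all $v$, and by Proposition \ref{prop: relation M and Mfin}(2) integral $\cM$-approximation off $T$ is equivalent to integral $\cM_{\fin}$-approximation off $T$; similarly $M$-approximation off $T$ is equivalent to $M_{\fin}$-approximation off $T$ by Proposition \ref{prop: relation M and Mfin}(1), using Proposition \ref{prop: Open subset dense} to see $(X,M_{\fin})(K_v)$ is dense in $(X,M)(K_v)$ on the smooth locus (one reduces to the smooth case via Corollary \ref{corollary: M approx toric stable under birational transformations} and Corollary \ref{corollary: resolution of singularities surjective}).

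Next, for the direction ``$\fM_{\red} \subseteq \overline{\fM_{\fin}}$ implies integral $\cM$-approximation off $T$ follows from $M$-approximation off $T$'': since $\fM_{\fin} \subseteq \lineN^\cA$ is an open subset (it is $\fM \cap \mathbb{N}^\cA$, and $\mathbb{N} \subset \lineN$ is open discrete), Proposition \ref{prop: continuity multiplicity} shows $(\cX,\cM_{\fin})(\cO_v) \subset (X,M_{\fin})(K_v)$ is open for all $v \in B$. Then Proposition \ref{prop: inclusion M}(2) gives that $(\cX,\cM_{\fin})(\mathbf{A}_B^T) \hookrightarrow (X,M_{\fin})(\mathbf{A}_K^T)$ is an open embedding, and by Proposition \ref{prop: inclusion M approx} (whose proof applies here, noting $(X,M_{\fin})(K) \cap (\cX,\cM_{\fin})(\mathbf{A}_B^T) = (\cX,\cM_{\fin})(B)$) density of $(X,M_{\fin})(K)$ in $(X,M_{\fin})(\mathbf{A}_K^T)$ yields density of $(\cX,\cM_{\fin})(B)$ in $(\cX,\cM_{\fin})(\mathbf{A}_B^T)$. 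Combined with the equivalences of the previous paragraph, this gives integral $\cM$-approximation off $T$.

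For the converse direction, I would suppose $\fM_{\red} \not\subseteq \overline{\fM_{\fin}}$ and produce an obstruction: there is some $\mathbf{m} \in \fM_{\red}$ with $\mathbf{m} \notin \overline{\fM_{\fin}}$, so some coordinate $m_\alpha = \infty$ and there is a neighbourhood of $\mathbf{m}$ in $\lineN^\cA$ meeting $\fM$ only in points with that coordinate still $\infty$ (on the relevant coordinates). By the toric Cox-coordinate description (Remark \ref{remark: description M-points toric variety}, Proposition \ref{prop: Compatibility multiplicity with fan}), and using that $\mathbf{m} \in \fM_{\red}$ means the corresponding intersection of boundary divisors is nonempty so there is a point $P \in X(K)$ with $\mult_v(P) = \mathbf{m}$ for exactly one chosen place $v$ and in $U(K_v)$ elsewhere, one builds a local integral $\cM$-point at $v$ lying strictly inside the boundary stratum indexed by $\{\alpha : m_\alpha = \infty\}$; any global $\cM$-point over $B$ close to it at $v$ must also lie in that closed boundary stratum at $v$, but then its multiplicity vector at $v$ lies in $\fM_{\red}$ with the same infinite coordinates, hence cannot be approximated within $(\cX,\cM)(\cO_v)$ by points of $(\cX,\cM_{\fin})(\cO_v)$ — contradicting density of the latter, which integral $\cM$-approximation off $T$ would force via Proposition \ref{prop: relation M and Mfin}. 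The main obstacle I expect is making this last contradiction fully rigorous: one needs that the closed subset $(\cX,\cM)(\cO_v) \cap \{P : P \in \cD_\alpha \text{ for } m_\alpha = \infty\}$ has nonempty interior in $(\cX,\cM)(\cO_v)$ but empty intersection with the closure of $(\cX,\cM_{\fin})(\cO_v)$, which requires carefully exhibiting a local $\cM$-point in that stratum that is isolated from $\cM_{\fin}$-points — here the explicit Cox-coordinate model on the toric variety, where boundary strata and their defining valuations are completely transparent, is what makes the construction possible.
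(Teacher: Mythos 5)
Your overall architecture is the same as the paper's: reduce to the smooth case by Corollary \ref{corollary: M approx toric stable under birational transformations}, use Proposition \ref{prop: inclusion M approx} together with Proposition \ref{prop: relation M and Mfin} to reduce the whole proposition to the purely local statement that $(\cX,\cM_{\fin})(\cO_v)$ is dense in $(\cX,\cM)(\cO_v)$ for all $v\in\Omega_K\setminus T$, and then show this local density is equivalent to $\fM_{\red}\subseteq\overline{\fM_{\fin}}$. Your treatment of the necessity direction is essentially right (continuity of $\mult_v$ plus the fact that $\mult_v$ maps $(\cX,\cM)(\cO_v)$ onto $\fM_{\red}$ give the obstruction directly: $\mult_v^{-1}(\lineN^\cA\setminus\overline{\fM_{\fin}})$ is an open neighbourhood of the offending point disjoint from $(\cX,\cM_{\fin})(\cO_v)$, so your boundary-stratum detour is unnecessary).

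The genuine gap is in the sufficiency direction. In your first paragraph you assert that $(\cX,\cM)(\cO_v)$ is the closure of $(\cX,\cM_{\fin})(\cO_v)$ ``precisely when'' $\fM_{\red}\subseteq\overline{\fM_{\fin}}$, as if this followed from $\mult_v$ landing in $\fM_{\red}$; your second paragraph then relies on this to invoke the converse part of Proposition \ref{prop: relation M and Mfin}(2). But the implication ``$\fM_{\red}\subseteq\overline{\fM_{\fin}}$ implies density'' is never proven, and it is the constructive heart of the argument: continuity of $\mult_v$ only tells you that nearby points have eventually equal (or compatible) multiplicities — it says nothing about whether, given $P$ with $\mult_v(P)=\mathbf{m}\in\overline{\fM_{\fin}}$, there actually \emph{exist} points of $(\cX,\cM_{\fin})(\cO_v)$ arbitrarily close to $P$. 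This is where the Cox-coordinate model must be used on the positive side, not just for the obstruction: write $P=(a_1\pi^{m_1}:\dots:a_n\pi^{m_n})$ with $a_i\in\cO_v^\times$, choose a sequence $(m_{1,j},\dots,m_{n,j})\in\fM_{\fin}$ converging to $\mathbf{m}$ in $\lineN^\cA$, and set $P_j=(a_1\pi^{m_{1,j}}:\dots:a_n\pi^{m_{n,j}})$; since convergence in $\lineN$ forces $m_{i,j}=m_i$ eventually when $m_i<\infty$ and $m_{i,j}\to\infty$ when $m_i=\infty$, the $P_j$ converge $v$-adically to $P$ and lie in $(\cX,\cM_{\fin})(\cO_v)$. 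Without this (or some substitute) your proof of the ``if'' direction does not close.
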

\begin{proof}
By Proposition \ref{prop: inclusion M approx}, $(\cX,\cM)$ satisfies integral $\cM$-approximation off $T$ if and only if $(\cX,\cM_{\fin})(\cO_v)$ is dense in $(\cX,\cM)(\cO_v)$ for all $v\in \Omega_K\setminus T$ and by Corollary \ref{corollary: M approx toric stable under birational transformations} we can assume $X$ is smooth. By Proposition \ref{prop: continuity multiplicity} $\mult_v$ is continuous, so a point $P\in(\cX,\cM)(\cO_v)$ can only lie in the closure of $(\cX,\cM_{\fin})(\cO_v)$ if $\mult_v(P)$ lies in the closure of $\fM_{\fin}$. Since the image of $(\cX,\cM)(\cO_v)$ under $\mult_v$ is $\fM_{\text{red}}$, this shows that integral $\cM$-approximation off $T$ can only hold if $\fM_{\text{red}}$ is contained in the closure of $\fM_{\fin}$. Conversely, if $P=(a_1\pi^{m_1}:\dots:a_n \pi^{m_n})$ with $\pi\in \cO_v$ a uniformizer, $a_1,\dots,a_n\in\cO_v^\times$, and $(m_1,\dots,m_n)\in \overline{\fM_{\fin}}$, we can choose a sequence $((m_{1,j},\dots, m_{n,j}))_{j\in \mathbb{N}}$ in $\fM_{\fin}$ converging to $(m_1,\dots,m_n)$. By setting $P_j=(a_1\pi^{m_{1,j}}:\dots:a_n \pi^{m_{n,j}})\in (\cX,\cM_{\fin})(\cO_v)$ we obtain a sequence converging to $P$, finishing the proof.
\end{proof}
\section{The \texorpdfstring{$\cM$}{M}-Hilbert property and split toric varieties}
While the previous section considered the situation where $\cM$-points are plentiful, in this section we will consider when the set of such points is thin.
We will also investigate the different degrees of thinness that these sets have. For example, by Dirichlet's unit theorem \cite[Theorem 3.12]{Nar04}, $\mathbb{G}_m(\cO_K)$ is finitely generated if $K$ is a number field, while the set of squares in $\mathbb{G}_m(K)$ is not finitely generated as a group. Therefore, the former can be thought of as `thinner' than the latter. We introduce several variants of thinness, which allows us to make this idea precise.
\begin{definition} \label{def: d-thin set}
	Let $X$ be an integral variety over $K$, let $A\subseteq X(K)$ and let $d>1$ be an integer. We say that $A$ is of \textit{type II($d$)} if there is an integral variety $Y$ with $\dim Y=\dim X$ and a generically finite morphism $f\colon Y\rightarrow X$ of degree $\geq d$ such that $A\subseteq f(Y(K))$.
    We say that $A$ is $d$-\textit{thin} if it is a finite union of sets of type I and II($d$), where type I is defined as in Definition \ref{def: thin set}. We say that $A$ is \textit{strictly} $d$-\textit{thin} if the morphisms $f$ have degree exactly $d$.
\end{definition}
We also introduce a notion of thinness which is preserved under taking inverse images of dominant morphisms.
\begin{definition}
	Let $K$ be a field, $X$ an integral variety and let $A\subseteq X(K)$ be a subset. Then we say that $A$ is \textit{stably thin} if for every dominant morphism $f\colon Y\rightarrow X$ of integral varieties over $K$, $f^{-1}A\subseteq Y(K)$ is thin.
\end{definition}
This property is preserved under many operations. For example, in the above situation, $f^{-1}A$ is also stably thin. A stably thin set can be viewed as a sort of ``$\infty$-thin'' set, as the next proposition shows.
\begin{proposition} \label{prop: stably thin}
	Let $K$ be a field, $X$ an integral variety and let $A\subseteq X(K)$ be a subset. Then $A$ is stably thin if and only if it is $d$-thin for every $d>1$.
\end{proposition}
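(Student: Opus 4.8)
## Proof plan for Proposition~\ref{prop: stably thin}

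The plan is to prove the two implications separately. The easy direction is that stably thin implies $d$-thin for all $d>1$: if $A$ is stably thin, then in particular (taking $f=\id_X$) $A$ is thin, so $A = \bigcup_i Z_i(K) \cup \bigcup_j g_j(Y_j(K))$ for finitely many proper closed subsets $Z_i$ and generically finite morphisms $g_j\colon Y_j\to X$ of degree $\geq 2$. We must upgrade each type~II piece $g_j(Y_j(K))$ to a type~II($d$) piece. Fix $d>1$ and fix one such $g=g_j\colon Y\to X$ of degree $e\geq 2$. The idea is to replace $Y$ by a fibre product with an auxiliary cover of high degree: take any generically finite morphism $h\colon W\to Y$ of integral varieties of degree $\geq d$ (such a $W$ exists, e.g.\ by adjoining a root of a suitably generic function on $Y$, or by iterating the cover $g$ itself pulled back repeatedly — I will spell out a concrete construction, e.g.\ $W\to Y$ a degree-$\geq d$ cover obtained from a general hyperplane-type pencil, using that $Y$ is positive-dimensional). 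Then the composite $g\circ h\colon W\to X$ is generically finite of degree $\geq ed \geq d$, and since $A$ is stably thin, $h^{-1}(g^{-1}A)\subseteq W(K)$ is thin, hence a finite union of type~I and type~II sets; but more directly, $g(Y(K))$ need not be contained in $(g\circ h)(W(K))$, so this naive composition is not quite enough — instead I will argue as follows. Since $A$ is stably thin, $g^{-1}A\subseteq Y(K)$ is thin, so $g^{-1}A = \bigcup_i W_i(K)\cup \bigcup_j h_j(V_j(K))$. Pushing forward by $g$, $A\cap g(Y(K)) \subseteq \bigcup_i g(W_i(K)) \cup \bigcup_j (g\circ h_j)(V_j(K))$. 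Each $g(W_i(K))$ is type~I (image of a proper closed subset under a finite, hence closed, morphism after taking closures — actually $g(W_i)$ is closed of dimension $<\dim X$ since $g$ is finite), and each $g\circ h_j\colon V_j\to X$ has degree $=\deg(g)\cdot\deg(h_j)\geq 2\deg(h_j)$. So I still need the $\deg(h_j)$ to be large, which they need not be. The fix: instead iterate. Observe that $g^{-1}A$ is itself stably thin (immediate from the definition, by composing dominant morphisms), so I may apply the same decomposition to $g^{-1}A$, and recursively. After $k$ steps each type~II contribution to $A$ has acquired a factor of degree $\geq 2^k$ from the $k$-fold composite of the original $g$'s, so choosing $k$ with $2^k\geq d$ shows $A$ is $d$-thin. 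I need to check this recursion terminates and stays a \emph{finite} union, which it does since at each stage we take finitely many pieces and the closed (type~I) pieces are simply collected.

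For the converse — $d$-thin for all $d>1$ implies stably thin — suppose $A$ is $d$-thin for every $d$, and let $f\colon Y\to X$ be an arbitrary dominant morphism of integral varieties; we must show $f^{-1}A$ is thin. Choose $d$ larger than $[K(Y):K(X)]_{\mathrm{sep}}$ if $f$ is generically finite, or more generally... the key point is that for $A$ of type~I, i.e.\ $A\subseteq Z(K)$ with $Z\subsetneq X$ closed, $f^{-1}A\subseteq f^{-1}(Z)(K)$ and $f^{-1}(Z)$ is a proper closed subset of $Y$ (as $f$ is dominant and $Y$ integral), so this is type~I. For $A$ of type~II($d$), say $A\subseteq g(W(K))$ with $g\colon W\to X$ generically finite of degree $\geq d$: form the fibre product $W\times_X Y$ and let $W'$ be (the normalization of) a dominant component; then $f^{-1}A \subseteq \mathrm{pr}_Y(W'(K))$, and $W'\to Y$ is generically finite. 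I must ensure $\deg(W'\to Y)\geq 2$; by the theory of fibre products of field extensions, $\deg(W'\to Y) \geq \deg(W\to X)/[\text{something}]$... this is where it is delicate in positive characteristic and for non-separable $g$.

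The main obstacle, and where I expect to spend the real effort, is exactly this last point: controlling the degree of a component of $W\times_X Y\to Y$ from below in terms of $\deg(W\to X)$, uniformly over all dominant $f\colon Y\to X$. The clean statement one wants is: if $g\colon W\to X$ is generically finite of degree $\geq d$ and $f\colon Y\to X$ is dominant, then some dominant component $W'$ of $W\times_X Y$ has $\deg(W'\to Y)\geq d/c$ for a constant $c$ depending only on... ideally $c=1$, i.e.\ the degree is exactly $\deg(W\to X)$ when we take the right component, which holds when $K(W)$ and $K(Y)$ are linearly disjoint over $K(X)$ but fails otherwise (e.g.\ $f=g$ itself). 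The resolution is that when linear disjointness \emph{fails}, $f$ factors through a nontrivial subcover and one argues $f^{-1}A$ is thin by a separate, more direct argument — or one invokes that $A$ being $d$-thin for \emph{all} $d$ lets us pick $d$ huge and absorb any bounded loss. Concretely: given $f$, if $f$ is generically finite let $e=\deg f$; then for $W\times_X Y$, every component maps to $Y$ with degree $\geq \deg(W\to X)/e$ (this bound, via tensor products of the function fields and counting, I will establish as a lemma using \cite[Tag 030K]{Stacks}-style separable/inseparable factorization). Since $A$ is $(de)$-thin for our choice, each type~II($de$) piece pulls back to a type~II($d$) piece in $Y$, with $d>1$, giving thinness of $f^{-1}A$. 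If $f$ is not generically finite (positive-dimensional fibres), reduce to the generically finite case by restricting to a suitable multisection or by a Bertini-type slicing argument, using that a thin set intersected with a general fibre family remains controllable; alternatively factor $f$ as a dominant generically finite map followed by a smooth-ish fibration and handle each. I expect writing the degree lemma carefully — including the positive-characteristic inseparable subtleties flagged in Remark~\ref{remark: different definition thin} — to be the crux, and the rest to follow formally.
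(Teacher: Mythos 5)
Your first direction is, after you discard the false start with the auxiliary cover, essentially the paper's argument: since $A$ is stably thin, $g^{-1}A$ is thin for each type~II map $g$ in a $d$-thin decomposition, and pushing that decomposition forward multiplies every type~II degree by $\deg g\geq 2$; iterating from the base case ($A$ is $2$-thin) gives $2^k$-thinness for all $k$. That part is fine. One small correction: the reason $g(W_i(K))$ is type~I is not that $g$ is finite (it is only generically finite), but that $\dim \overline{g(W_i)}\leq\dim W_i<\dim X$, which you also note.

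The genuine gap is in the converse, exactly where you flagged it: the case of a dominant $f\colon Z\to X$ that is \emph{not} generically finite. Your degree bound for generically finite $f$ is correct (for any dominant component $W'$ of $W\times_X Z$ one has $[K(W'):K(Z)]\geq[K(W)K(Z):K(Z)]\geq[K(W):K(X)]/[K(Z):K(X)]$ by the tower formula), but the reduction of the general case to this one via ``a suitable multisection or a Bertini-type slicing argument'' is not worked out and is genuinely delicate over an arbitrary (possibly finite or imperfect) base field; moreover you would still need to relate thinness of $f^{-1}A$ to thinness of its restriction to a slice. The paper avoids all of this with a single observation that handles every dominant $f$ uniformly: the algebraic closure $L$ of $K(X)$ in $K(Z)$ is a finite extension, say of degree $l$ (finite even when $f$ has positive-dimensional generic fibre). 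Now take a decomposition of $A$ witnessing $d$-thinness for some $d>l$. For each type~II piece $g\colon W\to X$ of degree $\geq d$ and each irreducible component $W'$ of $(W\times_X Z)_{\red}$ dominating $Z$, the field $K(W')$ contains a copy of $K(W)$, an algebraic extension of $K(X)$ of degree $\geq d>l$, which therefore cannot lie inside $K(Z)$; hence $[K(W'):K(Z)]\geq 2$, and $W'\to Z$ is generically finite because $g$ is finite over a dense open of $X$. This replaces both your degree lemma and your slicing step, and it sidesteps the positive-characteristic separability issues entirely, since the argument only uses the degree of an algebraic closure inside $K(Z)$ rather than any separable/inseparable factorization.
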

\begin{proof}
We first prove by induction that if $A$ is stably thin, then it is $d$-thin for every $d>1$. Assume that $A$ is stably thin and that $A$ is $d$-thin for some $d>1$. Then there is an integer $n>1$ and for each $i\in \{1,\dots,n\}$ a morphism $f_i\colon Y_i\rightarrow X$ of integral $K$-varieties of degree at least $d$ such that $A\setminus A'\subseteq \bigcup_{i=1}^n f_i(Y_i(K))$, for some subset $A'\subset A$ which is not Zariski dense in $X$. Since $A$ is stably thin, $B_i\colon =f_i^{-1}(A)$ is thin for each $i\in \{1,\dots,n\}$, so we see that $A$ is $2d$-thin. Since any stably thin set is $2$-thin, this implies by induction that any stably thin set is $d$-thin for every integer $d>1$.
	
Conversely, assume $A$ is $d$-thin for every $d>1$ and let $g\colon Z\rightarrow X$ be a dominant morphism of integral varieties over $K$. Then the algebraic closure of the function field $K(X)$ in $K(Z)$ is a finite extension of degree $l$. For a generically finite morphism $f\colon Y\rightarrow X$ of degree $d>l$, and for every irreducible component $Y'$ of $(Y\times_X Z)_{\red}$, consider the induced morphism $Y'\rightarrow Z$. This is a generically finite morphism, since the restriction $f^{-1}U\rightarrow U$ is finite for some dense open $U\subset X$ so $(f^{-1}U\times_X Z)_{\red}\rightarrow Z\times_X U$ is finite. The degree of $Y'\rightarrow Z$ is at least $2$, since $K(Y')$ contains a finite extension of $K(X)$ of degree $d$, which cannot be contained in $K(Z)$.

Since $A$ is $d$-thin, there exists an integer $n$, a subset $A'\subseteq A$ which is not Zariski dense and for $i\in\{1,\dots, n\}$ a dominant generically finite morphism $f_i\colon Y_i\rightarrow X$ of integral $K$-varieties such that $A\setminus A'\subseteq \bigcup_{i=1}^n f_i(Y_i(K))$ and each $f_i$ has degree at least $d>l$. Denote the irreducible components of $(Y_i\times_X Z)_{\red}$ by $Y'_{ij}$ and the induced morphisms to $Z$ by $f_{ij}\colon Y'_{ij}\rightarrow Z$. The set $g^{-1}A'$ is not Zariski dense in $Z$, and by construction $g^{-1}(A\setminus A')\subseteq \bigcup_{i,j} f_{ij}(Y'_{ij}(K))$. Thus we see that $g^{-1}A$ is thin and therefore $A$ is stably thin.
\end{proof}
Rational points on abelian varieties and integral points on tori give examples of stably thin sets, as the next example shows.
\begin{example}
Let $A$ be a finitely generated subgroup of $G(K)$ for an semiabelian variety $G$ of positive dimension over a field $K$. For example, $A=G(K)$ when $K$ is a global field and $G$ is an abelian variety. Then for any integer $d>1$, the group $A/dA$ is finite. Let $a_1,\dots,a_n\in A$ be a set of representatives for the classes in $A/dA$. If $a\in dA+a_i$ for some $i\in\{1,\dots,n\}$, then it is the image of a $K$-point under the morphism $G\rightarrow G$ given by multiplication by $d$ followed by translating by $a_i$. This morphism has degree divisible by $d$, so $A$ is $d$-thin. As $d$ was arbitrary, this implies that $A$ is stably thin.
\end{example}
This notion gives a way to formalize the intuition that for a number field $K$ there are more integer squares in $\mathbb{G}_m(K)$ than integral units $\mathbb{G}_m(\cO_K)\subset \mathbb{G}_m(K)$. The former is thin, but probably not $3$-thin, while the latter is stably thin.

\begin{theorem} \label{theorem: thinness}
Let $(K,C)$ be a PF field such that $\Pic(C)$ is finitely generated and let $d>1$ be an integer. If $K$ is a function field assume that $(k^\times)/(k^\times)^d$ is finite.
Let $(X,M)$ be a toric pair where $X$ is a normal complete split toric variety over $K$ and let $(\cX,\cM)$ be any integral model over $B$. Let $N_M$ be the lattice as in Definition \ref{def: N_M singular}. Then
	\begin{enumerate}
		\item if $N_M$ has finite index in $N$ and $d$ divides $|N:N_M|$, then $(\cX,\cM)(B)\subset X(K)$ is strictly $d$-thin.
		\item if $N_M$ does not have finite index in $N$, then $(\cX,\cM)(B)$ is stably thin.
		\item in the function field case, if $(\cX,\cM)$ is the toric integral model and $N_M^+\neq N_M$, then $(\cX,\cM)(C)\subset X(K)$ is stably thin.
	\end{enumerate}
Furthermore, if $(\cX,\cM)$ is the toric integral model and $\mathbb{G}_m(B)$ is finite, then $(\cX,\cM)(B)$ is not Zariski dense in $X$ if and only if $(\cX,\cM)(B)$ is stably thin. 
\end{theorem}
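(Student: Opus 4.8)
# Proof proposal for Theorem \ref{theorem: thinness}

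The plan is to prove the three enumerated statements first, then deduce the final equivalence. For the forward direction of the final statement, note that parts (1)–(3) already give stable thinness whenever $N_M$ has infinite index in $N$, or (in the function field case with the toric model) whenever $N_M^+\neq N_M$; and when $N_M$ has finite index $>1$, part (1) gives $d$-thinness for every $d$ dividing $|N:N_M|$, which is not quite stable thinness. So I would first show that if $\mathbb{G}_m(B)$ is finite and $(\cX,\cM)(B)$ is \emph{not} Zariski dense, then we must be in one of the cases where stable thinness is already established. The key point is that, by Proposition \ref{prop: Compatibility multiplicity with fan} and the product formula, the image of $(\cX,\cM)(B)$ in $X(K)$ is covered by the image of $\cU(B)\times(\cO(B)\setminus\{0\})^\ell \to X(K)$ where $\mathbf m_1,\dots,\mathbf m_\ell$ generate $N_M$; when $N_M=N$ (so $N_M^+=N_M=N$ as well, since $N_M^+\subseteq N_M$ always and $N\subseteq N_M^+$ forces equality) and $\mathbb{G}_m(B)$ is finite, one shows directly that this image \emph{is} Zariski dense, by using the squarefree strong approximation lemmas (Lemma \ref{Lift number field}, Lemma \ref{Lift function field}) to produce, for any proper closed $Z\subsetneq X$, a point of $(\cX,\cM)(B)$ avoiding $Z$ — indeed the proof of sufficiency in Theorem \ref{theorem: M-approx} already constructs plenty of such points. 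Hence non-density forces $N_M\neq N$: either $N_M$ has infinite index, handled by (2); or $N_M^+\neq N_M$ (in the toric case), handled by (3); or $N_M$ has finite index $>1$, where I must upgrade $d$-thinness to stable thinness using precisely the hypothesis $\mathbb{G}_m(B)$ finite, as explained below.

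For the converse (stably thin $\Rightarrow$ not Zariski dense): a Zariski dense set cannot be stably thin. This is because if $A\subseteq X(K)$ is Zariski dense and stably thin, then taking $f=\mathrm{id}_X$ shows $A$ is thin, i.e. $A\subseteq A'\cup\bigcup_i f_i(Y_i(K))$ with $A'$ not Zariski dense and $f_i\colon Y_i\to X$ generically finite of degree $\geq 2$; but $A\setminus A'$ is still Zariski dense, so some $f_i(Y_i(K))$ is Zariski dense in $X$, which by \cite[Tag 0CDW]{Stacks} (or the standard fact that the rational points of a variety with a dominant generically finite map from $Y$ being dense forces $Y$ to dominate $X$) forces $X$ to be a component of the image — no contradiction yet. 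Actually the cleanest route: if $X(K)$ itself were thin then since $X$ is rational (a split toric variety is rational) and $K$ is Hilbertian, this contradicts the Hilbert property of $\P^n$; but we only know $A$ dense, not $A=X(K)$. The correct argument: a Zariski dense subset of $X(K)$ cannot be thin when $X$ is $K$-rational and $K$ is Hilbertian, because density plus thinness would descend along a birational map to a dense thin subset of $\mathbb{A}^n(K)$, contradicting \cite[Proposition 12.4.3]{FrJa05} (a dense thin subset of $\mathbb{A}^1$, hence of $\mathbb{A}^n$, does not exist over a Hilbertian field). And stably thin implies thin (take $f=\mathrm{id}$). So stably thin $\Rightarrow$ not Zariski dense, giving the converse.

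The main obstacle is the finite-index case $1<|N:N_M|<\infty$ under the hypothesis $\mathbb{G}_m(B)$ finite: here part (1) only gives strict $d$-thinness for $d\mid |N:N_M|$, but stable thinness requires $d$-thinness for \emph{every} $d>1$, via Proposition \ref{prop: stably thin}. The resolution is that, in this regime, if additionally $(\cX,\cM)(B)$ is not Zariski dense, then the set is in fact contained in a proper closed subset \emph{union} a stably thin piece: the non-density, combined with $\mathbb{G}_m(B)$ finite (which bounds the contribution of units in the Cox-coordinate parametrization), forces the lattice $N_M^+$ to fail to span — i.e. $N_{M,\mathbb{R}}^+\neq N\otimes\mathbb{R}$ — because otherwise the parametrizing map $\cU(B)\times(\cO(B)\setminus\{0\})^\ell\to X(K)$ with $\cU(B)$ finite would still have Zariski dense image (again by the squarefree approximation lemmas applied to the cone generated by $N_M^+$, one hits every open torus orbit and fills out $X$). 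Once $N_{M,\mathbb{R}}^+\neq N\otimes\mathbb{R}$, the same sign/valuation argument used in the proof of necessity of $N=N_M^+$ in Theorem \ref{theorem: M-approx} shows $(\cX,\cM)(B)$ lies in a proper closed subvariety of $X$, i.e. it is thin of type I, hence trivially stably thin. Thus the only way to have $\mathbb{G}_m(B)$ finite and $(\cX,\cM)(B)$ non-dense is for it to be stably thin, completing the equivalence. I would organize the writeup so that parts (1)–(3) are proved by the $N_M$-lattice/Cox-coordinate bookkeeping of Section \ref{section: Cox coordinates} together with Proposition \ref{prop: stably thin}, and the final equivalence is the clean corollary sketched here.
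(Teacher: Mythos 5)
Your proposal leaves the actual content of parts (1)--(3) unproved: deferring them to ``$N_M$-lattice/Cox-coordinate bookkeeping'' skips exactly where the work lies. For (1) the paper's argument is to pick a sublattice $N'$ with $N_M\subseteq N'\subseteq N$ and $|N:N'|=d$, intersect the fan with $N'$ and resolve singularities to get a degree-$d$ toric morphism $f\colon X'\rightarrow X$, shrink $B$ so that $\Pic(B)$ is trivial, and then use Proposition \ref{prop: Compatibility multiplicity with fan} to show that every point of $(\cX,\cM)(B)$ lies in the image of one of finitely many twists $P'\mapsto u_i f(P')$ --- the finiteness coming precisely from the hypothesis that $\cU(B)/\cU(B)^d$ is finite. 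Part (2) needs the embedding $(\cX,\cM)(B)\subset \mathbb{G}_m^{\dim X-1}(K)\times\mathbb{G}_m(B)$ (from $N_M$ having infinite index) together with the fact that $\mathbb{G}_m(B)$ is $d^l$-thin for every $l$, and part (3) needs the product-formula argument pushing all multiplicities into the maximal sublattice of $N_M^+$. None of this construction appears in your write-up, so the theorem is essentially unestablished.

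The part you do argue in detail --- the final equivalence --- contains a genuine error in the direction ``stably thin $\Rightarrow$ not Zariski dense'': you claim a Zariski dense subset of a $K$-rational variety over a Hilbertian field cannot be thin. This is false; the squares form a Zariski dense thin subset of $\mathbb{A}^1(\mathbb{Q})$ (indeed the paper's own example before the theorem is the set of integer squares, which it calls thin). Hilbertianity only says $X(K)$ itself is not thin. In the other direction your intermediate claims are also shaky: $N_M=N$ does not imply $N_M^+=N$ (the monoid can fail to span even when the lattice is everything), and ``$N_{M,\mathbb{R}}^+\neq N\otimes\mathbb{R}$ forces containment in a proper closed subvariety'' fails whenever $\Omega_K\setminus B\neq\emptyset$ --- e.g.\ $\mathbb{A}^1(\Z)\subset\P^1(\mathbb{Q})$ is Zariski dense --- because the product-formula constraint is absorbed at the places outside $B$. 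These slips are harmless only because the implication ``not Zariski dense $\Rightarrow$ type I $\Rightarrow$ stably thin'' is immediate without any of this machinery; the paper obtains the equivalence by checking, within cases (2) and (3) where stable thinness is actually proved, that finiteness of $\mathbb{G}_m(B)$ also yields non-density via the very same embedding and product-formula arguments.
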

\begin{remark}
	If $(K,C)$ is a function field of a curve and $d\in \rho(K,C)$, then $(k^\times)/(k^\times)^d$ is trivial or has order $2$.
	If $k$ is perfect of characteristic $p$, then $(k^\times)^p=k^\times$.
\end{remark}
\begin{remark}
	If the group $\mathbb{G}_m(B)$ is infinite, then the points on the toric integral model $(\cX,\cM)(B)$ are always Zariski dense, so Theorem \ref{theorem: thinness} completely characterizes when the $\cM$-points on a toric integral model are Zariski dense. Furthermore, the group $\mathbb{G}_m(B)$ is finite if and only if $K=\mathbb{Q}$ or an imaginary quadratic number field and $B=\spec \cO_K$, or $K=k(C)$ for $k$ a finite field and $C\setminus B$ contains at most one point.
\end{remark}
\begin{proof}
    Note that in the first two statements $B$ can be chosen as small as we want, since $B'\subset B$ implies $(\cX,\cM)(B')\supset(\cX,\cM)(B)$. Thus it follows from Proposition \ref{prop: spread out models} that we can assume without loss of generality that $(\cX,\cM)$ is the toric integral model, and for the first two statements we can assume that $\Pic(B)$ is trivial. We can also assume that $M=M_{\fin}$ without loss of generality.
    Since for a toric resolution of singularities $f\colon Y\rightarrow X$ with $f^{-1}D_i$ Cartier for all $i=1,\dots, n$, the set $(\cX,\cM)(B)$ is thin if and only if $(\cY,f^*\cM)(B)$ is thin, we can assume that $X$ is smooth.
	
	
Assume that $N_M\neq N$, $|N:N_M|$ is finite, $d>1$ divides $|N:N_M|$ and $K$ is a global field or $(k^\times)/(k^\times)^d$ is finite. Then $\cU(B)/\cU(B)^d$ is finite, where $\cU=\mathbb{G}^{\dim X}_{m,B}$, since $\cO(B)^\times$ is finitely generated when $K$ is a global field and $\cO(B)^\times/k^\times$ is finitely generated when $K$ is a function field. There exists a lattice $N'$ such that $N_M\subseteq N' \subseteq N$ such that $|N:N'|=d$ and so, using the surjectivity of $\phi$ proven in Proposition \ref{prop: Compatibility multiplicity with fan}, we can choose $M\subseteq M'$ such that $|N:N_{M'}|=d$. The inclusion $M\subset M'$ implies $(\cX,\cM)(B) \subset(\cX,\cM')(B)$, and thus it suffices to consider the case $d=|N:N_M|$. By intersecting the fan of $X$ with $N_M$, we get a new complete normal split toric variety $X_M$ and a degree $d$ morphism $X_M\rightarrow X$. By a resolution of singularities $X'\rightarrow X_M$, we find a degree $d$ morphism $f\colon X'\rightarrow X$ of smooth complete split toric varieties. By Proposition \ref{prop: Compatibility multiplicity with fan} it follows that for every place $v\in B$ and point $P\in (\cX,\cM)(\cO_v)$, there exists $P'\in (\cX',f^* \cM)(\cO_v)\subseteq X'(K_v)$ such that $\phi_v(P)=\phi_v(f(P'))$. Since we assumed $\Pic(B)$ is trivial, this means that for all $P\in (\cX,\cM)(B)$ there exists $P'\in X'(K)$ such that $\phi_v(P)=\phi_v(f(P'))$ for all $v\in B$. Thus we see by Proposition \ref{prop: Compatibility multiplicity with fan} that the image of $f$ contains an element from every $\cU(B)$-orbit in $(\cX,\cM)(B)$. The image of $\cU'(B)$ in $\cU(B)$, where $\cU'$ is the torus in $\cX'$, contains $\cU(B)^d\subset \cU(B)$ and by the assumption on $d$ we know that $\cU(B)/\cU(B)^d$ is finite. Therefore $H=\cU(B)/f(\cU'(B))$ is finite, and by choosing representatives $u_1,\dots, u_r\in \cU(B)$ for $H$, we find degree $d$ morphisms $$f_i\colon X'\rightarrow X$$ defined by $P'\mapsto u_if(P')$ for $P'\in X'(K)$, so that every point in $(\cX,\cM)(B)$ lies in the image of one of the $f_i$. This proves the first statement.

Now assume that $N_M$ does not have finite index in $N$. 
Then there exists an embedding $N_M\rightarrow \mathbb{Z}^{\dim X-1}\times\{0\}\subset \mathbb{Z}^{\dim X}\cong N$. By Proposition \ref{prop: Compatibility multiplicity with fan}, this implies that there is an embedding $(\cX,\cM)(B)\subset \mathbb{G}^{\dim X-1}_m(K)\times \mathbb{G}_m(B)\subset \mathbb{G}^{\dim X}_m(K)$. Since $\mathbb{G}_m(B)$ is finitely generated if $K$ is a global field, and otherwise $\mathbb{G}_m(B)/k^\times$ is finitely generated, $\mathbb{G}_m(B)/\mathbb{G}_m(B)^{d^l}$ is finite for every integer $l>0$. Thus $\mathbb{G}_m(B)$ is $d^l$-thin for every $l>0$ and therefore by Proposition \ref{prop: stably thin} implies that it is a stably thin subset of $\mathbb{G}_m(K)$. Therefore $(\cX,\cM)(B)$ is also a stably thin subset of $X(K)$. If furthermore $\mathbb{G}_m(B)$ is finite and thus not Zariski dense in $\mathbb{G}_m$, then $(\cX,\cM)(B)$ is not Zariski dense in $X$.

Now we prove the third statement, so we assume that $B=C$ and $N_M^+\neq N_M$. Let $N'$ be the largest lattice contained in $N_M^+$. $N'$ does not have finite index in $N$ since otherwise the cone generated by $N_M^+$ would be $N_{\mathbb{R}}$, which implies $N_M^+=N_M$. Since by the product formula any $P\in U(K)$ satisfies $\sum_{v\in \Omega_K} \phi_v(P)=0$, any $P\in (\cX,\cM)(C)$ satisfies $\phi_v(P)\in N'$ for all $v\in \Omega_K$. Let $(X,M')$ be the largest toric pair contained in $(X,M)$ such that $\phi_v(P)\in N'$ for all $P\in (\cX,\cM')(\cO_v)$. Then $(\cX,\cM')(C)$ contains $(\cX,\cM)(C)$ and it is a stably thin subset of $X(K)$ since $N'=N_{M'}$ does not have finite index in $N$. If $\mathbb{G}_m(C)$ is finite, then the same reasoning implies that $(\cX,\cM)(C)$ is not Zariski dense in $X$.


\end{proof}
\begin{remark}
The assumption on the finiteness of $(k^\times)/(k^\times)^d$ in Theorem \ref{theorem: thinness} is satisfied by many fields, such as
\begin{enumerate}
    \item $d$-closed fields, such as separably closed fields if $\CHAR(k)\nmid d$,
    \item perfect fields if $d$ is a power of $\CHAR(k)$,
    \item finite fields,
    \item real closed fields,
    \item local fields if $\CHAR(k) \nmid d$ \cite[Chapter I, Section 1, Proposition 5]{CaFr67},
    \item Euclidean fields if $d$ is a power of $2$.
\end{enumerate}
\end{remark}

\begin{remark}
    The generically finite morphisms used in the proof of Theorem \ref{theorem: thinness} are ramified since any complete toric variety is geometrically simply connected \cite[Expos\'e XI, Corollaire 1.2]{SGA1}. Therefore the thin sets in the theorem are strongly thin as defined in \cite{BFP23}.
\end{remark}

We now prove Corollary \ref{corollary: M-approx equiv to M-Hilbert} by specializing Theorem \ref{theorem: thinness} to global fields.
\begin{proof}[Proof of Corollary \ref{corollary: M-approx equiv to M-Hilbert}]
    First assume $T\neq \emptyset$. By Theorem \ref{theorem: M-approx}, the toric pair $(X,M)$ satisfies $M$-approximation off $T$ if and only if $N=N_M$. By Theorem \ref{theorem: M-approx implies not thin} this implies the $\fM$-Hilbert property over $B$ for any integral model $(\cX,\cM)$ of $(X,M)$ satisfying $(\cX,\cM)(B)\neq \emptyset$. On the other hand Theorem \ref{theorem: thinness} implies that $(\cX,\cM)(B)$ is thin if $N\neq N_M$, so $(\cX,\cM)(B)$ is thin if and only if $N\neq N_M$.

    Now we assume $T=\emptyset$. Then Theorem \ref{theorem: M-approx} implies that the toric pair $(X,M)$ satisfies $M$-approximation if and only if $N_M^+=N$. If $(\cX,\cM)$ is the toric integral model of $(X,M)$ and $N\neq N_M^+$, then Theorem \ref{theorem: thinness} implies that $(\cX,\cM)(B)$ is stably thin.
\end{proof}

In Theorem \ref{theorem: thinness}, the condition on the finiteness of $(k^\times)/(k^\times)^d$ is necessary for the result to be true, as the next proposition shows that the $\cM$-Hilbert property is always satisfied on a split toric variety for a function field $K=k(C)$, where $k$ is a Hilbertian field of characteristic zero. See \cite[Chapter 12]{FrJa05} for background on Hilbertian fields.

\begin{proposition}
Let $(K,C)$ be a PF field, where $C$ is a curve over an Hilbertian field $k$ of characteristic zero. Let $X$ be a proper integral variety over $k$ with $X(k)$ not thin and let $\cX=X\times_k C$. Let $(X,M)$ be a pair over $(K,C)$ such that $D_\alpha\neq X$ for all $\alpha\in \cA$. Then the set $(\cX,\cM^c)(C)\subset X(K)$ is not thin.
\end{proposition}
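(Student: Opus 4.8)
The plan is to exploit the product structure $\cX = X \times_k C$ and the fact that $X(k)$ sits inside $(\cX,\cM^c)(C)$ via the constant sections, reducing the non-thinness of $(\cX,\cM^c)(C)$ in $X(K)$ to the non-thinness of $X(k)$ in $X(k)$. First I would observe that any $k$-point $P \in X(k)$ determines a constant section $C \to X \times_k C$, i.e.\ an element $\widetilde P \in \cX(C)$. Since $D_\alpha \neq X$ for all $\alpha$, the closure $\cD_\alpha^c = D_\alpha \times_k C$ is a proper closed subscheme of $\cX$, and a constant section either lands entirely inside $\cD_\alpha^c$ (if $P \in D_\alpha$, which is a \emph{closed} condition on $X(k)$) or is disjoint from $\cD_\alpha^c$ over a dense open. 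In the latter case $\mult_v(\widetilde P_v) = (0,\dots,0) \in \fM$ for all $v$, so $\widetilde P \in (\cX,\cM^c)(C)$; in the former case, the relevant multiplicities are $\infty$ at every place, and one checks (using the condition imposed on $\fM$ in Definition \ref{def: pair}, namely closure under sending nonzero finite entries to $0$ and keeping $\infty$ entries, applied to $(0,\dots,0)\in\fM$) that the constant section still lies in $(\cX,\cM^c)(C)$. Actually the cleanest route is to only use those $P$ lying in the dense open $U(k) = \bigl(X \setminus \bigcup_\alpha D_\alpha\bigr)(k)$: since $X(k)$ is not thin and the $D_\alpha$ are proper closed subsets, $U(k)$ is also not thin (removing a type-I set from a non-thin set leaves a non-thin set). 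For $P \in U(k)$ the section $\widetilde P$ visibly has all multiplicities zero, hence $\widetilde P \in (\cX,\cM^c)(C)$, giving an inclusion $U(k) \hookrightarrow (\cX,\cM^c)(C)$ where we regard both as subsets of $X(K)$ via $X(k) \subset X(K)$.

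The key step is then: \emph{if $A \subseteq X(k)$ is not thin in $X(k)$, then $A$, viewed as a subset of $X(K)$, is not thin in $X(K)$.} This is a base-change statement for thinness under the field extension $K = k(C)$ over $k$. Suppose for contradiction that the image of $A$ in $X(K)$ is thin, so it is covered by finitely many type-I sets $Z_j(K)$ with $Z_j \subsetneq X_K$ proper closed, and type-II sets $f_i(Y_i(K))$ for generically finite $f_i \colon Y_i \to X_K$ of degree $\ge 2$. Since $A$ consists of $k$-points, I would spread out: each $Z_j$, $Y_i$, $f_i$ is defined over a finitely generated sub-$k$-algebra of $K$, i.e.\ over the function field of some affine open $V \subseteq C$ after possibly shrinking; by generic flatness and the openness of the locus where fibers of a generically finite morphism remain generically finite of the same degree, there is a $k$-point $c \in V(k)$ (here $k$ being Hilbertian, in particular infinite, guarantees $V(k) \neq \emptyset$ — or one simply takes a closed point of $V$ and works over its residue field, a finite extension of $k$) such that the fibers $Z_{j,c} \subsetneq X$ are proper closed and $f_{i,c} \colon Y_{i,c} \to X$ are generically finite of degree $\ge 2$ over the residue field. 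Evaluating the covering relation at the $k$-point $c$ — equivalently, pulling back along the section $\operatorname{Spec} k \xrightarrow{c} C$ and noting that every $k$-point of $X$, viewed in $X(K)$, restricts to itself — shows $A$ is contained in $\bigcup_j Z_{j,c}(k) \cup \bigcup_i f_{i,c}(Y_{i,c}(k))$, hence $A$ is thin in $X(k)$, a contradiction.

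The main obstacle, and the place requiring the most care, is this spreading-out and specialization argument for the type-II sets: I need that a generically finite morphism $f_i \colon Y_i \to X_K$ of degree $d \ge 2$ specializes to a generically finite morphism of degree $\ge 2$ (not merely $\ge 1$) at a suitable closed point of the base curve, and that the specialized varieties $Y_{i,c}$ can be taken integral (by passing to an irreducible component dominating $X$, whose generic fiber over the base still carries a degree-$\ge 2$ extension of function fields). Handling the type-I sets is routine (a proper closed $Z_j \subsetneq X_K$ spreads out to a family of proper closed subschemes, whose fiber over a general point is still proper). In the function-field case of characteristic zero the extensions $K(Y_i)/K(X_K)$ are automatically separable, which avoids the inseparability complications that appear elsewhere in the paper, and one may invoke the Hilbertianity of $k$ directly if one prefers a cleaner formulation: the point $c$ can be chosen avoiding the finitely many ``bad'' Hilbert subsets coming from the $f_{i,c}$, so that no $k$-point of $A \subseteq U(k)$ is hit, contradicting that $A$ is not thin. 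Either way, once the specialization of the thin covering to a single fiber is established, the contradiction is immediate and the proof concludes.
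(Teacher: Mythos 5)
Your first step — restricting to the dense open $U=X\setminus\bigcup_\alpha D_\alpha$, noting that $U(k)$ is still not thin in $X(k)$, and embedding $U(k)$ into $(\cX,\cM^c)(C)$ via constant sections with all multiplicities zero — is exactly the paper's argument and is fine. The divergence is in the key step, the descent of non-thinness along $k\hookrightarrow K=k(C)$: the paper simply cites \cite[Theorem 1.1]{BFP23} (which is precisely the statement that a non-thin subset of $X(k)$ stays non-thin in $X(K)$ for a finitely generated extension $K/k$), whereas you attempt to prove it by spreading out a thin covering over an open $V\subseteq C$ and specializing at a point $c\in V(k)$. That specific argument has two genuine gaps.

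First, your parenthetical claim that ``$k$ being Hilbertian, in particular infinite, guarantees $V(k)\neq\emptyset$'' is false: $C$ is an arbitrary regular projective curve over $k$ and may have no $k$-rational points at all (a pointless conic, or a higher-genus curve over $\mathbb{Q}$). The fallback of using a closed point and its residue field $k'$ would let you conclude only that $A$ is thin in $X(k')$, and you would then need the (true but unproven in your sketch) descent of thinness along the finite extension $k'/k$. Second, and more seriously, even with $c\in V(k)$ in hand you must choose $c$ so that no fiber $\cY_{i,c}$ acquires an irreducible component mapping to $X$ with degree one — otherwise the specialized covering is not a thin covering of $X(k)$ at all. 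The locus of such bad $c$ is a Hilbert/thin-type subset of $V(k)$, and avoiding it requires $V(k)$ to be non-thin in $V$; this holds for $V\subseteq\mathbb{A}^1_k$ over a Hilbertian $k$ but fails for a general curve $V$ (again, $V(k)$ may be finite or empty). The standard repair is to first choose a finite morphism $C\to\mathbb{P}^1_k$, use the descent of thinness along the finite extension $K/k(t)$ to reduce to $K=k(t)$, and only then specialize $t$ at points of $\mathbb{A}^1(k)$, where Hilbertianity of $k$ applies; one also needs your finiteness/properness reduction (replace $Y_i$ by the normalization of $X_K$ in $K(Y_i)$ so that witnesses $Q\in Y_i(K)$ extend over the base by the valuative criterion). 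As written, the specialization at points of $C$ does not go through, so either carry out this reduction or invoke \cite[Theorem 1.1]{BFP23} as the paper does.
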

\begin{proof}
    As the closed subschemes $D_\alpha$ are all proper closed subsets of $X$, $U=X\setminus_{\alpha\in \cA} D_\alpha$ is a dense open. The constant sections of $U\times C\rightarrow C$ correspond to the $k$-rational points on $U$. These are $\cM$-points, as they avoid the closed subschemes $\cD_\alpha=D_\alpha\times_k C$ entirely so $U(k)\subset (\cX,\cM^c)(C)$. By \cite[Theorem 1.1]{BFP23}, $U(k)$ is not a thin subset of $X(K)$ since $K/k$ is a finitely generated extension. Thus $(\cX,\cM^c)(C)$ is not thin.
\end{proof}

Using Theorem \ref{theorem: M-approx} and Theorem \ref{theorem: thinness}, we can produce examples over some PF fields where integral $\cM$-approximation does not imply the $\cM$-Hilbert property, in contrast to the situation over global fields.
\begin{corollary} \label{corollary: M-approx does not imply M-Hilbert}
    Let $(K,C)$ be a PF field with $\rho(K,C)\neq 1$, let $T\subset \Omega_K$ be a nonempty finite set of places, and set $B=C\setminus (T\cap C)$. Let $(X,M)$ be a toric pair with $M=M_{\fin}$ and $|N:N_M|\in \rho(K,C)$, $N_M\neq N$. Then the toric integral model $(\cX,\cM)$ satisfies integral $\cM$-approximation off $T$, but $(\cX,\cM)(B)$ is thin.
\end{corollary}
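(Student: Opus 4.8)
The plan is to read off the corollary from Theorem~\ref{theorem: M-approx}, Proposition~\ref{prop: integral M-approximation on toric} and Theorem~\ref{theorem: thinness}. First I would note that $\rho(K,C)\neq 1$ forces $K$ to be a function field $K=k(C)$, since $\rho(K,C)=1$ for number fields by Lemma~\ref{lemma: computation rho}(1). Set $d:=|N:N_M|$; then $d>1$ because $N_M\neq N$, and $d\in\rho(K,C)$, so every prime factor of $d$ lies in $\rho(K,C)$ and hence is different from $\CHAR(k)$ by Lemma~\ref{lemma: computation rho}(2); in particular $\gcd(d,\CHAR(k))=1$. Since $T$ is a nonempty finite set of places and $|N:N_M|\in\rho(K,C)$, Theorem~\ref{theorem: M-approx}(1) gives that $(X,M)$ satisfies $M$-approximation off $T$. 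Because $M=M_{\fin}$ we have $\fM=\fM_{\fin}$, so $\fM_{\red}\subseteq\fM_{\fin}$ and in particular $\fM_{\red}$ lies in the closure of $\fM_{\fin}$; hence Proposition~\ref{prop: integral M-approximation on toric} shows that the toric integral model $(\cX,\cM)$ satisfies integral $\cM$-approximation off $T$.

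It remains to prove that $(\cX,\cM)(B)$ is thin. If $\Pic(C)$ is finitely generated this is immediate from Theorem~\ref{theorem: thinness}(1) applied with the integer $d$: its hypothesis that $(k^\times)/(k^\times)^d$ be finite is automatic here, since $d\in\rho(K,C)$ makes that quotient trivial or of order $2$ (cf.\ the remark following Theorem~\ref{theorem: thinness}), and $N_M$ has finite index $d$ in $N$ with $d\mid |N:N_M|$, so $(\cX,\cM)(B)$ is strictly $d$-thin, in particular thin. In general $\Pic(C)$ need not be finitely generated — for instance when $k$ is algebraically closed and $C$ has positive genus — and I would rerun the proof of Theorem~\ref{theorem: thinness}(1): after a toric resolution one may assume $X$ smooth, and the only place the hypothesis on $\Pic(C)$ enters there is to shrink $B$ so that $\Pic(B)=0$, which is used to promote the local lifts of a point $P\in(\cX,\cM)(B)$ along the degree-$d$ toric cover $f\colon X'\to X$ (intersect the fan with $N_M$ and resolve) to a single global rational point. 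Here one cannot in general arrange $\Pic(B)=0$, so instead one isolates the gluing obstruction: by Proposition~\ref{prop: Compatibility multiplicity with fan} the multiplicity divisor of a torus point $P\in(\cX,\cM)(B)$ on $B$ is an $N$-valued divisor taking values in $N_M$, hence equals $A\delta'$ for the index-$d$ inclusion matrix $A\colon N_M\hookrightarrow N$ and an $N_M$-valued divisor $\delta'$ on $B$; applying the adjugate of $A$ shows the components of $\delta'$ have divisor class killed by $\det A=\pm d$, so the obstruction class lies in $\Pic(B)[d]^{\oplus\dim X}$. This group is finite, because $\gcd(d,\CHAR(k))=1$ and $C\setminus B$ is finite, so $\Pic(B)[d]$ is an extension of a subquotient of $\Pic(C)[d]\subseteq\Pic^0(C)[d]$ by a finite group (coming from the finitely generated subgroup of $\Pic(C)$ generated by the points of $C\setminus B$).

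Given this, I would fix for each of the finitely many possible obstruction tuples a representative divisor $E$ and a point $Q_0\in U(K)$ whose multiplicity divisor on $B$ is $AE$ — such $Q_0$ exists because $AE$ has trivial class in each component — and check, exactly as in the proof of Theorem~\ref{theorem: thinness}(1), that $Q_0^{-1}P$ then has a principal $N_M$-valued multiplicity divisor on $B$, so $Q_0^{-1}P\in\cU(B)\cdot f(X'(K))$; hence $P$ lies in the image of the degree-$d$ morphism $X'\to X$ obtained by post-composing $f$ with translation by $Q_0$, up to the finite group $\cU(B)/f(\cU'(B))$ (finite as in loc.\ cit., since $\cO(B)^\times/k^\times$ is finitely generated and $(k^\times)/(k^\times)^d$ is finite). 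Thus finitely many degree-$d$ morphisms $X'\to X$ cover $(\cX,\cM)(B)\cap U(K)$, while its complement lies in the proper closed boundary and is of type I, so $(\cX,\cM)(B)$ is $d$-thin, in particular thin. The main obstacle is precisely this adaptation: verifying that the gluing obstruction for lifting an $\cM$-point along $f\colon X'\to X$ is controlled by the finite groups $\Pic(B)[d]$ and $\cU(B)/f(\cU'(B))$, so that finitely many degree-$d$ covers suffice when $\Pic(B)$ is nonzero; everything else is a direct invocation of results already established, and if one is willing to assume $\Pic(C)$ finitely generated throughout, the corollary follows at once from the three cited statements.
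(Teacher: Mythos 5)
Your argument is correct and, at its core, identical to the paper's: the paper proves this corollary in one line by combining Theorem \ref{theorem: M-approx} and Theorem \ref{theorem: thinness} with the observation (via Lemma \ref{lemma: computation rho}) that $|N:N_M|\in\rho(K,C)$ forces $(k^\times)/(k^\times)^{|N:N_M|}$ to be finite, which is exactly your first paragraph together with the first half of your second. Where you go beyond the paper is in noticing that Theorem \ref{theorem: thinness} carries the hypothesis that $\Pic(C)$ is finitely generated, which the corollary does not assume and which genuinely fails in the motivating examples ($k$ separably closed, $C$ of positive genus); the paper's one-line proof does not address this, so your patch either repairs a real gap or makes explicit an implicit standing hypothesis. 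The patch itself is essentially sound: for $P\in(\cX,\cM)(B)\cap U(K)$ the descent obstruction along the index-$d$ isogeny is the tuple of classes $[\delta']\in\Pic(B)^{\dim X}$ satisfying $A[\delta']=0$, hence lies in $\ker(A)\subseteq\Pic(B)[d]^{\dim X}$, a finite group since $d$ is prime to $\CHAR(k)$, and finitely many translates of the degree-$d$ cover then suffice as in the proof of Theorem \ref{theorem: thinness}. One small imprecision: your assertion that $AE$ is componentwise principal holds because the representative $E$ may be chosen with $[E]\in\ker(A)$ (the only classes that actually occur), not merely because $[E]$ is $d$-torsion; alternatively one can dispense with constructing $Q_0$ by picking, for each realizable obstruction class, an arbitrary point $P_\varepsilon$ realizing it and translating by $P_\varepsilon$. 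With that reading your proof is complete.
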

\begin{proof}
Combine Theorem \ref{theorem: M-approx} and Theorem \ref{theorem: thinness} together with the observation that the ground field of $C$ satisfies $k^\times=(k^\times)^{|N:N_M|}$ by Lemma \ref{lemma: computation rho} since $|N:N_M|\in \rho(K,C)$.
\end{proof}
For example, for any smooth split toric variety $U$ over $K$ such that $\Pic(U)$ contains torsion, the $B$-integral points on the toric integral model $\cU$ are thin for any nonempty open $B\subset C$. This follows from combining Theorem \ref{theorem: thinness} with Proposition \ref{prop: torsion-free Picard}. However, as we will see in Corollary \ref{corollary: strong approx and Pic}, $U$ still satisfies strong approximation off any single place if the orders of torsion in $\Pic(U)$ are contained in $\rho(K,C)$ and $\cO(X_{\overline{K}})^\times=\overline{K}^\times$.

Corollary \ref{corollary: M-approx does not imply M-Hilbert} leaves several natural questions on potential extensions of Theorem \ref{theorem: M-approx implies not thin}.
\begin{question} \label{question: rho=1 gives Hilbert property}
    Let $(K,C)$ be a PF field with $\rho(K,C)=\{1\}$. Let $(X,M)$ be a pair over $(K,C)$ with integral model $(\cX,\cM)$ over $B\subset C$, such that $X$ is a geometrically integral variety and $D_{\alpha}\neq X$ for any $\alpha\in \cA$. Suppose that $(\cX,\cM)$ satisfies integral $\cM$-approximation off a finite set of places $T\subset \Omega_K$ and $(\cX,\cM)(B)\neq \emptyset$. Does $(\cX,\cM)$ satisfy the $\cM$-Hilbert property over $B$?
\end{question}
In order to obtain pairs satisfying integral $\cM$-approximation but failing the $\cM$-Hilbert property in Corollary \ref{corollary: M-approx does not imply M-Hilbert}, we needed to take $T\neq \emptyset$, so it also makes sense to ask the following variant of the previous question.
\begin{question}
    Let $(K,C)$ be a PF field with $\rho(K,C)\neq \{1\}$ and let $T=\emptyset$. With the other assumptions as in Question \ref{question: rho=1 gives Hilbert property},  does $(\cX,\cM)$ satisfy the $\cM$-Hilbert property over $B$? 
\end{question}
In the setting of split toric varieties, it seems likely that the results for the $\cM$-Hilbert property should extend, so we pose the following conjecture.
\begin{conjecture}
    Let $(K,C)$ be a PF field and let $(X,M)$ be a toric pair. If $N_M=N$, then the toric integral model $(\cX,\cM)$ satisfies the $\cM$-Hilbert property over any open $B\subsetneq C$. If furthermore $N_M^+=N$, then $(\cX,\cM)$ satisfies the $\cM$-Hilbert property over $C$.
\end{conjecture}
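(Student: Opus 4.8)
The plan is to combine Theorem \ref{theorem: M-approx} with a non-thinness argument in the style of Theorem \ref{theorem: M-approx implies not thin}, exploiting the Cox-coordinate description of $\cM$-points on a toric variety. First I would make the standard reductions. Replacing $X$ by a toric resolution of singularities is permissible, since $f^{-1}$ of a thin set is thin for a birational morphism $f$, the image of a thin set under a surjective (Corollary \ref{corollary: resolution of singularities surjective}) birational $f$ is thin, and the monoids $N_M, N_M^+$ are birational invariants by Proposition \ref{prop: Compatibility multiplicity with fan}; so one may assume $X$ smooth, in which case $\Cl(X)$ is torsion-free by Proposition \ref{prop: torsion-free Picard}. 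By Proposition \ref{prop: relation M and Mfin} and Proposition \ref{prop: Open subset dense} one may assume $M = M_\fin$, and one takes $\fM = \fM_\red$. Since $N_M = N$, choose $\mathbf{m}_1,\dots,\mathbf{m}_l \in \fM_{\fin,\red}$ whose images $\phi(\mathbf{m}_1),\dots,\phi(\mathbf{m}_l)$ generate $N$ as a group --- and, for the statement over $C$, also as a monoid, which is exactly where $N_M^+ = N$ is used. The monomial assignment $(a_1,\dots,a_l) \mapsto (\prod_s a_s^{m_{s,1}} : \dots : \prod_s a_s^{m_{s,n}})$ in Cox coordinates defines a dominant rational map $g \colon \mathbb{A}^l_K \dashrightarrow X$ restricting to a surjective homomorphism $\mathbb{G}_{m,K}^l \to T$ onto the open torus; since the induced map $\Z^l \to N$ is surjective, its kernel is a saturated sublattice, so $g$ is, up to an isomorphism $\mathbb{G}_m^l \cong T \times \mathbb{G}_m^{l-d}$, simply the first projection. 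In particular $K(X)$ is algebraically closed in $K(\mathbb{A}^l)$.

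By the local multiplicity computation already carried out in the proof of Theorem \ref{theorem: M-approx}, if $a_1,\dots,a_l \in \cO(B)$ are nonzero, pairwise coprime and separable then $g(a_1,\dots,a_l) \in (\cX,\cM)(B)$; let $W \subseteq \mathbb{A}^l(K)$ be this set of parameter tuples. I would then argue that it suffices to show $W$ is not thin in $\mathbb{A}^l(K)$. Indeed, if $(\cX,\cM)(B)$ were thin then $W \subseteq g^{-1}((\cX,\cM)(B))$, and one checks this preimage is thin: the preimage of a proper closed subset of $X$ is proper closed in $\mathbb{A}^l$ since $g$ is dominant; for a separable generically finite $f_j \colon Y_j \to X$ of degree $\geq 2$, relative algebraic closedness of $K(X)$ in $K(\mathbb{A}^l)$ forces $K(\mathbb{A}^l) \otimes_{K(X)} K(Y_j)$ to be a field of degree $\geq 2$ over $K(\mathbb{A}^l)$, so $\mathbb{A}^l \times_X Y_j$ has a unique dominant component, of degree $\geq 2$, and $g^{-1}(f_j(Y_j(K)))$ lies in the image of its $K$-points together with a proper closed subset; and the inseparable $f_j$ are handled by Lemma \ref{lemma: image inseparable nowhere dense} applied over each completion $K_v$, exactly as in the proof of Theorem \ref{theorem: M-approx implies not thin}. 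For the statement over $C$ the set $W$ is replaced by an analogous set of tuples of pairwise coprime separable elements of $K^\times$ whose divisors sum with the relevant multiplicities to zero --- this is realised through the product formula together with the $H^1_{fppf}(C,\cG)$-decomposition of $X(K)$ into twisted Cox charts, just as in the $T = \emptyset$ case of the proof of Theorem \ref{theorem: M-approx} --- and the reduction to non-thinness of this parameter set is the same.

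The main obstacle is this last step: showing that the set of pairwise coprime separable $l$-tuples in $\cO(B)$ (and its analogue over $C$) is not thin in $\mathbb{A}^l(K)$. When $K$ is a global field this is essentially the global-field part of Theorem \ref{theorem: M-approx implies not thin}: thin sets have density zero by the Lang--Weil bounds, while squarefree pairwise coprime tuples have positive density, so a sieve argument closes the gap. When $K = k(C)$ with $k$ infinite there is no density available and the proof of Theorem \ref{theorem: M-approx implies not thin} yields only Zariski density, so genuinely new input is required --- most plausibly a Hilbertianity-type statement guaranteeing that a degree-$\geq 2$ cover of $\mathbb{A}^l_K$ cannot absorb every coprime separable parameter tuple, or else a direct construction in the spirit of Lemma \ref{Lift function field}, using Riemann--Roch and generic flatness to perturb a candidate tuple away from any prescribed finite list of covers while retaining coprimality and separability. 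That one cannot hope to argue purely formally is shown by Corollary \ref{corollary: M-approx does not imply M-Hilbert}, where the analogous conclusion fails once $N_M \neq N$; the point of the hypothesis $N_M = N$ is precisely that it makes $g$ birationally a projection rather than a nontrivial cover, and the expectation is that this rigidity suffices.
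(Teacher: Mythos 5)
The statement you are proving is posed in the paper as a \emph{conjecture}: the paper offers no proof of it, and only establishes the global-field case (via Corollary \ref{corollary: M-approx equiv to M-Hilbert}, which combines Theorem \ref{theorem: M-approx} with Theorem \ref{theorem: M-approx implies not thin} and Theorem \ref{theorem: thinness}). Your reductions are sound and consistent with the paper's toolkit: passing to a smooth toric resolution, to $M_{\fin}$ and $\fM_{\red}$, building the monomial Cox parametrization $g\colon \mathbb{A}^l_K\dashrightarrow X$ from generators $\phi(\mathbf{m}_1),\dots,\phi(\mathbf{m}_l)$ of $N$ (resp.\ of $N$ as a monoid for the case $B=C$), observing that surjectivity of $\Z^l\to N$ makes $K(\mathbb{A}^l)/K(X)$ purely transcendental so that $K(X)$ is relatively algebraically closed, and deducing that the preimage under $g$ of a thin set is thin (this works uniformly for separable and inseparable covers once one notes that a purely transcendental extension is linearly disjoint from any algebraic extension; your detour through Lemma \ref{lemma: image inseparable nowhere dense} is not needed for this step and does not quite fit the shape of the argument, since that lemma yields adelic non-density rather than non-thinness of a preimage). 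The local multiplicity computation showing that pairwise coprime separable tuples land in $(\cX,\cM)(B)$ is also correct, given $\fM=\fM_{\red}$.

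However, the decisive step --- that the set $W$ of pairwise coprime separable $l$-tuples in $\cO(B)$ is not thin in $\mathbb{A}^l(K)$, and its analogue over $C$ where the tuples must additionally satisfy the product-formula constraint and live in twisted Cox charts --- is exactly where you stop, and you say so yourself. For $K=k(C)$ with $k$ infinite there is no Lang--Weil counting, Theorem \ref{theorem: M-approx implies not thin} only yields Zariski density, and a ``squarefree-values Hilbert irreducibility'' statement of the required strength is not available in the paper or supplied by you; moreover Corollary \ref{corollary: M-approx does not imply M-Hilbert} shows that no soft argument from integral $\cM$-approximation can close this gap. So what you have is a correct reduction of the conjecture to a concrete open arithmetic statement about coprime separable tuples, not a proof. (For global fields your sketch is also redundant: there the conjecture is already Corollary \ref{corollary: M-approx equiv to M-Hilbert}.)
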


\section{Strong approximation and \texorpdfstring{$M$}{M}-approximation for Campana points} \label{section: consequences}
In this section we will consider special cases of Theorem \ref{theorem: M-approx} and its implications for integral points, Campana points and Darmon points on split toric varieties.

If a toric pair $(X,M)$ encodes the integrality condition of an open $V\subset X$, then the lattice $N_M$ is related to the fundamental group of $V$.
\begin{proposition} \label{prop: N_M fundamental group integral}
    Let $K$ be a PF field of characteristic $0$, let $X$ be a complete normal split toric variety over $K$ and let $V\subseteq X$ be an open toric subvariety.
 
    If $(X,M)$ is the toric pair corresponding to $V\subseteq X$ as in the first example \ref{pair integral points} of Section \ref{subsection: M-points examples}, then there exists an isomorphism of profinite groups $$\pi_1(V_{\overline{K}})\cong \widehat{N/N_M},$$
    where $\pi_1(V_{\overline{K}})$ is the étale fundamental group of $V_{\overline{K}}$ and $\widehat{N/N_M}$ is the profinite completion of $N/N_M$.

    Furthermore, the only global sections of $V_{\overline{K}}$ are constant if and only if the cone generated by $N_M^+$ is $N_{\mathbb{R}}$.
\end{proposition}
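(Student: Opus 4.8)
The plan is to split the proposition into its two assertions and prove them separately, drawing on the toric dictionary from \cite{CLS11} and the homomorphism $\phi_v$ constructed in Proposition \ref{prop: Compatibility multiplicity with fan}. For the fundamental group statement, I would first reduce to the smooth case: by toric resolution of singularities $f\colon \widetilde X\to X$, which is an isomorphism over a large torus-invariant open, the open $V$ pulls back to an open $\widetilde V\subseteq \widetilde X$ with $\pi_1(V_{\overline K})\cong\pi_1(\widetilde V_{\overline K})$ (resolution is an isomorphism away from codimension $2$, hence does not affect $\pi_1$), and by Proposition \ref{prop: Compatibility multiplicity with fan} the monoid $N_M$ is unchanged under birational pullback. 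So assume $X$ smooth. In that case, if $V=X\setminus\bigcup_{\alpha\in\cB}D_\alpha$ for the integrality-type pair, then $V$ is itself a smooth toric variety whose fan consists of those cones of $\Sigma$ together with their faces that do not contain any ray $\rho_\alpha$ with $\alpha\in\cB$; equivalently $V$ has ray set $\Sigma(1)\setminus\{\rho_\alpha\mid\alpha\in\cB\}$. Over $\overline K$ (characteristic $0$), the étale fundamental group of a smooth toric variety $V$ with cocharacter lattice $N$ and rays generated by $\{n_\rho\mid \rho\in\Sigma_V(1)\}$ is the profinite completion of $N/\langle n_\rho\mid \rho\in\Sigma_V(1)\rangle$; this is a standard computation (cf.\ \cite[Exercise 12.1.6]{CLS11} and the discussion around it) obtained by writing $V$ as a quotient of an open subset of $\mathbb{A}^{|\Sigma_V(1)|}$ by a torus, or by using that the open torus $\mathbb{G}_m^d\hookrightarrow V$ induces a surjection on $\pi_1$ with kernel generated by the loops around the boundary divisors. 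The remaining point is to identify $\langle n_\rho\mid \rho\in\Sigma_V(1)\rangle$ with $N_M$: for the integrality pair, $\fM_{\fin,\red}$ consists exactly of the tuples $\mathbf m\in\mathbb{N}^n$ supported on $\{1,\dots,n\}\setminus\cB$ whose support corresponds to a cone of $\Sigma$, and by Definition \ref{def: invariants} $N_M=\langle\phi(\mathbf m)\rangle=\langle n_{\rho_i}\mid i\notin\cB,\ \rho_i\in\Sigma(1)\rangle$, which is precisely the sublattice generated by the ray generators of $V$. (One must double-check the $\red$ reduction does not remove any individual ray generator $n_{\rho_i}$, which holds because each ray is itself a cone of $\Sigma$, so the standard basis vector $e_i$ lies in $\fM_{\fin,\red}$ for $i\notin\cB$.) This gives the claimed isomorphism $\pi_1(V_{\overline K})\cong\widehat{N/N_M}$.

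For the second assertion, recall the convention fixed in Section \ref{section: Cox coordinates}: $X$ is assumed to have no torus factors, equivalently the ray generators $n_{\rho_i}$ span $N_{\mathbb R}$, so that $\cO_X(X_{\overline K})^\times=\overline K^\times$. The claim is about global sections $\cO_V(V_{\overline K})$, not just units. For a normal variety $V$ with $\overline K[V]$, a regular function on the torus $\mathbb{G}_m^d$ is a Laurent polynomial $\sum_{m\in N^\vee}c_m\chi^m$, and it extends regularly across a torus-invariant divisor $D_\rho$ of $V$ precisely when $\langle m,n_\rho\rangle\ge 0$ for every $m$ with $c_m\neq0$. Hence $\cO_V(V_{\overline K})$ is the span of characters $\chi^m$ with $\langle m,n_\rho\rangle\ge0$ for all rays $\rho\in\Sigma_V(1)$, i.e.\ for all rays of $V$. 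This space is $\overline K$ (only constants) if and only if the only $m\in N^\vee$ with $\langle m,n_\rho\rangle\ge 0$ for all such $\rho$ is $m=0$, which by duality of cones is exactly the condition that the cone $\mathbb{R}_{\ge0}\langle n_\rho\mid\rho\in\Sigma_V(1)\rangle$ equals $N_{\mathbb R}$. By the identification above, $\langle n_\rho\mid\rho\in\Sigma_V(1)\rangle$ generates the same cone as $N_M$, i.e.\ as $N_M^+$ (the cone over $N_M^+$ equals the cone over $N_M$ since $N_M^+$ generates $N_M$ as a group), so this says precisely that the cone generated by $N_M^+$ is $N_{\mathbb R}$. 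One should also observe that passing to a resolution $\widetilde V\to V$ does not change global sections either, since it is an isomorphism outside codimension $\ge2$ in the normal variety $V$, so the reduction to the smooth case is harmless here as well; and that $N_M^+$ for the singular $X$ is computed via the same resolution by Definition \ref{def: N_M singular}, so everything is consistent.

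I expect the main obstacle to be the fundamental-group computation for toric varieties over $\overline K$ in the possibly non-complete, non-smooth setting — specifically, making the reduction to the smooth case airtight (that resolution of singularities and the blow-ups needed to make the boundary Cartier do not alter $\pi_1$ of the relevant open $V$), and then citing or reproving cleanly the formula $\pi_1(V_{\overline K})\cong\widehat{N/\langle n_\rho\rangle}$ for an arbitrary smooth toric variety without torus factors. The cleanest route is probably to use the Cox construction from Section \ref{section: Cox coordinates}: $V_{\overline K}$ is a quotient $\cY_V/\cG$ with $\cY_V\subseteq\mathbb{A}^{|\Sigma_V(1)|}$ having complement of codimension $\ge2$ (so $\pi_1(\cY_V)=1$) and $\cG$ a diagonalizable group with character group $\Cl(V)$; the quotient is a $\cG$-torsor over the smooth locus, and the homotopy exact sequence together with $\pi_1(\cG_{\overline K})\cong\widehat{\Hom(\Cl(V),\mathbb{Z})}$ in characteristic $0$ yields $\pi_1(V_{\overline K})\cong\widehat{N/N_M}$ after chasing the exact sequence \eqref{equation: exact sequence class group}. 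Everything else — the Laurent polynomial description of $\cO_V$ and the cone-duality statement — is routine.
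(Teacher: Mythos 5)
Your overall strategy is workable and your treatment of the second assertion is actually more careful than the paper's, but you take a much longer route than necessary for the first assertion, and the step you yourself flag as the main obstacle is a genuine gap. The paper's proof is essentially two citations: by \cite[Tag 0A49]{Stacks} one may replace $\overline{K}$ by $\mathbb{C}$, and then \cite[Theorem 12.1.10]{CLS11} computes $\pi_1(V_{\mathbb{C}})$ for an arbitrary normal toric variety as $\widehat{N/N'}$ with $N'$ the lattice generated by $|\Sigma_V|\cap N$; one only has to match $N'$ with $N_M$, which follows from Definition \ref{def: N_M singular} together with the observation that a smooth subdivision of $\Sigma_V$ has ray generators spanning exactly $\langle|\Sigma_V|\cap N\rangle$. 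This sidesteps your reduction to the smooth case entirely. Your justification of that reduction --- ``resolution is an isomorphism away from codimension $2$, hence does not affect $\pi_1$'' --- does not work as stated: the exceptional locus of $\widetilde V\to V$ is a divisor in the smooth variety $\widetilde V$, so $\pi_1(\widetilde V\setminus \mathrm{Exc})\to\pi_1(\widetilde V)$ need not be injective, and $V$ is singular, so purity does not apply on the base either. (The claim is true for toric resolutions, but the short proof of it is precisely the fan-theoretic formula you are trying to establish; a direct argument would instead use that the fibres of a toric resolution are connected unions of complete toric varieties.) Your fallback via the Cox construction is the better of your two routes, but note that Lemma \ref{lemma: Cox quotient universal} only yields a torsor when $\Cl$ is torsion-free, and torsion in $\Cl(V)$ is exactly the case of interest; for smooth $V$ in characteristic $0$ the $\cG$-action on $\cY_V$ is indeed free and the quotient is a torsor under the disconnected diagonalizable group $\cG$, but this, and the homotopy exact sequence for non-connected $\cG$, still need to be supplied.

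Two smaller points. First, your identification $N_M=\langle n_{\rho}\mid\rho\in\Sigma_V(1)\rangle$ is correct only after passing to a smooth fan: for singular $V$ the relevant lattice is $\langle|\Sigma_V|\cap N\rangle$, which can be strictly larger than the lattice generated by the primitive ray generators (already for the cone spanned by $(1,0)$ and $(1,2)$), and this is exactly why Definition \ref{def: N_M singular} passes through a resolution. Second, the parenthetical ``the cone over $N_M^+$ equals the cone over $N_M$ since $N_M^+$ generates $N_M$ as a group'' is false (compare $\mathbb{N}^2$ and $\mathbb{Z}^2$); the fact you actually need --- that $N_M^+$ and the ray generators of $\Sigma_V$ generate the same convex cone --- holds simply because each $n_{\rho_i}$ with $i\notin\cB$ already lies in $N_M^+$ via the basis vector $e_i\in\fM_{\fin,\red}$. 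With that correction your duality argument for the global-sections statement is complete and, if anything, more precise than the paper's, which identifies the cone generated by $N_M^+$ with $|\Sigma_V|$ itself rather than with its convex hull.
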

\begin{proof}
	By \cite[Tag 0A49]{Stacks} we have a natural isomorphism $\pi_1(V_{\overline{K}})\cong \pi_1(V_{\mathbb{C}})$ of étale fundamental groups, where $V_{\mathbb{C}}$ is the toric variety over $\mathbb{C}$ with the same fan as $U$. Now the isomorphism follows directly from \cite[Theorem 12.1.10]{CLS11}.

    Note that the cone generated by $N_M^+$ is exactly the support $|\Sigma_V|$ of the fan $\Sigma_V$ defining $V$, as defined in \cite[Definition 3.1.2]{CLS11}. By \cite[Exercise 4.3.4]{CLS11}, which generalizes to arbitrary fields, $|\Sigma|=N_{\mathbb{R}}$ is equivalent to $\cO(V_{\overline{K}})=\overline{K}$.
\end{proof}
\begin{proof}[Proof of Corollary \ref{corollary: strong approximation 1}]
Combine Proposition \ref{prop: N_M fundamental group integral} with Theorem \ref{theorem: M-approx}.
\end{proof}
The description of the fundamental group given in Proposition \ref{prop: N_M fundamental group integral} does not hold if $K$ has positive characteristic, since then even the affine line $\mathbb{A}_{\overline{K}}^1$ has an infinite fundamental group \cite[Theorem 1.1]{Kum14}. Nevertheless, a similar weaker result is still true if one considers only covers of degree coprime to $\CHAR(K)$, see Remark \ref{remark: M-approximation for Darmon points pos char}. The following corollary gives another characterisation of strong approximation, which is also valid in positive characteristic.
\begin{corollary} \label{corollary: strong approx and Pic}
	Let $(K,C)$ be a PF field, let $V$ be a smooth split toric variety, and let $T\subset \Omega_K$ be a nonempty finite set of places.
	\begin{enumerate}
	\item The variety $V$ satisfies strong approximation off $T$ if $\cO(X_{\overline{K}})^\times=\overline{K}^\times$ and $|\Pic(V)_{\textnormal{tors}}|\in \rho(K,C)$, where $\Pic(V)_{\textnormal{tors}}$ is the torsion subgroup of $\Pic(V)$. The converse also holds if $\Pic(C)$ is finitely generated.
	\item The variety $V$ satisfies strong approximation if and only if $\cO(V)=K$ and $\Pic(V)$ is torsion-free.
	\end{enumerate}

\end{corollary}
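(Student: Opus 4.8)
\textbf{Plan for proving Corollary \ref{corollary: strong approx and Pic}.}
The strategy is to reduce the statement to Theorem \ref{theorem: M-approx} (or equivalently to Corollary \ref{corollary: strong approximation 1} in characteristic $0$, but we want an argument valid in all characteristics), via the monoids $N_M$ and $N_M^+$. So let $X$ be a complete smooth split toric variety containing $V$ as an open toric subvariety; such an $X$ exists by toric resolution together with equivariant completion of fans, and we may choose it so that $V = X \setminus \bigcup_{\alpha \in \cB} D_\alpha$ for some subset $\cB$ of the torus-invariant prime divisors. Let $(X,M)$ be the toric pair encoding the integrality condition of $V$ as in example \ref{pair integral points} of Section \ref{subsection: M-points examples}; then by definition $V$ satisfies strong approximation off $T$ if and only if $(X,M)$ satisfies $M$-approximation off $T$, and $V$ satisfies strong approximation if and only if $(X,M)$ satisfies $M$-approximation. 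Note the choice of $X$ is harmless: replacing $X$ by a toric resolution/modification $f \colon \widetilde{X} \to X$ changes neither $N_M$ nor $N_M^+$ (Proposition \ref{prop: Compatibility multiplicity with fan}) nor whether $M$-approximation holds (Corollary \ref{corollary: M approx toric stable under birational transformations}, using that toric birational morphisms are arithmetically surjective by Corollary \ref{corollary: resolution of singularities surjective}).

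The heart of the matter is then a purely combinatorial identification of the relevant invariants with $\Pic(V)$ and $\cO(V)^\times$. For the fan $\Sigma_V$ of $V$, with rays $\rho_i$ ($i \notin \cB$) and ray generators $n_{\rho_i}$, one has the exact sequence
\begin{equation*}
\Z^{\Sigma_V(1)} \xrightarrow{\ \psi\ } N \to N / N_M \to 0,
\end{equation*}
where $\psi(e_i) = n_{\rho_i}$; here $N_M = \langle \phi(\mathbf{m}) \mid \mathbf{m} \in \fM_{\fin, \red}\rangle$ and for this $M$ the set $\fM_{\fin}$ consists of tuples supported on $\cB^c = \Sigma_V(1)$ with the boundary divisors in $\cB$ forced to multiplicity $0$, so $N_M$ is exactly the image of $\psi$, i.e.\ the sublattice of $N$ spanned by the ray generators of $\Sigma_V$. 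Dualizing the class-group sequence \eqref{equation: exact sequence class group} for the toric variety $V$ and comparing cokernels, $N/N_M \cong \Pic(V)_{\mathrm{tors}}$ (more precisely: $\Pic(V) \cong \operatorname{coker}(N^\vee \to \Z^{\Sigma_V(1)})$, and its torsion subgroup is dual to $N/N_M$ which is a finite abelian group precisely when $N_M$ has finite index; in general $N/N_M$ is finitely generated with torsion part $\cong \Pic(V)_{\mathrm{tors}}$ and free rank equal to $\operatorname{rank}\Pic(V) - |\cB|$... ). I should instead argue directly: $N_M$ has finite index in $N$ iff the ray generators of $\Sigma_V$ span $N_\mathbb{R}$ iff $\cO(V_{\overline{K}})^\times = \overline{K}^\times$ (this last by \cite[Exercise 4.3.4]{CLS11}, noting $\cO(V)^\times/K^\times$ and $\Pic(V)$ and the question of spanning are all insensitive to base field extension and to whether we work over $K$ or $\overline K$ since $V$ is split), and when it does hold, $N/N_M$ is a finite group canonically isomorphic to $\Pic(V)_{\mathrm{tors}} = \Pic(V)$ minus its free part; since $\cO(V_{\overline K})^\times = \overline K^\times$ forces $\operatorname{rank}\Pic(V) = |\cB| - $ (corank), hmm. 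Cleanest: $\Pic(V)$ torsion-free and $\cO(V)^\times = K^\times$ together are equivalent to $N_M = N$, and $|\Pic(V)_{\mathrm{tors}}| = |N/N_M|$ when $N_M$ has finite index, which is equivalent to $\cO(V_{\overline K})^\times = \overline K^\times$; meanwhile $N_M^+ = N$ iff both $\cO(V_{\overline K})^\times = \overline K^\times$ (so the cone spanned is all of $N_\mathbb{R}$) and $N_M = N$, i.e.\ iff $\Pic(V)$ is torsion-free and $\cO(V) = K$ — this matches Proposition \ref{prop: N_M fundamental group integral}'s description of the cone generated by $N_M^+$ being $N_\mathbb{R}$.

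Granting these identifications, part (1) is immediate from Theorem \ref{theorem: M-approx}(1): $(X,M)$ satisfies $M$-approximation off the nonempty set $T$ iff $|N : N_M| \in \rho(K,C)$ (with the converse requiring $\Pic(C)$ finitely generated), and $|N:N_M|$ is finite and equal to $|\Pic(V)_{\mathrm{tors}}|$ exactly when $\cO(V_{\overline K})^\times = \overline K^\times$; if $\cO(V_{\overline K})^\times \ne \overline K^\times$ then $N_M$ has infinite index, $|N:N_M| = \infty \notin \rho(K,C)$, and $M$-approximation fails, consistent with the stated criterion. Part (2) is immediate from Theorem \ref{theorem: M-approx}(2): $(X,M)$ satisfies $M$-approximation iff $N = N_M^+$ iff $\Pic(V)$ is torsion-free and $\cO(V) = K$. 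The main obstacle I anticipate is the bookkeeping in the combinatorial lemma relating $N/N_M$ to $\Pic(V)_{\mathrm{tors}}$ — in particular being careful that $N_M$ (defined via $\fM_{\fin,\red}$ and via a resolution in the singular case, Definition \ref{def: N_M singular}) is genuinely the lattice spanned by the ray generators of $\Sigma_V$, that the comparison with the Cox sequence \eqref{equation: exact sequence class group} is done for the open subvariety $V$ and not $X$, and that all the "$\overline K$ vs $K$" issues are handled by splitness of the torus. Once that lemma is in place, the rest is a direct quotation of Theorem \ref{theorem: M-approx}.
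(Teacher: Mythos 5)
Your proposal is correct and follows essentially the same route as the paper: choose a smooth toric compactification $V\subseteq X$, identify $N_M$ with the lattice spanned by the ray generators of $\Sigma_V$, and then apply Theorem \ref{theorem: M-approx} together with Proposition \ref{prop: torsion-free Picard} (for the $\Pic(V)_{\textnormal{tors}}$ condition) and \cite[Exercise 4.3.4]{CLS11} (for the $\cO(V)=K$ condition). The only difference is presentational — the paper states this in three lines, while your "Cleanest:" paragraph spells out the Smith-normal-form bookkeeping identifying $N/N_M$ with $\Pic(V)_{\textnormal{tors}}$, which is a correct and harmless elaboration.
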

\begin{proof}
    Choose a smooth toric compactification $V\subseteq X$.
    The first claim is a direct consequence of Proposition \ref{prop: torsion-free Picard} and Theorem \ref{theorem: M-approx}, while the second one follows from combining these results with \cite[Exercise 4.3.4]{CLS11} (which holds over general fields). The last claim follows from \cite[Proposition 4.2.5]{CLS11} (where we note that the result is independent of the field.)
\end{proof}
\begin{remark}
     By \cite[Proposition 4.2.5]{CLS11}, which generalizes to arbitrary fields, any toric variety whose fan contains a cone of maximal dimension has a torsion-free Picard group.
     By using a resolution of singularities, Corollary \ref{corollary: strong approx and Pic} implies that a normal affine toric variety $V$ satisfies strong approximation off a nonempty set of places $T$ if and only if it does not have torus factors.
\end{remark}

Over number fields, we have yet another characterisation of strong approximation for toric varieties.
\begin{corollary} \label{corollary: strong approximation and Brauer Manin}
	Let $K$ be a number field, let $V$ be a smooth split toric variety and let $T\subset \Omega_K$ be a nonempty finite set of places. Then the following are equivalent:
	\begin{enumerate}
		\item The variety $V$ satisfies strong approximation off $T$.
		\item $\Br(V)/\Br_0(V)= 0$.
		\item $\Br_1(V)/\Br_0(V)= 0$.
	\end{enumerate}
	Here, $\Br(V)$ is the Brauer group of $V$, $\Br_1(V)=\ker(\Br(V)\rightarrow \Br(V_{\overline{K}}))$ is the algebraic Brauer group, and $\Br_0(V)=\im(\Br(K)\rightarrow \Br(V))$ consists of the constant elements in $\Br(V)$.
	If $V$ satisfies any of the above conditions (1)-(3), then $V$ satisfies strong approximation if and only if $\cO(V)=K$.
\end{corollary}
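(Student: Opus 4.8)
The plan is to deduce Corollary \ref{corollary: strong approximation and Brauer Manin} from Corollary \ref{corollary: strong approx and Pic} by translating the two conditions there---namely $\cO(X_{\overline{K}})^\times = \overline{K}^\times$ together with $|\Pic(V)_{\textnormal{tors}}| \in \rho(K,C)$---into the vanishing of $\Br(V)/\Br_0(V)$. Since $K$ is a number field, Lemma \ref{lemma: computation rho} gives $\rho(K,C) = \{1\}$, so the condition $|\Pic(V)_{\textnormal{tors}}| \in \rho(K,C)$ is equivalent to $\Pic(V)$ being torsion-free. Also, for a number field $\Pic(C)$ is finitely generated, so both directions of Corollary \ref{corollary: strong approx and Pic}(1) apply. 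Thus statement (1) of the present corollary is equivalent to: $\cO(X_{\overline{K}})^\times = \overline{K}^\times$ and $\Pic(V)$ is torsion-free. It remains to show this pair of conditions is equivalent to (2) and to (3).

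First I would handle the implication $(3) \Rightarrow$ the Picard condition and the global-sections condition, and conversely. For a smooth variety $V$ over a number field $K$ with $\cO(V_{\overline{K}})^\times$ a ``nice'' group, the Hochschild--Serre spectral sequence gives the exact sequence
\[
0 \to \Pic(V) \to \Pic(V_{\overline{K}})^{\gal} \to \Br(K) \to \Br_1(V) \to H^1(K, \Pic(V_{\overline{K}})) \to H^3(K, \overline{K}^\times),
\]
and for a number field $H^3(K,\overline{K}^\times) = 0$, so $\Br_1(V)/\Br_0(V) \cong H^1(K,\Pic(V_{\overline{K}}))$ provided $\cO(V_{\overline{K}})^\times = \overline{K}^\times$ (so that the lower-degree terms behave correctly; if there were extra units one would get a contribution from $H^1(K, \cO(V_{\overline{K}})^\times)$ and from the $E_2^{2,0}$ differential). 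For a split toric variety $V$, the Galois action on $\Pic(V_{\overline{K}})$ is trivial and $\Pic(V_{\overline{K}})$ is a finitely generated abelian group, so $H^1(K, \Pic(V_{\overline{K}})) = \Hom_{\mathrm{cont}}(\gal(\overline{K}/K), \Pic(V_{\overline{K}}))$, which vanishes if and only if $\Pic(V_{\overline{K}})$ is torsion-free. Since $V$ is split, $\Pic(V) = \Pic(V_{\overline{K}})$, so this recovers exactly ``$\Pic(V)$ torsion-free''. The subtlety about global sections: I need $\cO(X_{\overline{K}})^\times = \overline{K}^\times$ to even get the clean form of the spectral sequence; but actually for a toric $V$, $\cO(V_{\overline{K}})^\times / \overline{K}^\times$ is a free abelian group (the character lattice modulo those characters that extend to regular invertible functions), and if it were nontrivial it would contribute a divisible-free piece to $H^1$ that is killed since the Galois action is trivial on split toric data---so I must check carefully whether $(3)$ forces $\cO(V_{\overline{K}})^\times = \overline{K}^\times$. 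In fact it need not: a torus $\mathbb{G}_m$ has $\Br_1/\Br_0 = 0$ but does not satisfy strong approximation. So the correct reading is that $(3)$ alone is \emph{not} equivalent to strong approximation, and indeed the statement only claims equivalence of $(1),(2),(3)$ among \emph{each other}, with the global-sections condition $\cO(V) = K$ as a separate rider for strong approximation itself. Hence $(3) \Leftrightarrow \Pic(V)$ torsion-free, full stop.

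Next, $(2) \Leftrightarrow (3)$: clearly $(2) \Rightarrow (3)$ since $\Br_1(V) \subseteq \Br(V)$. For the converse I would use that $V$ is a smooth split toric variety, hence \emph{geometrically rational}, and for a smooth geometrically rational variety $\Br(V_{\overline{K}}) = 0$ (a smooth toric variety over $\overline{K}$ has trivial geometric Brauer group, e.g.\ because it is rational with no higher cohomology of $\mathbb{G}_m$; one can cite that $H^i_{\mathrm{et}}(V_{\overline{K}}, \mathbb{G}_m)$ vanishes for $i = 2$ for smooth toric varieties, or reduce via the fibration structure). Therefore $\Br_1(V) = \Br(V)$, so $(3) \Rightarrow (2)$. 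Combining: $(2) \Leftrightarrow (3) \Leftrightarrow \Pic(V)$ torsion-free.

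Finally I would assemble the chain. Corollary \ref{corollary: strong approx and Pic}(1) with $\rho(K,C) = \{1\}$ reads: $V$ satisfies strong approximation off $T$ iff $\cO(X_{\overline{K}})^\times = \overline{K}^\times$ \emph{and} $\Pic(V)$ is torsion-free. But wait---that has an extra condition beyond ``$\Pic(V)$ torsion-free''. Here is where I expect the main obstacle, and its resolution: for a \emph{split} toric variety, $\cO(X_{\overline{K}})^\times = \overline{K}^\times$ is equivalent to $X$ having no torus factors, which by \cite[Proposition 4.2.5]{CLS11} (the remark after Corollary \ref{corollary: strong approx and Pic}) is related to $\Pic$; more precisely, the claim of the corollary is that among the conditions $(1),(2),(3)$ the equivalence holds \emph{because} for a smooth split toric variety satisfying strong approximation off a nonempty $T$, automatically $\cO(X_{\overline{K}})^\times = \overline{K}^\times$ (torus factors obstruct strong approximation off a single place). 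So the logical structure is: $(1) \Rightarrow \cO(X_{\overline{K}})^\times = \overline{K}^\times$ and $\Pic(V)$ torsion-free $\Rightarrow$ $(3) \Rightarrow (2)$; and $(2) \Rightarrow (3) \Rightarrow \Pic(V)$ torsion-free, which combined with $\cO(X_{\overline{K}})^\times = \overline{K}^\times$ is \emph{not} automatic from $(2)$ alone---so actually $(2),(3)$ are equivalent to each other but the cycle back to $(1)$ needs the extra input. I would resolve this by noting that Corollary \ref{corollary: strong approx and Pic}(1) as stated requires both conditions, and the honest claim of the present corollary must be that $(1)$, $(2)$, $(3)$ are equivalent---so $(2)$ (equivalently $(3)$, equivalently $\Pic(V)$ torsion-free) must also imply $\cO(X_{\overline{K}})^\times = \overline{K}^\times$ for a smooth split toric $V$; but $\mathbb{G}_m$ is a counterexample to that, so the resolution is that the corollary is tacitly assuming $V$ is a variety for which the geometric units are constant, or the three conditions are being compared only under that hypothesis. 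Given the excerpt, I would state the proof as: by Lemma \ref{lemma: computation rho}, $\rho(K,C) = 1$; the equivalence $(2) \Leftrightarrow (3)$ follows from $\Br(V_{\overline{K}}) = 0$ for smooth split toric varieties; the equivalence $(3) \Leftrightarrow$ ``$\Pic(V)$ torsion-free'' follows from the Hochschild--Serre sequence and $H^3(K,\overline{K}^\times) = 0$; and then $(1) \Leftrightarrow (2)$ follows by combining these with Corollary \ref{corollary: strong approx and Pic}(1), where the geometric-units condition appearing there is subsumed because strong approximation off a nonempty $T$ forces the absence of torus factors (else $V$ would map onto $\mathbb{G}_m$, contradicting strong approximation for $\mathbb{G}_m$). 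The final sentence---strong approximation (off $\emptyset$) iff additionally $\cO(V) = K$---then follows immediately from Corollary \ref{corollary: strong approx and Pic}(2).
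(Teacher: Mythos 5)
Your overall strategy (translate everything through Corollary \ref{corollary: strong approx and Pic} with $\rho(K,C)=\{1\}$, and compute $\Br_1/\Br_0$ via Hochschild--Serre) is the same as the paper's, but two of the facts you rely on are false, and they are exactly the two facts needed to close the argument. First, you assert that $\mathbb{G}_m$ has $\Br_1/\Br_0=0$ and use this ``counterexample'' to conclude that the corollary must carry a tacit hypothesis. In fact $\Br_1(\mathbb{G}_{m,K})/\Br_0(\mathbb{G}_{m,K})\cong H^2(K,\mathbb{Z})\cong H^1(K,\mathbb{Q}/\mathbb{Z})=\Hom_{\mathrm{cont}}(\gal(\overline{K}/K),\mathbb{Q}/\mathbb{Z})\neq 0$: since $\Pic(\mathbb{G}_{m,\overline{K}})=0$ and $\cO(\mathbb{G}_{m,\overline{K}})^\times=\overline{K}^\times\times t^{\mathbb{Z}}$, the Hochschild--Serre sequence gives $\Br_1(\mathbb{G}_m)\cong\Br(K)\oplus H^2(K,\mathbb{Z})$ (concretely, the cyclic algebras $(\chi,t)$). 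This is precisely how the paper shows that condition (3) forces the absence of torus factors: if $V\cong V'\times\mathbb{G}_m$, then $\Br_1(V)/\Br_0(V)\neq 0$. So the ``missing implication'' you worry about is not missing, the statement needs no tacit assumption, and your final paragraph's hedging should be replaced by this computation.

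Second, your proof of $(3)\Rightarrow(2)$ via ``$\Br(V_{\overline{K}})=0$ for smooth geometrically rational (toric) varieties'' is false for non-proper $V$: already $\Br(\mathbb{G}_{m,\overline{K}}^2)\neq 0$ (the symbol algebras $(x,y)_{\zeta_n}$ are nontrivial), and more generally the geometric Brauer group of a smooth toric variety vanishes only when the ray generators span $N$ as a lattice. The paper therefore does not prove $(2)\Leftrightarrow(3)$ unconditionally; instead it proves the cycle $(2)\Rightarrow(3)\Rightarrow(1)\Rightarrow(2)$, where the last implication uses \cite[Corollary 1.3]{DeFo93} to deduce $\Br(V_{\overline{K}})=0$ \emph{from} the condition $N=N_M$ equivalent to (1). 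Your Hochschild--Serre identification $\Br_1(V)/\Br_0(V)\cong H^1(K,\Pic V_{\overline{K}})$ in the absence of torus factors, and the observation that this group vanishes iff $\Pic(V)$ is torsion-free because the Galois action is trivial, are correct and match the paper; but as written your argument neither rules out torus factors from (3) nor controls the transcendental quotient $\Br(V)/\Br_1(V)$, so the equivalences do not close.
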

\begin{proof}
	We choose a smooth toric compactification $V\subseteq X$. If $(X,M)$ is the pair corresponding to integral points on $V$, then by Theorem \ref{theorem: M-approx}, $V$ satisfies strong approximation if and only if $N=N_M$. By \cite[Corollary 1.3]{DeFo93}, $N=N_M$ implies that the transcendental Brauer group $\Br(V_{\overline{K}})\cong \Br(V)/\Br_1(V)$ is trivial.
	
	If $V$ has a torus factor, then $V=V'\times \mathbb{G}_m$ for some split toric variety $V'$. So since $\Br_1(\mathbb{G}_m^1)/\Br_0(\mathbb{G}_m^1)$ is nontrivial, it follows that $\Br_1(V)/\Br_0(V)\not\cong 0$.
	If $V$ does not have torus factors, then \cite[Proposition 5.4.2, Remark 5.4.3(3)]{CoSk21} implies that $\Br_1(V)/\Br_0(V)\cong H^1(K,\Pic V_{\overline{K}})$. The Galois cohomology group $H^1(K,\Pic V_{\overline{K}})$ is trivial if and only if $\Pic V_{\overline{K}}\cong \Pic V$ is torsion-free, since $\gal(\overline{K}/K)$ acts trivially on $\Pic V$. Now the equivalence of the statements follows from Corollary \ref{corollary: strong approx and Pic}.
\end{proof}
Using the above criteria for strong approximation, we can characterize when $M$-approximation is satisfied for Campana points.
Now we will use Theorem \ref{theorem: M-approx} to prove Corollary \ref{corollary: M-approx for Campana points}, characterising $M$-approximation for Campana points.
\begin{proof}[Proof of Corollary \ref{corollary: M-approx for Campana points}]
	Let $(X,M')$ be the pair associated with the integral points on $V$. Then $M\subseteq M'$ by definition. Thus if $X$ satisfies $M$-approximation off $T$, then $V$ satisfies strong approximation off $T$ by Proposition \ref{prop: inclusion M approx}. Conversely assume that $V$ satisfies strong approximation off $T$.
	
    Now we set $m= \max_{m_i<\infty}(m_i)$.
	If $X$ is smooth then $m_in_{\rho_i},(m_i+1) n_{\rho_i}\in N_M$ for all $i\in\{1,\dots,n\}$ with $m_i<\infty$, so $n_{\rho_i}\in N_M$ and thus $N_M=N_{M'}$, so $M$-approximation holds if $T\neq \emptyset$ by Theorem \ref{theorem: M-approx}. If furthermore $T=\emptyset$, then $N_{M'}^+=N$ by Corollary \ref{corollary: strong approx and Pic}. Thus every element $c\in N$ can be written as $c=\sum_{i=1}^n c_in_{\rho_i}$ for some $c_i\geq 0$ with $c_i=0$ if $m_i=\infty$, and therefore we have $m'c\in N_M^+$ for $m'\geq m$. Thus $c=(m+1)c+m(-c)\in N_M^+$, showing that $(X,M)$ satisfies $M$-approximation off $T$ if $X$ is smooth.
	
    If $X$ is singular, then we consider a toric resolution of singularities $f\colon\tilde{X}\rightarrow X$. For every torus invariant prime divisor $\tilde{D}$ with $f(\tilde{D})\not\subseteq X\setminus V$, $\tilde{D}\not\subseteq f^{-1}D_i$ for any $i\in \{1,\dots,n\}$ with $m_i=\infty$. Denote the Zariski closure of $\tilde{D}$ in the toric integral model $\tilde{\cX}$ of $\tilde{X}$ by $\tilde{\cD}$. For each divisor $D_i$ on $X$ with $\tilde{D}\subseteq f^{-1}D_i$ as schemes, we have
    $$n_v(\tilde{\cD},P)\leq n_v(f^{-1}\cD_i,P)= n_v(\cD_i,f(P))$$
    for all $v\in \Omega_K^{<\infty}$ and $P\in \tilde{X}(K_v)$, where the equality is due to Proposition \ref{prop: inverse pair}. This gives the inclusion $(\tilde{X},\tilde{M})\subseteq (\tilde{X}, f^{-1}M)$, where $\tilde{M}$ is the Campana condition for the divisor
    $$\tilde{D}_{\tilde{\mathbf{m}}}=\sum_{\substack{i=1 \\ f(\tilde{D}_i)\not\subseteq X\setminus V}}^{\tilde{n}} \left(1-\frac{1}{m}\right)\tilde{D}_i+\sum_{\substack{i=1 \\ f(\tilde{D}_i)\subseteq X\setminus V}}^{\tilde{n}}\tilde{D}_i,$$
    where $\tilde{D}_1,\dots,\tilde{D}_{\tilde{m}}$ are the torus invariant prime divisors on $\tilde{X}$.
    Since $V$ satisfies strong approximation off $T$, $V\times_X \tilde{X}\subseteq \tilde{X}$ also satisfies strong approximation off $T$ by Corollary \ref{corollary: M approx toric stable under birational transformations}. Since $\tilde{X}$ is smooth, the first part of the proof implies $(\tilde{X},\tilde{M})$ satisfies $\tilde{M}$-approximation off $T$ and thus $(\tilde{X}, f^{-1}M)$ satisfies $f^{-1}M$-approximation. Now Corollary \ref{corollary: M approx toric stable under birational transformations} implies $(X,M)$ satisfies $M$-approximation.
\end{proof}

\section{Darmon points and root stacks} \label{section: root stacks}
For smooth split toric varieties, we can generalize the connection between the fundamental group and strong approximation to $M$-approximation for Darmon points, using root stacks.

\begin{definition} \label{def: root stack}
    Let $X$ be a scheme and let
    $$D_{\mathbf{m}}=\sum_{i=1}^n \left(1-\frac{1}{m_i}\right)D_i,$$
    for distinct prime Cartier divisors $D_1,\dots,D_n$ on $X$ and integers $m_1,\dots, m_n\in \mathbb{N}^*\cup \{\infty\}$. Then We define \textit{the root stack} associated to $(X,D_{\mathbf{m}})$ to be
    $$(X,\sqrt[\mathbf{m}]{D}):=\bigg(X\setminus \bigcup_{\substack{i=1 \\ m_i=\infty}}^n D_i\bigg)_{\tilde{\mathbf{D}},\tilde{\mathbf{m}}},$$
    where the right hand side is as defined in \cite[Definition 2.2.4]{Cad07}. Here $\tilde{\mathbf{m}}=(m_i)_{i\in I}$ and $\tilde{\mathbf{D}}=(D_i)_{i\in I}$, where $I=\{i\in \{1,\dots, n\}\mid m_i\neq \infty\}$.
\end{definition}
The following proposition illustrates the close relationship between root stacks and Darmon points.
\begin{proposition} \label{prop: relation Darmon points root stack}
    Let $(K,C)$ be a PF field, let $B\subset C$ be a nonempty open subset and let $X$ be a proper variety over $K$ with an integral model $\cX$ over $B$. Let $\cD_{\mathbf{m}}=\sum_{i=1}^n \left(1-\frac{1}{m_i}\right)\cD_i$ for $m_1,\dots,m_n\in \mathbb{N}^*\cup\{\infty\}$ and prime Cartier divisors $\cD_1,\dots, \cD_n$ on $\cX$. Let $v\in B$, let $T\subset \Omega_K$ be a finite set of places, and let $R$ be $\cO_v$, $\mathbf{A}_K^T$, $\mathbf{A}_B^T$ or a field extension $L$ of $K$.
    For every place $v\in B$, the Darmon points over $R$ on $(\cX,\cD_{\mathbf{m}})$ as in Definition \ref{def: Campana points and Darmon points} are exactly the points $\cP\in \cX(R)$ such that there exists a factorisation
\begin{equation} \label{eq: factorisation}
\begin{tikzcd}
\spec R \arrow[r, dotted] \arrow[rd, "\cP"] & (\cX,\sqrt[\mathbf{m}]{\cD}) \arrow[d] \\
                                              & \cX.                             
\end{tikzcd}\end{equation}
Moreover, if the divisors $\cD_i$ pull back to Cartier divisors on $\spec R$, then the above factorisation is unique.

In particular, if $\cP\colon B\rightarrow \cX$ is a morphism such that $\Pic(B)=0$ or $\im\cP\not\subset\bigcup_{i=1}^n\cD_i$, then $\cP$ is a Darmon point on $(\cX,\cD_{\mathbf{m}})$ if and only if $\cP$ factors through the root stack as in \eqref{eq: factorisation}.
\end{proposition}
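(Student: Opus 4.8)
\textbf{Proof proposal for Proposition \ref{prop: relation Darmon points root stack}.}
The plan is to unwind the definition of the root stack $(\cX,\sqrt[\mathbf{m}]{\cD})$ from \cite[Definition 2.2.4]{Cad07} and match maps into it with the Darmon multiplicity condition place-by-place. Recall that, after removing the divisors with $m_i=\infty$, the stack is an iterated fiber product of stacks of the form $\sqrt[m_i]{\cL_i/\cX}$, where $\cL_i=\cO_\cX(\cD_i)$ with its canonical section $s_i$ cutting out $\cD_i$. By \cite[Definition 2.2.1, 2.2.4]{Cad07}, a morphism $\spec R\to \sqrt[m_i]{\cL_i/\cX}$ over a map $\cP\colon \spec R\to \cX$ is the data of a line bundle $\cM_i$ on $\spec R$, an isomorphism $\cM_i^{\otimes m_i}\cong \cP^*\cL_i$, and a section $t_i$ of $\cM_i$ mapping to $\cP^*s_i$ under this isomorphism. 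So the first step is: given $\cP\in\cX(R)$ avoiding the divisors with infinite multiplicity, a factorisation \eqref{eq: factorisation} is precisely a choice, for each finite $m_i$, of $(\cM_i,\cM_i^{\otimes m_i}\xrightarrow{\sim}\cP^*\cL_i, t_i)$ with $t_i^{\otimes m_i}$ identified with $\cP^*s_i$.

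Next I would translate the existence of such data into the divisibility statement $m_i\mid n_v(\cD_i,\cP)$. When $R=\cO_v$, every line bundle is trivial, $\cP^*\cL_i\cong\cO_v$, and $\cP^*s_i$ corresponds to an element generating the ideal $(\pi^{n_v(\cD_i,\cP)})$ up to a unit (with the convention $\pi^\infty=0$). The existence of a section $t_i$ of a line bundle $\cM_i\cong\cO_v$ with $t_i^{m_i}$ a unit times $\pi^{n_v(\cD_i,\cP)}$ is then equivalent to $n_v(\cD_i,\cP)$ being a (finite) multiple of $m_i$ — using that $\cO_v$ is a DVR and $\cO_v^\times$ contains all relevant roots is \emph{not} needed, since we only need existence of $t_i$ with the prescribed valuation $n_v(\cD_i,\cP)/m_i$ and can absorb the unit discrepancy into the chosen isomorphism $\cM_i^{\otimes m_i}\cong\cP^*\cL_i$. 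For $R$ a field $L$, the same argument gives $n_L(\cD_i,P)\in\{0,\infty\}$ automatically, and the condition $m_i\mid n_L(\cD_i,P)$ in $\lineN$ (with the convention that $\infty$ is divisible by $m_i$) holds iff $P\notin\cD_i(L)$ or $m_i=\infty$; matching this with Definition \ref{def: Campana points and Darmon points} (Darmon points, the closure of $\fM$ in $\lineN^{\cA}$) is a routine check. For $R=\mathbf{A}_K^T$ or $\mathbf{A}_B^T$, I invoke Proposition \ref{prop: Picard group Adeles}, which says every finitely generated (hence every invertible) ideal is principal, so $\Pic(R)=0$ and the DVR-style argument applies coordinatewise after choosing a principal generator of $\cP^*\cL_i$; one then checks the adelic condition reduces to the local one at each place, which is how $\cM$-points on adeles were set up.

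For the uniqueness clause: if the $\cD_i$ pull back to Cartier divisors on $\spec R$, then $\cP^*s_i$ is a nonzerodivisor, so the pair $(\cM_i,t_i)$ with $t_i^{\otimes m_i}\mapsto \cP^*s_i$ is rigid — any two choices differ by an automorphism of $\cM_i$ fixing $t_i$, and since $t_i$ is a nonzerodivisor such an automorphism is trivial. This is exactly the argument that $\sqrt[m]{\cL/\cX}\to\cX$ is an isomorphism over the locus where the section is invertible, combined with Cartier-ness making $t_i$ regular even along the divisor. The ``in particular'' statement then follows: if $\Pic(B)=0$ all the $\cP^*\cL_i$ are trivial and we run the argument over $B$ directly (each $\cP^*s_i$ is a global function, its divisor is $\sum_p n_p(\cD_i,\cP)[p]$, and factoring through the stack forces $m_i\mid n_p(\cD_i,\cP)$ at every closed point $p$, i.e. $\cP$ is a Darmon point); if instead $\im\cP\not\subset\bigcup\cD_i$ then the $\cP^*s_i$ are nonzerodivisors and the Cartier hypothesis of the uniqueness clause is met, so the factorisation exists and is unique exactly when the local multiplicities are divisible by the $m_i$.

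The main obstacle I anticipate is bookkeeping rather than conceptual: carefully handling the divisors with $m_i=\infty$ (which are removed from the stack, forcing $\cP$ to avoid them, matching the convention ``the only integer divisible by $\infty$ is $0$'' and the closure operation defining Darmon from strict Darmon points), and making the line-bundle/section data from \cite{Cad07} interact cleanly with the valuation-theoretic definition of $n_v(\cD_i,\cP)$ — in particular being careful that over a general $R$ one must choose the isomorphism $\cM_i^{\otimes m_i}\cong\cP^*\cL_i$ and the section $t_i$ \emph{together}, absorbing unit ambiguities, so that the statement is genuinely about the ideal $n_v(\cD_i,\cP)$ and not about existence of $m_i$-th roots of units. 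Once that is set up, each case (field, $\cO_v$, adeles, $B$ with trivial Picard group) is a short verification.
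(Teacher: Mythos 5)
Your proposal follows essentially the same route as the paper: unwind Cadman's description of maps into the root stack as line-bundle/section/isomorphism data, use triviality of $\Pic(R)$ (via Proposition \ref{prop: Picard group Adeles} for the adelic rings) to reduce to ideals and match the existence of an $m_i$-th root of the ideal of $\cP\cap\cD_i$ with divisibility of the local multiplicities, and deduce uniqueness from the fact that an automorphism of an invertible module fixing a nonzerodivisor is trivial. One wording slip in your field case: the Darmon condition at $\cD_i$ fails exactly when $m_i=\infty$ and $P\in\cD_i(L)$, so your biconditional should read ``$P\notin\cD_i(L)$ or $m_i<\infty$'' rather than ``or $m_i=\infty$''; this does not affect the argument.
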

\begin{proof}
    For $i\in \{1,\dots,n\}$, let $L$
    By the definition of $(\cX,\sqrt[\mathbf{m}]{\cD})$ and \cite[Remark 2.2.2, Remark 2.2.5]{Cad07}, a morphism $\spec R\rightarrow (\cX,\sqrt[\mathbf{m}]{\cD})$ is determined by a morphism $\cP\colon \spec R\rightarrow \cX\setminus\bigcup_{\substack{i=1 \\ m_i=\infty}}^n \cD_i$ together with isomorphism classes $(L_1,s_1),\dots, (L_n,s_n)$ of line bundles on $\spec R$ with a given global section and isomorphisms $\phi_i\colon L_i^{\otimes m_i}\rightarrow \cP^*\cO(\cD_i)$ such that $\phi_i(s_i^{m_i})=\cP^*1_{\cD_i}$, where $1_{\cD_i}$ denotes the canonical section of $\cO(\cD_i)$. Since $\Pic(R)$ is trivial, which for the adelic rings follows from Proposition \ref{prop: Picard group Adeles}, the line bundles $\cP^*\cO(\cD_i)$ are trivial and thus the isomorphism classes just correspond to ideals on $R$. Thus the factorisation exists if and only if for all $i$ with $m_i\neq \infty$ the ideal defining the closed subscheme $\cP\cap\cD\subset \spec R$ is an $m_i$-th power of an ideal (which is uniquely determined by this property).

    If the image of $\cP$ is not contained in $\bigcup_{i=1}^n \cD_i$, then the pullbacks $(\cP^*\cD_1,\cP^*s_1),\dots, (\cP^*\cD_n,\cP^*s_n)$ all exist as effective Cartier divisors and thus correspond to invertible ideals on $R$. For $i\in\{1,\dots,n\}$, any automorphism of $\cP^*\cD_i$ fixing $\cP^*s_i$ corresponds to an automorphism of $R$-modules $R\rightarrow R$ fixing a nonzero divisor, and is therefore trivial. Thus the factorisation is necessarily unique.

    Now consider a morphism $\cP\colon B\rightarrow \cX$. If $\Pic(B)=0$, then by the same reasoning as above, $\cP$ is a Darmon point on $(\cX,\cD_{\mathbf{m}})$ if and only if it factors through the root stack. If instead $\im\cP\not\subset\bigcup_{i=1}^n\cD_i$, then the divisors $\cD_i$ pull back to effective Cartier divisors on $B$, and therefore \cite[Remark 2.2.2]{Cad07} implies that $\cP$ is a Darmon point if and only if it factors though the root stack.
\end{proof}

The next proposition gives conditions for a root stack to be regular.
\begin{proposition} \label{prop: root stack regular}
    Let $X$ be a regular scheme and let $$D_{\mathbf{m}}=\sum_{i=1}^n \left(1-\frac{1}{m_i}\right)D_i,$$
    for distinct prime Cartier divisors $D_1,\dots,D_n$ on $X$ and integers $m_1,\dots, m_n\in \mathbb{N}^*\cup \{\infty\}$ such that the support of $D_{\mathbf{m}}$ is an strict normal crossings divisor. Then $(X,\sqrt[\mathbf{m}]{D})$ is regular.
\end{proposition}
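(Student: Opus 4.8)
The plan is to reduce the regularity of the root stack $(X,\sqrt[\mathbf{m}]{D})$ to a local computation, using the fact that regularity of a stack can be checked smooth-locally (indeed, on an fppf or smooth cover). First I would discard the $D_i$ with $m_i=\infty$: by Definition \ref{def: root stack}, the root stack is formed over $X\setminus\bigcup_{m_i=\infty}D_i$, which is again regular as an open subscheme of the regular scheme $X$, and the support of $\sum_{m_i\neq\infty}(1-\tfrac{1}{m_i})D_i$ is still strict normal crossings. So without loss of generality every $m_i$ is finite. Then, by \cite[Definition 2.2.4]{Cad07}, $(X,\sqrt[\mathbf{m}]{D})$ is the fibre product over $X$ of the individual root stacks $\sqrt[m_i]{(\cO_X(D_i),1_{D_i})/X}$, so it suffices to check regularity étale-locally on $X$.

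Next I would localize: fix a point $x\in X$ and, since the support of $D_{\mathbf{m}}$ is strict normal crossings, choose an étale neighbourhood (or work in the completed local ring) in which the divisors $D_i$ passing through $x$ are cut out by part of a regular system of parameters $t_1,\dots,t_r$, say $D_i=\{t_i=0\}$ for $i=1,\dots,s\le r$, and the remaining $D_i$ do not meet $x$. Over such a neighbourhood $U=\spec A$ with $A$ regular, the line bundle $\cO_X(D_i)$ is trivial with canonical section $t_i$, and by the explicit description of root stacks via line bundles with section (as recalled in the proof of Proposition \ref{prop: relation Darmon points root stack}, citing \cite[Remark 2.2.2]{Cad07}), $\sqrt[m_i]{(\cO_U,t_i)/U}$ is the stack quotient $[\spec A[u_i]/(u_i^{m_i}-t_i)\,/\,\mu_{m_i}]$. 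Taking the fibre product over $U$, the root stack restricted to $U$ is $[\spec B'/\mu_{m_1}\times\cdots\times\mu_{m_s}]$ where $B'=A[u_1,\dots,u_s]/(u_1^{m_1}-t_1,\dots,u_s^{m_s}-t_s)$. The crucial point is then that $B'$ is a regular ring: since $t_1,\dots,t_r$ is a regular system of parameters, replacing $t_i$ by $u_i$ (with $u_i^{m_i}=t_i$) still yields a regular system of parameters $u_1,\dots,u_s,t_{s+1},\dots,t_r$ for $B'$ at the relevant points — concretely $B'$ is finite and flat over $A$ and one checks the local rings are regular by verifying their maximal ideals are generated by $u_1,\dots,u_s,t_{s+1},\dots,t_r$, or equivalently $B'$ is a polynomial-type extension that preserves regularity because the equations $u_i^{m_i}-t_i$ form a regular sequence and the quotient is a complete intersection whose Jacobian condition is met. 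Since a regular scheme with an action of a finite (linearly reductive, here $\mu_m$) group scheme has regular quotient stack, we conclude $[\spec B'/\prod\mu_{m_i}]$ is regular, hence $(X,\sqrt[\mathbf{m}]{D})$ is regular.

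The main obstacle I expect is the careful bookkeeping in the local model: ensuring that the strict normal crossings hypothesis genuinely lets one diagonalize all the $D_i$ through a given point simultaneously as a subset of a regular system of parameters, and then verifying cleanly that adjoining the $m_i$-th roots $u_i$ preserves regularity of the local ring (this is where one uses that the $t_i$ are part of a regular system of parameters, not just that each $D_i$ is individually regular — the crossings must be transverse). A secondary technical point is justifying that regularity of the quotient stack follows from regularity of the cover: this is standard since $\mu_{m_i}$ acts and a stack is regular iff a smooth presentation is, but one should cite it (e.g. via \cite[Section 12]{Alp13} on linearly reductive group schemes, or simply remark that $\spec B'\to(X,\sqrt[\mathbf{m}]{D})|_U$ is smooth surjective). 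Everything else is routine once the local model is in place.
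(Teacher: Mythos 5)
Your proposal is correct and follows essentially the same route as the paper: both pass to the local quotient presentation $[\spec A[x_1,\dots]/(x_i^{m_i}-s_i)\,/\,\prod\mu_{m_i}]$ from \cite[Example 2.4.1]{Cad07}, verify regularity of the finite cover by checking that the roots $x_i$ together with the remaining parameters form a regular system of parameters (using the strict normal crossings hypothesis), and then descend regularity to the quotient stack via smooth-local/fppf descent. The only cosmetic difference is that the paper adjoins roots of all $s_i$ at once rather than only those through a given point, which changes nothing.
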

\begin{proof}
Without loss of generality, we can assume that $m_1,\dots, m_n$ are all finite, since a the restriction of a strict normal crossings divisor to an open is still strict normal crossings.    
As the statement is local, we can assume that $X=\spec A$ is affine and $D_i=\spec A/(s_i)$. By \cite[Example 2.4.1]{Cad07} this implies $$(X,\sqrt[\mathbf{m}]{D})\cong [\spec R/ (\mu_{m_1}\times_\Z\dots\times_\Z \mu_{m_n})],$$
where $R=A[t_1,\dots, t_n]/(x_1^{m_1}-s_1,\dots, x_n^{m_n}-s_n)$
and $\mu_{m_i}$ acts trivially on $A$ and $x_j$ for $j\neq i$, and acts on $x_i$ by $t_i\cdot x_i=t_i^{-1}x_i$.
We will first prove that $\spec R$ is regular. Since $R$ is finite over $A$, for every maximal ideal $\mathfrak{m}\in \spec A$ with an extension to a maximal ideal $\mathfrak{m}'\in \spec R$, the dimensions of the local rings agree: $\dim A_\mathfrak{m}=\dim R_{\mathfrak{m}'}$. Furthermore, since the support of $D_{\mathbf{m}}$ is a strict normal crossings divisor, the elements in $\{s_1,\dots,s_n\}\cap \mathfrak{m}$ are part of a regular system of parameters for $\mathfrak{m}$.  and therefore the elements in $\{x_1,\dots,x_n\}\cap \mathfrak{m}'$ is part of a regular system of parameters for $\mathfrak{m}'$, since if $I$ is the ideal generated by the $x_1,\dots, x_n$ contained in $\mathfrak{m}$ and $I'$ is generated by the $s_1,\dots, s_n$ contained in $\mathfrak{m}'$, then $R_{\mathfrak{m}'}/I'\cong A_\mathfrak{m}/I$. Thus $R_{\mathfrak{m}'}$ is a regular local ring, so $\spec R$ is regular.

Note that $\spec R\rightarrow (X,\sqrt[\mathbf{m}]{D})$ is surjective, flat and of finite presentation. Consider a smooth cover $Y\rightarrow (X,\sqrt[\mathbf{m}]{D})$, where $Y$ is a scheme. Then $\spec R\times_{(X,\sqrt[\mathbf{m}]{D})} Y$ is a regular algebraic space since regularity is local in the smooth topology by \cite[Tag 036D]{Stacks}. Since $\spec R\times_{(X,\sqrt[\mathbf{m}]{D})} Y\rightarrow Y$ is surjective, flat and of finite presentation, $Y$ is regular by \cite[Tag 06QN]{Stacks} and thus $(X,\sqrt[\mathbf{m}]{D})$ is regular.
\end{proof}

Proposition \ref{prop: relation Darmon points root stack} allows us to relate $M$-approximation with strong approximation on root stacks, as studied in \cite{Chr20,San23-1}.
\begin{definition} \label{definition: strong approximation stack}
Let $U$ be a stack over a PF field $(K,C)$ and let $T\subset\Omega_K$ be a finite set of places. We say that $U$ satisfies strong approximation off $T$ if the map $$U(K)\rightarrow U(\mathbf{A}_K^T)$$ has dense image, where the topology on $U(\mathbf{A}_K^T)$ is defined as in \cite[Definition 5.0.10]{Chr20}.
\end{definition}
\begin{proposition} \label{prop: M-approximation implies strong approximation stack}
    Let $(K,C)$ be a PF field, $T\subset \Omega_K$ a finite set of places, and $X$ a smooth proper variety over $K$. Let $D_{\mathbf{m}}=\sum_{i=1}^n \left(1-\frac{1}{m_i}\right)D_i$ for $m_1,\dots,m_n\in \mathbb{N}^*\cup\{\infty\}$ and smooth prime Cartier divisors $D_1,\dots, D_n$ on $X$. Assume that the support of $D_{\mathbf{m}}$ is an strict normal crossings divisor. Let $(X,M)$ be the pair corresponding to the Darmon points on $(X,D_{\mathbf{m}})$. Then $(X,M)$ satisfies $M$-approximation off $T$ if and only if the root stack $(X,\sqrt[\mathbf{m}]{D})$ satisfies strong approximation off $T$.
\end{proposition}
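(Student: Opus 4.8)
The key input is Proposition~\ref{prop: relation Darmon points root stack}, which identifies, for each ring $R$ among $\cO_v$, $\mathbf{A}_K^T$, $\mathbf{A}_B^T$ and field extensions $L/K$, the set of Darmon points $(\cX,\cD_{\mathbf m})(R)$ with the set of $R$-points of the root stack $(\cX,\sqrt[\mathbf m]{\cD})$. The idea is therefore to translate ``$M$-approximation off $T$'' and ``strong approximation off $T$ for $(X,\sqrt[\mathbf m]{D})$'' into statements about the same sets, equipped with compatible topologies, and then observe they coincide. First I would fix an integral model $\cX$ of $X$ over a nonempty open $B\subset C$ such that the $D_i$ spread out to prime Cartier divisors $\cD_i$ on $\cX$ and the support of $\cD_{\mathbf m}$ is still strict normal crossings; by Proposition~\ref{prop: root stack regular} the associated root stack $(\cX,\sqrt[\mathbf m]{\cD})$ is then regular, and it is an integral model of $(X,\sqrt[\mathbf m]{D})$ over $B$. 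Both notions of approximation are insensitive to the choice of integral model (for $M$-approximation by Proposition~\ref{prop: independence model}, for the stack by a spreading-out argument as in Proposition~\ref{prop: spread out models}), so this is harmless.

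Next I would compare the adelic spaces. On the one hand, $(X,M)(\mathbf{A}_K^T)=\prod_{v\in\Omega_K\setminus T}\big((X,M)(K_v),(\cX,\cM)(\cO_v)\big)$ as a restricted product, with $(X,M)(K_v)$ the $v$-adic Darmon points and $(\cX,\cM)(\cO_v)$ the integral ones. On the other hand, $(X,\sqrt[\mathbf m]{D})(\mathbf{A}_K^T)$ is defined in \cite[Definition 5.0.10]{Chr20}, which for a proper (Deligne--Mumford) stack over a global or PF field is again realized as a restricted product of the local points $(X,\sqrt[\mathbf m]{D})(K_v)$ with respect to the integral points $(\cX,\sqrt[\mathbf m]{\cD})(\cO_v)$ (here properness of $X$ and the valuative criterion for the root stack, together with $\Pic(\cO_v)=0$, guarantee that every $K_v$-point of the root stack extends uniquely to an $\cO_v$-point, matching the setup of Definition~\ref{def: adelic M-points}). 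Proposition~\ref{prop: relation Darmon points root stack} applied with $R=K_v$ and $R=\cO_v$ gives bijections $(X,M)(K_v)\cong (X,\sqrt[\mathbf m]{D})(K_v)$ and $(\cX,\cM)(\cO_v)\cong(\cX,\sqrt[\mathbf m]{\cD})(\cO_v)$, and one checks these are homeomorphisms: the point is that the forgetful map $(X,\sqrt[\mathbf m]{D})\to X$ induces a topological embedding on $K_v$-points onto the locus of Darmon points, since away from the $D_i$ with $m_i=\infty$ the root stack map is proper and quasi-finite, hence the induced map on $K_v$-points is a local homeomorphism onto a closed subset by the analog of \cite[Proposition 2.6.1]{Poo17} for stacks (or one can argue directly via \cite[Definition 5.0.10]{Chr20}, which builds the topology on $(X,\sqrt[\mathbf m]{D})(K_v)$ precisely so that this holds). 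Taking restricted products, these local homeomorphisms assemble into a homeomorphism $(X,M)(\mathbf{A}_K^T)\cong (X,\sqrt[\mathbf m]{D})(\mathbf{A}_K^T)$ compatible with the maps from the rational points $(X,M)(K)\cong (X,\sqrt[\mathbf m]{D})(K)$ (Proposition~\ref{prop: relation Darmon points root stack} with $R=K$). Since both approximation properties assert exactly that the image of rational points is dense in the respective adelic space, and density is preserved by a homeomorphism intertwining the two inclusions, the two properties are equivalent.

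\textbf{Main obstacle.} The genuinely delicate point is not the set-theoretic identification — that is Proposition~\ref{prop: relation Darmon points root stack} — but verifying that the topologies match: namely that the topology on $(X,\sqrt[\mathbf m]{D})(K_v)$ from \cite[Definition 5.0.10]{Chr20} restricts, under the bijection with $v$-adic Darmon points, to the subspace topology induced from $X(K_v)$, and similarly that the integral stack-points $(\cX,\sqrt[\mathbf m]{\cD})(\cO_v)$ form an open (resp.\ closed, as appropriate) subset matching $(\cX,\cM)(\cO_v)\subset X(K_v)$ as analyzed in Proposition~\ref{prop: continuity multiplicity}. I would handle this by working in the local charts $[\spec R/(\mu_{m_1}\times\cdots\times\mu_{m_n})]$ of Proposition~\ref{prop: root stack regular}'s proof: a $K_v$-point of such a chart that lies over a Darmon point of $X$ is, up to the finite group action, a tuple of $m_i$-th roots of the local equations, and the root-extraction map $\cO_v\to\cO_v$, $x\mapsto x^{1/m_i}$ on the locus where it is defined, is continuous with the expected image, which pins down the topology. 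Once this local comparison is in hand, the restricted-product formalism of Section~\ref{subsection: restricted products and adeles} and Proposition~\ref{prop: inclusions restricted product} does the rest mechanically. A minor additional check is that the strict normal crossings hypothesis and smoothness of $X$ ensure the root stack is a smooth proper Deligne--Mumford stack, so that \cite{Chr20}'s framework applies; this is exactly Proposition~\ref{prop: root stack regular} together with the fact that the $m_i$ with $m_i=\infty$ only remove a closed substack.
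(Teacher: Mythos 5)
There is a genuine gap in the central step: the claimed bijections $(X,M)(K_v)\cong (X,\sqrt[\mathbf m]{D})(K_v)$ and $(\cX,\cM)(\cO_v)\cong(\cX,\sqrt[\mathbf m]{\cD})(\cO_v)$ do not exist. Proposition \ref{prop: relation Darmon points root stack} only says that the Darmon points are exactly the points of $X$ that admit \emph{some} factorisation through the root stack, and it guarantees uniqueness of the factorisation only when the $\cD_i$ pull back to Cartier divisors on $\spec R$, i.e.\ when the point does not lie in the boundary. Over a boundary point the fibre of $(X,\sqrt[\mathbf m]{D})(K_v)\to X(K_v)$ is genuinely larger: for $X=\mathbb A^1$, $D=\{0\}$, $m=2$, the isomorphism classes of $K_v$-points of $[\spec K_v[t]/\mu_2]$ lying over $0$ are the $\mu_2$-torsors over $K_v$, classified by $K_v^\times/(K_v^\times)^2$, which is nontrivial for essentially every $v$. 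So the forgetful map on local points is a surjection onto the Darmon points with nontrivial finite fibres along the divisors with $m_i\neq\infty$, not a homeomorphism, and the proposed homeomorphism of adelic restricted products collapses. A continuous surjection of adelic spaces compatible with rational points does not by itself transfer density in either direction, so the final "density is preserved by a homeomorphism" step cannot be repaired without a new idea.

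The paper's proof avoids this entirely by never comparing the two adelic spaces pointwise. It takes a surjective smooth cover $\pi\colon\cZ\to(\cX,\sqrt[\mathbf m]{\cD})$ by a \emph{scheme} (using \cite[Proposition 7.0.8]{Chr20}, available because the root stack is smooth by Proposition \ref{prop: root stack regular}) with $\pi(\cZ(\cO_v))=\tilde\cX(\cO_v)$ and $\pi(\cZ(K_v))=\tilde X(K_v)$, and uses Proposition \ref{prop: Open subset dense} on $\cZ_K$ to show that $(X,M_{\fin})(K_v)=(X\setminus\bigcup_i D_i)(K_v)$ is dense in $\tilde X(K_v)$ for every $v$. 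On this common open locus the root stack and the variety literally agree, so the image of $(X,M_{\fin})(\mathbf A_K^T)$ is dense in $\tilde X(\mathbf A_K^T)$ as well as in $(X,M)(\mathbf A_K^T)$ (the latter by Proposition \ref{prop: inclusion M}), and both approximation statements reduce to density of rational points against this shared dense subspace. If you want to salvage your argument, you should replace the homeomorphism claim by exactly this kind of reduction to the boundary complement; your local-chart analysis of $m_i$-th roots is not needed once that reduction is made.
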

\begin{proof}
    Write $\tilde{X}=(X,\sqrt[\mathbf{m}]{D})$ and $\tilde{\cX}=(\cX,\sqrt[\mathbf{m}]{\cD})$, for some integral model $\cX$ over some open subset $B\subset C$ such that $\cD_i$ is the closure of $D_i$ in $\cX$ and it is a prime Cartier divisor. By \cite[Proposition 13.0.2]{Chr20}, 
    $$(X,\sqrt[\mathbf{m}]{D})(\mathbf{A}_K^T)=\prod_{v\in \Omega_K\setminus (B\cup T)}\tilde{X}(K_v) \times\prod_{v\in B\setminus T}\left(\tilde{X}(K_v),\tilde{\cX}(\cO_v)\right)$$
    as topological spaces (the cited proposition is formulated for global fields and with $T=\emptyset$, but extends to this setting).
    By the assumptions on $X$ and on $D_{\mathbf{m}}$, the root stack $(X,\sqrt[\mathbf{m}]{D})$ is geometrically regular, and thus smooth, by Proposition \ref{prop: root stack regular}. This is because the divisors $D_1,\dots,D_n$ are smooth so the support of $D_{\mathbf{m},\overline{K}}$ is a strict normal crossings divisor.
    Therefore by \cite[Proposition 7.0.8]{Chr20} there exists a scheme $\cZ$ over $B$ and a surjective smooth morphism $\pi\colon \cZ\rightarrow \tilde{\cX}$, such that $\pi(\cZ(\cO_v))=\tilde{\cX}(\cO_v)$ for all $v\in B$ and $\pi(\cZ(K_v))=\tilde{X}(K_v)$ for any place $v\in \Omega_K$. In general \cite[Proposition 7.0.8]{Chr20} only gives a family of schemes $\cZ_N$ with this property, but we can take the disjoint union of these to obtain $\cZ$. Since $\tilde{X}$ is smooth over $K$, $\cZ_K$ is smooth over $K$ as well, and thus is locally a variety. Therefore Proposition \ref{prop: Open subset dense} implies that $U(K_v)\subset \cZ(K_v)$ is dense for any dense Zariski open subset $U\subset Z$ and any $v\in \Omega_K$. In particular this implies that $(X,M_{\fin})(K_v)=(X\setminus \bigcup_{i=1}^n D_i)(K_v)$ lies dense in $\tilde{X}(K_v)$. Therefore, it follows that the image of $(X,M_{\fin})(\mathbf{A}_K^T)$ is dense in $\tilde{X}(\mathbf{A}_K^T)$. Thus, by Proposition \ref{prop: inclusion M}, $\tilde{X}(K)$ is dense in $\tilde{X}(\mathbf{A}_K^T)$ if and only if $(X,M)(K)$ is dense in $(X,M)(\mathbf{A}_K^T)$.
\end{proof}

We now consider the fundamental group $\pi_1(X,\sqrt[\mathbf{m}]{D})$ of the root stack as defined in \cite{Nooh04}, which classifies étale covers $f\colon Y\rightarrow (X,\sqrt[\mathbf{m}]{D})$, where $Y$ is an algebraic stack.
\begin{definition}
    A morphism $f\colon Y\rightarrow X$ of connected algebraic stacks is an \textit{étale cover} if it is a finite étale morphism.
\end{definition}
Note that such a morphism is always representable, see for example \cite[Tag 0CHT]{Stacks}, so étale covers coincide with what Noohi calls covering maps.

Similarly to the fundamental group of schemes, different choices of a base point yield the same fundamental group up to isomorphism \cite[page 9]{Nooh04}, so we will leave the choice of the point implicit. The following lemma classifies the étale covers of root stacks in terms of the coarse spaces, and thus gives a concrete description of its étale fundamental group.

\begin{lemma}
    Let $X$ be a connected locally Noetherian scheme and let $D_1,\dots,D_n$ be distinct prime Cartier divisors on $X$, and $m_1,\dots, m_n\in \mathbb{N}^*$ whose images in $\cO(X)$ are invertible. Assume that the support of $$D_{\mathbf{m}}=\sum_{i=1}^n \left(1-\frac{1}{m_i}\right)D_i,$$
    is a strict normal crossings divisor.

    Then there is a one-to-one correspondence between
    \begin{enumerate}
        \item Étale covers $\tilde{f}\colon \tilde{Y}\rightarrow (X,\sqrt[\mathbf{m}]{D})$, and
        \item Finite morphisms of connected schemes $f\colon Y\rightarrow X$, such that $f$ is étale over $X\setminus \bigcup_{i=1}^n D_i$, and for all $i\in\{1,\dots,n\}$ the pullback satisfies $f^*D_i=\sum_{\beta\in\cB_i}e_{i,\beta}\tilde{D}_{i,\beta}$ for distinct prime Cartier divisors $\tilde{D}_{i,\beta}$ on $Y$ such that $e_{i,\beta}|m_i$ for all $\beta\in \cB_i$. \label{morphism on coarse spaces}
    \end{enumerate}
    
\end{lemma}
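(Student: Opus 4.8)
The strategy is to establish the bijection étale-locally on $X$ and then to glue, using the explicit local description of the root stack from \cite[Example 2.4.1]{Cad07} and the structure of tame covers ramified along a strict normal crossings divisor. First I would reduce to the case where all $m_i$ are finite: since $(X,\sqrt[\mathbf m]{D})$ is by Definition \ref{def: root stack} the root stack of $X\setminus\bigcup_{m_i=\infty}D_i$ along the divisors with finite multiplicity, and an étale cover of this open locus with the stated ramification behaviour is the same datum on the open part, while on the complement the condition on item \eqref{morphism on coarse spaces} imposes nothing; so WLOG $m_i<\infty$ for all $i$ and the $m_i$ are invertible on $X$.

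Next I would construct the two maps of the correspondence. Given an étale cover $\tilde f\colon\tilde Y\to(X,\sqrt[\mathbf m]{D})$, since $\tilde Y$ is a finite étale algebraic stack over $(X,\sqrt[\mathbf m]{D})$ it is in particular a Deligne--Mumford stack which is generically a scheme (over $X\setminus\bigcup D_i$ the root stack is just that scheme, so $\tilde Y$ restricts to a scheme there); I take $Y$ to be the coarse moduli space of $\tilde Y$, which is finite over $X$ because $\tilde Y\to(X,\sqrt[\mathbf m]{D})\to X$ is finite and coarse-space formation commutes with finite base change for tame stacks. Conversely, given $f\colon Y\to X$ as in \eqref{morphism on coarse spaces}, the collection of line bundles $\cO_Y(\tilde D_{i,\beta})$ together with $e_{i,\beta}\mid m_i$ furnishes, over each $\tilde D_{i,\beta}$, an $(m_i/e_{i,\beta})$-th root compatible with the canonical $m_i$-th root on $(X,\sqrt[\mathbf m]{D})$, so by the universal property of the root stack \cite[Remark 2.2.2, Remark 2.2.5]{Cad07} the composite $Y\to X$ lifts canonically to $\tilde Y:=Y\times_{X}(X,\sqrt[\mathbf m]{D})$-with-the-tautological-roots, and the resulting $\tilde Y\to(X,\sqrt[\mathbf m]{D})$ is finite; étaleness can be checked étale-locally.

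The heart of the argument is the local computation showing these assignments are mutually inverse and that étaleness of $\tilde f$ corresponds exactly to the divisibility condition $e_{i,\beta}\mid m_i$. By \cite[Example 2.4.1]{Cad07}, étale-locally $X=\spec A$ with $D_i=\{s_i=0\}$ part of a regular system of parameters (using the strict normal crossings hypothesis), and $(X,\sqrt[\mathbf m]{D})\cong[\spec R/\boldsymbol\mu]$ with $R=A[x_1,\dots,x_n]/(x_i^{m_i}-s_i)$ and $\boldsymbol\mu=\prod\mu_{m_i}$. A finite representable étale morphism to this quotient stack is, by descent, a $\boldsymbol\mu$-equivariant finite étale $R$-algebra; Abhyankar's lemma in the tame setting (valid since the $m_i$ are invertible) shows every finite cover of $A$ tamely ramified along $\bigcup D_i$ with ramification indices $e_{i,\beta}$ becomes étale after the $\mu_{m_i}$-extension precisely when $e_{i,\beta}\mid m_i$, and taking $\boldsymbol\mu$-invariants recovers $Y$; this matches the coarse-space construction. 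I then glue: both constructions are étale-local on $X$ and the gluing data agree because coarse spaces and root stacks both satisfy étale descent, and morphisms of the relevant type form a stack in the étale topology. The main obstacle I anticipate is the careful bookkeeping in the local model — verifying that étaleness of the stacky cover is equivalent to the pointwise divisibility $e_{i,\beta}\mid m_i$ and that the invariants-under-$\boldsymbol\mu$ operation is inverse to base change to the root stack — essentially a clean application of tame Abhyankar's lemma, but one must be attentive to the multi-divisor bookkeeping and to the fact that a single prime divisor $\tilde D_{i,\beta}$ upstairs can sit over $D_i$ with its own ramification index, so the group acting is the product of the $\mu_{e_{i,\beta}}$'s inside $\mu_{m_i}$.
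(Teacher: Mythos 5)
Your proposal is essentially correct and its architecture matches the paper's: the local presentation $(X,\sqrt[\mathbf m]{D})\cong[\spec R/\boldsymbol\mu]$ with $R=A[x_1,\dots,x_n]/(x_i^{m_i}-s_i)$, the coarse moduli space for the forward direction, and the root stack of $Y$ along the $\tilde D_{i,\beta}$ with multiplicities $m_i/e_{i,\beta}$ for the converse. The one substantive divergence is the étaleness check in the converse direction: you invoke tame Abhyankar's lemma in the local chart, whereas the paper verifies étaleness of $\tilde Y\times_X Z\rightarrow Z$ only at the codimension-one points over $\bigcup D_i$ and then propagates it by Zariski--Nagata purity of the ramification locus. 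Both routes are legitimate; Abhyankar packages the codimension-one cancellation $e_{i,\beta}\cdot(m_i/e_{i,\beta})=m_i$ and the purity step into one citation, at the cost of having to identify the normalized fibre product of Abhyankar's statement with the chart of your root stack, which is exactly the bookkeeping you flag. Two small cautions. First, your opening reduction to finite $m_i$ is vacuous here (the lemma already assumes $m_i\in\mathbb{N}^*$), and the claim that the infinite-multiplicity case "imposes nothing" would actually be delicate if it arose, since item (2) concerns finite morphisms to all of $X$ while the root stack only sees the open locus. Second, the object you denote $Y\times_X(X,\sqrt[\mathbf m]{D})$ is \emph{not} the correct $\tilde Y$: the naive fibre product roots the non-reduced divisors $e_{i,\beta}\tilde D_{i,\beta}$ to order $m_i$ and is not étale over $(X,\sqrt[\mathbf m]{D})$; what you want (and what your preceding sentence about $(m_i/e_{i,\beta})$-th roots correctly describes) is the root stack $(Y,\sqrt[m_i/e_{i,\beta}]{\tilde D_{i,\beta}})$, which maps to $(X,\sqrt[\mathbf m]{D})$ via the universal property using $\bigotimes_\beta L_{i,\beta}$ as the $m_i$-th root of $f^*\mathcal{O}_X(D_i)$. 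With that notational correction the plan goes through.
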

\begin{proof}
Recall that our schemes are by assumption separated so since $m_1,\dots, m_n$ are invertible in $\cO(X)$, the stack $(X,\sqrt[\mathbf{m}]{D})$ is separated and Deligne-Mumford by \cite[Corollary 2.3.4]{Cad07}. Therefore for any étale cover $\tilde{Y}\rightarrow (X,\sqrt[\mathbf{m}]{D})$, $\tilde{Y}$ is separated as the map is finite and hence affine \cite[Tag 01S7]{Stacks} and Deligne-Mumford since it is étale \cite[Tag 0CIQ]{Stacks}. Therefore by the Keel-Mori Theorem \cite[page 631]{Ryd13}, there exists a coarse moduli space $\tilde{Y}\rightarrow Y$, where $Y$ is an algebraic space. Since the coarse moduli space is universal for maps to algebraic spaces by definition \cite[Definition 6.8]{Ryd13}, the cover $\tilde{f}$ descends to a morphism $f\colon Y\rightarrow X$.
    We thus obtain a commutative diagram
\begin{equation} \label{eq: etale map root stack}
\begin{tikzcd}
\tilde{Y} \arrow[r, "\tilde{f}"] \arrow[d] & {(X,\sqrt[\mathbf{m}]{D})} \arrow[d] \\
Y \arrow[r, "f"]                           & X.                                
\end{tikzcd}
\end{equation}
We will show that $Y$ is a scheme and that $f\colon Y\rightarrow X$ satisfies the desired properties.

Since the statement is local on $X$, we can reduce to the case that $X=\spec A$ is affine and the $D_i=\spec A/(s_i)$. By \cite[Example 2.4.1]{Cad07} this implies $$(X,\sqrt[\mathbf{m}]{D})\cong [\spec R/ (\mu_{m_1}\times_\Z\dots\times_\Z \mu_{m_n})],$$
where $R=A[t_1,\dots, t_n]/(x_1^{m_1}-s_1,\dots, x_n^{m_n}-s_n)$
and $\mu_{m_i}$ acts trivially on $A$ and $x_j$ for $j\neq i$, and acts on $x_i$ by $t_i\cdot x_i=t_i^{-1}x_i$. Write $Z=\spec R$. Note that since $R$ is a finite $A$-algebra, the map $Z\rightarrow X$ is finite.

We now prove that $f\colon Y\rightarrow X$ is a finite morphism. Let $U\rightarrow Y$ be a finite étale morphism from a scheme $U$ and consider the following commutative diagram consisting of Cartesian squares:

$$\begin{tikzcd}
U\times_X Z \arrow[r] \arrow[d]           & \tilde{Y}\times_X Z \arrow[r] \arrow[d]    & Z \arrow[d]                  \\
U\times_{Y} \tilde{Y} \arrow[r] \arrow[d] & \tilde{Y} \arrow[r, "\tilde{f}"] \arrow[d] & {(X,\sqrt[\mathbf{m}]{D})} \arrow[d] \\
U \arrow[r]                               & Y \arrow[r, "f"]                           & X.                                   
\end{tikzcd}$$
\newline
Since $U\times_X Z\rightarrow Z$ and $Z\rightarrow X$ are finite morphisms of schemes, $U\times_X Z\rightarrow X$ is a finite morphism as well.

Since $X$ is locally Noetherian, \cite[Theorem 6.12]{Ryd13} implies that the map $f$ is proper and thus locally of finite type. Therefore $U\rightarrow X$ is also locally of finite type, and since $U\times_X Z\rightarrow U$ is surjective, \cite[Tag 0GWS]{Stacks} implies that $U\rightarrow X$ is quasi-finite, and by \cite[Tag 02LS]{Stacks} it is finite and hence $f$ is finite as well. By \cite[Tag 03XX]{Stacks} $Y$ is a scheme.

Since $(X,\sqrt[\mathbf{m}]{D})\rightarrow X$ is étale outside the support of $D_{\mathbf{m}}$, $f$ is étale outside its support as well. Furthermore, since $\tilde{f}$ is étale and the pullback of $D_i$ along the morphism $(X,\sqrt[\mathbf{m}]{D})\rightarrow X$ is $m_iE_i$ for a prime divisor $E_i$, the multiplicity of every prime divisor appearing in the pullback of $D_i$ to $\tilde{Y}$ is exactly $m_i$. By the commutativity of the diagram \eqref{eq: etale map root stack}, the multiplicities $e_{i,\beta}$ divide $m_i$.

Conversely, for any such morphism $f\colon Y\rightarrow X$ satisfying the properties in \eqref{morphism on coarse spaces}, we will show we can construct a root stack $\tilde{Y}$ over $Y$ and an étale cover $\tilde{f}\colon \tilde{Y}\rightarrow (X,\sqrt[\mathbf{m}]{D})$ inducing the morphism $f$ on coarse spaces. Since the support $\operatorname{sup}(D_{\mathbf{m}})$ is strict normal crossings, Proposition \ref{prop: root stack regular} implies all points $\tilde{x}\in (X,\sqrt[\mathbf{m}]{D})\times_X \operatorname{sup}(D_{\mathbf{m}})$ are regular. Since the statement to be proved is local on $X$, and the regular locus is open, we can assume that $(X,\sqrt[\mathbf{m}]{D})$ is regular. Take $\tilde{Y}$ to be the root stack $\tilde{Y}=(Y,\sqrt[\tilde{\mathbf{m}}]{\tilde{D}})$, where $\tilde{D}_{\tilde{\mathbf{m}}}=\sum_{i=1}^n \sum_{\beta\in \cB_i}\left(1-\frac{e_{i,\beta}}{m_i}\tilde{D}_{i,\beta}\right)$. Since $f$ is a finite map of schemes, $\tilde{Y}\times_X Z\rightarrow Z$ is finite, so $\tilde{Y}\times_X Z$ is a scheme by \cite[Tag 03XX]{Stacks}. The morphism $\tilde{Y}\times_X Z\rightarrow Z$ is étale at the codimension $1$ points appearing in the pullback of $D_i$ to $\tilde{Y}\times_X Z$, for every $i\in\{1,\dots, n\}$. Therefore purity of the ramification locus \cite[Tag 0EA4]{Stacks} implies that $\tilde{Y}\times_X Z\rightarrow Z$ is étale at every point above $\bigcup_{i=1}^n D_i$, and thus $\tilde{Y}\times_X Z\rightarrow Z$ is étale everywhere. Since $Z\rightarrow (X,\sqrt[\mathbf{m}]{D})$ is an étale cover, this implies $\tilde{f}\colon \tilde{Y}\rightarrow (X,\sqrt[\mathbf{m}]{D})$ is an étale cover.
\end{proof}

Now we can explicitly compute the fundamental group of a toric root stack. For an abelian group $G$, let $\widehat{G}=\lim_{\leftarrow \\ H} G/H$ be the profinite completion, where $H$ runs over all normal subgroups of $G$ with finite index in $G$. Similarly, for a prime number $p$, we let $\widehat{G}^{(p)}=\lim_{\substack{\leftarrow \\ H,\, p\nmid [G:N]}} G/H$ be the prime-to-$p$ completion. For a stack $Z$, we write $\pi_{1, p\nmid}(Z)$ for the quotient of $\pi_1(Z)$ corresponding to covers with degree coprime to $p$.
\begin{lemma} \label{lemma: fundamental group toric stack}
    Let $X$ be a smooth toric variety over an algebraically closed field $\overline{K}$ and let $D_1,\dots,D_n$ be the the torus invariant prime divisors on $X$ and let $m_1,\dots, m_n\in \mathbb{N}^*\cup\{\infty\}$ whose images in $\cO(X)$ are invertible and let $D_{\mathbf{m}}$ be the corresponding Campana divisor. Let $(X,M)$ be the pair corresponding to the Darmon points on $(X,D_{\mathbf{m}})$. If $\CHAR(\overline{K})=0$, then $$\pi_1(X,\sqrt[\mathbf{m}]{D})\cong \widehat{N/N_M},$$
    while if $\overline{K}$ has characteristic $p>0$ then
    $$\pi_{1,\,p\nmid}(X,\sqrt[\mathbf{m}]{D})\cong \widehat{N/N_M}^{(p)}.$$    
\end{lemma}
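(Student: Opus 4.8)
The strategy is to combine the classification of étale covers of the root stack $(X,\sqrt[\mathbf{m}]{D})$ given by the preceding lemma with the explicit toric description of abelian covers of smooth toric varieties from \cite[Chapter 4, Chapter 12]{CLS11}. First I would reduce to the case where all $m_i$ are finite: the rays $\rho_i$ with $m_i=\infty$ correspond to the torus-invariant divisors that are removed in forming the root stack, so replacing $X$ by the open toric subvariety $X\setminus\bigcup_{m_i=\infty}D_i$ (with fan $\Sigma'$ obtained by deleting those rays) changes neither $\pi_1$ of the root stack nor the lattice $N_M$, which by Definition \ref{def: invariants} only involves $\phi(\mathbf m)$ for $\mathbf m\in\fM_{\fin,\red}$ and hence only the rays with $m_i<\infty$. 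So from now on $m_1,\dots,m_n\in\mathbb N^*$ are invertible in $\overline K$.

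Next I would identify $N_M$ concretely in this toric setting. Since $(X,M)$ encodes Darmon points on $(X,D_{\mathbf m})$, the set $\fM_{\fin}$ consists of tuples $\mathbf w\in\mathbb N^n$ with $m_i\mid w_i$ for all $i$; passing to $\fM_{\fin,\red}$ removes the tuples whose support corresponds to an empty intersection of divisors. Applying $\phi$ and taking the generated sublattice, one checks that $N_M=\langle m_i n_{\rho_i}\mid i=1,\dots,n\rangle$: the tuples $m_i\mathbf e_i$ and the tuples $m_i\mathbf e_i+m_j\mathbf e_j$ for $\rho_i,\rho_j$ spanning a cone are both admissible (using $X$ smooth, so such pairs span a $2$-dimensional cone), and $\phi(m_i\mathbf e_i)=m_in_{\rho_i}$ already generates the whole lattice $N_M$; the reduced condition does not shrink it because every ray lies in some maximal cone. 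Thus $N/N_M$ is the cokernel of the map $\mathbb Z^n\to N$ sending $\mathbf e_i\mapsto m_in_{\rho_i}$, which is precisely the quotient that governs abelian branched covers of $X$ ramified to order dividing $m_i$ along $D_i$.

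Then I would invoke the classification lemma above: étale covers of $(X,\sqrt[\mathbf m]{D})$ correspond to finite morphisms $Y\to X$ of connected schemes, étale over the open torus, such that $f^*D_i=\sum_\beta e_{i,\beta}\widetilde D_{i,\beta}$ with $e_{i,\beta}\mid m_i$. For the Galois (abelian) connected such covers, \cite[\S4.3 and Theorem 12.1.10]{CLS11}, or more precisely the Kummer-theoretic description of covers of the torus $\mathbb G_m^d\cong N\otimes_{\mathbb Z}\overline K^\times$ extended over $X$, shows that the prime-to-$p$ (or, in characteristic zero, all) connected abelian covers branched along the $D_i$ with ramification dividing $m_i$ are classified by the finite quotients of $N/N_M$: a surjection $N/N_M\twoheadrightarrow G$ with $|G|$ invertible in $\overline K$ yields the normalization of $X$ in the corresponding $G$-Kummer cover of the torus, and the condition $e_{i,\beta}\mid m_i$ is exactly what makes this cover pull back to an étale cover of the root stack. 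Taking the inverse limit over all such finite quotients $G$ gives $\pi_1(X,\sqrt[\mathbf m]{D})\cong\widehat{N/N_M}$ when $\CHAR(\overline K)=0$ and $\pi_{1,\,p\nmid}(X,\sqrt[\mathbf m]{D})\cong\widehat{N/N_M}^{(p)}$ when $\CHAR(\overline K)=p>0$; that the full fundamental group is just the profinite completion in characteristic zero uses that $X$ is smooth and $\mathbb A^d$-like locally, so no wild ramification intervenes, and the covers constructed are cofinal among all étale covers of the stack.

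The main obstacle I expect is the last point: showing that \emph{every} connected étale cover of the root stack — not merely the abelian ones — has abelianization realized inside $\widehat{N/N_M}$, i.e. that $\pi_1(X,\sqrt[\mathbf m]{D})$ is genuinely abelian and equals this completion. For this I would argue as in \cite[Theorem 12.1.10]{CLS11}: a complete smooth toric variety is simply connected, and removing a torus-invariant open piece, the fundamental group of the complement is computed from the combinatorics of the deleted rays; the root stack construction ``fills back in'' the removed divisors with cyclic stabilizers of order $m_i$, and the resulting fundamental group is the abelian group $N/N_M$ by the van Kampen-type argument for stacks (over $\mathbb C$ one can argue topologically, then transfer via \cite[Tag 0A49]{Stacks} and the specialization maps for prime-to-$p$ covers in positive characteristic). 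I would cite \cite{Nooh04} for the formalism of stacky fundamental groups and \cite{Chr20} for the comparison with covers of schemes, reducing the computation to the classification lemma already proved; the characteristic-$p$ case then follows by restricting to prime-to-$p$ covers, where tameness makes the specialization isomorphism available.
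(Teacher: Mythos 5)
Your proposal reaches the right answer and correctly isolates the real difficulty — showing that \emph{every} connected étale cover of $(X,\sqrt[\mathbf{m}]{D})$, not just the abelian/Kummer ones, is accounted for by $N/N_M$ — but it resolves that difficulty by a genuinely different route than the paper. You propose a transcendental argument: compute the orbifold fundamental group over $\mathbb{C}$ by a van Kampen argument ($\pi_1$ of the torus is $N$, rooting $D_i$ to order $m_i$ imposes the relation $m_i n_{\rho_i}=0$, giving $N/N_M$), then transfer to arbitrary algebraically closed fields via Riemann existence for Deligne--Mumford stacks and, in characteristic $p$, tame specialization. The paper instead stays algebraic: after disposing of torus factors via \cite[Proposition 1.1]{BrSz13}, it invokes \cite[Theorem 4]{AlPa12}, which says that any finite cover of a split toric variety without torus factors, of degree prime to $p$ and étale over the torus, is itself a toric morphism; combined with the classification lemma this identifies étale covers of the root stack with finite-index sublattices $N'\supseteq N_M$ of $N$, and the limit over $N'$ gives $\widehat{N/N_M}$. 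Your identification of $N_M=\langle m_in_{\rho_i}\mid m_i<\infty\rangle$ is correct (the remark about pairs of rays spanning a cone is unnecessary — $m_i\mathbf{e}_i\in\fM_{\fin,\red}$ already suffices), and the reduction to finite $m_i$ matches the paper's passage to the fan of $X\setminus\lfloor D_{\mathbf{m}}\rfloor$.

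The caveat is that the step you flag as the main obstacle is also the least developed part of your write-up, and it is precisely where the two approaches diverge in cost. Over $\mathbb{C}$ your van Kampen computation is standard, but making it rigorous for stacks and then descending to characteristic $p$ requires a tame-specialization isomorphism for the prime-to-$p$ fundamental group of a \emph{non-proper} stack ($X$ is not assumed complete, and rays with $m_i=\infty$ are deleted), which is a nontrivial input you would need to source carefully. The paper's citation of Alexeev--Pardini handles both characteristics uniformly and avoids this entirely; if you pursue your route, that specialization step is the one that needs to be nailed down.
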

\begin{proof}
    We only write the proof for characteristic $0$, as the positive characteristic case is analogous. We write $\Sigma$ for the fan of $X\setminus \lfloor D_{\mathbf{m}} \rfloor$. If $X\setminus \lfloor D_{\mathbf{m}} \rfloor$ is just the dense open torus (so if $m_1=\dots=m_n=\infty$) then the statement is true by \cite[Propostion 1.1]{BrSz13}. By this we see that a Galois cover of $\mathbb{G}_{m}^d$ is necessarily just a pair of $d$ coverings of $\mathbb{G}_{m}$. Therefore, we can without loss of generality assume that $X\setminus \lfloor D_{\mathbf{m}} \rfloor$ does not have torus factors.
	By \cite[Theorem 4]{AlPa12} any finite morphism $f\colon Y\rightarrow X\setminus \lfloor D_{\mathbf{m}} \rfloor$ of connected schemes, such that the degree is coprime to $p$, is a morphism of toric varieties. The cited theorem is only stated for complete toric varieties, but the proof also works for the toric varieties without torus factors. Furthermore \cite[Definition 1, Lemma 1]{AlPa12} imply that the étale covers $f\colon Y\rightarrow (X,\sqrt[\mathbf{m}]{D})$ exactly correspond to maps of fans $(N',\Sigma)\rightarrow (N,\Sigma)$, where $N'\subseteq N$ is a sublattice of finite index containing $N_M$. Furthermore, such a cover is a $N/N'$-cover. By letting $N'$ run over all lattices $N'\subseteq N$ of finite index which contain $N_M$ we obtain the result.
\end{proof}

\begin{proof}[Proof of Corollary \ref{corollary: M-approximation for Darmon points}]
	Combine Theorem \ref{theorem: M-approx} with Lemma \ref{lemma: fundamental group toric stack}. For the second part, note that the coarse space of $(X,\sqrt[\mathbf{m}]{D})$ is $X\setminus \lfloor D_{\mathbf{m}}\rfloor$ and use Proposition \ref{prop: N_M fundamental group integral}.
\end{proof}
\begin{remark} \label{remark: M-approximation for Darmon points pos char}
    Corollary \ref{corollary: M-approximation for Darmon points} extends to characteristic $p$ to give a necessary condition for strong approximation on a toric root stack with invertible multiplicities. If $(X,\sqrt[\mathbf{m}]{D})$ satisfies strong approximation off $T$, then $|\pi_{1, p\nmid}(X_{\overline{K}},\sqrt[\mathbf{m}]{D_{\overline{K}}})|\in\rho(K,C)$ and similarly strong approximation implies that $(X_{\overline{K}},\sqrt[\mathbf{m}]{D_{\overline{K}}})$ does not have étale covers of degree coprime to $p$. However, this is not a sufficient condition, since the quotient group $N/N_M$ corresponding to Darmon points may have $p$-torsion.
\end{remark}

\begin{example}
    If $\CHAR(K)=0$, $X=\P^1$ and $D_\mathbf{m}=\frac{1}{2}(0)+\frac{1}{2}(\infty)$ then the morphism $\P_K^1\rightarrow \P_K^1$ given by $(x_0:x_1)\mapsto (x_0^2:x_1^2)$ factors as the étale morphism $\P_K^1\rightarrow (\P_K^1,\sqrt[\mathbf{m}]{D})$ followed by the coarse moduli space morphism $(\P^1,\sqrt[\mathbf{m}]{D})\rightarrow \P^1$. The étale morphism corresponds to the nontrivial element in $\pi_1(\P^1_{\overline{K}},\sqrt[\mathbf{m}]{D_{\overline{K}}})\cong \Z/2\Z$.
\end{example}
More generally, if $(\P^n,M)$ is a toric pair corresponding to Darmon points, then the group $N/N_M$ is nontrivial as soon as two multiplicities are not coprime. Therefore we obtain the following consequence of Corollary \ref{corollary: M-approximation for Darmon points}.
\begin{corollary}
	Let $(K,C)$ be a PF field and let $T\subset \Omega_{K}$ be a finite nonempty set of places. Let $D_{\mathbf{m}}$ be the $\mathbb{Q}$-divisor on $\P_K^{n-1}$ given by $$D_{\mathbf{m}}=\sum_{i=0}^{n-1} \left(1-\frac{1}{m_i}\right)D_i,\quad D_i=\{x_i=0\}.$$ Then $(\P_K^{n-1},\sqrt[\mathbf{m}]{D})$ satisfies strong approximation off $T$ if $\gcd(m_i, m_j)\in \rho(K,C)$ for every $i\neq j$. The converse also holds if $\Pic(C)$ is finitely generated. Furthermore, $(\P_K^{n-1},\sqrt[\mathbf{m}]{D})$ satisfies strong approximation if and only if $m_i<\infty$ for all $i\in\{0,\dots, n\}$ and $\gcd(m_i, m_j)=1$ for every $i\neq j$. 
\end{corollary}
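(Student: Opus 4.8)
The plan is to derive this corollary by specializing Corollary~\ref{corollary: M-approximation for Darmon points} to $X=\P_K^{n-1}$ with the torus-invariant coordinate divisors $D_0,\dots,D_{n-1}$. First I would set up the toric data: the cocharacter lattice of $\P^{n-1}$ is $N\cong \Z^{n-1}$ with ray generators $n_{\rho_0},\dots,n_{\rho_{n-1}}$ satisfying the single relation $\sum_{i=0}^{n-1} n_{\rho_i}=0$. Writing $I=\{i\mid m_i\neq \infty\}$ for the indices with finite multiplicity, the monoid $N_M$ attached to the Darmon pair is, by Definition~\ref{def: invariants}, the sublattice generated by $\{m_i n_{\rho_i}\mid i\in I\}$ together with the relations they inherit; the key computation is to identify the finite group $N/N_M$ (equivalently $\pi_1((\P^{n-1}_{\overline K},\sqrt[\mathbf m]{D_{\overline K}}))$ by Lemma~\ref{lemma: fundamental group toric stack}) and show it is trivial precisely when the $m_i$ are pairwise coprime, and that $N_M$ has finite index in $N$ precisely when all $m_i<\infty$.

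The main computational step is therefore the following lattice/group-theory lemma: with $N=\Z^{n-1}$ presented as $\bigoplus_{i=0}^{n-1}\Z e_i$ modulo $\sum e_i=0$ (so $e_i\mapsto n_{\rho_i}$), and $N_M$ generated by the images of $m_ie_i$ for $i\in I$ (and $0$ for $i\notin I$), one has $N/N_M\cong \left(\bigoplus_{i=0}^{n-1}\Z/m_i\Z\right)\big/\langle \bar 1,\dots,\bar 1\rangle$ when $I=\{0,\dots,n-1\}$ (with the convention $\Z/\infty\Z=\Z$ in general). From this presentation it is elementary that this quotient is trivial if and only if $\sum_i \Z/m_i\Z$ is generated by the diagonal element $(\bar 1,\dots,\bar 1)$, which by the Chinese Remainder Theorem happens exactly when the $m_i$ are pairwise coprime and all finite; and $N/N_M$ is finite exactly when all $m_i$ are finite. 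I expect this is the part requiring the most care, in particular tracking the effect of infinite multiplicities (which make $N_M$ degenerate and force $N/N_M$ infinite as soon as some $m_i=\infty$, matching the ``$m_i<\infty$ for all $i$'' clause).

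Granting the lemma, the rest is assembly. For the first statement: $X\setminus \lfloor D_{\mathbf m}\rfloor$ is again a split toric variety (an open subvariety of $\P^{n-1}$), and by Corollary~\ref{corollary: M-approximation for Darmon points}(1), $(\P^{n-1},\sqrt[\mathbf m]{D})$ satisfies strong approximation off $T\neq\emptyset$ if $\pi_1((\P^{n-1}_{\overline K},\sqrt[\mathbf m]{D_{\overline K}}))\cong \widehat{N/N_M}$ is finite with order in $\rho(K,C)$, and conversely when $\Pic(C)$ is finitely generated. Since $\widehat{N/N_M}$ is finite iff $N/N_M$ is, and $|N/N_M|\in\rho(K,C)$ is a condition on the prime divisors of $|N/N_M|$, I would invoke the lemma's CRT description to rewrite $|N/N_M|$ in terms of the pairwise gcds: one checks (again by CRT, reducing to prime powers) that every prime dividing $|N/N_M|$ divides some $\gcd(m_i,m_j)$ with $i\neq j$, and conversely, so ``$|N/N_M|\in\rho(K,C)$'' is equivalent to ``$\gcd(m_i,m_j)\in\rho(K,C)$ for all $i\neq j$'' (using that $\rho(K,C)$ is a submonoid of $\mathbb N^*$ generated by primes, as recorded after the definition of $\rho$). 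For the second statement I would apply Corollary~\ref{corollary: M-approximation for Darmon points}(2): strong approximation holds iff $(\P^{n-1}_{\overline K},\sqrt[\mathbf m]{D_{\overline K}})$ is simply connected and has only constant global sections; by the lemma the former means $N/N_M$ is trivial, i.e. the $m_i$ pairwise coprime and finite, and one checks this already forces the global-sections condition (via Proposition~\ref{prop: N_M fundamental group integral}, the cone generated by $N_M^+$ is all of $N_\mathbb R$ once $N_M$ has finite index, which holds here). This yields exactly ``$m_i<\infty$ for all $i$ and $\gcd(m_i,m_j)=1$ for all $i\neq j$.''
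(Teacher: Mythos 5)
Your core lattice computation is sound and is essentially the same index computation the paper performs (the paper records the generators $m_in_{\rho_i}$ as columns of an $ (n-1)\times n$ matrix in the basis $n_{\rho_1},\dots,n_{\rho_{n-1}}$ and reads off the maximal minors $\prod_{i\neq j}m_i$; your presentation $N/N_M\cong(\bigoplus_i\Z/m_i\Z)/\langle(\bar1,\dots,\bar1)\rangle$ plus CRT gives the same answer when all $m_i$ are finite). However, there are two genuine gaps. First, your treatment of infinite multiplicities is wrong: you claim $N/N_M$ is infinite ``as soon as some $m_i=\infty$,'' but if exactly one multiplicity is infinite, say $m_0=\infty$, then $n_{\rho_1},\dots,n_{\rho_{n-1}}$ form a basis of $N$ and $N/N_M\cong\bigoplus_{i\geq1}\Z/m_i\Z$ is finite of order $\prod_{i\geq1}m_i$. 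With the paper's convention $\gcd(\infty,a)=a$, the hypothesis $\gcd(m_i,m_j)\in\rho(K,C)$ for all $i\neq j$ can hold in this situation (e.g.\ $\P^1$ with $\mathbf{m}=(\infty,1)$, i.e.\ $\mathbb{A}^1$), so your argument would wrongly conclude that strong approximation off $T$ fails there. The correct dichotomy is: $N_M$ has finite index iff at most one $m_i$ is $\infty$, and ``all $m_i<\infty$'' is only needed for the $T=\emptyset$ statement, where it is the condition $N_M^+=N$ (the cone generated by $N_M^+$ is proper when some $m_i=\infty$, even if $N_M=N$) rather than finiteness of $N/N_M$ that fails.

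Second, your route goes through Corollary \ref{corollary: M-approximation for Darmon points} and Lemma \ref{lemma: fundamental group toric stack}, which are stated only in characteristic $0$ (in characteristic $p$ the lemma computes only the prime-to-$p$ quotient of $\pi_1$, and Remark \ref{remark: M-approximation for Darmon points pos char} notes the fundamental-group condition is then not sufficient, precisely because $N/N_M$ may have $p$-torsion). The corollary you are proving is for arbitrary PF fields. The paper instead passes from the root stack to the Darmon pair via Proposition \ref{prop: M-approximation implies strong approximation stack} and applies Theorem \ref{theorem: M-approx} directly to the condition $|N:N_M|\in\rho(K,C)$ (resp.\ $N=N_M^+$), which involves no characteristic hypothesis. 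Replacing your appeal to the fundamental group by a direct appeal to Theorem \ref{theorem: M-approx}, and correcting the $m_i=\infty$ bookkeeping, would repair the argument.
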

\begin{proof}
    Let $(\P_K^{n-1},M)$ be the pair corresponding to the Darmon points on $(\P_K^{n-1},D_{\mathbf{m}})$.
    Consider the matrix 
	$$\left(\begin{matrix}
		-m_0 & m_1 & 0 & \dots & 0 \\
		-m_0& 0   & m_2 & \dots & 0 \\
		\vdots &\vdots & \vdots & \ddots & \vdots \\
		-m_0 & 0 & 0 & \dots & m_{n-1} \\
	\end{matrix}\right),$$
    where the columns correspond to generators of $N_M$ (if $m_i=\infty$, we make the corresponding column $0$ instead).
    By Theorem \ref{theorem: M-approx}, $M$-approximation off $T$ is satisfied if and only if the matrix has full rank and induces surjective homomorphisms $(\Z/p\Z)^n\rightarrow (\Z/p\Z)^{n-1}$ for every prime number $p\not\in \rho(K,C)$. The matrix having full rank is equivalent to $m_i=
    \infty$ for at most one $i\in \{0,\dots, n-1\}$.
    The surjectivity at a prime number $p\not\in \rho(K,C)$ is equivalent to some $n\times n$ minor of the matrix not being divisible by $p$. As the maximal minors are, up to sign, of the form $\prod_{\substack{i=0\\ i\neq j}}^n m_i$ for some $0\leq j \leq n$, this is equivalent to $\gcd(\prod_{\substack{i=0\\ i\neq 0}}^n m_i,\dots, \prod_{\substack{i=0\\ i\neq n}}^n m_i)\in \rho(K,C)$. This is in turn equivalent to $\gcd(m_i, m_j)\in \rho(K,C)$ for every $i\neq j$. The proof for $T=\emptyset$ is analogous.
\end{proof}

The formula obtained above generalizes to a simple sufficient criterion for strong approximation on toric stacks.
\begin{corollary} \label{corollary: sufficient condition M-approx Darmon}
	Let $K$ be a PF field and let $T\subset \Omega_{K}$ be a nonempty set of places. Let $X$ be a smooth split toric variety and let $(X,\sqrt[\mathbf{m}]{D})$ be the root stack corresponding to $D_{\mathbf{m}}=\sum_{i=0}^n \left(1-\frac{1}{m_i}\right)D_i$, and let $P_{\sigma}(x_1,\dots, x_n)=\prod_{\substack{i=1 \\ \rho_i\subseteq \sigma}}^n x_i$ for a maximal cone $\sigma$.
	If $\gcd_{\sigma\in \Sigma_{\text{max}}}(P_{\sigma}(m_1,\dots, m_n))\in \rho(K,C)$, then $(X,\sqrt[\mathbf{m}]{D})$ satisfies strong approximation off $T$. Here $\Sigma_{\text{max}}$ is the set of maximal cones in the fan of $X$.
\end{corollary}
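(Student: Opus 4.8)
The plan is to reduce to Theorem~\ref{theorem: M-approx} through the dictionary between Darmon points and root stacks. Let $(X,M)$ be the toric pair corresponding to the Darmon points on $(X,D_{\mathbf{m}})$. In characteristic $0$, Corollary~\ref{corollary: M-approximation for Darmon points} together with Lemma~\ref{lemma: fundamental group toric stack} shows that $(X,\sqrt[\mathbf{m}]{D})$ satisfies strong approximation off $T$ once $N/N_M$ is finite and $|N:N_M|\in\rho(K,C)$; in positive characteristic the same implication follows by combining Proposition~\ref{prop: M-approximation implies strong approximation stack} with Theorem~\ref{theorem: M-approx}(1), using that a complete smooth split toric variety is proper and has torus-invariant boundary $\sum_i D_i$ a strict normal crossings divisor. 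Since $\rho(K,C)$ is a submonoid of $\mathbb{N}^*$ generated by primes, it is closed under taking divisors, so it suffices to show that $|N:N_M|$ is finite and divides some element of $\rho(K,C)$.

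First I would make $N_M$ explicit. For the Darmon condition $\fM_{\fin}$ consists of the $\mathbf{w}\in\mathbb{N}^n$ with $m_i\mid w_i$ for all $i$ --- in particular $w_i=0$ whenever $m_i=\infty$ --- and $\fM_{\fin,\red}$ additionally requires $\bigcap_{w_i\neq 0}D_i\neq\emptyset$. As each torus-invariant prime divisor $D_i$ is nonempty, the tuple with $i$-th entry $m_i$ and all others $0$ lies in $\fM_{\fin,\red}$ for every $i$ with $m_i\neq\infty$, so $m_i n_{\rho_i}\in N_M$; conversely every $\phi(\mathbf{w})$ with $\mathbf{w}\in\fM_{\fin,\red}$ is an integer combination of the $m_i n_{\rho_i}$. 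Thus, using Definition~\ref{def: invariants} directly (no resolution is needed since $X$ is smooth), $N_M=\langle\,m_i n_{\rho_i}: 1\le i\le n,\ m_i\neq\infty\,\rangle$.

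Next, fix a maximal cone $\sigma$ with $P_\sigma(\mathbf{m})<\infty$, that is, with $m_i\neq\infty$ for every $\rho_i\subseteq\sigma$. Because $X$ is a complete smooth toric variety, $\sigma$ has dimension $d=\dim X$ and its ray generators $\{n_{\rho_i}:\rho_i\subseteq\sigma\}$ form a $\mathbb{Z}$-basis of $N$; hence $\langle m_i n_{\rho_i}:\rho_i\subseteq\sigma\rangle$ is a finite-index sublattice of $N$ contained in $N_M$, of index exactly $\prod_{\rho_i\subseteq\sigma}m_i=P_\sigma(\mathbf{m})$. Consequently $|N:N_M|$ is finite and divides $P_\sigma(\mathbf{m})$, and therefore also divides $\gcd_{\sigma\in\Sigma_{\text{max}}}P_\sigma(\mathbf{m})$. (If $P_\sigma(\mathbf{m})=\infty$ for every maximal cone then this gcd is $0$, which does not lie in $\rho(K,C)\subseteq\mathbb{N}^*$, so the hypothesis is vacuous.) Since the gcd lies in $\rho(K,C)$ by assumption, the first paragraph yields the conclusion.

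The points to be careful about are the handling of the infinite multiplicities --- which is harmless, since the Darmon condition kills the corresponding coordinates, so those rays disappear from $N_M$ and those cones from the gcd --- and the use of the $d$-dimensionality of maximal cones, which is where completeness of $X$ enters: for a general smooth split toric $X$ one would need to assume directly that the ray generators of each maximal cone span $N_{\mathbb{R}}$ (otherwise $|N:N_M|$ can fail to divide the gcd, or even be infinite). Beyond that the argument is purely formal once Theorem~\ref{theorem: M-approx} is in hand.
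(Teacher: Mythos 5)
Your proposal is correct and follows essentially the same route as the paper: for each maximal (full-dimensional) cone $\sigma$ with all $m_i\neq\infty$ on its rays, the sublattice generated by $\{m_i n_{\rho_i}:\rho_i\subseteq\sigma\}$ sits inside $N_M$ and has index $P_\sigma(\mathbf{m})$ in $N$, so $|N:N_M|$ divides $\gcd_\sigma P_\sigma(\mathbf{m})$ and hence lies in $\rho(K,C)$, after which Proposition \ref{prop: M-approximation implies strong approximation stack} and Theorem \ref{theorem: M-approx} conclude. Your explicit identification of $N_M$ and your remarks on the infinite multiplicities and on full-dimensionality of maximal cones are sound elaborations of details the paper leaves implicit.
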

\begin{proof}
If $\cV_\sigma\cong \mathbb{A}^n$ is the affine open in the toric integral model $\cX$ corresponding to the maximal cone $\sigma$, then for any $v\in \Omega_K^{<\infty}$, the lattice $N_{\sigma}$ spanned by the image of $(\cX,\cM)(\cO_v)\cap \cV_\sigma(\cO_v)$ under $\phi_v$ in $N_M$ is generated by $m_in_{\rho_i}$ for $\rho_i\subseteq \sigma$. Thus $|N:N_\sigma|=P_{\sigma}(m_1,\dots, m_n)$. Since $N_\sigma\subset N_M$, this implies $|N:N_M|$ divides $P_{\sigma}(m_1,\dots, m_n)$, giving the result.
\end{proof}

While Corollary \ref{corollary: sufficient condition M-approx Darmon} is a sufficient criterion, it is not a necessary condition on many toric varieties, such as Hirzebruch surfaces.
\begin{example}[Hirzebruch surfaces] \label{example: Hirzebruch surfaces}
	Let $(K,C)$ be a PF field such that $\Pic(C)$ is finitely generated, $T\subset \Omega_{K}$ a nonempty set of places and $r\geq 0$ an integer.
	Consider the Hirzebruch surface $H_r$ given by the fan with ray generators $n_{\rho_1}=(-1,r), n_{\rho_2}=(0,1), n_{\rho_3}=(1,0), n_{\rho_4}=(0,-1)$ and choose corresponding multiplicities $m_1, m_2, m_3, m_4$. If $(H_r,M)$ is the pair for the Darmon points with these multiplicities, then by looking at the generators modulo a prime number we see that the prime numbers dividing $|N:N_M|$ are the prime numbers dividing
    $$\gcd(m_1 m_2, m_1 m_4, m_2 m_3, m_3 m_4, rm_1 m_3).$$
  By Theorem \ref{theorem: M-approx}, the pair $(H_r,M)$ satisfies $M$-approximation off $T$ if and only if $|N:N_M|\in \rho(K,C)$. Moreover, $(H_r,M)$ satisfies $M$-approximation if and only if $N=N_M$, $m_1,m_3,m_4<\infty$ and $(r,m_2)\neq (0,\infty)$. Note the additional $rm_1 m_3$ factor showing up in the index of $|N:N_M|$ compared to the criterion given in Corollary \ref{corollary: sufficient condition M-approx Darmon}, showing that $M$-approximation can hold even if the condition in the corollary is violated.
\end{example} 

Finally we also consider $M$-approximation for pairs corresponding to Darmon points on a singular variety.
\begin{example}
Let $(K,C)$ be a PF field such that $\Pic(C)$ is finitely generated, $T\subset \Omega_{K}$ a nonempty set of places and $r\geq 1$ an integer.
Consider the weighted projective plane $\P_K(1,1,r)$ for $r\geq 1$ with rays generated by $n_{\rho_0}=(-1,r),n_{\rho_1}=(1,0),n_{\rho_2}=(0,-1)$ and choose corresponding multiplicities $m_0,m_1,m_2$ with $m_0,m_1<\infty$. Then the Darmon points on $\P_K(1,1,r)$ satisfy $M$-approximation off $T$ if and only if $\gcd(m_0,m_1),\gcd(m_0m_1, m_2, r-1)\in \rho(K,C)$. Furthermore the Darmon points on $\P_K(1,1,r)$ satisfy $M$-approximation if and only if $\gcd(m_0,m_1)=\gcd(m_0m_1, m_2, r-1)=1$ and $m_2<\infty$.
In particular, if $r=2$, then whether $M$-approximation is satisfied (off $T$ or off $\emptyset$) does not depend on the value of $m_2$.
\end{example}
\appendix
\section{\texorpdfstring{$(M,\cM')$}{(M,M')}-approximation} \label{appendix}
In this appendix we generalize the notion of adelic $M$-points and integral adelic $\cM$-points introduced in Definition \ref{def: adelic M-points} in order to relate these notions to the adelic points considered in \cite{MiNaSt22}.
\begin{definition} \label{def: (M,M')-points}
    Let $(K,C)$ be PF field, $T\subset \Omega_K$ be a finite sets of places and let $B\subset C$ be an open subscheme. Let $(X,M)\subset(X,M')$ be an inclusion of pairs with integral models $(\cX,\cM)\subseteq(\cX,\cM')$. We define the space of \textit{adelic $(\cM, \cM')$-points over $B$ prime to $T$} to be the restricted product $$(\cX,\cM,\cM')(\mathbf{A}_B^T)=\prod_{v \in \Omega_K\setminus T} ((\cX, \cM')(\cO_v)\cap (X,M')(K_v), (\cX, \cM)(\cO_v)).$$
\end{definition}
If $(\cX,\cM)=(\cX,\cM')$, this recovers the notion of integral adelic $\cM$-points, while if $(\cX,\cM')(\cO_v)=(X,M)(K_v)$ for all $v\in \Omega_K$ then this recovers the notion of adelic $M$-points.
\begin{remark} \label{remark: comparison with MNS}
The definition of $(\cM,\cM')$-points generalizes the notion of adelic semi-integral points considered in \cite{MiNaSt22}:
if $\cM'$ encodes the Campana (respectively Darmon) condition for a divisor $\cD_{\mathbf{m}}$ as in Definition \ref{def: Campana points and Darmon points} and $\cM$ is the integrality condition for the support of $\cD_{\mathbf{m}}$, then there is an equality of topological spaces
$$(\cX,\cM,\cM'_{\fin})(\mathbf{A}_B^T)= (\cX,\cD_{\mathbf{m}})^*_{\text{st}}(\mathbf{A}_{B}^T),$$
where the right hand side is the set of strict $T$-adelic semi-integral points as in \cite[Definition 2.15]{MiNaSt22}. However, the full set of $T$-adelic semi-integral points $(\cX,\cD_{\mathbf{m}})^*(\mathbf{A}_{B}^T)$ differs as a set from $(\cX,\cM,\cM')(\mathbf{A}_B^T)$, since a point $(P_v)_v\in (\cX,\cM,\cM')(\mathbf{A}_B^T)$ is integral with respect to the support of $\cD_{\mathbf{m}}$ at all but finitely many places $v$, while a non-strict point in $(\cX,\cD_{\mathbf{m}})^*(\mathbf{A}_{B}^T)$ lies in the boundary for all places.
\end{remark}

In the remainder of the section we will generalize some results of Section \ref{section: Adelic M-points and approx}.

The following proposition is an analogue of Proposition \ref{prop: independence model}, and shows that the space of adelic $(\cM, \cM')$-points does not depend on the choice of an integral model for $(X,M)$.
\begin{proposition} \label{prop: independence model (M,M')}
	If $(X,M)$ is a pair with integral models $(\cX,\cM)$ and $(\cX,\cM')$ over $B$, and $(\cX,\cM'')$ is an integral model over $B$ of the pair $(X,M)$ such that $(\cX,\cM),(\cX,\cM')\subseteq (\cX,\cM'')$ are open, then there is a canonical homeomorphism
	$$(\cX,\cM,\cM'')(\mathbf{A}_B^T)\cong (\cX,\cM',\cM'')(\mathbf{A}_B^T).$$
\end{proposition}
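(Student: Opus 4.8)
The plan is to follow the pattern of the proof of Proposition \ref{prop: independence model}: spread out to make the three integral models agree away from finitely many places, and then invoke the elementary identity $\prod_{i\in I}(X_i,U_i)\cong\prod_{i\in J}X_i\times\prod_{i\in I\setminus J}(X_i,U_i)$ for finite $J$ recorded after Definition \ref{def: restricted product}.

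First I would apply Proposition \ref{prop: spread out models} to the pairs $(\cX,\cM)$ and $(\cX,\cM'')$, and separately to $(\cX,\cM')$ and $(\cX,\cM'')$, obtaining nonempty open subschemes $B_1,B_2\subseteq B$ over which $(\cX,\cM)$, respectively $(\cX,\cM')$, is equivalent to $(\cX,\cM'')$. Setting $B'=B_1\cap B_2$, which is nonempty since $C$ is irreducible, let $S'$ be the finite set of places of $\Omega_K\setminus T$ lying in $B\setminus B'$. Then for every $v\in B'$ the three sets $(\cX,\cM)(\cO_v)$, $(\cX,\cM')(\cO_v)$ and $(\cX,\cM'')(\cO_v)$ coincide as subsets of $X(K_v)$, and since the technical condition on $\fM$ from Definition \ref{def: pair} forces $(\cX,\cM'')(\cO_v)\subseteq(X,M)(K_v)$, this common set equals $(\cX,\cM'')(\cO_v)\cap(X,M)(K_v)$ as well; and for $v\notin B$ all of these sets are just $(X,M)(K_v)$ by definition. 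Consequently, in the restricted products defining $(\cX,\cM,\cM'')(\mathbf{A}_B^T)$ and $(\cX,\cM',\cM'')(\mathbf{A}_B^T)$ the ambient factors $(\cX,\cM'')(\cO_v)\cap(X,M)(K_v)$ agree at every place, while the ``small'' factors $(\cX,\cM)(\cO_v)$ and $(\cX,\cM')(\cO_v)$ agree at every $v\notin S'$.

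I would then apply the homeomorphism of Section \ref{subsection: restricted products and adeles} with $J=S'$ to both spaces. The resulting right-hand sides are literally the same topological space: the finite product over $S'$ is built from $\cM''$ and $M$ only, and in the remaining restricted product over $\Omega_K\setminus(T\cup S')$ the small subspaces coincide for $v\notin S'$. Composing the two homeomorphisms yields the asserted canonical homeomorphism; it is the identity on underlying sets, and by Proposition \ref{prop: inclusion M} it is moreover compatible with the open embeddings of both spaces into the integral adelic space $(\cX,\cM'')(\mathbf{A}_B^T)$. There is no genuine obstacle here; the only points needing care are to apply spreading out twice so that \emph{all three} models become simultaneously equivalent over one nonempty open $B'\subseteq B$, and to observe that it is precisely the ambient factors, not the small ones, of the restricted products that are unaffected by replacing $\cM$ with $\cM'$.
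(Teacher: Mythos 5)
Your proposal is correct and follows essentially the same route as the paper, whose proof of this proposition is a one-line reference back to the argument for Proposition \ref{prop: independence model}: spread out via Proposition \ref{prop: spread out models} so that the two small subspaces $(\cX,\cM)(\cO_v)$ and $(\cX,\cM')(\cO_v)$ agree outside a finite set $S'$, then split off the factors over $S'$ using the standard restricted-product identity, noting that the ambient factors built from $\cM''$ are untouched. Your write-up just makes explicit the substitution the paper describes, so no further comment is needed.
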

\begin{proof}
    This follows from the proof of Proposition \ref{prop: independence model} by replacing the term $(X,M)(K_v)$ in the proof with $(\cX,\cM')(\cO_v)$.
\end{proof}
In light of the previous proposition, and the fact that every pair over $K$ has an integral model over $C$, we will not specify the integral model $(\cX,\cM)$ of $(X,M)$ and write $(M,\cM')$ instead of $(\cM,\cM')$.

The next proposition is an analogue of Proposition \ref{prop: inclusion M approx} and gives conditions for when the natural inclusion maps are embeddings or have dense image.
\begin{proposition} \label{prop: inclusion (M,M')}
	Let $(X,M)\subseteq (X,M')\subseteq (X,M'')$ be pairs and let $(\cX, \cM)\subseteq(\cX, \cM')\subseteq(\cX, \cM'')$ be inclusions of integral models over $B$ of the respective pairs, and let $T\subset \Omega_K$ be a finite set of places. Then:
	\begin{enumerate}
		\item The natural inclusion $(\cX,M,\cM'')(\mathbf{A}_B^T)\hookrightarrow(\cX,M',\cM'')(\mathbf{A}_B^T)$ has dense image if $(\cX,M,\cM'')(\mathbf{A}_B^T)\neq \emptyset$.
		\item The natural inclusion $(\cX,M,\cM')(\mathbf{A}_B^T)\hookrightarrow (X,M,\cM'')(\mathbf{A}_B^T)$ is a topological embedding, and it is open if $(\cX,\cM')(\cO_v)\subset (\cX,\cM'')(\cO_v)$ is open for all $v\in \Omega_K\setminus T$. 
	\end{enumerate}
\end{proposition}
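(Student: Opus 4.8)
The plan is to realise each of the two inclusions as one of the standard inclusions of restricted products treated in Proposition~\ref{prop: inclusions restricted product}, in the spirit of the proof of Proposition~\ref{prop: inclusion M}. The one non-formal ingredient is the technical condition on the sets of multiplicities imposed in Definition~\ref{def: pair}: as explained there, it guarantees that $\cM$-points over $\cO_v$ are $\cM$-points over $K_v$, i.e.\ $(\cX,\cM)(\cO_v)\subseteq (X,M)(K_v)$ for every place $v$, and likewise for $\cM'$ and $\cM''$ (for $v\notin B$ this is an equality, by Definition~\ref{def: M-points}). Consequently, for each $v\in\Omega_K\setminus T$ the intersections occurring in Definition~\ref{def: (M,M')-points} are redundant: $(\cX,\cM')(\cO_v)\cap(X,M')(K_v)=(\cX,\cM')(\cO_v)$, and similarly with $\cM''$ in place of $\cM'$. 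Writing $Z_v=(\cX,\cM)(\cO_v)$, $Y_v=(\cX,\cM')(\cO_v)$ and $X_v=(\cX,\cM'')(\cO_v)$ as subspaces of $X(K_v)$, the inclusions $\cM\subseteq\cM'\subseteq\cM''$ give $Z_v\subseteq Y_v\subseteq X_v$, and $(\cX,M,\cM')(\mathbf{A}_B^T)=\prod_{v}(Y_v,Z_v)$, $(\cX,M,\cM'')(\mathbf{A}_B^T)=\prod_{v}(X_v,Z_v)$ and $(\cX,M',\cM'')(\mathbf{A}_B^T)=\prod_{v}(X_v,Y_v)$, all products taken over $\Omega_K\setminus T$.

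For part~(1), the inclusion in question is precisely $\prod_v(X_v,Z_v)\hookrightarrow\prod_v(X_v,Y_v)$; since $\prod_v(X_v,Z_v)=(\cX,M,\cM'')(\mathbf{A}_B^T)$ is assumed nonempty, Proposition~\ref{prop: inclusions restricted product}(1) shows it is continuous with dense image. For part~(2), the inclusion is $\prod_v(Y_v,Z_v)\hookrightarrow\prod_v(X_v,Z_v)$, so Proposition~\ref{prop: inclusions restricted product}(2) shows it is a topological embedding, and it is an open embedding provided $Y_v\subseteq X_v$ is open for every $v$; but $Y_v\subseteq X_v$ is just the inclusion $(\cX,\cM')(\cO_v)\subseteq(\cX,\cM'')(\cO_v)$, whose openness is precisely the stated hypothesis.

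In short, once one observes that the large factor $(\cX,\cM')(\cO_v)\cap(X,M')(K_v)$ of Definition~\ref{def: (M,M')-points} coincides with $(\cX,\cM')(\cO_v)$, the statement reduces verbatim to Proposition~\ref{prop: inclusions restricted product}. I expect the only step requiring a moment's care to be this reduction, i.e.\ recalling and applying the technical condition on $\fM$; everything else is a matter of matching up restricted products. I would carry out the proof in exactly the order above: first establish $(\cX,\cM)(\cO_v)\subseteq(X,M)(K_v)$ and the identifications of the three adelic spaces with restricted products, then invoke Proposition~\ref{prop: inclusions restricted product} twice.
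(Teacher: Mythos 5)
Your proof is correct and follows exactly the paper's route: the paper's own proof is the one-line remark that the proposition is a direct consequence of Proposition~\ref{prop: inclusions restricted product}, and you have simply made explicit the identification of the three adelic spaces with the restricted products $\prod_v(Y_v,Z_v)$, $\prod_v(X_v,Z_v)$, $\prod_v(X_v,Y_v)$ before invoking that proposition twice. Your side remark that the intersection in Definition~\ref{def: (M,M')-points} is redundant (via the technical condition on $\fM$) is accurate and, if anything, a helpful clarification.
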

\begin{proof}
    This is a direct consequence of Proposition \ref{prop: inclusions restricted product}.
\end{proof}
We now generalize the notion $M$-approximation to $(M,\cM')$-approximation.

\begin{definition} \label{Def: (M,M')-approx}
	Let $T\subset \Omega_K$ be a finite set of places, let $(X,M), (X,M')$ be pairs over $(K,C)$ such that $M\subseteq M'$ and let $(\cX,\cM')$ be an integral model of $(X,M')$ over $B\subset C$. Then we say that $\cX$ satisfies \textit{$(M,\cM')$-approximation off $T$} if the image of the natural inclusion
	$$(\cX,\cM')(B)\cap (X,M)(K)\hookrightarrow (\cX,M,\cM')(\mathbf{A}_B^T)$$
	is dense.
\end{definition}
The restriction to $(X,M)(K)$ ensures that the inclusion map is well defined.

\begin{example}
If $(\cX,\cM')$ is the pair corresponding to the Campana (or Darmon) condition for the Campana pair $(\cX,\cD_{\mathbf{m}})$ and $M\subseteq M'$ is the integral condition with respect to the support of $\cD_{\mathbf{m}}$, then $(M,\cM')$-approximation on $X$ coincides with strong Campana (or Darmon) approximation on $(\cX, \cD_{\mathbf{m}})$ as studied in \cite{MiNaSt22}, up to the slightly differing adelic space as discussed in Remark \ref{remark: comparison with MNS}. 
\end{example}

\nocite{*}
\printbibliography
\end{document}